 \newtheorem{Theorem}{Theorem}[section]
 \newtheorem{Corollary}[Theorem]{Corollary}
 \newtheorem{Lemma}[Theorem]{Lemma}
 \newtheorem{Proposition}[Theorem]{Proposition}
 \newtheorem{Definition}[Theorem]{Definition}
 \newtheorem{Remark}[Theorem]{Remark}
 \numberwithin{equation}{section}
\begin{document}

\title[Boundary points, minimal $L^2$ integrals and concavity property \uppercase\expandafter{\romannumeral5}]
 {Boundary points, minimal $L^2$ integrals and concavity property \uppercase\expandafter{\romannumeral5}---vector bundles}

\author{Qi'an Guan}
\address{Qi'an Guan: School of
Mathematical Sciences, Peking University, Beijing 100871, China.}
\email{guanqian@math.pku.edu.cn}

\author{Zhitong Mi}
\address{Zhitong Mi: Institute of Mathematics, Academy of Mathematics
and Systems Science, Chinese Academy of Sciences, Beijing, China
}
\email{zhitongmi@amss.ac.cn}
\author{Zheng Yuan}
\address{Zheng Yuan: School of
Mathematical Sciences, Peking University, Beijing 100871, China.}
\email{zyuan@pku.edu.cn}

\thanks{}

\subjclass[2020]{32Q15, 32F10, 32U05, 32W05}

\keywords{concavity, boundary points, singular hermitian metric, holomorphic vector bundle}

\date{\today}

\dedicatory{}

\commby{}

%%% ----------------------------------------------------------------------

\begin{abstract}
In this article, for singular hermitian metrics on holomorphic vector bundles, we consider minimal $L^2$ integrals on sublevel sets of plurisubharmonic functions on weakly pseudoconvex K\"ahler manifolds related to modules at boundary points of the sublevel sets, and establish a concavity property of the minimal $L^2$ integrals. As applications, we present a necessary condition for the concavity degenerating to linearity, a strong openness property of the modules and a twisted version, an effectiveness result of the strong openness property of the modules, and an optimal support function related to the modules.
\end{abstract}

%%% ----------------------------------------------------------------------
\maketitle
%%% ----------------------------------------------------------------------

\section{Introduction}
The strong openness property of multiplier ideal sheaves \cite{GZSOC} (Demailly's strong openness conjecture \cite{DemaillySoc}: $\mathcal{I}(\varphi)=\mathcal{I}_+(\varphi):=\mathop{\cup} \limits_{\epsilon>0}\mathcal{I}((1+\epsilon)\varphi)$)
 is an important feature of multiplier ideal sheaves, which was called "opened the door to new types of approximation techniques" (see e.g. \cite{GZSOC,McNeal and Varolin,K16,cao17,cdM17,FoW18,DEL18,ZZ2018,GZ20,ZZ2019,ZhouZhu20siu's,FoW20,KS20,DEL21}),
where the multiplier ideal sheaf $\mathcal{I}(\varphi)$ was defined as the sheaf of germs of holomorphic functions $f$ such that $|f|^2e^{-\varphi}$ is locally integrable (see e.g. \cite{Tian,Nadel,Siu96,DEL,DK01,DemaillySoc,DP03,Lazarsfeld,Siu05,Siu09,DemaillyAG,Guenancia}),
and $\varphi$ is a plurisubharmonic function on a complex manifold $M$ (see \cite{Demaillybook}).

Guan-Zhou \cite{GZSOC} proved the strong openness property (the 2-dimensional case was proved by Jonsson-Musta\c{t}\u{a} \cite{JonssonMustata}).
After that, using the strong openness property, Guan-Zhou \cite{GZeff} proved a conjecture about volumes growth of the sublevel sets of quasi-plurisubharmonic functions which was posed by Jonsson-Musta\c{t}\u{a} (Conjecture J-M for short, see \cite{JonssonMustata}).

Considering the minimal $L^2$ integrals on sublevel sets of a plurisubharmonic function with respect to a module at a boundary point of the sublevel sets, Bao-Guan-Yuan \cite{BGY-boundary} (see also \cite{GMY-boundary2}) established a concavity property of the minimal $L^2$ integrals, which deduces an approach to Conjecture J-M \textbf{independent} of the strong openness property.

In this article, for singular hermitian metrics on holomorphic vector bundles, we consider minimal $L^2$ integrals on sublevel sets of plurisubharmonic functions on weakly pseudoconvex K\"ahler manifolds related to modules at boundary points of the sublevel sets,
and obtain a concavity property of minimal $L^2$ integrals. As applications, we present a necessary condition for the concavity degenerating to linearity, a strong openness property of the modules and a twisted version, an effectiveness result of the strong openness property of the modules, and an optimal support function related to the modules.

\subsection {Singular hermitian metrics on vector bundles}
Let $M$ be an $n-$dimensional complex manifold. Let $E$ be a rank $r$ holomorphic vector bundle over $M$ and $\bar{E}$ the conjugate of $E$. Let $h$ be a section of the vector bundle $E^*\otimes {\bar{E}}^*$ with measurable coefficients, such that $h$ is an almost everywhere positive definite hermitian form on $E$; we call such an $h$ a \textit{measurable metric} on $E$.

We would like to use the following definition for singular hermitian metrics on vector bundles in this article which is a modified version of the definition in \cite{CA}.
\begin{Definition} Let $M$, $E$ and $h$ be as above and $\Sigma\subset M$ be a closed set of measure zero. Let $\{M_j\}_{j=1}^{+\infty}$ be a sequence of relatively compact subsets of $M$ such that $M_1 \Subset  M_2\Subset  ...\Subset
M_j\Subset  M_{j+1}\Subset  ...$ and $\cup_{j=1}^{+\infty} M_j=M$. Assume that for each $M_j$, there exists a sequence of hermitian metrics $\{h_{j,s}\}_{s=1}^{+\infty}$ on $M_j$ of class $C^2$ such that

\centerline{$\lim\limits_{s\to+\infty}h_{j,s}=h$\ \ \ point-wisely on $M_j\backslash \Sigma$.}

We call the collection of data $(M,E,\Sigma,M_j,h,h_{j,s})$ a singular hermitian metric (s.h.m. for short) on $E$.
\label{singular metric}
\end{Definition}

\begin{Remark}[see \cite{CA}] Let $M$, $E$, $\Sigma$, $h$ be as in Definition \ref{singular metric}. Assume that there exists a sequence of hermitian metrics $\tilde{h}_s$ of class $C^2$ such that

\centerline{$\lim\limits_{s\to+\infty}\tilde{h}_s=h$\ \ \ in the $C^2-$topology on $M\backslash \Sigma$.}

The authors of \cite{CA} called such a collection of data $(X,E,\Sigma,h,\tilde{h}_s)$ a singular hermitian metric on $E$. They called $\Theta_h(E_{X\backslash \Sigma})$ the curvature of $(X,E,\Sigma,h,\tilde{h}_s)$ and denoted it by $\Theta_h(E)$. $\Theta_h(E)$ has continuous coefficients and values in $Herm_h(E)$ away from $\Sigma$; they denoted the a.e.-defined associated hermitian form on $TX\otimes E$ by the same symbol $\Theta_h(E)$.
\label{singular metric in CA}
\end{Remark}

We use the following definition of singular version of Nakano positivity in this article. Let $\omega$ be a hermitian metric on $M$, $\theta$ be a hermitian form on $TM$ with continuous coefficients and $(M,E,\Sigma,M_j,h,h_{j,s})$ be a s.h.m in the sense of Definition \ref{singular metric}.
\begin{Definition}Let things be as above. We write:
$$\Theta_h(E)\ge^s_{Nak} \theta \otimes Id_E$$
if the following requirements are met.

For each $M_j$, there exist a sequence of continuous functions $\lambda_{j,s}$ on $\overline{M_j}$ and a continuous function $\lambda_j$ on $\overline{M_j}$ subject to the following requirements:
\par
(1.2.1) for any $x\in \Omega:\ |e_x|_{h_{j,s}}\le |e_x|_{h_{j,s+1}},$ for any $s\in \mathbb{N}$ and any $e_x\in E_x$;
\par
(1.2.2) $\Theta_{h_{j,s}}(E)\ge_{Nak} \theta-\lambda_{j,s}\omega\otimes Id_E$ on $M_j$;
\par
(1.2.3)  $\lambda_{j,s}\to 0$ a.e. on $M_j$;
\par
(1.2.4) $0\le \lambda_{j,s}\le \lambda_j$ on $M_j$, for any $s$.
\label{singular nak}
\end{Definition}

We would also like to recall the following notation of singular version of Nakano positivity in \cite{CA}. Let $\omega$ be a hermitian metric on $M$, $\tilde{\theta}$ be a hermitian form on $TM$ with continuous coefficients and $(X,E,\Sigma,h,\tilde{h}_s)$ be a s.h.m in the sense of Remark \ref{singular metric in CA}.
\begin{Remark}[see \cite{CA}]Let things be as above. In \cite{CA}, the authors wrote
$$\Theta_h(E)\ge^s_{Nak} \tilde{\theta} \otimes Id_E$$
if the following requirements are met.

There exist a sequence of hermitian forms $\tilde{\theta}_s$ on $TM\otimes E$ with continuous coefficients, a sequence of continuous functions $\tilde{\lambda}_s$ on $M$ and a continuous function $\tilde{\lambda}$ on $M$ subject to the following requirements:
\par
(1.2.1) for any $x\in X:\ |e_x|_{\tilde{h}_s}\le |e_x|_{\tilde{h}_{s+1}},$ for any $s\in \mathbb{N}$ and any $e_x\in E_x$;
\par
(1.2.2) $\tilde{\theta}_s\ge_{Nak} \tilde{\theta}\otimes Id_E$;
\par
(1.2.3) $\Theta_{\tilde{h}_s}(E)\ge_{Nak} \tilde{\theta}_s-\tilde{\lambda}_s\omega\otimes Id_E$;
\par
(1.2.4) $\tilde{\theta}_s\to \Theta_h(E)$ a.e on $M$;
\par
(1.2.5)  $\tilde{\lambda}_s\to 0$ a.e on $M$;
\par
(1.2.6) $0\le \tilde{\lambda}_s\le \tilde{\lambda}$, for any $s$.
\end{Remark}

\begin{Remark}\label{example of singular metric}Let $M$ be a weakly pseudoconvex K\"ahler manifold. Let $\varphi$ be a plurisubharmonic function on $M$. Using regularization of quasi-plurisubharmonic function (see Theorem \ref{regularization on cpx mfld}), we know that $h:=e^{-\varphi}$ is a singular metric on $E:=M\times \mathbb{C}$ in the sense of Definition \ref{singular metric} and $h$ satisfies $\Theta_h(E)\ge^s_{Nak} 0$ in the sense of Definition \ref{singular nak}. We will prove Remark \ref{example of singular metric} in appendix (see Remark \ref{psh fun is singular metric}).
\end{Remark}

We recall the following definitions which can be referred to \cite{CA}.

\begin{Definition}[see \cite{CA}]Let $h$ be a measurable metric on $E$. Let $\mathcal{I}(h)$ be the analytic sheaf of germs of holomorphic functions on $M$ defined as follows:
\\
$\mathcal{I}(h)_x:=\{f_x\in\mathcal{O}_{X,x}:|f_xe_x|^2_h$ is integrable in some neighborhood of $x$, $\forall e_x\in \mathcal{O}(E)_x\}$.

Analogously, we define an analytic sheaf $\mathcal{E}(h)$ by setting:
\center{$\mathcal{E}(h)_x:=\{e_x\in \mathcal{O}(E)_x: |e_x|^2_h$ is integrable in some neighborhood of $x\}$.}
\end{Definition}

\subsection{Main result: minimal $L^2$ integrals and concavity property}\label{sec:Main result}
Let $M$ be a complex manifold. Let $X$ and $Z$ be closed subsets of $M$. We call that a triple $(M,X,Z)$ satisfies condition $(A)$, if the following two statements hold:

$\uppercase\expandafter{\romannumeral1}.$ $X$ is a closed subset of $M$ and $X$ is locally negligible with respect to $L^2$ holomorphic functions; i.e., for any local coordinated neighborhood $U\subset M$ and for any $L^2$ holomorphic function $f$ on $U\backslash X$, there exists an $L^2$ holomorphic function $\tilde{f}$ on $U$ such that $\tilde{f}|_{U\backslash X}=f$ with the same $L^2$ norm;

$\uppercase\expandafter{\romannumeral2}.$ $Z$ is an analytic subset of $M$ and $M\backslash (X\cup Z)$ is a weakly pseudoconvex K\"ahler manifold.

Let $M$ be an $n-$dimensional complex manifold.
Assume that $(M,X,Z)$  satisfies condition $(A)$. Let $K_M$ be the canonical line bundle on $M$.
Let $dV_M$ be a continuous volume form on $M$. Let $F\not\equiv 0$ be a holomorphic function on $M$. Let $\psi$ be a plurisubharmonic function on $M$.
Let $E$ be a holomorphic vector bundle on $M$ with rank $r$.
Let $\hat{h}$ be a smooth metric on $E$. Let $h$ be a \textit{measurable metric} on $E$. Denote $\tilde{h}:=he^{-\psi}$. Let $(M,E,\Sigma,M_j,\tilde{h},\tilde{h}_{j,s})$ be a singular metric on $E$. Assume that $\Theta_{\tilde{h}}(E)\ge^s_{Nak} 0$.

  Let $(V,z)$ be a local coordinate near a point $p$ of $M$ and $E|_V$ is trivial. Let $g\in H^0(V,\mathcal{O}(K_M\otimes E))$ and $g=\hat{g}\otimes e$ locally, where $\hat{g}$ is a holomorphic $(n,0)$ form on $V$ and $e$ is a local section of $E$ on $V$. We define $|g|^2_{h_0}|_V={\sqrt{-1}}^{n^{2}}g\wedge \bar{g}\langle e,e\rangle_{h_0}$, where $h_0$ is any (smooth or singular) metric on $E$. Note that $|g|^2_{h_0}|_V$ is invariant under the coordinate change and $|g|^2_{h_0}$ is a globally defined $(n,n)$ form on $M$.

Let $T\in [-\infty,+\infty)$.
Denote that
$$\Psi:=\min\{\psi-2\log|F|,-T\}.$$
For any $z \in M$ satisfying $F(z)=0$,
we set $\Psi(z)=-T$.
Note that for any $t\ge T$,
the holomorphic function $F$ has no zero points on the set $\{\Psi<-t\}$.
Hence $\Psi=\psi-2\log|F|=\psi+2\log|\frac{1}{F}|$ is a plurisubharmonic function on $\{\Psi<-t\}$.

\begin{Definition}
We call that a positive measurable function $c$ (so-called ``\textbf{gain}") on $(T,+\infty)$ is in class $\tilde{P}_{T,M,\Psi,h}$ if the following two statements hold:
\par
$(1)$ $c(t)e^{-t}$ is decreasing with respect to $t$;
\par
$(2)$ For any $t_0> T$, there exists a closed subset $E_0$ of $M$ such that $E_0\subset Z\cap \{\Psi(z)=-\infty\}$ and for any compact subset $K\subset M\backslash E_0$, $|e_x|_h^2c(-\psi)\ge C_K|e_x|^2_{\hat{h}}$ for any $ x\in K\cap\{\Psi<-t_0\}$, where $C_K>0$ is a constant and $e_x\in E_x$.
\end{Definition}

Let $z_0$ be a point in $M$. Denote that $\tilde{J}(E,\Psi)_{z_0}:=\{f\in H^0(\{\Psi<-t\}\cap V,\mathcal{O}(E)\}): t\in \mathbb{R}$ and $V$ is a neighborhood of $z_0\}$. We define an equivalence relation $\backsim$ on $\tilde{J}(E,\Psi)_{z_0}$ as follows: for any $f,g\in \tilde{J}(\Psi)_{z_0}$,
we call $f \backsim g$ if $f=g$ holds on $\{\Psi<-t\}\cap V$ for some $t\gg T$ and open neighborhood $V\ni z_0$.
Denote $\tilde{J}(E,\Psi)_{z_0}/\backsim$ by $J(E,\Psi)_{z_0}$, and denote the equivalence class including $f\in \tilde{J}(E,\Psi)_{z_0}$ by $f_{z_0}$.

If $z_0\in \cap_{t>T} \{\Psi<-t\}$, then $J(E,\Psi)_{z_0}=\mathcal{O}(E)_{z_0}$ (the stalk of the sheaf $\mathcal{O}(E)$ at $z_0$), and $f_{z_0}$ is the germ $(f,z_0)$ of holomorphic section $f$ of $E$. If $z_0\notin \cap_{t>T} \overline{\{\Psi<-t\}}$, then  $J(E, \Psi)_{z_0}$ is trivial.

 Let $f_{z_0},g_{z_0}\in J(E,\Psi)_{z_0}$ and $(q,z_0)\in \mathcal{O}_{M,z_0}$. We define $f_{z_0}+g_{z_0}:=(f+g)_{z_0}$ and $(q,z_0)\cdot f_{z_0}:=(qf)_{z_0}$.
Note that $(f+g)_{z_0}$ and $(qf)_{z_0}$ ($\in J(E,\Psi)_{z_0}$) are independent of the choices of the representatives of $f,g$ and $q$. Hence $J(E, \Psi)_{z_0}$ is an $\mathcal{O}_{M,z_0}$-module.

Let $dV_M$ be a continuous volume form on $M$. Recall that $h$ is a measurable metric on $E$. For $f_{z_0}\in J(E,\Psi)_{z_0}$ and $a\ge 0$, we call $f_{z_0}\in I\big(h,a\Psi\big)_{z_0}$ if there exist $t\gg T$ and a neighborhood $V$ of $z_0$,
such that $\int_{\{\Psi<-t\}\cap V}|f|^2_he^{-a\Psi}dV_M<+\infty$.
Note that $I\big(h,a\Psi\big)_{z_0}$ is an $\mathcal{O}_{M,z_0}$-submodule of $J(E,\Psi)_{z_0}$.
If $z_0\in \cap_{t>T} \{\Psi<-t\}$, then $I_{z_0}=\mathcal{O}(E)_{z_0}$, where $I_{z_0}:=I\big(\hat{h}_1,0\Psi \big)_{z_0}$ and $\hat{h}_1$ is a smooth metric on $E$.

Let $Z_0$ be a subset of $\cap_{t>T} \overline{\{\Psi<-t\}}$. Let $f$ be an $E$-valued holomorphic $(n,0)$ form on $\{\Psi<-t_0\}\cap V$, where $V\supset Z_0$ is an open subset of $M$ and $t_0\ge T$
is a real number.
Let $J_{z_0}$ be an $\mathcal{O}_{M,z_0}$-submodule of $J(E,\Psi)_{z_0}$ such that $I\big(h,\Psi\big)_{z_0}\subset J_{z_0}$,
where $z_0\in Z_0$.
Denote $J:=\cup_{z_0\in Z_0}J_{z_0}$.
Denote the \textbf{minimal $L^{2}$ integral} related to $J$
\begin{equation}
\label{def of g(t) for boundary pt}
\begin{split}
\inf\Bigg\{ \int_{ \{ \Psi<-t\}}|\tilde{f}|^2_hc(-\Psi): \tilde{f}\in
H^0(\{\Psi<-t\},\mathcal{O} (K_M\otimes E)  ) \\
\&\, (\tilde{f}-f)_{z_0}\in
\mathcal{O} (K_M)_{z_0} \otimes J_{z_0},\text{for any }  z_0\in Z_0 \Bigg\}
\end{split}
\end{equation}
by $G(t;c,\Psi,h,J,f)$, where $t\in[T,+\infty)$, $c$ is a nonnegative function on $(T,+\infty)$.
Without misunderstanding, we denote $G(t;c,\Psi,h,J,f)$ by $G(t)$ for simplicity. For various $c(t)$, we denote $G(t;c,\Psi,h,J,f)$ by $G(t;c)$ respectively for simplicity.

In this article, we obtain the following concavity property of $G(t)$.

\begin{Theorem}
\label{main theorem}
Let $c\in\tilde{P}_{T,M,\Psi,h}$. If there exists $t \in [T,+\infty)$ satisfying that $G(t)<+\infty$, then $G(h^{-1}(r))$ is concave with respect to  $r\in (\int_{T_1}^{T}c(t)e^{-t}dt,\int_{T_1}^{+\infty}c(t)e^{-t}dt)$, $\lim\limits_{t\to T+0}G(t)=G(T)$ and $\lim\limits_{t \to +\infty}G(t)=0$, where $h(t)=\int_{T_1}^{t}c(t_1)e^{-t_1}dt_1$ and $T_1 \in (T,+\infty)$.
\end{Theorem}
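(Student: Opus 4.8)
The plan is to follow the method of the earlier papers in this series (see Bao--Guan--Yuan \cite{BGY-boundary} and Guan--Mi--Yuan \cite{GMY-boundary2}), the only genuinely new feature being the passage from a line bundle carrying a plurisubharmonic weight to a holomorphic vector bundle $E$ with a singular hermitian metric $\tilde h$ satisfying $\Theta_{\tilde h}(E)\ge^s_{Nak}0$ in the approximate sense of Definition \ref{singular nak}. I would begin with the elementary properties of $G(t)$. First, $G$ is non-increasing: restricting a competitor on $\{\Psi<-t_1\}$ to $\{\Psi<-t_2\}$ with $t_1<t_2$ does not change the germ classes in $J(E,\Psi)_{z_0}$ and can only decrease the integral. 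Next, a normal families argument (Montel, turning local $L^2$ bounds into locally uniform bounds via $c\in\tilde P_{T,M,\Psi,h}$ and the comparison $|e_x|_h^2c(-\psi)\ge C_K|e_x|_{\hat h}^2$ away from the negligible set $E_0$) together with Fatou's lemma shows that $G$ is lower semicontinuous and that $\lim_{t\to+\infty}G(t)=0$: for a near-optimal competitor $F_*$ at a level $t_*$ with $G(t_*)<+\infty$, its restrictions dominate $G(t)$ for $t>t_*$ and $\int_{\{\Psi<-t\}}|F_*|_h^2c(-\Psi)\to0$ by dominated convergence since $\bigcap_t\{\Psi<-t\}=\{\Psi=-\infty\}$ has measure zero. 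The same argument, granted the one-step extension below, propagates $G(t)<+\infty$ to all $t\in(T,+\infty)$, and the identity $\lim_{t\to T+0}G(t)=G(T)$ will be recovered at the very end.

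The heart of the proof is a sharp twisted $L^2$-extension lemma: for $t_0\in[T,+\infty)$ and $B>0$, given a holomorphic $E$-valued $(n,0)$ form $F_1$ on $\{\Psi<-t_0-B\}$ with $(F_1-f)_{z_0}\in\mathcal O(K_M)_{z_0}\otimes J_{z_0}$ for all $z_0\in Z_0$ and $\int_{\{\Psi<-t_0-B\}}|F_1|_h^2c(-\Psi)<+\infty$, one produces a holomorphic $E$-valued $(n,0)$ form $\tilde F$ on $\{\Psi<-t_0\}$ with the same germ condition and
\[
\int_{\{\Psi<-t_0\}}|\tilde F|_h^2c(-\Psi)\ \le\ \mathcal C(t_0,B)\cdot\int_{\{\Psi<-t_0-B\}}|F_1|_h^2c(-\Psi),
\]
where $\mathcal C(t_0,B)$ is the \emph{sharp} constant dictated by $h(t)=\int_{T_1}^tc(s)e^{-s}\,ds$ (i.e.\ the one that makes the arithmetic of the last step close up). I would prove this by the Ohsawa--Takegoshi--Guan--Zhou / Donnelly--Fefferman technique: glue $F_1$ by a smooth approximation $b_\delta(\Psi)$ of the characteristic function of $\{\Psi<-t_0-B\}$ to get a smooth extension on $\{\Psi<-t_0\}$, solve $\bar\partial u=\bar\partial\big((1-b_\delta(\Psi))F_1\big)$ against the weight $e^{-\psi}$ twisted by auxiliary functions of $\Psi$ obeying a second-order ODE designed to give the sharp constant, set $\tilde F:=(1-b_\delta(\Psi))F_1-u$, and let $\delta\to0$. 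The germ condition for $\tilde F$ at each $z_0\in Z_0$ follows because the $L^2$ estimate forces $\int|u|_h^2e^{-\Psi}(\cdots)<+\infty$ near $z_0$, hence $u_{z_0}\in\mathcal O(K_M)_{z_0}\otimes I(h,\Psi)_{z_0}\subset\mathcal O(K_M)_{z_0}\otimes J_{z_0}$, using the hypothesis $I(h,\Psi)_{z_0}\subset J_{z_0}$.

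The decisive difficulty --- and the step I expect to be the main obstacle --- is that $\Theta_{\tilde h}(E)\ge^s_{Nak}0$ holds only in the limiting sense of Definition \ref{singular nak}. So the $\bar\partial$-estimate must first be run on each $M_j$ with the $C^2$ metrics $\tilde h_{j,s}$, yielding solutions $u_{j,s}$ on $\{\Psi<-t_0\}\cap M_j$ with the estimate spoilt by the error $\lambda_{j,s}\,\omega\otimes Id_E$; one then lets $s\to+\infty$ --- exploiting the monotone increase $|e_x|_{\tilde h_{j,s}}\le|e_x|_{\tilde h_{j,s+1}}$, the bounds $0\le\lambda_{j,s}\le\lambda_j$ and $\lambda_{j,s}\to0$ a.e., and a weak limit of the $u_{j,s}$ --- and then $j\to+\infty$ along $M_1\Subset M_2\Subset\cdots$, checking at every stage that the estimate constant does not degrade (merely obtaining \emph{some} concavity would not suffice; one needs the sharp $\mathcal C(t_0,B)$). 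Interwoven are two further technical points: since $\{\Psi<-t_0\}$ need not be complete Kähler, one runs Hörmander--Demailly $L^2$ theory on $\{\Psi<-t_0\}\setminus(X\cup Z)$ with an auxiliary complete Kähler metric --- available because $M\setminus(X\cup Z)$ is weakly pseudoconvex Kähler --- and then removes $X\cup Z$ using condition $(A)$ ($X$ is $L^2$-negligible, $Z$ is analytic); and since $F$ may vanish, $\Psi$ is plurisubharmonic only on the sets $\{\Psi<-t\}$ with $t\ge T$ (where $F$ has no zeros), so every $\Psi$-dependent construction --- cut-offs, twisting functions --- stays inside such a set, which is precisely why the lemma is stated on $\{\Psi<-t_0\}$ with $t_0\ge T$ and why the zero set of $F$ causes no harm.

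Finally I would assemble the theorem. Applying the key lemma with $F_1$ a near-minimizer of $G$ at the deeper level, combining the resulting family of inequalities with the monotonicity of $G$ and $\lim_{t\to+\infty}G(t)=0$, and passing to the variable $r=h(t)$, one finds that through each interior point the graph of $r\mapsto G(h^{-1}(r))$ admits a supporting affine majorant; hence $G(h^{-1}(r))$ is concave on $\big(\int_{T_1}^{T}c(t)e^{-t}dt,\int_{T_1}^{+\infty}c(t)e^{-t}dt\big)$ --- this is the same ``de-ODE'' bookkeeping as in \cite{BGY-boundary,GMY-boundary2}. Concavity yields continuity in the interior; and $\lim_{t\to T+0}G(t)=G(T)$ then follows from the normal families argument of the first step: extract a locally uniform limit of near-optimal competitors $F_t$ as $t\downarrow T$, note that its germ classes lie in $\mathcal O(K_M)_{z_0}\otimes J_{z_0}$ by closedness of this module under locally uniform convergence, bound its integral by $\lim_{t\to T+0}G(t)$ via Fatou, and use $G(T)\ge\lim_{t\to T+0}G(t)$ from monotonicity to get equality. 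A last step reduces an arbitrary gain $c\in\tilde P_{T,M,\Psi,h}$ to a convenient dense subclass of gains by monotone approximation.
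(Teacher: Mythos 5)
Your overall roadmap matches the paper: elementary monotonicity and $\lim_{t\to+\infty}G(t)=0$; a sharp Ohsawa--Takegoshi/Donnelly--Fefferman type extension lemma proved first with the $C^2$ approximants $\tilde h_{j,s}$ on $M_j$ and then passing $s\to+\infty$, $j\to+\infty$ (exploiting $|e_x|_{\tilde h_{j,s}}\le|e_x|_{\tilde h_{j,s+1}}$, $0\le\lambda_{j,s}\le\lambda_j$, $\lambda_{j,s}\to 0$ a.e.); working on $\{\Psi<-t\}\setminus(X\cup Z)$ with a complete K\"ahler metric and removing $X\cup Z$ by condition $(A)$; keeping all $\Psi$-dependent constructions inside $\{\Psi<-t\}$ to avoid the zeros of $F$; and finally a normal-families/Fatou argument at $t=T$. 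These are precisely the steps of Sections 2--4 of the paper.

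However, your key extension lemma, as stated, has the wrong form and creates a genuine gap. You bound
\[
\int_{\{\Psi<-t_0\}}|\tilde F|_h^2\,c(-\Psi)\ \le\ \mathcal C(t_0,B)\int_{\{\Psi<-t_0-B\}}|F_1|_h^2\,c(-\Psi),
\]
a multiplicative estimate between full integrals at two levels. Even with the ``sharp'' $\mathcal C$, when $F_1$ is a near-minimizer this gives only $G(t_0)\le\mathcal C(t_0,B)\,G(t_0+B)$, i.e.\ monotonicity of $G(t)/\int_t^{+\infty}c(s)e^{-s}\,ds$ --- which is strictly weaker than concavity of $G(h^{-1}(r))$ (one can easily write down increasing functions $H$ with $H(0)=0$ and $H(r)/r$ decreasing that are not concave). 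The correct form --- Lemma \ref{L2 method in JM concavity} in the paper --- bounds the \emph{difference} $\tilde F-(1-b_{t_0,B}(\Psi))f$, and the right-hand side is a \emph{shell} integral
\[
\Bigl(\int_{t_1}^{t_0+B}c(s)e^{-s}ds\Bigr)\int_M\frac{1}{B}\mathbb{I}_{\{-t_0-B<\Psi<-t_0\}}|f|^2_h e^{-\Psi},
\]
not the integral over all of $\{\Psi<-t_0-B\}$. The decisive bookkeeping (Lemma \ref{derivatives of G}) is that when $f=F_{t_0}$ is the \emph{actual} minimizer for $G(t_0)$, the shell integral is (up to controllable factors involving $\inf c$ and $e^{t_0+B}$) precisely $\frac{G(t_0)-G(t_0+B)}{B}$, which is supplied by the Pythagoras/orthogonality identity of Lemma \ref{existence of F}. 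This produces the one-sided derivative comparison
\[
\frac{G(t_1)-G(t_0)}{\int_{t_1}^{t_0}c(t)e^{-t}dt}\ \le\ \liminf_{B\to 0+}\frac{G(t_0)-G(t_0+B)}{\int_{t_0}^{t_0+B}c(t)e^{-t}dt},
\]
which, combined with lower semicontinuity, is exactly what the concavity criterion (Lemma \ref{characterization of concave function}) requires. Without the cut-off $(1-b_{t_0,B}(\Psi))$, the shell structure of the bound, the exact minimizer, and the orthogonality relation, there is no route from your stated lemma to the supporting-line inequality you invoke at the end.

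One smaller point: you derive $\lim_{t\to T+0}G(t)=G(T)$ \emph{after} concavity, but in the paper's logic the right-continuity (lower semicontinuity, Lemma \ref{semicontinuous}) must be available \emph{before} applying the concavity criterion, since Lemma \ref{characterization of concave function} requires a lower semicontinuous function as input. It is obtained, as you partly describe, directly from the closedness of the modules $J_{z_0}$ under locally uniform convergence (Lemmas \ref{closedness of module}, \ref{global convergence property of module}), not as a corollary of concavity.
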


\begin{Remark}
	\label{infty2}Let $c\in\tilde{P}_{T,M,\Psi,h}$.	If  $\int_{T_1}^{+\infty}c(t)e^{-t}dt=+\infty$ and $f_{z_0}\notin
\mathcal{O} (K_M)_{z_0} \otimes J_{z_0}$ for some  $ z_0\in Z_0$, then $G(t)=+\infty$ for any $t\geq T$. Thus, when there exists $t \in [T,+\infty)$ satisfying that $G(t)\in(0,+\infty)$, we have $\int_{T_1}^{+\infty}c(t)e^{-t}dt<+\infty$ and $G(\hat{h}^{-1}(r))$ is concave with respect to  $r\in (0,\int_{T}^{+\infty}c(t)e^{-t}dt)$, where $\hat{h}(t)=\int_{t}^{+\infty}c(l)e^{-l}dl$.
\end{Remark}

For any $t\ge T$, denote
\begin{equation}\nonumber
\begin{split}
\mathcal{H}^2(t;c,f):=\Bigg\{\tilde{f}:\int_{ \{ \Psi<-t\}}|\tilde{f}|^2_hc(-\Psi)<+\infty,\  \tilde{f}\in
H^0(\{\Psi<-t\},\mathcal{O} (K_M\otimes E)  ) \\
\& (\tilde{f}-f)_{z_0}\in
\mathcal{O} (K_M)_{z_0} \otimes J_{z_0},\text{for any }  z_0\in Z_0  \Bigg\},
\end{split}
\end{equation}
where $f$ is an $E$-valued holomorphic $(n,0)$ form on $\{\Psi<-t_0\}\cap V$ for some $V\supset Z_0$ is an open subset of $M$ and some $t_0\ge T$ and $c(t)$ is a positive measurable function on $(T,+\infty)$.

As a corollary of Theorem \ref{main theorem}, we give a necessary condition for the concavity property degenerating to linearity.
\begin{Corollary}
\label{necessary condition for linear of G}
Let $c\in\tilde{P}_{T,M,\Psi,h}$.
Assume that $G(t)\in(0,+\infty)$ for some $t\ge T$, and $G(\hat{h}^{-1}(r))$ is linear with respect to $r\in[0,\int_T^{+\infty}c(s)e^{-s}ds)$, where $\hat{h}(t)=\int_{t}^{+\infty}c(l)e^{-l}dl$.

Then there exists a unique $E$-valued holomorphic $(n,0)$ form $\tilde{F}$ on $\{\Psi<-T\}$
such that $(\tilde{F}-f)_{z_0}\in\mathcal{O} (K_M)_{z_0} \otimes J_{z_0}$ holds for any  $z_0\in Z_0$,
and $G(t)=\int_{\{\Psi<-t\}}|\tilde{F}|^2_hc(-\Psi)$ holds for any $t\ge T$.

Furthermore
\begin{equation}
\begin{split}
  \int_{\{-t_1\le\Psi<-t_2\}}|\tilde{F}|^2_ha(-\Psi)=\frac{G(T_1;c)}{\int_{T_1}^{+\infty}c(t)e^{-t}dt}
  \int_{t_2}^{t_1}a(t)e^{-t}dt
  \label{other a also linear}
\end{split}
\end{equation}
holds for any nonnegative measurable function $a$ on $(T,+\infty)$, where $T\le t_2<t_1\le+\infty$ and $T_1 \in (T,+\infty)$.
\end{Corollary}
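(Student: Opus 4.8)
The plan is to derive Corollary \ref{necessary condition for linear of G} from Theorem \ref{main theorem} together with the constructions used in its proof. Since $G(t_0)\in(0,+\infty)$ for some $t_0\ge T$, Remark \ref{infty2} gives $\int_{T_1}^{+\infty}c(t)e^{-t}\,dt<+\infty$, so $\hat{h}(t)=\int_{t}^{+\infty}c(l)e^{-l}\,dl$ is a strictly decreasing continuous bijection of $[T,+\infty]$ onto $[0,\hat{h}(T)]$, and by Theorem \ref{main theorem} the function $r\mapsto G(\hat{h}^{-1}(r))$ is concave on $[0,\hat{h}(T))$ with $G(\hat{h}^{-1}(0))=\lim_{t\to+\infty}G(t)=0$ and $\lim_{r\to\hat{h}(T)^{-}}G(\hat{h}^{-1}(r))=\lim_{t\to T+0}G(t)=G(T)$. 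Under the linearity hypothesis together with $G(\hat{h}^{-1}(0))=0$, we get $G(\hat{h}^{-1}(r))=\alpha r$ on $[0,\hat{h}(T)]$ for some constant $\alpha$; since $G(t_0)>0$ we have $\alpha>0$, and evaluating at $r=\hat{h}(T_1)$ identifies $\alpha=G(T_1;c)/\int_{T_1}^{+\infty}c(t)e^{-t}\,dt$. Hence $G(t)=\alpha\int_{t}^{+\infty}c(l)e^{-l}\,dl$ for all $t\ge T$, and in particular $G(t)>0$ for every $t\ge T$.

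For each $t\ge T$ the class $\mathcal{H}^{2}(t;c,f)$ is nonempty (because $G(t)<+\infty$) and is a closed affine subspace of the Hilbert space of $E$-valued holomorphic $(n,0)$ forms on $\{\Psi<-t\}$ with finite $\int_{\{\Psi<-t\}}|\cdot|_{h}^{2}c(-\Psi)$: closedness uses condition $(A)$ to push weak $L^{2}$ limits across $X\cup Z$, together with the stability of the germ conditions $(\cdot-f)_{z_0}\in\mathcal{O}(K_M)_{z_0}\otimes J_{z_0}$ under local uniform convergence. Thus there is a unique $\tilde{F}_t\in\mathcal{H}^{2}(t;c,f)$ with $\int_{\{\Psi<-t\}}|\tilde{F}_t|_{h}^{2}c(-\Psi)=G(t)$. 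The crucial point is the compatibility $\tilde{F}_{t_1}|_{\{\Psi<-t_2\}}=\tilde{F}_{t_2}$ for $T\le t_1\le t_2$. To prove it I would re-examine the proof of Theorem \ref{main theorem}: from $\tilde{F}_{t_2}$ one builds, by solving a $\bar{\partial}$-equation against a cut-off function of $\Psi$ with an Ohsawa--Takegoshi-type estimate, a holomorphic $\tilde{G}$ on $\{\Psi<-t_1\}$ with the same germs as $f$ at all points of $Z_0$ (this is where $I(h,\Psi)_{z_0}\subset J_{z_0}$ is used) and with $\int_{\{\Psi<-t_1\}}|\tilde{G}|_{h}^{2}c(-\Psi)\le\frac{\hat{h}(t_1)}{\hat{h}(t_2)}G(t_2)$. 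By linearity the right-hand side is $\alpha\hat{h}(t_1)=G(t_1)$, so $\tilde{G}$ attains $G(t_1)$, whence $\tilde{G}=\tilde{F}_{t_1}$ by uniqueness; moreover, since every inequality in the estimate is then an equality, the $\bar{\partial}$-correction vanishes on $\{\Psi<-t_2\}$ (where the cut-off is constant), so $\tilde{F}_{t_1}=\tilde{G}=\tilde{F}_{t_2}$ there. This equality-case analysis, which forces one to quote the internal steps of the proof of Theorem \ref{main theorem} rather than only its statement, is the main obstacle.

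Granting the compatibility, put $\tilde{F}:=\tilde{F}_T$ on $\{\Psi<-T\}=\bigcup_{t>T}\{\Psi<-t\}$. Then $\tilde{F}|_{\{\Psi<-t\}}=\tilde{F}_t$ for all $t\ge T$, so $(\tilde{F}-f)_{z_0}\in\mathcal{O}(K_M)_{z_0}\otimes J_{z_0}$ for every $z_0\in Z_0$ and $G(t)=\int_{\{\Psi<-t\}}|\tilde{F}|_{h}^{2}c(-\Psi)$ for all $t\ge T$. For uniqueness, if $\tilde{F}'$ is another $E$-valued holomorphic $(n,0)$ form on $\{\Psi<-T\}$ with these two properties, then $\tilde{F}'|_{\{\Psi<-t\}}\in\mathcal{H}^{2}(t;c,f)$ attains $G(t)$, hence $\tilde{F}'|_{\{\Psi<-t\}}=\tilde{F}_t=\tilde{F}|_{\{\Psi<-t\}}$ for every $t\ge T$, and therefore $\tilde{F}'=\tilde{F}$.

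Finally, for (\ref{other a also linear}), the case $a=c$ follows by telescoping: for $T\le t_2<t_1\le+\infty$ one has $\int_{\{-t_1\le\Psi<-t_2\}}|\tilde{F}|_{h}^{2}c(-\Psi)=\int_{\{\Psi<-t_2\}}|\tilde{F}|_{h}^{2}c(-\Psi)-\int_{\{\Psi<-t_1\}}|\tilde{F}|_{h}^{2}c(-\Psi)=G(t_2)-G(t_1)=\alpha\int_{t_2}^{t_1}c(t)e^{-t}\,dt$ (with $G(+\infty)=0$ when $t_1=+\infty$). For general nonnegative measurable $a$ on $(T,+\infty)$, let $\lambda$ be the push-forward of the measure $|\tilde{F}|_{h}^{2}$ on $\{\Psi<-T\}$ under $z\mapsto-\Psi(z)$, so that $\int_{\{-t_1\le\Psi<-t_2\}}|\tilde{F}|_{h}^{2}a(-\Psi)=\int_{(t_2,t_1]}a\,d\lambda$; since $c(t)e^{-t}$ is decreasing, $c(-\Psi)$ is bounded below on each set $\{-t_1\le\Psi<-t_2\}$ with $t_1<+\infty$, so $\lambda$ is $\sigma$-finite. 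The case $a=c$ says that the $\sigma$-finite measures $c\,d\lambda$ and $\alpha c(t)e^{-t}\,dt$ agree on the $\pi$-system of intervals $(t_2,t_1]$, hence coincide on $(T,+\infty)$; dividing by the everywhere positive $c$ gives $d\lambda=\alpha e^{-t}\,dt$, and thus $\int_{(t_2,t_1]}a\,d\lambda=\alpha\int_{t_2}^{t_1}a(t)e^{-t}\,dt$, which is (\ref{other a also linear}).
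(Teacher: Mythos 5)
Your overall structure matches the paper's: both treat the problem as (i) extracting, from the equality case in the derivative estimate of Lemma \ref{derivatives of G}, the compatibility of the minimizers $F_{t}$ across different $t$, and (ii) upgrading the resulting identity $\int_{\{-t_1\le\Psi<-t_2\}}|\tilde F|^2_h c(-\Psi)=\alpha\int_{t_2}^{t_1}c(s)e^{-s}ds$ to arbitrary nonnegative measurable gains $a$.

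For part (i), you have the right idea but the mechanism is slightly mis-described. The paper does not argue that ``the $\bar\partial$-correction vanishes where the cut-off is constant''; rather, after forcing all inequalities in \eqref{derivative of G 5} to be equalities, one obtains on $\{\Psi<-t_0\}$ that $\int |\tilde F_{t_1}-F_{t_0}|^2_h c(-\Psi)=\int |\tilde F_{t_1}-F_{t_0}|^2_h e^{-\Psi-t_0}c(t_0)$. Because $c(t)e^{-t}$ is decreasing and $\int_T^{+\infty}c(t)e^{-t}dt<+\infty$, the nonnegative integrand $|\tilde F_{t_1}-F_{t_0}|^2_h\big(e^{-\Psi-t_0}c(t_0)-c(-\Psi)\big)$ has zero integral yet the bracket is bounded away from zero on $\{\Psi<-t_2\}$ for some $t_2>t_0$; this forces $\tilde F_{t_1}=F_{t_0}$ there, hence on all of $\{\Psi<-t_0\}$ by the identity theorem. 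Your conclusion is correct, but if you flesh out your sketch you should use this strict-decrease-of-$c(t)e^{-t}$ argument rather than the equality-in-the-$\bar\partial$-estimate heuristic.

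For part (ii), your push-forward/$\pi$-system argument is a genuinely different and cleaner route than the paper's. The paper proves \eqref{other a also linear} by first reducing to $a=\mathbb I_E$, then handling the null set, the at most countably many discontinuity points of $c$ via a shrinking family of open sets $U_k$, and finally passing through closed intervals, open sets, compact sets and general measurable sets. Your approach bypasses all of this: defining $\lambda$ as the push-forward of $|\tilde F|^2_h$ under $-\Psi$, the $c$-case of the identity says the two $\sigma$-finite measures $c\,d\lambda$ and $\alpha c(t)e^{-t}dt$ agree on the generating $\pi$-system of half-open intervals, hence coincide; since $c>0$ everywhere on $(T,+\infty)$, $d\lambda=\alpha e^{-t}dt$, and \eqref{other a also linear} follows for all nonnegative measurable $a$. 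This avoids any discussion of continuity of $c$ entirely, trading the paper's explicit regularization for standard uniqueness of measures, and is the cleaner argument.
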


\begin{Remark}
\label{rem:linear}
If $\mathcal{H}^2(t_0;\tilde{c},f)\subset\mathcal{H}^2(t_0;c,f)$ for some $t_0\ge T$, we have
\begin{equation}
\begin{split}
  G(t_0;\tilde{c})=\int_{\{\Psi<-t_0\}}|\tilde{F}|^2_h\tilde{c}(-\Psi)=
  \frac{G(T_1;c)}{\int_{T_1}^{+\infty}c(t)e^{-t}dt}
  \int_{t_0}^{+\infty}\tilde{c}(s)e^{-s}ds,
  \label{other c also linear}
\end{split}
\end{equation}
 where $\tilde{c}$ is a nonnegative measurable function on $(T,+\infty)$ and $T_1 \in (T,+\infty)$. Thus, if $\mathcal{H}^2(t;\tilde{c})\subset\mathcal{H}^2(t;c)$ for any $t>T$, then $G(\hat{h}^{-1}(r);\tilde c)$ is linear with respect to $r\in[0,\int_T^{+\infty}c(s)e^{-s}ds)$.
\end{Remark}

\subsection{Applications}
In this section, we give some applications of Theorem \ref{main theorem}.

\subsubsection{Strong openness property of $I(h,a\Psi)_{z_0}$}\label{sec:soc}

In this section, we give an estimate of $|f|^2_{h}$ on sublevel sets of $\Psi$, which implies the strong openness property of $I(h,\Psi)_{z_0}$.

Let $M$ be an $n-$dimensional weakly pseudoconvex K\"ahler manifold, and let $dV_M$ be a continuous volume form on $M$. Let $K_M$ be the canonical line bundle on $M$.
 Let $F\not\equiv 0$ be a holomorphic function on $M$. Let $\psi$ be a plurisubharmonic function on $M$.
Let $E$ be a holomorphic vector bundle on $M$ with rank $r$. We call a measurable metric $\hat h$ on $E$ has a \textit{positive locally lower bound} if for any compact subset $K$ of $M$, there exists a constant $C_K>0$ such that $\hat h\ge C_K  h_1$ on $K$, where $ h_1$ is a smooth metric on $E$.
Let $h$ be a measurable metric on $E$  satisfying that $h$ has a positive locally  lower bound.

Denote that
$$\Psi:=\min\{\psi-2\log|F|,0\}.$$ Let $z_{0}\in M$. Recall that $\hat f_{z_0}\in I(h,a\Psi)_{z_0}$ if and only if there exist $t\gg 0$ and a neighborhood $V$ of $z_0$ such that $\int_{\{\Psi<-t\}\cap V}|\hat f|^2_he^{-a\Psi}dV_M<+\infty$, where $a\ge0$. Denote that
$$I_+(h,a\Psi)_{}z_0:=\cup_{s>a}I(h,s\Psi)_{z_0}.$$
Let $f$ be an $E$-valued holomorphic $(n,0)$ form on $\{\Psi<-t_0\}$ such that $f_{z_0}\in \mathcal{O}(K_M)_{z_0}\otimes I(h,0\Psi))_{z_0}$. Denote that
$$a_{z_0}^f(\Psi;h):=\sup\{a\ge0:f_{z_0}\in (\mathcal{O}(K_M)\otimes I(h,2a\Psi))_{z_0}\}.$$
Especially, $a_{z_0}^f(\Psi;h)$ is the jumping number $c_{z_0}^f(\psi)$ (see \cite{JM13}), when $F\equiv1$, $\psi(z_0)=-\infty$,   $E$ is the trivial line bundle and $h\equiv1$.

\begin{Theorem}\label{p:DK}
	Assume that $a_{z_0}^f(\Psi;h)<+\infty$ and $\Theta_{\tilde h}(E)\ge^s_{Nak} 0$, where $\tilde h:=he^{-2a_{z_0}^f(\\\Psi;h)\psi}$. Then we have $a_{z_0}^f(\Psi;h)>0$ and
	\begin{displaymath}
		\frac{1}{r^2}\int_{\{a_{z_0}^f(\Psi;h)\Psi<\log r\}}|f|^2_h\ge G(0;c\equiv1,\Psi,h,I_+(h,2a_{z_0}^f(\Psi;h)\Psi)_{z_0},f)>0
	\end{displaymath}
	holds for any $r\in(0,e^{-a_{z_0}^f(\Psi;h)t_0}]$, where the definition of $G(0;c\equiv1,\Psi,h,I_+(h,2a_{z_0}^f(\Psi;h)\Psi)_{z_0},f)$ can be found in Section \ref{sec:Main result}.
\end{Theorem}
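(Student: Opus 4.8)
The plan is to obtain Theorem~\ref{p:DK} from the concavity property of Theorem~\ref{main theorem} (in the form of Remark~\ref{infty2}), applied to the minimal $L^2$ integral built from the \emph{rescaled} weight $2a_0\Psi$, and then to compare that integral with $\int_{\{\Psi<-s\}}|f|^2_h$ simply by using $f$ itself as a test section. Write $a_0:=a_{z_0}^f(\Psi;h)$, which is finite by hypothesis. First I would invoke the strong openness of the modules $I(h,\cdot\,\Psi)_{z_0}$ (a consequence of Theorem~\ref{main theorem}; under $\Theta_{\tilde h}(E)=\Theta_{he^{-2a_0\psi}}(E)\ge^s_{Nak}0$ together with $f_{z_0}\in\mathcal{O}(K_M)_{z_0}\otimes I(h,0\Psi)_{z_0}$ it also follows from the Ohsawa--Takegoshi-type $L^2$ extension theorem for Nakano semipositive singular hermitian metrics). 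This gives $a_0>0$ — since $f_{z_0}\in\mathcal{O}(K_M)_{z_0}\otimes I(h,0\Psi)_{z_0}=\mathcal{O}(K_M)_{z_0}\otimes\cup_{s>0}I(h,s\Psi)_{z_0}$ — and the identity $I_+(h,2a_0\Psi)_{z_0}=I(h,2a_0\Psi)_{z_0}$, which is what makes the minimal $L^2$ integral below a legitimate object of the framework of Section~\ref{sec:Main result}.

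Put $\Phi:=2a_0\Psi=\min\{2a_0\psi-2\log|F|^{2a_0},0\}$; on each $\{\Phi<-t\}$, $t>0$, one has $F\neq0$ and $\log|F|^{2a_0}$ is pluriharmonic, so $\Phi$ is plurisubharmonic there and is admissible in the framework with $\tilde h=he^{-2a_0\psi}$ Nakano semipositive by hypothesis, while $c\equiv1\in\tilde P_{0,M,\Phi,h}$ because $h$ has a positive locally lower bound. Consider $G(t):=G(t;c\equiv1,\Phi,h,I_+(h,2a_0\Psi)_{z_0},f)$ for $t\ge0$; by the previous paragraph $I_+(h,2a_0\Psi)_{z_0}=I(h,2a_0\Psi)_{z_0}=I(h,1\cdot\Phi)_{z_0}\supset I(h,\Phi)_{z_0}$, so the setup is legitimate. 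If $\int_{\{\Psi<-s\}}|f|^2_h=+\infty$ for every $s\ge t_0$ the asserted inequality is trivial, so fix $s_1\ge t_0$ with $\int_{\{\Psi<-s_1\}}|f|^2_h<+\infty$: then $f|_{\{\Phi<-2a_0s_1\}}$ is admissible for $G(2a_0s_1)$ (it satisfies $(f-f)_{z_0}=0$), hence $G(2a_0s_1)\le\int_{\{\Psi<-s_1\}}|f|^2_h<+\infty$, and the Montel/Fatou argument used for $G(0)>0$ below, applied at level $2a_0s_1$ (a nonzero $L^2$ limit is forced by $f_{z_0}\notin\mathcal{O}(K_M)_{z_0}\otimes I(h,2a_0\Psi)_{z_0}$), shows $G(2a_0s_1)>0$ too; thus $G$ takes a value in $(0,+\infty)$.

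Since in addition $f_{z_0}\notin\mathcal{O}(K_M)_{z_0}\otimes I(h,2a_0\Psi)_{z_0}$ (by definition of $a_0$ as a supremum, $f_{z_0}\notin\mathcal{O}(K_M)_{z_0}\otimes I(h,s\Psi)_{z_0}$ for all $s>2a_0$, hence also for $s=2a_0$ by strong openness), Theorem~\ref{main theorem} and Remark~\ref{infty2} apply with $\int_0^{+\infty}e^{-t}\,dt=1<+\infty$: writing $\hat h(t)=\int_t^{+\infty}e^{-l}\,dl=e^{-t}$, the function $G(\hat h^{-1}(r))=G(-\log r)$ is concave on $r\in(0,1)$, with $\lim_{r\to1-}G(-\log r)=G(0)$ and $\lim_{r\to0+}G(-\log r)=0$. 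A concave function on $[0,1)$ vanishing at $0$ has non-increasing difference quotient $r\mapsto G(-\log r)/r$, whose limit as $r\to1-$ is $G(0)$; hence $G(-\log r)\ge rG(0)$ on $(0,1)$ (in particular $G(0)<+\infty$), i.e. $G(t)\ge e^{-t}G(0)$ for all $t\ge0$. Now for $s\ge t_0$ with $\int_{\{\Psi<-s\}}|f|^2_h<+\infty$, set $t=2a_0s$: since $\{\Phi<-t\}=\{\Psi<-s\}$ and $f$ is holomorphic there, $f$ is admissible for $G(t)$, so
\[
\int_{\{\Psi<-s\}}|f|^2_h=\int_{\{\Phi<-t\}}|f|^2_h\ge G(t)\ge e^{-t}G(0)=e^{-2a_0s}\,G(0)
\]
(for the remaining $s$ this is trivial). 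Because $c\equiv1$, the value $G(0)=G(0;c\equiv1,\Phi,h,I_+(h,2a_0\Psi)_{z_0},f)$ is unchanged if $\Phi$ is replaced by $\Psi$ in the definition (same domain $\{\Phi<0\}=\{\Psi<0\}$, same weight $c(-\Phi)\equiv1$, same module), so $G(0)=G(0;c\equiv1,\Psi,h,I_+(h,2a_0\Psi)_{z_0},f)$; writing $r=e^{-a_0s}\in(0,e^{-a_0t_0}]$ so that $\{\Psi<-s\}=\{a_0\Psi<\log r\}$ and $e^{-2a_0s}=r^2$, the display is exactly $\frac1{r^2}\int_{\{a_0\Psi<\log r\}}|f|^2_h\ge G(0;c\equiv1,\Psi,h,I_+(h,2a_0\Psi)_{z_0},f)$.

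It remains to prove $G(0)>0$. If $G(0)=0$ then, using $G(0)=\lim_{t\to0+}G(t)$, choose admissible $\tilde f_j\in H^0(\{\Psi<-t_j\},\mathcal{O}(K_M\otimes E))$ with $t_j\downarrow0$ and $\int_{\{\Psi<-t_j\}}|\tilde f_j|^2_h\to0$; by Montel (the domains increase to $\{\Psi<0\}$) a subsequence converges locally uniformly to some $\tilde f_\infty$ with $\int_{\{\Psi<0\}}|\tilde f_\infty|^2_h=0$ by Fatou, so $\tilde f_\infty\equiv0$ since $h$ has a positive locally lower bound; as the constraint $(\tilde f_j-f)_{z_0}\in\mathcal{O}(K_M)_{z_0}\otimes I(h,2a_0\Psi)_{z_0}$ passes to the limit (the module is coherent by strong openness), $f_{z_0}=-(\tilde f_\infty-f)_{z_0}\in\mathcal{O}(K_M)_{z_0}\otimes I(h,2a_0\Psi)_{z_0}$, contradicting the definition of $a_0$. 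The real difficulty thus lies upstream — in Theorem~\ref{main theorem} and in the strong openness of the modules, on which both $a_0>0$ and the legitimacy of the framework setup depend — while the work inside this proof is the bookkeeping of the rescaling $\Psi\mapsto2a_0\Psi$ (so that the prefactor comes out as $e^{-2a_0t}$ and not $e^{-t}$) and the closedness/limiting arguments giving $a_0>0$ and $G(0)>0$.
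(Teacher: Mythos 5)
Your plan ultimately rests on invoking the strong openness identity $I_+(h,2a_0\Psi)_{z_0}=I(h,2a_0\Psi)_{z_0}$ (and, via it, $a_0>0$) at the very start. That is circular here: in the paper, the strong openness of the modules $I(h,\cdot\,\Psi)_{z_0}$ is Corollary \ref{thm:soc}, whose proof explicitly uses Theorem \ref{p:DK} — the very statement you are proving. The alternative source you gesture at (an Ohsawa--Takegoshi-type extension theorem for singular Nakano-semipositive metrics) is not available in the paper, and in any case the whole point of this section is to establish strong openness from the concavity machinery, not to assume it.

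The circularity is not cosmetic; it breaks the setup. With your rescaled weight $\Phi:=2a_0\Psi$ and the module $J_{z_0}:=I_+(h,2a_0\Psi)_{z_0}$, the framework of Section \ref{sec:Main result} requires $I(h,\Phi)_{z_0}\subset J_{z_0}$, i.e.\ $I(h,2a_0\Psi)_{z_0}\subset I_+(h,2a_0\Psi)_{z_0}$. But by definition $I_+(h,2a_0\Psi)_{z_0}=\cup_{s>2a_0}I(h,s\Psi)_{z_0}\subset I(h,2a_0\Psi)_{z_0}$, so the required inclusion holds \emph{only if} $I_+=I$, which is precisely strong openness. The paper sidesteps this by working, for each $p>2a_0$, with the weight $p\Psi$ (written as $\min\{p\psi+(2\lceil p\rceil-2p)\log|F|-2\log|F^{\lceil p\rceil}|,\,0\}$ so that the ``$F'$'' is genuinely holomorphic — another point your proposal glosses over) and the module $I(h,p\Psi)_{z_0}$; Lemma \ref{l:m5}, a Noetherian argument that is \emph{not} strong openness, supplies some $p_0>2a_0$ with $I(h,p_0\Psi)_{z_0}=I_+(h,2a_0\Psi)_{z_0}$, so that $G_p(0)\ge G(0;c\equiv1,\Psi,h,I_+(h,2a_0\Psi)_{z_0},f)$ for all $p>2a_0$, and then it recovers both $a_0>0$ (by dominated convergence as $p\to0+$, using that $\{\Psi=-\infty\}$ has measure zero) and the displayed inequality (by dominated convergence as $p\to2a_0+$ followed by the exhaustion $\{a_0\Psi<\log r\}=\cup_{r_2<r}\{a_0\Psi\le\log r_2\}$). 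Your bookkeeping of the rescaling, the concavity inequality $G(t)\ge e^{-t}G(0)$, and the use of $f$ itself as a test section are all fine and match the paper; the gap is that you take the limit $p\to2a_0$ implicitly, \emph{inside} the module, by assuming the conclusion.
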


Theorem \ref{p:DK} implies the following strong openness property of $I(h,a\Psi)_{z_0}$.

\begin{Corollary}
	\label{thm:soc}
	$I(h,a\Psi)_{z_0}=I_+(h,a\Psi)_{z_0}$ holds for any $a\ge0$ satisfying $\Theta_{he^{-2a\psi}}\ge_{Nak}^s0$.
\end{Corollary}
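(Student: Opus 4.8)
The plan is to derive Corollary \ref{thm:soc} from Theorem \ref{p:DK} by the standard dichotomy argument on the jumping number $a_{z_0}^f(\Psi;h)$. Fix $a\ge 0$ with $\Theta_{he^{-2a\psi}}\ge^s_{Nak}0$. The inclusion $I_+(h,a\Psi)_{z_0}\subset I(h,a\Psi)_{z_0}$ is essentially a definitional matter: if $\hat f_{z_0}\in I(h,s\Psi)_{z_0}$ for some $s>a$, then near $z_0$ we have $\int|\hat f|^2_h e^{-s\Psi}dV_M<+\infty$; since $\Psi\le 0$ we get $e^{-a\Psi}\le e^{-s\Psi}$ on $\{\Psi<-t\}$, so $\int|\hat f|^2_h e^{-a\Psi}dV_M<+\infty$ as well, giving $\hat f_{z_0}\in I(h,a\Psi)_{z_0}$. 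So the content is the reverse inclusion $I(h,a\Psi)_{z_0}\subset I_+(h,a\Psi)_{z_0}$.

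For the reverse inclusion I would argue by contradiction: suppose $\hat f_{z_0}\in I(h,a\Psi)_{z_0}\setminus I_+(h,a\Psi)_{z_0}$. Then $g:=\hat f\otimes(\text{local holomorphic }(n,0)\text{ frame})$, or more precisely $f:=\hat f\cdot dz$ viewed as an $E$-valued $(n,0)$-form on a small Stein neighborhood $V$ of $z_0$ (shrunk so that the relevant integral converges), satisfies $f_{z_0}\in\mathcal O(K_M)_{z_0}\otimes I(h,0\Psi)_{z_0}$, while $f_{z_0}\notin \mathcal O(K_M)_{z_0}\otimes I(h,s\Psi)_{z_0}$ for every $s>a$. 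Translating into the language of the invariant $a_{z_0}^f$: since $\int_{\{\Psi<-t\}\cap V}|f|^2_h e^{-a\Psi}dV_M<+\infty$ we have, setting $b=a/2$, that $f_{z_0}\in(\mathcal O(K_M)\otimes I(h,2b\Psi))_{z_0}$, so $a_{z_0}^f(\Psi;h)\ge b=a/2$; and since $f_{z_0}\notin(\mathcal O(K_M)\otimes I(h,s\Psi))_{z_0}$ for $s>a$, we get $a_{z_0}^f(\Psi;h)\le a/2$. Hence $a_{z_0}^f(\Psi;h)=a/2<+\infty$, and $\tilde h=he^{-2a_{z_0}^f(\Psi;h)\psi}=he^{-a\psi}$, whose curvature positivity $\Theta_{\tilde h}(E)\ge^s_{Nak}0$ is exactly the hypothesis of the Corollary. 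Thus Theorem \ref{p:DK} applies and yields
\[
\frac{1}{r^2}\int_{\{a_{z_0}^f(\Psi;h)\Psi<\log r\}}|f|^2_h\ge G\big(0;c\equiv1,\Psi,h,I_+(h,2a_{z_0}^f(\Psi;h)\Psi)_{z_0},f\big)>0
\]
for all small $r>0$. The strict positivity of the right-hand side $G(0;\cdots)>0$ is precisely the statement that $f_{z_0}\notin \mathcal O(K_M)_{z_0}\otimes I_+(h,a\Psi)_{z_0}$ forces a nonzero minimal integral; conversely, I claim $G(0;c\equiv 1,\Psi,h,I_+(h,a\Psi)_{z_0},f)=0$ whenever $f_{z_0}\in \mathcal O(K_M)_{z_0}\otimes I_+(h,a\Psi)_{z_0}$, because then $f$ itself (or an admissible competitor obtained from it) lies in the defining family with arbitrarily small integral on $\{\Psi<-t\}$ as $t\to+\infty$, so the infimum over $t$, hence $G(0)$, vanishes. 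This contradiction completes the argument.

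The main obstacle, and the step I would be most careful about, is the clean bookkeeping translating between the three equivalent formulations — membership in $I(h,a\Psi)_{z_0}$, the value of $a_{z_0}^f(\Psi;h)$, and the non-vanishing of the relevant $G(0;\cdots)$ — and in particular verifying that $G(0;c\equiv1,\Psi,h,I_+(h,2a_{z_0}^f(\Psi;h)\Psi)_{z_0},f)=0$ exactly when $f_{z_0}\in\mathcal O(K_M)_{z_0}\otimes I_+(h,a\Psi)_{z_0}$. One subtlety is that $a_{z_0}^f$ is defined as a supremum, so a priori one only knows $f_{z_0}\in(\mathcal O(K_M)\otimes I(h,2a'\Psi))_{z_0}$ for $a'<a_{z_0}^f$; I would need the sup to be attained (or to run a limiting argument over $a'\uparrow a_{z_0}^f$) to land exactly on the metric $he^{-2a_{z_0}^f\psi}$ in the hypothesis, and here the positive-locally-lower-bound assumption on $h$ together with the decreasing-in-$t$ monotonicity built into $G$ should make the limit harmless. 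A second routine-but-necessary point is checking that the chosen neighborhood $V$ and the form $f=\hat f\,dz$ genuinely satisfy the hypotheses of Theorem \ref{p:DK} (that $f$ is an $E$-valued holomorphic $(n,0)$-form on $\{\Psi<-t_0\}$ with $f_{z_0}\in\mathcal O(K_M)_{z_0}\otimes I(h,0\Psi)_{z_0}$), which follows once $V$ is shrunk so the $a=0$ integral converges. Modulo this bookkeeping, the Corollary is an immediate consequence of Theorem \ref{p:DK}.
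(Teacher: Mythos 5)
Your proposal correctly identifies that the interesting direction is $I(h,a\Psi)_{z_0}\subset I_+(h,a\Psi)_{z_0}$, and correctly sets up the contradiction hypothesis (take $\hat f_{z_0}\in I(h,a\Psi)_{z_0}\setminus I_+(h,a\Psi)_{z_0}$, so $a_{z_0}^f(\Psi;h)=a/2$ and $\tilde h=he^{-a\psi}$, making Theorem \ref{p:DK} applicable). Up to there you match the paper. But the argument never actually reaches a contradiction, and the step you do supply in its place is vacuous.

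You write that Theorem \ref{p:DK} yields the lower bound
\[
\frac{1}{r^2}\int_{\{a_{z_0}^f(\Psi;h)\Psi<\log r\}}|f|^2_h\;\ge\;G\bigl(0;c\equiv 1,\Psi,h,I_+(h,2a_{z_0}^f(\Psi;h)\Psi)_{z_0},f\bigr)>0,
\]
and then try to close the argument by the dichotomy ``$G(0;\cdots)>0$ iff $f_{z_0}\notin\mathcal O(K_M)_{z_0}\otimes I_+(h,a\Psi)_{z_0}$.'' But that dichotomy simply restates the contradiction hypothesis you already assumed; it does not conflict with anything. You assumed $f_{z_0}\notin I_+$, and you derived $G(0;\cdots)>0$, which is consistent with that assumption. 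At no point have you used the other half of the hypothesis, namely $f_{z_0}\in I(h,a\Psi)_{z_0}$, to produce a tension. The claim ``This contradiction completes the argument'' is therefore unsupported.

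The missing ingredient is the Fubini step, which is where the paper actually extracts a contradiction. The point of Theorem \ref{p:DK} is not merely that $G(0;\cdots)>0$, but that the inequality holds \emph{uniformly for all small $r$}; it is a quantitative $\sim r^2$ lower bound on the mass of $|f|^2_h$ on the sublevel sets $\{a\Psi<2\log r\}$. Feeding this into Fubini,
\[
\int_{\{a\Psi<-t\}\cap U}|f|^2_h e^{-a\Psi}
=\int_0^{+\infty}\Bigl(\int_{\{l<e^{-a\Psi}\}\cap\{a\Psi<-t\}\cap U}|f|^2_h\Bigr)\,dl
\;\ge\;C_U\int_{e^t}^{+\infty}\frac{dl}{l}=+\infty,
\]
which contradicts $f_{z_0}\in I(h,a\Psi)_{z_0}$. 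Without this step the Corollary is not an ``immediate consequence'' of Theorem \ref{p:DK}; the $1/r^2$ scaling in the theorem is there precisely to make this divergence happen, and that is the part your proposal leaves out.
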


When $E$ is the trivial line bundle and $h=e^{-\varphi}$, where $\varphi$ is a plurisubharmonic function on $M$, Theorem \ref{p:DK} and Corollary \ref{thm:soc} can be referred to \cite{GMY-boundary2}.

\begin{Remark}
	\label{r:soc-inner}Let $F\equiv1$ and $\psi(z_0)=-\infty$. Note that $z_0\in\cap_{t\ge T}\{\Psi<-t\}$ and $I(h,a\Psi)_{z_0}=\mathcal E(he^{-a\psi})_{z_0}$, then  Corollary \ref{thm:soc} is a vector bundle version of the strong openness property of multiplier ideal sheaves \cite{GZSOC}.
\end{Remark}

\subsubsection{Effectiveness of the strong openness property of $I(h,\Psi)_{z_0}$}
In this section, we give an effectiveness result of the strong openness property of $I(h,\Psi)_{z_0}$ (Corollary \ref{thm:soc}).
We follow the notations and assumptions in Section \ref{sec:soc}. Let $f$ be an $E$-valued holomorphic $(n,0)$ form on $\{\Psi<0\}$, and denote that
\begin{displaymath}
	\begin{split}
		\frac{1}{K_{\Psi,f,h,a}(z_0)}:=\inf\Bigg\{\int_{\{\Psi<0\}}|\tilde f|_h^2&e^{-(1-a)\Psi}:\tilde f\in H^0(\{\Psi<0\},\mathcal{O}(K_M\otimes E))\\
		& \& \,(\tilde f-f)_{z_0}\in\mathcal{O}(K_M)_{z_0}\otimes I_+(h,2a_{z_0}^{f}(\Psi;h)\Psi)_{z_0}\Bigg\},
	\end{split}
\end{displaymath}
where $a\in(0,+\infty)$.

We present the following effectiveness result of the strong openness property of $I(h,\Psi)_{z_0}$.
\begin{Theorem}
	\label{thm:effe}Assume that $\Theta_{\tilde h}(E)\ge_{Nak}^s0$, where $\tilde h:=he^{-2a_{z_0}^f(\Psi,h)\psi}$.
	Let $C_1$ and $C_2$ be two positive constants. If there exists $a>0$, such that
	
	$(1)$ $\int_{\{\Psi<0\}}|f|_h^2e^{-\Psi}\leq C_1$;
	
	$(2)$ $\frac{1}{K_{\Psi,f,h,a}(z_0)}\geq C_2$.
	
	Then for any $q>1$ satisfying $$\theta_a(q)>\frac{C_1}{C_2},$$ we have
$f_{z_0}\in \mathcal{O}(K_M)_{z_0}\otimes I(h,q\Psi)_{z_0}$, where $\theta_a(q)=\frac{q+a-1}{q-1}$.	\end{Theorem}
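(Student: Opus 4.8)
The plan is to derive the effectiveness result by applying the concavity property (Theorem \ref{main theorem}) together with the linearity-degeneration analysis (Corollary \ref{necessary condition for linear of G} and Remark \ref{rem:linear}), following the by-now-standard route for effectiveness of strong openness. First I would set $a_0:=a_{z_0}^f(\Psi;h)$ and work with the gain $c_a(t):=e^{at}$ on $(0,+\infty)$, which lies in $\tilde P_{0,M,\Psi,h}$ once the hypothesis $\Theta_{\tilde h}(E)\ge^s_{Nak}0$ is in force (here $\tilde h=he^{-2a_0\psi}$; note this controls $|f|_h^2c_a(-\Psi)=|f|_h^2e^{-a\Psi}$ up to the smooth metric, giving condition (2) of the gain class). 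With $J:=I_+(h,2a_0\Psi)_{z_0}$ as the module at the boundary point $z_0$, set $G(t):=G(t;c_a,\Psi,h,J,f)$. Condition (1), $\int_{\{\Psi<0\}}|f|_h^2e^{-\Psi}\le C_1$, shows $f$ itself is a competitor at $t=0$ for the gain $\hat c(t):=e^{t}$, hence is a competitor for $G$ as well after comparing gains on sublevel sets; in particular $G(0)<+\infty$, so Theorem \ref{main theorem} applies and $G(\hat h^{-1}(r))$ is concave in $r$, where $\hat h(t)=\int_t^{+\infty}c_a(l)e^{-l}\,dl=\int_t^{+\infty}e^{(a-1)l}\,dl=\frac{e^{(a-1)t}}{1-a}$ when $a<1$ (and I would treat $a\ge 1$ separately or reduce to $a<1$, since large $q$ only needs small $a$).

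The heart of the argument is a dichotomy. Suppose toward a contradiction that $f_{z_0}\notin \mathcal{O}(K_M)_{z_0}\otimes I(h,q\Psi)_{z_0}$ for some $q>1$ with $\theta_a(q)>C_1/C_2$. The definition of $a_0$ and the assumption $a_0\le (q-1)/2 <+\infty$ (which follows because membership in $I(h,q\Psi)_{z_0}$ fails) force $f_{z_0}\notin \mathcal{O}(K_M)_{z_0}\otimes J$, i.e. $f$ is genuinely not in the module. Then by the boundary behavior in Theorem \ref{main theorem} (or Remark \ref{infty2}) the concave function $G(\hat h^{-1}(r))$ does not vanish at the left endpoint, and concavity of a nonnegative function on an interval forces $G(\hat h^{-1}(r))\le \frac{G(0)}{\hat h(0)}\cdot(\hat h(0)-\hat h(t))$ read off as a linear upper bound; more precisely, concavity plus $G(\hat h^{-1}(r))\to 0$ as $r\to 0$ and the value at $r$ corresponding to $t=0$ gives
\begin{equation}\nonumber
G(0)\ \le\ \frac{1}{K_{\Psi,f,h,a}(z_0)}\cdot\frac{\hat h(0)}{\hat h(0)-\hat h(t_0')}
\end{equation}
type comparisons; the clean statement is that the slope $G(0)/\hat h(0)$ of the chord is bounded by the derivative information encoded in $K_{\Psi,f,h,a}(z_0)$. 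Quantitatively, one compares the two gains $c_a$ and $\hat c$ on $\{\Psi<0\}$: since $\mathcal{H}^2(0;\hat c,f)\subset\mathcal{H}^2(0;c_a,f)$ (because $e^{-\Psi}\ge e^{-a\Psi}$ on $\{\Psi<0\}$ as $\Psi<0$ and $a<1$... — actually the needed inclusion is $e^{a\Psi}\le e^{\Psi}$ there, so a finite $\int|\tilde f|^2_h e^{-\Psi}$ forces finite $\int|\tilde f|^2_h e^{-a\Psi}$), we get $G(0;\hat c)\ge \frac{1}{K_{\Psi,f,h,a}(z_0)}\ge C_2$, while $G(0;c_a)\le C_1$ from hypothesis (1) after the same comparison. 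Feeding $\int_0^{+\infty}\hat c(l)e^{-l}dl=\int_0^{+\infty}e^{(a-1)l}dl=\frac{1}{1-a}$ and $\int_0^{+\infty}c_a(l)e^{-l}dl=\int_0^{+\infty}e^{0\cdot l}dl$ — here one must instead use the ratio form, and the correct normalization constants produce exactly $\theta_a(q)=\frac{q+a-1}{q-1}$ as the ratio of the two integrals $\int_0^{+\infty}$ evaluated against the truncations at the level $-t$ corresponding to $q$.

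I expect the main obstacle to be bookkeeping the exponents correctly: translating "$f_{z_0}\notin \mathcal{O}(K_M)_{z_0}\otimes I(h,q\Psi)_{z_0}$" into a statement about where the concave function $G(\hat h^{-1}(r))$ sits relative to its chord, and pinning down the precise level $t=t(q)$ at which the comparison between the $e^{-\Psi}$–norm (hypothesis (1)) and the $K_{\Psi,f,h,a}(z_0)$–norm (hypothesis (2)) is taken, so that the ratio of the relevant $\int c(t)e^{-t}dt$ integrals is exactly $\theta_a(q)$. Concretely: if $f_{z_0}\notin \mathcal{O}(K_M)_{z_0}\otimes I(h,q\Psi)_{z_0}$ then $\int_{\{\Psi<-t\}}|f|_h^2 e^{-(q-1)\Psi}$ must diverge as $t\to\infty$ in a controlled way, and combined with concavity this forces $\theta_a(q)\le C_1/C_2$, contradicting the hypothesis. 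The rest — verifying $c_a\in\tilde P_{0,M,\Psi,h}$, the measurability and gain-class checks, and the limit $\lim_{t\to 0+}G(t)=G(0)$ — is routine given Theorem \ref{main theorem} and the definitions in Section \ref{sec:Main result}.
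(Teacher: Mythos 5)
Your intuition that the effectiveness bound should follow from the concavity property with a carefully chosen gain is the right starting point, but the proposal never actually produces the factor $\theta_a(q)=\frac{q+a-1}{q-1}$, and the specific comparisons you invoke cannot give it. The central missing idea is a Fubini (layer-cake) decomposition of the quantity in hypothesis $(1)$,
\[
\int_{\{\Psi<0\}}|f|^2_he^{-\Psi}=\int_{\{\Psi<0\}}|f|^2_he^{(a-1)\Psi}
+\int_0^{+\infty}\Big(\int_{\{\Psi<-l/a\}}|f|^2_he^{(a-1)\Psi}\Big)e^{l}\,dl,
\]
followed by a lower bound on each slice. That lower bound comes from applying Theorem \ref{main theorem} not to $\Psi$ with your $c_a(t)=e^{at}$, but to $\Psi_1:=q'\Psi$ (written as $\psi_1-2\log|F_1|$ for suitable plurisubharmonic $\psi_1$ and holomorphic $F_1$) with gain $c_1(t)=e^{\frac{1-a}{q'}t}$ and module $I(h,q'\Psi)_{z_0}$, where $q'>2a_{z_0}^f(\Psi;h)$. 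This choice is forced: $c_1(-q'\Psi)=e^{-(1-a)\Psi}$ is exactly the weight in the definition of $K_{\Psi,f,h,a}(z_0)$, and $c_1(t)e^{-t}$ is decreasing whenever $q'\ge 1$. Concavity then yields $G_{c_1,q'}\big(\tfrac{q'l}{a}\big)\ge e^{-\frac{a-1+q'}{a}l}\,G_{c_1,q'}(0)\ge e^{-\frac{a-1+q'}{a}l}\cdot\frac{1}{K_{\Psi,f,h,a}(z_0)}$, and inserting this into the Fubini identity gives $\int_{\{\Psi<0\}}|f|^2_he^{-\Psi}\ge\big(1+\int_0^{\infty}e^{-\frac{q'-1}{a}l}dl\big)\frac{1}{K}=\frac{q'+a-1}{q'-1}\cdot\frac{1}{K}$; finally one lets $q'\to 2a_{z_0}^f(\Psi;h)$ and uses that $\theta_a$ is strictly decreasing in $q$. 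By contrast, the "comparison of gains" you sketch --- that $\mathcal H^2(0;\hat c,f)\subset\mathcal H^2(0;c_a,f)$ because $e^{-\Psi}\ge e^{-(1-a)\Psi}$ on $\{\Psi<0\}$ --- only yields $C_1\ge G(0;\hat c)\ge\frac{1}{K_{\Psi,f,h,a}(z_0)}\ge C_2$, i.e. the tautology $C_1\ge C_2$. Nothing in your argument makes $q$ enter quantitatively: the ratio of $\int c(t)e^{-t}dt$ you appeal to at the end is never pinned down, whereas the paper's $q'$ enters through rescaling the weight $\Psi$ to $q'\Psi$ before the concavity theorem is applied.

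Two smaller errors worth flagging. First, failure of $f_{z_0}\in\mathcal O(K_M)_{z_0}\otimes I(h,q\Psi)_{z_0}$ gives $a_{z_0}^f(\Psi;h)\le q/2$, not $(q-1)/2$, by the definition of $a_{z_0}^f$. Second, the case $a\ge 1$ cannot be waved off: your $c_a(t)=e^{at}$ then has $c_a(t)e^{-t}$ increasing and is outside $\tilde P_{0,M,\Psi,h}$, and simply "reducing to $a<1$" is not available since $a$ is fixed by the hypothesis. The paper handles $a>1$ by truncating, setting $\tilde c_m(t)=e^{\frac{1-a}{q'}t}$ for $t<m$ and constant for $t\ge m$, applying Theorem \ref{main theorem} for each $m$, and passing $m\to+\infty$ via dominated convergence, using hypothesis $(1)$ to dominate.
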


\subsubsection{A twisted version of the strong openness property of $I(h,a\Psi)_o$}

 Let $D\subseteq\mathbb{C}^n$ be a pseudoconvex domain containing the origin $o$, and let $\psi$ be a plurisubharmonic function on $D$. Let $F\not\equiv0$ be a holomorphic function on $D$.
Denote that
$$\Psi:=\min\{\psi-2\log|F|,0\}.$$
For any $z \in M$ satisfying $F(z)=0$, we set $\Psi(z)=0$. Let $E$ be a holomorphic vector bundle on $D$ with rank $r$, and
let $h$ be a measurable metric on $E$ satisfying that $h$ has a positive locally lower bound.

 It is clear that the following two statements are equivalent:

$(1)$ The strong openness property of $I(h,a\Psi)_o$ (Corollary \ref{thm:soc}): $I(h,a\Psi)_{o}=I_+(h,a\Psi)_o$ for any $a\ge0$ satisfying  $\Theta_{he^{-a\psi}}\ge_{Nak}^s0$;

 $(2)$ $f_{o}\not\in I(h,2a_{o}^f(\Psi;h)\Psi)_{o}$ for any $f_{o}\in I(h,0\Psi)_{o}$ satisfying $a_{o}^f(\Psi;h)<+\infty$ and $\Theta_{he^{-a_{o}^f(\Psi;h)\psi}}\ge_{Nak}^s0$.

 We present a twisted version of the strong openness property of $I(h,a\Psi)_{o}$.

\begin{Theorem}
	\label{p:soc-twist}
Let $a(t)$ be a positive measurable function on $(-\infty,+\infty)$. If  one of the following conditions holds:
	
	$(1)$ $a(t)$ is decreasing  near $+\infty$;
	
	$(2)$ $a(t)e^t$ is increasing near $+\infty$,
	
	 then the following two statements are equivalent:
	
	$(A)$ $a(t)$ is not integrable near $+\infty$;
	
	$(B)$ for any $\Psi$, $h$ and  $f_{o}\in I(h,0\Psi)_{o}$ satisfying $a_{o}^f(\Psi;h)<+\infty$ and $\Theta_{he^{-a_{o}^f(\Psi;h)\psi}}\ge_{Nak}^s0$, we have
	$$|f|_h^2e^{-2a_{o}^{f}(\Psi;h)\Psi}a(-2a_{o}^{f}(\Psi;h)\Psi)\not\in L^1(U\cap\{\Psi<-t\})$$ for any neighborhood $U$  of $o$ and any $t>0$.	
\end{Theorem}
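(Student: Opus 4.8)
The plan is to prove $(A)\Leftrightarrow(B)$ via the two implications, with $(A)\Rightarrow(B)$ splitting according to whether $(1)$ or $(2)$ holds. For $(B)\Rightarrow(A)$ I would argue contrapositively with a single explicit example: if $a$ is integrable near $+\infty$, take on the unit ball $D\ni o$ the data $E=D\times\mathbb{C}$, $h\equiv1$, $F\equiv1$, $\psi:=2\log|z_1|$ and $f:=dz_1\wedge\cdots\wedge dz_n$, so $\Psi=2\log|z_1|$, $a_o^f(\Psi;h)=\tfrac12$ (from $\int_{|z_1|<\varepsilon}|z_1|^{-4a}\,dV<+\infty\iff a<\tfrac12$), and $\Theta_{he^{-a_o^f(\Psi;h)\psi}}(E)=\Theta_{e^{-\log|z_1|}}(E)\ge_{Nak}^s0$ by plurisubharmonicity of $\log|z_1|$ (Remark \ref{example of singular metric}); passing to polar coordinates in $z_1$ and substituting $s=-2\log|z_1|$ turns the integral in $(B)$ into a constant times $\int_{t'}^{+\infty}a(s)\,ds<+\infty$, so $(B)$ fails.

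For $(A)\Rightarrow(B)$, assume $a$ not integrable near $+\infty$ with $(1)$ or $(2)$, fix $\Psi,h,f_o$ as in $(B)$, put $b:=a_o^f(\Psi;h)<+\infty$, and suppose toward a contradiction that $\int_{U\cap\{\Psi<-t_*\}}|f|_h^2e^{-2b\Psi}a(-2b\Psi)\,dV_M<+\infty$ for some $U\ni o$ and some (arbitrarily large) $t_*>0$. Replacing $\psi$ by $2b\psi$ (and keeping $F$, which near $o$, where $F(o)\neq0$, only perturbs the weight by a bounded term) puts us in the framework of Section \ref{sec:Main result} with $T=0$: call the resulting weight $\Psi_1\sim2b\Psi$ near $o$, with $a_o^f(\Psi_1;h)=\tfrac12$ and $\Theta_{\tilde h}(E)\ge_{Nak}^s0$ for $\tilde h=he^{-2b\psi}$ (which the hypothesis of $(B)$ provides, since adding more of the semipositive $\partial\bar\partial\psi$ preserves positivity); moreover $f_o\notin\mathcal{O}(K_M)_o\otimes I(h,\Psi_1)_o$ by $a_o^f(\Psi_1;h)=\tfrac12$ together with the already-proven Corollary \ref{thm:soc}. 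Under $(1)$ I set $c(u):=e^{u}a(u)$, extended to $(0,+\infty)$ so that $c(u)e^{-u}=a(u)$ is decreasing; since $\int^{+\infty}a=+\infty$ with $a$ decreasing forces $e^{u}a(u)\to+\infty$, $c$ is bounded below on the relevant sublevel sets, and with the positive local lower bound of $h$ one gets $c\in\tilde{P}_{0,M,\Psi_1,h}$. Because $\int_{T_1}^{+\infty}c(u)e^{-u}\,du=\int_{T_1}^{+\infty}a(u)\,du=+\infty$, Remark \ref{infty2} yields $G(t;c,\Psi_1,h,I(h,\Psi_1)_o,f)=+\infty$ for all $t\ge0$. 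To contradict this, for large $t'$ I would take $\chi\equiv1$ near $o$ with $\mathrm{supp}\,\chi\Subset U$, solve $\bar\partial v=\bar\partial(\chi f)$ on the weakly pseudoconvex K\"ahler manifold $\{\Psi_1<-t'\}$ with a Donnelly--Fefferman estimate using $\Theta_{\tilde h}(E)\ge_{Nak}^s0$ and weight $\sim e^{-\Psi_1}$ (so finiteness of $\int|v|_h^2e^{-\Psi_1}$ forces $v_o\in\mathcal{O}(K_M)_o\otimes I(h,\Psi_1)_o$), exactly as for Theorem \ref{p:DK}; then $\tilde f:=\chi f-v$ is admissible for $G(t')$, and since $a$ decreasing gives $c(-\Psi_1)\le a(t')e^{-\Psi_1}$ on $\{\Psi_1<-t'\}$ one gets $\int|v|_h^2c(-\Psi_1)\lesssim\int|v|_h^2e^{-\Psi_1}<+\infty$ and $\int|\chi f|_h^2c(-\Psi_1)\le\int_{U\cap\{\Psi_1<-t'\}}|f|_h^2e^{-\Psi_1}a(-\Psi_1)\,dV_M<+\infty$ — hence $G(t')<+\infty$, a contradiction.

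Under $(2)$ I would instead use Theorem \ref{p:DK} directly. Shrinking $U$ to a small pseudoconvex neighborhood $U_0\ni o$ and applying Theorem \ref{p:DK} on $U_0$ (its hypotheses being local and satisfied) gives $c_{U_0}>0$ with $\rho(s):=\int_{U_0\cap\{\Psi_1<-s\}}|f|_h^2\,dV_M\ge c_{U_0}e^{-s}$ for all large $s$. With $W(s):=e^{s}a(s)$, increasing near $+\infty$ and with $W(s)e^{-s}=a(s)$, a Lebesgue--Stieltjes computation gives
\[
\int_{U_0\cap\{\Psi_1<-t_*\}}|f|_h^2e^{-\Psi_1}a(-\Psi_1)\,dV_M=\int_{t_*}^{+\infty}W(s)\,d(-\rho(s))\ \ge\ c_{U_0}\int_{t_*}^{+\infty}e^{-s}\,dW(s),
\]
and $\int_{t_*}^{T}e^{-s}\,dW(s)=a(T)-a(t_*)+\int_{t_*}^{T}a(s)\,ds\to+\infty$ as $T\to+\infty$ since $a\ge0$ is not integrable near $+\infty$; hence the left side is $+\infty$, contradicting the standing assumption. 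Combining the two cases, $(B)$ holds, and with the first part we obtain the equivalence.

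The conceptual crux — and the step I expect to be hardest — is recognizing that $(1)$ and $(2)$ genuinely need different arguments: under $(1)$ a gain $c\in\tilde{P}$ adapted to $a$ can be built and fed into the concavity machinery, but under $(2)$ it cannot (the decreasing minorant of $a$, which such a gain would have to dominate, may be integrable even when $a$ is not), so one must instead exploit the monotonicity of $e^{t}a(t)$ together with the quantitative lower bound from Theorem \ref{p:DK}. The heaviest technical point is the $\bar\partial$-construction in Case $(1)$ — producing $v$ with both finite $c$-norm and the correct germ at $o$, using the same Donnelly--Fefferman/Nakano-positivity estimate as in Theorem \ref{p:DK}; a secondary nuisance is the rescaling $\psi\mapsto2b\psi$ together with the $|F|$-factor bookkeeping and the local-versus-global passages.
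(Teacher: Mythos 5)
Your implication $(B)\Rightarrow(A)$ is the paper's example (modulo normalizing $\psi=\log|z_1|$ versus $2\log|z_1|$), and your Case $(2)$ argument is a genuine alternative to the paper's: you replace the paper's use of Lemma \ref{l:2} (regularizing $a$ to a continuous $\tilde a\le a$ with $\tilde a(t)e^t$ strictly increasing) and the measure-comparison Lemma \ref{l:m} by a direct Lebesgue--Stieltjes integration by parts against the pushforward of $|f|^2_h$, using only the lower bound $\rho(s)\ge c_{U_0}e^{-s}$ from Theorem \ref{p:DK} and the monotonicity of $W(s)=e^sa(s)$; this is cleaner, provided one handles the possible jumps of $W$ carefully. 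However, Case $(1)$ has two problems.

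First, a genuine gap: the assertion that ``$\int^{+\infty}a=+\infty$ with $a$ decreasing forces $e^{u}a(u)\to+\infty$'' is false, and with it your verification that $c(u)=e^ua(u)$ is bounded below (hence $c\in\tilde{P}_{0,M,\Psi_1,h}$) can fail. Take $n_1=1$, $n_{k+1}=n_k+k^2e^{n_k}$, and $a(t)=k^{-2}e^{-n_k}$ on $[n_k,n_{k+1})$: then $a$ is decreasing, $\int a=\sum_k k^{-2}e^{-n_k}(n_{k+1}-n_k)=\sum_k 1=+\infty$, yet $a(n_k)e^{n_k}=k^{-2}\to 0$, so $\liminf_{u\to+\infty}e^ua(u)=0$. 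The paper avoids this by taking $c(t)=a(t)e^t+1$, which is $\ge1$ everywhere and still satisfies $c(t)e^{-t}=a(t)+e^{-t}$ decreasing with $\int c(t)e^{-t}\,dt=+\infty$; you need this (or an equivalent additive shift) to get the lower-bound condition in the definition of $\tilde{P}$.

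Second, an unnecessary detour in Case $(1)$: you do not need to solve a $\bar\partial$-equation to exhibit a finite competitor for $G$. The defining constraint on $\tilde f$ in $G(t;c,\Psi_1,h,I(h,\Psi_1)_o,f)$ is $(\tilde f-f)_o\in\mathcal{O}(K_M)_o\otimes I(h,\Psi_1)_o$, which is trivially satisfied by $\tilde f=f$ itself. Since $f_o\in I(h,0\Psi)_o$ gives $\int_{D_1\cap\{\Psi_1<-t_1\}}|f|^2_h<+\infty$, and the contradiction hypothesis gives $\int_{D_1\cap\{\Psi_1<-t_1\}}|f|^2_he^{-\Psi_1}a(-\Psi_1)<+\infty$, one immediately gets $\int_{D_1\cap\{\Psi_1<-t_1\}}|f|^2_h c(-\Psi_1)<+\infty$ with the paper's $c$, so $G<+\infty$ on the pseudoconvex domain $D_1$ --- no Donnelly--Fefferman construction required. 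The contradiction with Remark \ref{infty2} then uses Corollary \ref{thm:soc} to know $f_o\notin\mathcal{O}(K_M)_o\otimes I(h,\Psi_1)_o$, which you did invoke correctly.
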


When $E$ is the trivial line bundle, Theorem \ref{p:soc-twist} can be referred to \cite{GMY-boundary2}. When $F\equiv1$, $\psi(o)=-\infty$ and $E$ is the trivial line bundle, Theorem \ref{p:soc-twist} is a twisted version of the strong openness property of multiplier ideal sheaves (some related results can be referred  to \cite{GZ-soc17}, \cite{chen18} and \cite{GY-twisted}).

\subsubsection{An optimal support function related to $I(h,\Psi)$}
Let $M$ be an $n-$dimensional complex manifold.
Let $X$ and $Z$ be closed subsets of $M$ such that $(M,X,Z)$  satisfies condition $(A)$. Let $K_M$ be the canonical line bundle on $M$.
Let $F\not\equiv 0$ be a holomorphic function on $M$. Let $\psi$ be a plurisubharmonic function on $M$.
Let $E$ be a holomorphic vector bundle on $M$ with rank $r$, and
let $h$ be a measurable metric on $E$ satisfying that $\Theta_{he^{-\psi}}\ge_{Nak}^s0$ and $h$ has a positive locally lower bound.

Denote that
$$\Psi:=\min\{\psi-2\log|F|,0\}$$
and $M_t:=\{z\in M:-t\le\Psi(z)<0\}$.
Let $Z_0$ be a subset of $M$, and let  $f$ be an $E$-valued holomorphic $(n,0)$ form on $\{\Psi<0\}$.
Denote
\begin{displaymath}\begin{split}
		\inf\bigg\{\int_{M_t}|\tilde f|_h^2:&f\in H^0(\{\Psi<0\},\mathcal{O}(K_M\otimes E))\\
		&\&\,(\tilde f-f)_{z_0}\in \mathcal{O}(K_M)\otimes I(h,\Psi)_{z_0}\,\text{for any }  z_0\in Z_0\bigg\}	
\end{split}\end{displaymath}
by $C_{\Psi,f,h,t}(Z_0)$ for any $t\ge0$. When $C_{\Psi,f,h,t}(Z_0)=0$ or $+\infty$, we set $\frac{\int_{M_t}|f|_h^2e^{-\Psi}}{C_{\Psi,f,h,t}(Z_0)}=+\infty$.

We obtain the following optimal support function of $\frac{\int_{M_t}|f|_h^2e^{-\Psi}}{C_{\Psi,f,h,t}(Z_0)}$.
\begin{Proposition}
	\label{p:1}
	Assume that $\int_{\{\Psi<-l\}}|f|_h^2<+\infty$ holds for any $l>0$. Then the inequality
	\begin{equation}
		\label{eq:0304a}
		\frac{\int_{M_t}|f|_h^2e^{-\Psi}}{C_{\Psi,f,h,t}(Z_0)}\ge\frac{t}{1-e^{-t}}
	\end{equation}
	holds for any $t\ge0$, where $\frac{t}{1-e^{-t}}$ is the optimal support function.
\end{Proposition}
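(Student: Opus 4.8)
The plan is to derive Proposition~\ref{p:1} from the concavity property of Theorem~\ref{main theorem}, applied to one suitably chosen family of minimal $L^2$ integrals, together with an explicit disc example showing that $\tfrac{t}{1-e^{-t}}$ cannot be improved. \emph{Set-up.} By the convention fixed before the statement, if $C_{\Psi,f,h,t}(Z_0)\in\{0,+\infty\}$ or $\int_{M_t}|f|_h^2e^{-\Psi}=+\infty$ then \eqref{eq:0304a} reads $+\infty\ge\tfrac{t}{1-e^{-t}}$, so I may assume $C_{\Psi,f,h,t}(Z_0)\in(0,+\infty)$ and $\int_{M_t}|f|_h^2e^{-\Psi}<+\infty$; since $e^{-\Psi}\ge1$ on $M_t$ and $\int_{\{\Psi<-t\}}|f|_h^2<+\infty$ by hypothesis, this forces $\int_{\{\Psi<0\}}|f|_h^2<+\infty$. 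After discarding from $Z_0$ the points lying outside $\bigcap_{s>0}\overline{\{\Psi<-s\}}$ (where the germ condition is vacuous and $I(h,\Psi)_{z_0}$ is trivial, so nothing changes), I put $T=0$, $J_{z_0}:=I(h,\Psi)_{z_0}$ and set $G(s):=G(s;c\equiv1,\Psi,h,J,f)$. Then $c\equiv1\in\tilde P_{0,M,\Psi,h}$ because $h$ has a positive locally lower bound, $\Theta_{he^{-\psi}}\ge^s_{Nak}0$ is exactly the positivity required in Section~\ref{sec:Main result}, and $G(s)\le\int_{\{\Psi<-s\}}|f|_h^2<+\infty$ for all $s>0$; hence Theorem~\ref{main theorem} applies, so $r\mapsto G(-\log r)$ is concave on $(0,1)$ with $\lim_{r\to1-}G(-\log r)=G(0)$ and $\lim_{r\to0+}G(-\log r)=0$. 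Concavity together with the vanishing at $0$ makes $G(s)e^s=G(-\log r)/r$ (with $s=-\log r$) nonincreasing in $r$, i.e.\ nondecreasing in $s$, so $G(0)\le e^tG(t)\le e^t\int_{\{\Psi<-t\}}|f|_h^2<+\infty$.

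\emph{The two inequalities.} First, $C_{\Psi,f,h,t}(Z_0)\le G(0)-G(t)$: every $\tilde f$ admissible for $G(0)$ is admissible for $C_{\Psi,f,h,t}(Z_0)$, and its restriction to $\{\Psi<-t\}$ is admissible for $G(t)$; taking $\tilde f_k$ with $\int_{\{\Psi<0\}}|\tilde f_k|_h^2\to G(0)$ and using $\{\Psi<0\}=M_t\sqcup\{\Psi<-t\}$ gives $C_{\Psi,f,h,t}(Z_0)\le\int_{M_t}|\tilde f_k|_h^2=\int_{\{\Psi<0\}}|\tilde f_k|_h^2-\int_{\{\Psi<-t\}}|\tilde f_k|_h^2\le\int_{\{\Psi<0\}}|\tilde f_k|_h^2-G(t)\to G(0)-G(t)$. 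Second, I claim
\[
G(0)-G(t)\ \le\ \frac{1-e^{-t}}{t}\int_{M_t}|f|_h^2e^{-\Psi}.
\]
Put $I(s):=\int_{\{\Psi<-s\}}|f|_h^2$, so $I(s)\ge G(s)$ ($f$ is admissible for $G(s)$), and let $g:=I-G\ge0$. A layer-cake (coarea) decomposition gives $\int_{M_t}|f|_h^2e^{-\Psi}=-\int_{(0,t]}e^u\,dI(u)$. Define $\mathcal N:=-\int_{(0,t]}e^u\,dG(u)$; integration by parts turns this into $\int_a^1\big(\tfrac{d}{dr}G(-\log r)\big)\tfrac{dr}{r}$ with $a:=e^{-t}$. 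Since $p(r):=\tfrac{d}{dr}G(-\log r)$ is nonnegative ($G(-\log r)$ increases in $r$) and nonincreasing (concavity), Chebyshev's integral inequality for the two nonincreasing functions $p$ and $r\mapsto1/r$ on $[a,1]$ yields $\int_a^1\tfrac{p}{r}\ge\tfrac1{1-a}\big(\int_a^1p\big)\big(\int_a^1\tfrac1r\big)$, that is $G(0)-G(t)=\int_a^1p\le\tfrac{1-a}{-\log a}\,\mathcal N=\tfrac{1-e^{-t}}{t}\,\mathcal N$. Finally $\int_{M_t}|f|_h^2e^{-\Psi}-\mathcal N=-\int_{(0,t]}e^u\,dg(u)\ge0$ provided the $L^2$-defect $g(s)=\int_{\{\Psi<-s\}}|f|_h^2-G(s)$ is nonincreasing on $[0,t]$ — equivalently $G(s)-G(s')\le I(s)-I(s')$ for $0\le s\le s'\le t$, i.e.\ the minimal form tracks $f$ ever more tightly on smaller sublevel sets. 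This monotonicity of the defect is the one ingredient requiring the $\bar\partial$-extension mechanism underlying Theorem~\ref{main theorem} rather than merely its concavity conclusion, and I expect it to be the main obstacle. Granting it, the two inequalities combine to $C_{\Psi,f,h,t}(Z_0)\le\tfrac{1-e^{-t}}{t}\int_{M_t}|f|_h^2e^{-\Psi}$, which is \eqref{eq:0304a}.

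\emph{Optimality.} Take $M=\Delta$ the unit disc, $F\equiv1$, $\psi=\log|z|^2$ (so $\Psi=\log|z|^2$, and $\Theta_{he^{-\psi}}\ge^s_{Nak}0$ by Remark~\ref{example of singular metric}), $E$ the trivial line bundle, $h\equiv1$, $Z_0=\{0\}$ and $f=dz$. Then $I(h,\Psi)_0=z\mathcal O_0$, so the competitors are $\hat f\,dz$ with $\hat f$ holomorphic and $\hat f(0)=1$; since $M_t=\{e^{-t/2}\le|z|<1\}$, an elementary power-series computation gives both $C_{\Psi,f,h,t}(\{0\})=c_0(1-e^{-t})$ (attained at $\hat f\equiv1$) and $\int_{M_t}|f|_h^2e^{-\Psi}=c_0\int_{M_t}|z|^{-2}=c_0\,t$ for a common constant $c_0>0$, so the ratio in \eqref{eq:0304a} equals $\tfrac{t}{1-e^{-t}}$ for every $t>0$. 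Hence no support function larger than $\tfrac{t}{1-e^{-t}}$ can hold.
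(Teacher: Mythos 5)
Your reduction, and your own flagging of the gap, are both accurate: the step that fails is precisely the one you identify, namely the monotonicity of the defect $g(s):=\int_{\{\Psi<-s\}}|f|_h^2-G(s)$. By the orthogonality relation \eqref{orhnormal F} one has $g(s)=\int_{\{\Psi<-s\}}|f-F_s|_h^2$, where $F_s$ is the minimizer for $G(s)$, so for $0\le s\le s'$ the inequality $g(s)\ge g(s')$ reduces (after throwing away the nonnegative contribution from $\{-s'\le\Psi<-s\}$) to $\int_{\{\Psi<-s'\}}|f-F_s|_h^2\ge\int_{\{\Psi<-s'\}}|f-F_{s'}|_h^2$. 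But $F_{s'}$ minimizes $\|\cdot\|_{L^2(\{\Psi<-s'\})}$, not $\|\cdot-f\|_{L^2(\{\Psi<-s'\})}$ (the latter is minimized by $f$ itself), so there is no variational principle forcing $F_{s'}$ to be closer to $f$ on $\{\Psi<-s'\}$ than $F_s$ is; writing everything in terms of the orthogonal splitting produced by Lemma \ref{existence of F} only returns a tautology. Indeed the restrictions $F_s|_{\{\Psi<-s'\}}$ and $F_{s'}$ agree precisely when the concavity degenerates to linearity (Corollary \ref{necessary condition for linear of G}), which is the non-generic case, and away from it I see no mechanism controlling $g$. So this is a genuine gap, not a routine verification.

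The paper avoids the issue by a different deployment of Theorem \ref{main theorem}: instead of fixing $c\equiv1$ and then trying to extract the annular mass, it fixes $t$ and builds a sequence of smooth decreasing gains $c_t^n$ with $\mathbb{I}_{\{\cdot\le t\}}\le c_t^n\le 1$ and $c_t^n\to\mathbb{I}_{\{\cdot\le t\}}$; each $c_t^n$ lies in $\tilde P_{0,M,\Psi,h}$, and the concavity chord estimate gives $G_{t,n}(l)\ge\frac{\int_l^{+\infty}c_t^n(s)e^{-s}ds}{\int_0^{+\infty}c_t^n(s)e^{-s}ds}\,G_{t,n}(0)$ together with $G_{t,n}(0)\ge C_{\Psi,f,h,t}(Z_0)$ and $G_{t,n}(l)\le\int_{\{\Psi<-l\}}|f|_h^2c_t^n(-\Psi)$. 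Letting $n\to\infty$ yields the pointwise-in-$l$ inequality $\int_{\{-t\le\Psi<-l\}}|f|_h^2\ge\frac{e^{-l}-e^{-t}}{1-e^{-t}}\,C_{\Psi,f,h,t}(Z_0)$ for $l\in[0,t)$, which integrated against the layer-cake decomposition of $e^{-\Psi}$ (the same Fubini computation you wrote, but applied here rather than to $dG$) gives \eqref{eq:0304a}. The truncated gain localizes the concavity estimate to $M_t$ at the outset, so no monotone-defect lemma is needed. Your first inequality $C_{\Psi,f,h,t}(Z_0)\le G(0)-G(t)$, the Chebyshev step, and the disc optimality check are all correct; the entire difficulty sits in the missing bound $\mathcal N\le\int_{M_t}|f|_h^2e^{-\Psi}$.
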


When $E$ is the trivial line bundle and $h\equiv1$, Proposition \ref{p:1} can be referred to \cite{GMY-boundary2}.

Take $M=\Delta\subset\mathbb{C}$, $Z_{0}=o$ the origin of $\mathbb{C}$, $F\equiv1$ and $\psi=2\log{|z|}$. Let $E$ is the trivial line bundle, $h\equiv1$ and $f\equiv dz$.
It is clear that $\int_M|f|^2_h<+\infty$. By direct calculations, we have $C_{\Psi,f,h,t}(Z_0)=2\pi(1-e^{-t})$ and $\int_{M_t}|f|^2_he^{-\Psi}=2t\pi$.
Then $\frac{\int_{M_t}|f|^2e^{-\Psi}}{C_{f,\Psi,t}(Z_0)}=\frac{t}{1-e^{-t}}$, which shows the optimality of the support function $\frac{t}{1-e^{-t}}$.

\section{Preparations}
\subsection{$L^2$ methods}
Let $X$ be an $n-$dimensional weakly pseudoconvex K\"ahler manifolds. Let $\psi$ be a plurisubharmonic function on $M$. Let $F$ be a holomorphic function on $X$. We assume that $F$ is not identically zero. Let $E$ be a rank $r$ holomorphic vector bundle over $X$. Let $\hat{h}$ be a smooth metric on $E$.
Let $(X,E,\Sigma,M_j,h,h_{j,s})$ be a singular hermitian metric on $E$.  Assume that $\Theta_h(E)\ge^s_{Nak}0$.

Let $\delta$ be a positive integer. Let $T$ be a real number. Denote
$$\tilde{M}:=\max\{\psi+T,2\log|F|\}$$
and
$$\Psi:=\min\{\psi-2\log|F|,-T\}.$$
If $F(z)=0$ for some $z \in M$, we set $\Psi(z)=-T$.

Let $c(t)$ be a positive measurable function on $[T,+\infty)$ such that $c(t)e^{-t}$ is decreasing with respect $t$. We have the following lemma.

\begin{Lemma}
\label{L2 method}
Let $B\in(0,+\infty)$ and $t_0>T$ be arbitrarily given. Let $f$ be an $E$-valued holomorphic $(n,0)$ form on $\{\Psi<-t_0\}$ such that
\begin{equation}
\label{estimate on sublevel set of lemma2.1}
\int_{\{\Psi<-t_0\}\cap K}|f|_{\hat{h}}^2<+\infty,
\end{equation}
for any compact subset $K\subset X$, and
\begin{equation}
\label{estimate on annaul of lemma2.1}
\int_{M}\frac{1}{B}\mathbb{I}_{\{-t_0-B<\Psi<-t_0\}}|fF|^2_h<+\infty.
\end{equation}
 Then there exists an $E$-valued holomorphic $(n,0)$ form $\tilde{F}$ on $X$ such that
\begin{equation*}
  \begin{split}
      & \int_{X}|\tilde{F}-(1-b_{t_0,B}(\Psi))fF^{1+\delta}|^2_he^{v_{t_0,B}(\Psi)-\delta\tilde{M}}c(-v_{t_0,B}(\Psi)) \\
      \le & \left(\frac{1}{\delta}c(T)e^{-T}+\int_{T}^{t_0+B}c(s)e^{-s}ds\right)
       \int_{X}\frac{1}{B}\mathbb{I}_{\{-t_0-B<\Psi<-t_0\}}|fF|^2_h,
  \end{split}
\end{equation*}
where $b_{t_0,B}(t)=\int^{t}_{-\infty}\frac{1}{B} \mathbb{I}_{\{-t_0-B< s < -t_0\}}ds$,
$v_{t_0,B}(t)=\int^{t}_{-t_0}b_{t_0,B}(s)ds-t_0$.
\end{Lemma}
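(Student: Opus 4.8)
The plan is to produce $\tilde F$ in the form $\tilde F=(1-b_{t_0,B}(\Psi))fF^{1+\delta}-u$, where $u$ solves a suitable $\bar\partial$-equation. Since $t_0>T$, on $\{\Psi<-t_0\}$ (which contains $\{-t_0-B\le\Psi\le-t_0\}$) one has $\Psi=\psi-2\log|F|$ and $f$ is defined there; hence $(1-b_{t_0,B}(\Psi))fF^{1+\delta}$, extended by $0$ on $\{\Psi\ge-t_0\}$, is a globally defined smooth $E$-valued $(n,0)$ form on $X$, equal to $fF^{1+\delta}$ on $\{\Psi\le-t_0-B\}$, and
\[
\alpha:=\bar\partial\big((1-b_{t_0,B}(\Psi))fF^{1+\delta}\big)=-b_{t_0,B}'(\Psi)\,\bar\partial\Psi\wedge fF^{1+\delta}
\]
is a smooth, $\bar\partial$-closed $E$-valued $(n,1)$ form supported in the annulus $\{-t_0-B<\Psi<-t_0\}$, on which $b_{t_0,B}'(\Psi)=\tfrac1B$ and all data are smooth; the two finiteness hypotheses on $f$ ensure $\alpha$ is square-integrable for the weights below, and the right-hand side of the asserted inequality is finite since $c(t)e^{-t}$ is bounded on $[T,t_0+B]$. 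It thus suffices to solve $\bar\partial u=\alpha$ with
\[
\int_X|u|_h^2\,e^{v_{t_0,B}(\Psi)-\delta\tilde M}c(-v_{t_0,B}(\Psi))\le\Big(\tfrac1\delta c(T)e^{-T}+\int_T^{t_0+B}c(s)e^{-s}\,ds\Big)\int_X\tfrac1B\mathbb{I}_{\{-t_0-B<\Psi<-t_0\}}|fF|_h^2 ,
\]
for then $\tilde F$ is holomorphic, is locally $L^2$, satisfies the stated estimate by the triangle inequality together with $|fF^{1+\delta}|_h^2e^{-\delta\tilde M}\le|fF|_h^2$ (valid since $\tilde M\ge2\log|F|$), and extends holomorphically across the analytic set $\{F=0\}$ and the measure-zero set $\Sigma$ by removability for locally $L^2$ holomorphic sections.

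To solve this $\bar\partial$-equation one first reduces to smooth data. By the regularization of quasi-plurisubharmonic functions (Theorem \ref{regularization on cpx mfld}), replace $\psi$ by a decreasing sequence of smooth strictly plurisubharmonic functions; this also regularizes $\Psi$, $\tilde M=\max\{\psi+T,2\log|F|\}$ and the compositions $b_{t_0,B}(\Psi),v_{t_0,B}(\Psi)$ — the corner of the $\max$ and the non-smoothness of $b_{t_0,B},v_{t_0,B}$ being handled by smooth approximants as in \cite{BGY-boundary}. Replace the singular metric $h$ by the smooth metrics $h_{j,s}$ of Definition \ref{singular nak}, absorbing the curvature loss $\lambda_{j,s}\,\omega\otimes\mathrm{Id}_E$ into a small multiple of a smooth strictly plurisubharmonic exhaustion; and replace $X$ by its exhaustion $\{M_j\}$. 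On each resulting smooth model the Bochner--Kodaira--Nakano machinery applies.

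On the smooth model one invokes the twisted $L^2$ estimate of Donnelly--Fefferman/Berndtsson type used in \cite{GZSOC,BGY-boundary}: taking the weight $\varphi:=\delta\tilde M-v_{t_0,B}(\Psi)$ (so $e^{-\varphi}=e^{v_{t_0,B}(\Psi)-\delta\tilde M}$) and auxiliary positive functions $\eta=\eta(-v_{t_0,B}(\Psi))$, $\phi=\phi(-v_{t_0,B}(\Psi))$, one obtains $u$ with $\bar\partial u=\alpha$ and
\[
\int_X|u|_h^2(\eta+\phi^{-1})^{-1}e^{-\varphi}\le\int_X\big\langle B_\eta^{-1}\alpha,\alpha\big\rangle_h\,e^{-\varphi},
\]
where $B_\eta:=\big[\,\eta\sqrt{-1}\Theta_{he^{-\varphi}}(E)-\sqrt{-1}\partial\bar\partial\eta-\phi\,\sqrt{-1}\partial\eta\wedge\bar\partial\eta,\ \Lambda_\omega\,\big]$, provided this operator is positive. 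Its positivity is assembled from $\Theta_h(E)\ge^s_{Nak}0$ (the hypothesis), $\sqrt{-1}\partial\bar\partial\tilde M\ge0$ ($\tilde M$ is a maximum of plurisubharmonic functions), $\sqrt{-1}\partial\bar\partial v_{t_0,B}(\Psi)\ge b_{t_0,B}'(\Psi)\sqrt{-1}\partial\Psi\wedge\bar\partial\Psi$ on $\{\Psi<-T\}$ (since $v_{t_0,B}$ is increasing and convex, $v_{t_0,B}''=b_{t_0,B}'\ge0$, and $\Psi$ is plurisubharmonic there), and the fact that $v_{t_0,B}(\Psi)$ is locally constant near $\{\Psi=-T\}$. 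The functions $\eta,\phi$ are chosen, via a first-order ODE in $t=-v_{t_0,B}(\Psi)\in[T,+\infty)$ built from $c(t)e^{-t}$, so that the output weight $(\eta+\phi^{-1})^{-1}e^{-\varphi}$ equals $c(-v_{t_0,B}(\Psi))e^{v_{t_0,B}(\Psi)-\delta\tilde M}$ and so that, on the annulus, Demailly's fundamental inequality bounds $\langle B_\eta^{-1}\alpha,\alpha\rangle$ by a multiple of $b_{t_0,B}'(\Psi)^2|fF^{1+\delta}|_h^2$ governed by the positive lower bound of the $\sqrt{-1}\partial\Psi\wedge\bar\partial\Psi$-component of $B_\eta$; integrating in $t=-v_{t_0,B}(\Psi)$ and using $|fF^{1+\delta}|_h^2e^{-\delta\tilde M}\le|fF|_h^2$ and $b_{t_0,B}'(\Psi)=\tfrac1B$, this telescopes to the stated constant times $\int_X\tfrac1B\mathbb{I}_{\{-t_0-B<\Psi<-t_0\}}|fF|_h^2$. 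The solvability of that ODE with the correct sign is exactly where the hypothesis that $c(t)e^{-t}$ is decreasing enters, and the summand $\tfrac1\delta c(T)e^{-T}$ records the cost of the $\delta\tilde M$-device, which is what makes $\tilde F$ extend holomorphically across $\{F=0\}$.

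Finally one lets the regularization parameters tend to their limits: $s\to+\infty$ (so $h_{j,s}\uparrow h$, $\lambda_{j,s}\to0$), $j\to+\infty$ along the exhaustion, and the smoothings of $\psi,b_{t_0,B},v_{t_0,B}$ off. The uniform estimate together with interior elliptic (Montel / weak-$L^2$) compactness gives, by a diagonal argument, a limit $u$ on $X$ with $\bar\partial u=\alpha$ still satisfying the displayed estimate (Fatou's lemma for the left-hand side), and $\tilde F=(1-b_{t_0,B}(\Psi))fF^{1+\delta}-u$ is the desired form after removing the singularities along $\{F=0\}$ and $\Sigma$. I expect the principal obstacle to be the construction of $\eta$ and $\phi$ in the third step: arranging at once that $B_\eta\ge0$ and that the contraction of $\alpha$ integrates to \emph{exactly} the asserted constant forces the monotonicity of $c(t)e^{-t}$, the convexity of $v_{t_0,B}$ and the $\delta\tilde M$-regularization to be balanced against one another with care; a secondary difficulty is carrying out the second and third steps rigorously for a genuinely singular metric $h$ and controlling all of the limits, including the removable-singularity arguments.
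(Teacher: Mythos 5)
Your overall skeleton matches the paper's: write $\tilde F=(1-b_{t_0,B}(\Psi))fF^{1+\delta}-u$, solve a $\bar\partial$-equation with the twisted Bochner--Kodaira--Nakano estimate using auxiliary functions $\eta,\phi$ built from an ODE in $t=-v_{t_0,B}(\Psi)$, regularize $\psi$, $\tilde M$ and $v_{t_0,B}$, and pass to limits. The ODE (the paper's Step~9) and the role of $\delta\tilde M$ in producing the extra $\tfrac1\delta c(T)e^{-T}$ are also correctly anticipated.

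There is, however, a genuine gap in how you propose to handle the singularity of $h$. You write that the curvature loss $-\lambda_{j,s}\,\omega\otimes\mathrm{Id}_E$ from replacing $h$ by the smooth approximants $h_{j,s}$ can be ``absorbed into a small multiple of a smooth strictly plurisubharmonic exhaustion.'' This does not work here. First, $X$ is only weakly pseudoconvex, so it carries a smooth plurisubharmonic exhaustion but not, in general, a \emph{strictly} plurisubharmonic one; there is nothing whose complex Hessian dominates $\omega$ globally. Second, and more fundamentally, $\lambda_{j,s}$ is a nonnegative continuous \emph{function} that converges to $0$ only almost everywhere and is bounded by $\lambda_j$ (Definition~\ref{singular nak}); there is no uniform smallness, so even if a strictly psh exhaustion $P$ were available, adding $\varepsilon P$ to the weight would at best trade the curvature defect for an $e^{-\varepsilon P}$ factor whose removal is exactly as hard as the original problem and would not reproduce the asserted constant $\tfrac1\delta c(T)e^{-T}+\int_T^{t_0+B}c(s)e^{-s}ds$ with no extra terms.

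The paper's way around this is its Lemma~\ref{d-bar equation with error term}: instead of solving $D''u=\lambda$, one solves the relaxed equation
$D''u+P_{h}\big(\sqrt{\lambda}\,\tau\big)=\lambda$
with a correction term $\tau$, using the fact that $B+\lambda I$ (not $B$ itself) is positive when $\lambda\ge\lambda_{j,s}$-type terms are included, and with the key estimate
$\int(\eta+g^{-1})^{-1}|u|^2+\int|\tau|^2\le\int\langle(B+\lambda I)^{-1}\lambda,\lambda\rangle$.
One then shows, via the weak-convergence machinery of Lemma~\ref{weakly convergence} and Lemma~\ref{weakly converge lemma} (which tracks the metric-dependent orthogonal projections $P_{h_{m'}}$ and identifies their weak limit under the monotone approximation $h_{m'}\uparrow h$), that as the regularization parameters tend to their limits the error term $P_{h_{m'}}(\sqrt{\lambda_{m'}}\tau_{m'})$ converges weakly to $0$, so the $\bar\partial$-equation is recovered in the limit. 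This error-term device and the weak-limit analysis of the projections constitute the technical core that your proposal is missing; without it, the positivity assembly in your third step fails for the regularized metrics, and the subsequent estimate cannot be established.

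Two smaller points. Your identity ``$v_{t_0,B}(\Psi)$ is locally constant near $\{\Psi=-T\}$'' is not needed and is not literally used in the paper (what matters is the smooth maximum $M_{\eta_m}$ regularization of $\tilde M$ and the sign of $v_{t_0,B}''=b_{t_0,B}'$). And the paper also carries out a separate reduction to \emph{smooth} $c$ before everything else (with its own limit argument using Fatou and the upper-semicontinuous envelope $c^-$), which your proposal does not address but which is needed because the twisted weight $c(-v_\epsilon(\Psi_m))$ has to be differentiated inside the ODE step.
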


We would like to recall the following notations in section \ref{sec:Main result}.
Let $M$ be an $n-$dimensional complex manifold.
Assume that $(M,X,Z)$  satisfies condition $(A)$. Let $K_M$ be the canonical line bundle on $M$.
Let $dV_M$ be a continuous volume form on $M$. Let $F\not\equiv 0$ be a holomorphic function on $M$. Let $\psi$ be a plurisubharmonic function on $M$.
Let $E$ be a holomorphic vector bundle on $M$ with rank $r$.
Let $\hat{h}$ be a smooth metric on $E$. Let $h$ be a \textit{measurable metric} on $E$. Denote $\tilde{h}:=he^{-\psi}$. Let $(M,E,\Sigma,M_j,\tilde{h},\tilde{h}_{j,s})$ be a singular metric on $E$. Assume that $\Theta_{\tilde{h}}(E)\ge^s_{Nak} 0$. Let $c(t)\in \tilde{P}_{T,M,\Psi,h}$.

 Let $T\in[-\infty,+\infty)$.  Denote $$\Psi:=\min\{\psi-2\log|F|,-T\}.$$
If $F(z)=0$ for some $z \in M$, we set $\Psi(z)=-T$.
Let $T_1>T$ be a real number. Denote $\tilde{M}:=\max\{\psi+T_1,2\log|F|\}$. Denote
$$\Psi_1:=\min\{\psi-2\log|F|,-T_1\}.$$
If $F(z)=0$ for some $z \in M$, we set $\Psi_1(z)=-T_1$.

It follows from Lemma \ref{L2 method} that we have the following lemma.
\begin{Lemma}
Let $(M,X,Z)$  satisfy condition $(A)$. Let $B \in (0, +\infty)$ and $t_0> T_1>T$ be arbitrarily given.
Let $f$ be an $E$-valued holomorphic $(n,0)$ form on $\{\Psi< -t_0\}$ such that
\begin{equation}
\int_{\{\Psi<-t_0\}} {|f|}^2_hc(-\Psi)<+\infty,
\label{condition of lemma 2.2}
\end{equation}
Then there exists an $E$-valued holomorphic $(n,0)$ form $\tilde{F}$ on $M$  such that
\begin{equation}
\begin{split}
&\int_{M}|\tilde{F}-(1-b_{t_0,B}(\Psi_1))fF^{1+\delta}|^2_{\tilde{h}}e^{v_{t_0,B}(\Psi_1)-\delta \tilde{M}}c(-v_{t_0,B}(\Psi_1))\\
\le & \left(\frac{1}{\delta}c(T_1)e^{-T_1}+\int_{T_1}^{t_0+B}c(t)e^{-t}dt\right)\int_M \frac{1}{B} \mathbb{I}_{\{-t_0-B< \Psi_1 < -t_0\}}  {|fF|}^2_{\tilde{h}}
,
\end{split}
\end{equation}
where  $b_{t_0,B}(t)=\int^{t}_{-\infty}\frac{1}{B} \mathbb{I}_{\{-t_0-B< s < -t_0\}}ds$ and
$v_{t_0,B}(t)=\int^{t}_{-t_0}b_{t_0,B}(s)ds-t_0$.
\label{L2 method for c(t)}
\end{Lemma}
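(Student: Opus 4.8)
The plan is to deduce Lemma~\ref{L2 method for c(t)} from Lemma~\ref{L2 method} by applying the latter on the weakly pseudoconvex K\"ahler manifold $M':=M\setminus(X\cup Z)$ (part $\uppercase\expandafter{\romannumeral2}$ of condition $(A)$), and then transporting the conclusion back to $M$ across $X\cup Z$. Restricting $\Sigma$, the exhaustion $\{M_j\}$, and the approximating metrics, one obtains a singular hermitian metric $(M',E|_{M'},\Sigma\cap M',\{M_j\cap M'\},\tilde h,\tilde h_{j,s})$ on $E|_{M'}$ in the sense of Definition~\ref{singular metric}, with $\Theta_{\tilde h}(E)\ge^s_{Nak}0$ still valid on $M'$; moreover $c|_{[T_1,+\infty)}$ is a positive measurable function with $c(t)e^{-t}$ decreasing (requirement $(1)$ of $\tilde P_{T,M,\Psi,h}$). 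I would then apply Lemma~\ref{L2 method} on $M'$ with $\tilde h$ playing the role of its singular metric, $T_1$ the role of its $T$ (so that its $\Psi$, $\tilde M$ become our $\Psi_1$, $\tilde M$), $c|_{[T_1,+\infty)}$ the role of its $c$, and the same $\delta$, $B$, $t_0$.

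To invoke Lemma~\ref{L2 method} I must verify its hypotheses \eqref{estimate on sublevel set of lemma2.1} and \eqref{estimate on annaul of lemma2.1} for the given $f$. The annulus bound \eqref{estimate on annaul of lemma2.1} is the easy one: on $\{\Psi<-t_0\}$ we have $F\neq0$ and $\Psi=\Psi_1=\psi-2\log|F|$, so $|fF|^2_{\tilde h}=|F|^2e^{-\psi}|f|^2_h=e^{-\Psi_1}|f|^2_h$ there; on $\{-t_0-B<\Psi_1<-t_0\}$ one has $e^{-\Psi_1}\le e^{t_0+B}$ and, by monotonicity of $c(t)e^{-t}$, $c(-\Psi_1)\ge c(t_0+B)e^{-B}>0$, hence
\[
\int_{M'}\tfrac1B\mathbb{I}_{\{-t_0-B<\Psi_1<-t_0\}}|fF|^2_{\tilde h}\le\frac{e^{\,t_0+2B}}{B\,c(t_0+B)}\int_{\{\Psi<-t_0\}}|f|^2_hc(-\Psi)<+\infty,
\]
which also makes the right-hand side of the claimed estimate finite. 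For \eqref{estimate on sublevel set of lemma2.1}, fix a compact $K\Subset M'$ and apply requirement $(2)$ of $\tilde P_{T,M,\Psi,h}$ with this $t_0$ to produce a closed set $E_0\subset Z\cap\{\Psi=-\infty\}$; since $K\subset M'\subset M\setminus Z\subset M\setminus E_0$, this gives $|f|^2_{\hat h}\le C_K^{-1}c(-\psi)|f|^2_h$ on $K\cap\{\Psi_1<-t_0\}=K\cap\{\Psi<-t_0\}$, and it then remains to see that $\int_{K\cap\{\Psi<-t_0\}}c(-\psi)|f|^2_h<+\infty$. Writing $c(-\psi)|f|^2_h=|f|^2_{\tilde h}\cdot\big(c(-\psi)e^{\psi}\big)$ and using $c(-\psi)e^{\psi}\le c(T_1)e^{-T_1}$ once $-\psi\ge T_1$ (which holds on $K\cap\{\Psi<-t_0\}$ after covering $K$ by small enough pieces, using the local upper bounds for $\psi$ and $\log|F|$), this reduces to $\int_{K\cap\{\Psi<-t_0\}}|f|^2_{\tilde h}<+\infty$, which one extracts from \eqref{condition of lemma 2.2} together with the semipositivity $\Theta_{\tilde h}(E)\ge^s_{Nak}0$.

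Lemma~\ref{L2 method} then produces an $E$-valued holomorphic $(n,0)$ form $\tilde F$ on $M'$ satisfying the displayed estimate with $M'$ in place of $M$. To finish I would extend $\tilde F$ to $M$: since $-v_{t_0,B}(\Psi_1)$ ranges in the bounded interval $[t_0,t_0+B]$, the factor $c(-v_{t_0,B}(\Psi_1))$ is bounded below by a positive constant; $v_{t_0,B}$ is bounded and $\tilde M=\max\{\psi+T_1,2\log|F|\}$ is locally bounded above, so the full weight $e^{v_{t_0,B}(\Psi_1)-\delta\tilde M}c(-v_{t_0,B}(\Psi_1))$ is locally bounded below by a positive constant. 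Hence finiteness of the integral forces $\tilde F$ to be locally square-integrable near $X$ and near $Z$, so $\tilde F$ extends holomorphically across $X$ by part $\uppercase\expandafter{\romannumeral1}$ of condition $(A)$ and across $Z$ by the Riemann extension theorem; because $X\cup Z$ has measure zero, the estimate is unchanged when $M'$ is replaced by $M$, giving Lemma~\ref{L2 method for c(t)}.

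I expect the genuine difficulty to be the verification of \eqref{estimate on sublevel set of lemma2.1}: extracting local $L^2$ control of $f$ on $M'$ from the single hypothesis \eqref{condition of lemma 2.2} is exactly what membership of $c$ in $\tilde P_{T,M,\Psi,h}$ is designed to make possible, and it is \emph{not} a pointwise domination of $c(-\psi)$ by $c(-\Psi)$ — near $\{F=0\}$ these differ by a factor $|F|^{-2}$ — so the argument must exploit the positivity of $\tilde h$ (for instance a local mean-value/Bergman-type bound for holomorphic sections) rather than a crude comparison. The passage from $M'$ back to $M$ is routine once one checks that the weight is locally bounded below there.
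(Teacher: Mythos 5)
Your broad strategy -- apply Lemma \ref{L2 method} on the weakly pseudoconvex K\"ahler manifold $M\setminus(X\cup Z)$ and then extend across $X\cup Z$ -- is exactly the paper's. However, two steps in your argument have genuine gaps, and the second one in particular is a part of the proof the paper handles with more care than you allow for.

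\textbf{Verification of \eqref{estimate on sublevel set of lemma2.1}.} Your reduction hinges on the inequality $c(-\psi)e^{\psi}\le c(T_1)e^{-T_1}$ on $K\cap\{\Psi<-t_0\}$, which you justify by claiming $-\psi\ge T_1$ there ``after covering $K$ by small enough pieces''. This is false: on $\{\Psi<-t_0\}$ one only has $\psi<-t_0+2\log|F|$, and $|F|$ is a fixed holomorphic function whose size on $K$ is not reduced by shrinking the pieces, so there is no reason for $\psi<-T_1$. Your closing remark -- that $c(-\psi)$ and $c(-\Psi)$ differ by a factor $|F|^{-2}$ and so a crude comparison cannot work -- correctly identifies the difficulty, but the fix you offer is wrong, and the speculative appeal to a ``Bergman-type bound'' is not what is needed. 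What the paper actually does at this point is apply the $\tilde P_{T,M,\Psi,h}$ condition in the form $|e_x|^2_h\,c(-\Psi)\ge C_K|e_x|^2_{\hat h}$ on $K\cap\{\Psi<-t_0\}$ for compact $K\subset M\setminus E_0$ (this is what the proofs of Lemmas \ref{L2 method for c(t)} and \ref{L2 method in JM concavity} write, even though Definition 1.7 has $c(-\psi)$ in place of $c(-\Psi)$); with $c(-\Psi)$ the verification of \eqref{estimate on sublevel set of lemma2.1} is immediate from \eqref{condition of lemma 2.2} with no comparison between $\psi$ and $\Psi$ at all.

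\textbf{The extension step.} You propose a one-shot extension of $\tilde F$ across $X\cup Z$ from a local lower bound on the weight. Two things are missing. First, the estimate from Lemma \ref{L2 method} controls $\tilde F-(1-b_{t_0,B}(\Psi_1))fF^{1+\delta}$, not $\tilde F$ itself, so you also need a local $L^2$ bound for $(1-b_{t_0,B}(\Psi_1))fF^{1+\delta}$ near $X\cup Z$ before you can conclude that $\tilde F$ is locally square-integrable; this is why the paper introduces the exceptional set $E_0$ and splits into the two regimes $z\in(Z\cup X)\setminus E_0$ (where the $\tilde P$-bound gives $\int_{V_z\cap\{\Psi<-t_0\}}|f|^2_{\hat h}<+\infty$) and $z\in E_0$ (where $E_0\subset\{\Psi=-\infty\}$ forces $U_z\Subset\{\Psi<-t_0\}$, so that $f$ is simply continuous on $\overline{U_z}$). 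Second, ``$\tilde F$ is locally square-integrable'' must mean square-integrable against a \emph{smooth} metric before one can invoke negligibility of $X$ or Riemann extension across $Z$; since $\tilde h$ is singular, the paper extracts a $C^2$ approximant $\tilde h_{j,1}\le\tilde h$ from Definition \ref{singular nak} (property (1.2.1)) and works with that. Your argument omits both of these steps, and as written does not yield the extension.

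\end{document}
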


\begin{proof} We note that $\{\Psi<-t_0\}=\{\Psi_1<-t_0\}$ and $\Psi_1=\Psi=\psi-2\log|F|$ on $\{\Psi<-t_0\}$. It follows from inequality \eqref{condition of lemma 2.2}, $\tilde{h}=he^{-\psi}$ and $c(t)e^{-t}$ is decreasing with respect to $t$ that
$$\int_M \frac{1}{B} \mathbb{I}_{\{-t_0-B< \Psi_1 < -t_0\}}  {|fF|}^2_{\tilde{h}}
<+\infty.$$

  As $c(t)\in \tilde{P}_{T,M,\Psi,h}$, $\{\Psi<-t_0\}=\{\Psi_1<-t_0\}$ and $\Psi_1=\Psi$ on $\{\Psi<-t_0\}$, there  exists a closed subset $E_0\subset Z\cap \{\Psi=-\infty\}$ such that for any compact subset $K\subset M\backslash E_0$, $|e|_h^2c(-\Psi)\ge C_K|e|^2_{\hat{h}}$ on $K\cap \{\Psi_1<-t_0\} $, where $C_K>0$ is a constant and $e$ is any $E$-valued holomorphic $(n,0)$ form on $\{\Psi_1<-t_0\}$.
It follows from inequality \eqref{condition of lemma 2.2} that we have
\begin{equation}\nonumber
\int_{K\cap \{\Psi_1<-t_0\}}|f|^2_{\hat{h}}<+\infty.
\end{equation}

As $(M,X,Z)$  satisfies condition $(A)$,  $M\backslash (Z\cup X)$ is a weakly pseudoconvex K\"ahler manifold. It follows from Lemma \ref{L2 method} that there exists an $E$-valued holomorphic $(n,0)$ form $\tilde{F}_Z$ on $M\backslash (Z\cup X)$ such that
\begin{equation*}
  \begin{split}
      & \int_{M\backslash (Z\cup X)}|\tilde{F}_Z-(1-b_{t_0,B}(\Psi_1))fF^{1+\delta}|^2_{\tilde{h}}e^{v_{t_0,B}(\Psi_1)-\delta \tilde{M}}c(-v_{t_0,B}(\Psi_1)) \\
      \le & \left(\frac{1}{\delta}c(T_1)e^{-T_1}+\int_{T_1}^{t_0+B}c(s)e^{-s}ds \right)
       \int_{M}\frac{1}{B}\mathbb{I}_{\{-t_0-B<\Psi_1<-t_0\}}|fF|^2_{\tilde{h}}<+\infty.
  \end{split}
\end{equation*}

For any $z\in \left((Z\cup X)\backslash E_0\right)$, there exists an open neighborhood $V_z$ of $z$ such that $V_z\Subset M\backslash E_0$.

As $(M,E,\Sigma,M_j,\tilde{h},\tilde{h}_{j,s})$ is a singular metric on $E$ and $\Theta_{\tilde{h}}(E)\ge^s_{Nak} 0$,  there exist a relatively compact subset $M_{j'}\subset M$ containing $V_z$ and a $C^2$ smooth metric $\tilde{h}_{j',1}\le \tilde{h}$ on $V_z\subset M_{j'}$.
Note that $\delta \tilde{M}$ is a plurisubharmonic function on $M$. As $c(t)e^{-t}$ is decreasing with respect to $t$ and $v_{t_0,B}(\Psi_1)\ge -t_0-\frac{B}{2}$, we have $c(-v_{t_0,B}(\Psi_1))e^{v_{t_0,B}(\Psi_1)}\ge c(t_0+\frac{B}{2})e^{-t_0-\frac{B}{2}}>0$. Denote $C:=\inf\limits_{V_z}e^{v_{t_0,B}(\Psi_1)-\delta M}c(-v_{t_0,B}(\Psi_1))$, we know $C>0$. On $V_z$, as both $\hat{h}$ and $\tilde{h}_{j',1}$ are continuous, we have $\tilde{h}_{j',1}\le \tilde{C}\hat{h}$ for some $\tilde{C}>0$. Then we have
\begin{equation*}
  \begin{split}
  &\int_{V_z\backslash (Z\cup X)}|\tilde{F}_Z|^2_{\tilde{h}_{j',1}}\\
  \le & 2\int_{V_z\backslash (Z\cup X)}|\tilde{F}_Z-(1-b_{t_0,B}(\Psi_1))fF^{1+\delta}|^2_{\tilde{h}_{j',1}}
  +2\int_{V_z\backslash (Z\cup X)}|(1-b_{t_0,B}(\Psi_1))fF^{1+\delta}|^2_{\tilde{h}_{j',1}}
 \\
 \le& 2\int_{V_z\backslash (Z\cup X)}|\tilde{F}_Z-(1-b_{t_0,B}(\Psi_1))fF^{1+\delta}|^2_{\tilde{h}}
  +2\sup_{V_z}|F^{1+\delta}|^2\int_{\{\Psi_1<-t_0\}\cap V_z}|f|^2_{\tilde{h}_{j',1}}\\
  \le &
  \frac{2}{C}\bigg(\int_{M\backslash (Z\cup X)}|\tilde{F}_Z-(1-b_{t_0,B}(\Psi_1))fF^{1+\delta}|^2_{\tilde{h}}e^{v_{t_0,B}(\Psi_1)-\delta M}c(-v_{t_0,B}(\Psi_1))\bigg)\\
  &+\tilde{C}\sup_{V_z}|F^{1+\delta}|^2\int_{\{\Psi_1<-t_0\}\cap V_z}|f|^2_{\hat{h}}\\
  <&+\infty.
  \end{split}
\end{equation*}

As $Z\cup X$ is locally negligible with respect to $L^2$ holomorphic function, we can find an $E$-valued holomorphic extension $\tilde{F}_{E_0}$ of $\tilde{F}_Z$ from $M\backslash (Z\cup X)$ to $M\backslash E_0$ such that

\begin{equation*}
  \begin{split}
      & \int_{M\backslash E_0}|\tilde{F}_{E_0}-(1-b_{t_0,B}(\Psi_1))fF^{1+\delta}|^2_{\tilde{h}}e^{v_{t_0,B}(\Psi_1)-\delta \tilde{M}}c(-v_{t_0,B}(\Psi_1)) \\
      \le & \left(\frac{1}{\delta}c(T_1)e^{-T_1}+\int_{T_1}^{t_0+B}c(s)e^{-s}ds\right)
       \int_{M}\frac{1}{B}\mathbb{I}_{\{-t_0-B<\Psi_1<-t_0\}}|fF|^2_{\tilde{h}}.
  \end{split}
\end{equation*}

 Note that $E_0\subset\{\Psi=-\infty\}\subset\{\Psi<-t_0\}$ and $\{\Psi<-t_0\}$ is open, then for any $z\in E_0$, there exists an open neighborhood $U_z$ of $z$ such that $U_z\Subset\{\Psi<-t_0\}=\{\Psi_1<-t_0\}$. As $(M,E,\Sigma,M_j,\tilde{h},\tilde{h}_{j,s})$ is a singular metric on $E$ and $\Theta_{\tilde{h}}(E)\ge^s_{Nak} 0$,  there exist a relatively compact subset $M_{j''}\subset M$ containing $U_z$ and a $C^2$ smooth metric $\tilde{h}_{j'',1}\le \tilde{h}$ on $U_z\subset M_{j''}$.
 As $v_{t_0,B}(t)\ge -t_0-\frac{B}{2}$, we have $c(-v_{t_0,B}(\Psi_1))e^{v_{t_0,B}(\Psi_1)}\ge c(t_0+\frac{B}{2})e^{-t_0-\frac{B}{2}}>0$. Note that $\delta \tilde{M}$ is plurisubharmonic on $M$. Thus we have
\begin{equation*}
  \begin{split}
      & \int_{U_z\backslash E_0}|\tilde{F}_{E_0}-(1-b_{t_0,B}(\Psi))fF^{1+\delta}|^2_{\tilde{h}_{j'',1}} \\
      \le  & \int_{U_z\backslash E_0}|\tilde{F}_{E_0}-(1-b_{t_0,B}(\Psi))fF^{1+\delta}|^2_{\tilde{h}} \\
      \le & \frac{1}{C_1}\int_{U_z\backslash E_0}|\tilde{F}_{E_0}-(1-b_{t_0,B}(\Psi_1))fF^{1+\delta}|^2_{\tilde{h}}e^{v_{t_0,B}(\Psi_1)-\delta \tilde{M}}c(-v_{t_0,B}(\Psi_1))
      <+\infty,
  \end{split}
\end{equation*}
where $C_1$ is some positive number.

As $U_z\Subset\{\Psi<-t_0\}$, we have
\begin{equation*}
  \begin{split}
       \int_{U_z\backslash E_0}|(1-b_{t_0,B}(\Psi))fF^{1+\delta}|^2_{\tilde{h}_{j'',1}}
      \le
      \left(\sup_{U_z}|F^{1+\delta}|^2\right)\int_{U_z}|f|^2_{\tilde{h}_{j'',1}} <+\infty.
  \end{split}
\end{equation*}
Hence we have
$$\int_{U_z\backslash E_0}|\tilde{F}_{E_0}|^2_{\tilde{h}_{j'',1}}<+\infty. $$
As $E_0$ is contained in some analytic subset of $M$, we can find a holomorphic extension $\tilde{F}$ of $\tilde{F}_{E_0}$ from $M\backslash E_0$ to $M$ such that

\begin{equation}
\begin{split}
&\int_{M}|\tilde{F}-(1-b_{t_0,B}(\Psi_1))fF^{1+\delta}|^2_{\tilde{h}}e^{v_{t_0,B}(\Psi_1)-\delta \tilde{M}}c(-v_{t_0,B}(\Psi_1))\\
\le & \left(\frac{1}{\delta}c(T_1)e^{-T_1}+\int_{T_1}^{t_0+B}c(t)e^{-t}dt\right)\int_M \frac{1}{B} \mathbb{I}_{\{-t_0-B< \Psi_1 < -t_0\}}  |fF|^2_{\tilde{h}}.
\end{split}
\end{equation}
Lemma \ref{L2 method for c(t)} is proved.
\end{proof}

Let $T\in[-\infty,+\infty)$. Let $c(t)\in \tilde{P}_{T,M,\Psi.h}$.
Following the notations in Lemma \ref{L2 method for c(t)} and using the result of Lemma \ref{L2 method for c(t)}, we have the following lemma, which will be used to prove Theorem \ref{main theorem}.
\begin{Lemma}
\label{L2 method in JM concavity}
Let $(M,X,Z)$  satisfy condition $(A)$. Let $B \in (0, +\infty)$ and $t_0>t_1> T$ be arbitrarily given.
Let $f$ be a holomorphic $(n,0)$ form on $\{\Psi< -t_0\}$ such that
\begin{equation}
\int_{\{\Psi<-t_0\}} {|f|}^2_{h}c(-\Psi)<+\infty,
\label{condition of JM concavity}
\end{equation}
Then there exists an $E$-valued holomorphic $(n,0)$ form $\tilde{F}$ on $\{\Psi<-t_1\}$ such that

 \begin{equation*}
  \begin{split}
      & \int_{\{\Psi<-t_1\}}|\tilde{F}-(1-b_{t_0,B}(\Psi))f|^2_h e^{v_{t_0,B}(\Psi)-\Psi}c(-v_{t_0,B}(\Psi)) \\
      \le & \left(\int_{t_1}^{t_0+B}c(s)e^{-s}ds\right)
       \int_{M}\frac{1}{B}\mathbb{I}_{\{-t_0-B<\Psi<-t_0\}}|f|^2_he^{-\Psi},
  \end{split}
\end{equation*}
where $b_{t_0,B}(t)=\int^{t}_{-\infty}\frac{1}{B} \mathbb{I}_{\{-t_0-B< s < -t_0\}}ds$,
$v_{t_0,B}(t)=\int^{t}_{-t_0}b_{t_0,B}(s)ds-t_0$.
\end{Lemma}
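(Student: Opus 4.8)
The plan is to derive this lemma from Lemma \ref{L2 method for c(t)} by making a specific and essentially trivial choice of the parameters there, namely the trivial line-bundle twist $F$ replaced by the constant $1$ in the relevant place and the exponent $\delta$ taken to be $0$ in spirit, but since Lemma \ref{L2 method for c(t)} is stated for a positive integer $\delta$, the honest route is to apply it with the measurable metric $h$ itself playing the role of $h$, with $\psi$ replaced by $\Psi$ (so that $\tilde h = h e^{-\Psi}$ on the relevant sublevel sets), and then let $\delta\to 0+$ in a rescaled inequality, or more cleanly, reprove the statement by reading off the construction in the proof of Lemma \ref{L2 method for c(t)} with $\tilde M$ absent. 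Concretely: on $\{\Psi<-t_0\}$ one has $\Psi=\psi-2\log|F|$, so $|f|_h^2 e^{-\Psi}=|fF|_{\tilde h}^2\cdot|F|^{-2\delta}e^{\delta\tilde M}$ type bookkeeping lets one match the two weight functions; the point of the present lemma is the clean form in which the vector bundle $E$ carries the metric $h e^{-\Psi}$ directly (Nakano semipositivity of which is what Lemma \ref{L2 method for c(t)} ultimately uses), so the $\delta$-dependent terms $\frac1\delta c(T_1)e^{-T_1}$ and $-\delta\tilde M$ simply disappear.

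First I would set $T_1=t_1$ in Lemma \ref{L2 method for c(t)} and note that the hypothesis \eqref{condition of JM concavity} is exactly \eqref{condition of lemma 2.2} with the gain $c$; hence the conclusion of Lemma \ref{L2 method for c(t)} applies and furnishes, for each positive integer $\delta$, an $E$-valued holomorphic $(n,0)$ form $\tilde F_\delta$ on $M$ with
\[
\int_M \bigl|\tilde F_\delta-(1-b_{t_0,B}(\Psi_1))fF^{1+\delta}\bigr|^2_{\tilde h}\,e^{v_{t_0,B}(\Psi_1)-\delta\tilde M}c(-v_{t_0,B}(\Psi_1))
\le\Bigl(\tfrac1\delta c(t_1)e^{-t_1}+\int_{t_1}^{t_0+B}c(s)e^{-s}ds\Bigr)\int_M\tfrac1B\mathbb I_{\{-t_0-B<\Psi_1<-t_0\}}|fF|^2_{\tilde h}.
\]
Next I would divide through by an appropriate power of $F$: since $\tilde h=he^{-\psi}$ and $e^{-\delta\tilde M}=e^{-\delta\max\{\psi+T_1,2\log|F|\}}\le e^{-\delta(\psi+T_1)}$ and also $\le|F|^{-2\delta}$, one checks that $|G F^{1+\delta}|_{\tilde h}^2 e^{-\delta\tilde M}$ compares, on $\{\Psi<-t_1\}$ where $F$ is zero-free, to $|G|_h^2 e^{-\Psi}$ up to the harmless factor $e^{-2\delta\log|F|-\delta T_1}$ bounded between fixed constants on compacts; restricting the integral to $\{\Psi<-t_1\}$ (where $\Psi_1=\Psi$) and writing $\tilde F:=\tilde F_\delta/F^{1+\delta}$ (holomorphic there since $F\ne0$) converts the displayed estimate into
\[
\int_{\{\Psi<-t_1\}}\bigl|\tilde F-(1-b_{t_0,B}(\Psi))f\bigr|^2_h\,e^{v_{t_0,B}(\Psi)-\Psi}c(-v_{t_0,B}(\Psi))
\le\Bigl(\tfrac1\delta c(t_1)e^{-t_1}+\int_{t_1}^{t_0+B}c(s)e^{-s}ds\Bigr)\int_M\tfrac1B\mathbb I_{\{-t_0-B<\Psi<-t_0\}}|f|^2_h e^{-\Psi}.
\]
Finally, letting $\delta\to 0+$ and extracting a weak limit of the $\tilde F_\delta/F^{1+\delta}$ (the bound on the right is uniformly bounded as $\delta\to 0$, and the left-hand integrands are coercive for the $L^2_{\mathrm{loc}}$ topology of holomorphic forms by the positive-lower-bound property of $c\in\tilde P_{T,M,\Psi,h}$ relative to $\hat h$ on compacts of $\{\Psi<-t_1\}\setminus E_0$), one obtains a holomorphic $\tilde F$ on $\{\Psi<-t_1\}$ satisfying the asserted inequality with the $\frac1\delta$ term gone. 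The extension across $E_0\subset Z\cap\{\Psi=-\infty\}$ is handled exactly as in the proof of Lemma \ref{L2 method for c(t)}, using local negligibility of $Z\cup X$ and boundedness of the local $C^2$ metrics $\tilde h_{j,1}$.

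The main obstacle I anticipate is the passage $\delta\to 0+$ together with the division by $F^{1+\delta}$: one must be sure that the family $\{\tilde F_\delta/F^{1+\delta}\}$ is locally uniformly $L^2$-bounded on $\{\Psi<-t_1\}$ (so that a normal-families/Montel argument yields a holomorphic limit), and that the limit still satisfies the right germ condition — but here there is no germ condition imposed in the statement, so the real work is just the uniform local $L^2$ bound, which follows because $v_{t_0,B}(\Psi)\ge -t_0-\tfrac B2$ keeps $e^{v_{t_0,B}(\Psi)}c(-v_{t_0,B}(\Psi))$ bounded below by a positive constant on the whole domain, and because the difference term $(1-b_{t_0,B}(\Psi))f$ is itself locally $L^2$ by hypothesis \eqref{condition of JM concavity}. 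Once this compactness point is secured, everything else is the bookkeeping comparison of weights already rehearsed in the proof of Lemma \ref{L2 method for c(t)}.
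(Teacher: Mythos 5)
Your overall plan reproduces the paper's proof of Lemma \ref{L2 method in JM concavity}: apply Lemma \ref{L2 method for c(t)} with $T_1=t_1$, divide the resulting section $\tilde F_\delta$ by a power of $F$ on $\{\Psi<-t_1\}$ (where $F$ is zero-free), use the identity $\tilde M=2\log|F|$ and $\Psi_1=\Psi$ there to match the weights, run a Montel/Fatou argument to pass to a limit, and extend across $E_0$ by local negligibility. The weight bookkeeping in your second display is in fact exact on $\{\Psi<-t_1\}$ — not merely ``up to a harmless factor'' — because $\tilde M=\max\{\psi+t_1,2\log|F|\}=2\log|F|$ there, which is precisely the cancellation the paper exploits.

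There is, however, a genuine error in the limiting step. You take $\delta\to 0^+$ and assert that ``the bound on the right is uniformly bounded as $\delta\to 0$,'' and your opening framing speaks of the ``exponent $\delta$ taken to be $0$ in spirit.'' This is backwards. In Lemma \ref{L2 method for c(t)} the constant on the right-hand side is
\begin{equation*}
\frac{1}{\delta}\,c(t_1)e^{-t_1}+\int_{t_1}^{t_0+B}c(s)e^{-s}\,ds,
\end{equation*}
which blows up as $\delta\to 0^+$; the $\frac1\delta$ term vanishes only as $\delta\to+\infty$, and indeed $\delta$ in Lemma \ref{L2 method} is a positive integer, so $\delta\to 0^+$ is not even available. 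The paper's proof passes to $\liminf_{\delta\to+\infty}$ for exactly this reason. Taking $\delta\to 0^+$ you would have no uniform local $L^2$ bound on $\tilde F_\delta/F^{1+\delta}$, so the normal-families extraction you rely on would fail. The purpose of $\delta$ is \emph{not} to shrink the twist $F^{1+\delta}$ toward $F$ (you remove that by dividing, for every fixed $\delta$); its purpose is solely to suppress the boundary term $\frac1\delta c(t_1)e^{-t_1}$, and for that you must let $\delta\to+\infty$. Once the limit direction is corrected, the rest of your argument goes through as in the paper.
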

\begin{proof}[Proof of Lemma \ref{L2 method in JM concavity}]

Denote that
$$\tilde{\Psi}:=\min\{\psi-2\log|F|,-t_1\}.$$

As $t_0>t_1>T$, we have $\{\tilde{\Psi}<-t_0\}=\{\Psi<-t_0\}$. It follows from inequality \eqref{condition of JM concavity} and Lemma \ref{L2 method for c(t)} that there exists an $E$-valued holomorphic $(n,0)$ form $\tilde{F}_{\delta}$ on $M$ such that

\begin{equation*}
  \begin{split}
      & \int_{M}|\tilde{F}_{\delta}-(1-b_{t_0,B}(\tilde\Psi))fF^{1+\delta}|^2_{\tilde{h}}e^{v_{t_0,B}(\tilde \Psi)-\delta \tilde{M}}c(-v_{t_0,B}(\tilde \Psi)) \\
      \le & \left(\frac{1}{\delta}c(t_1)e^{-t_1}+\int_{t_1}^{t_0+B}c(s)e^{-s}ds\right)
       \int_{M}\frac{1}{B}\mathbb{I}_{\{-t_0-B<\tilde\Psi<-t_0\}}{|fF|}^2_{\tilde{h}}.
  \end{split}
\end{equation*}

Note that on $\{\Psi<-t_1\}$, we have $\Psi=\tilde{\Psi}=\psi-2\log|F|$.  Hence
\begin{equation}\label{1st formula in L2 method JM concavity}
\begin{split}
    & \int_{\{\Psi<-t_1\}}|\tilde{F}_{\delta}-(1-b_{t_0,B}(\Psi))fF^{1+\delta}|^2_{\tilde{h}}e^{v_{t_0,B}( \Psi)-\delta \tilde{M}}c(-v_{t_0,B}(\Psi)) \\
    = &\int_{\{\Psi<-t_1\}}|\tilde{F}_{\delta}-(1-b_{t_0,B}(\tilde\Psi))fF^{1+\delta}|^2_{\tilde{h}}e^{v_{t_0,B}(\tilde \Psi)-\delta \tilde{M}}c(-v_{t_0,B}(\tilde \Psi))\\
     \le &
      \int_{M}|\tilde{F}_{\delta}-(1-b_{t_0,B}(\tilde\Psi))fF^{1+\delta}|^2_{\tilde{h}}e^{v_{t_0,B}(\tilde \Psi)-\delta \tilde{M}}c(-v_{t_0,B}(\tilde \Psi)) \\
     \le & \left(\frac{1}{\delta}c(t_1)e^{-t_1}+\int_{t_1}^{t_0+B}c(s)e^{-s}ds\right)
       \int_{M}\frac{1}{B}\mathbb{I}_{\{-t_0-B<\tilde\Psi<-t_0\}}{|fF|}^2_{\tilde{h}}\\
       =&
     \left(\frac{1}{\delta}c(t_1)e^{-t_1}+\int_{t_1}^{t_0+B}c(s)e^{-s}ds\right)
       \int_{M}\frac{1}{B}\mathbb{I}_{\{-t_0-B<\Psi<-t_0\}}{|fF|}^2_{\tilde{h}}<+\infty.
\end{split}
\end{equation}

 Let $F_{\delta}:=\frac{\tilde{F}_{\delta}}{F^{\delta}}$ be an $E$-valued holomorphic $(n,0)$ form on $\{\Psi<-t_1\}$. Then it follows from \eqref{1st formula in L2 method JM concavity}  that
\begin{equation}\label{2nd formula in L2 method JM concavity}
\begin{split}
    & \int_{\{\Psi<-t_1\}}|F_{\delta}-(1-b_{t_0,B}(\Psi))fF|^2_{\tilde{h}}
    e^{v_{t_0,B}(\Psi)}c(-v_{t_0,B}(\Psi)) \\
    \le&
     \bigg(\frac{1}{\delta}c(t_1)e^{-t_1}+\int_{t_1}^{t_0+B}c(s)e^{-s}ds\bigg)
       \int_{M}\frac{1}{B}\mathbb{I}_{\{-t_0-B<\Psi<-t_0\}}{|fF|}^2_{\tilde{h}}.
\end{split}
\end{equation}

Note that $e^{v_{t_0,B}(\Psi)}c(-v_{t_0,B}(\Psi))\ge \left(c(t_0+\frac{2}{B})e^{-t_0-\frac{2}{B}}\right)>0$. As $c(t)\in \tilde{P}_{T,M,\Psi.h}$, there exists a closed subset $E_0$ of $M$ such that $E_0\subset Z\cap \{\Psi(z)=-\infty\}$ (where $Z$ is an analytic subset of $M$) and for any compact subset $K\subset M\backslash E_0$, $|e_x|_h^2c(-\Psi)\ge C_K|e_x|^2_{\hat{h}}$ for any $ x\in K\cap\{\Psi<-t_0\}$, where $C_K>0$ is a constant and $e_x\in E_x$.
Let $K$ be any compact subset of $M\backslash E_0$. As $(M,E,\Sigma,M_j,\tilde{h},\tilde{h}_{j,s})$ is a singular metric on $E$ and $\Theta_{\tilde{h}}(E)\ge^s_{Nak} 0$,  there exist a relatively compact subset $M_{j_K}\subset M$ containing $K$ and a $C^2$ smooth metric $\tilde{h}_{j_K,1}\le \tilde{h}$ on $K\subset M_{j_K}$. It follows from inequality \eqref{2nd formula in L2 method JM concavity} that we have
 $$\sup_{\delta} \int_{\{\Psi<-t_1\}\cap K}|F_{\delta}-(1-b_{t_0,B}(\Psi))fF|^2_{\tilde{h}_{j_K,1}}<+\infty.$$

We also note that
$$\int_{\{\Psi<-t_1\}\cap K}|(1-b_{t_0,B}(\Psi))fF|^2_{\tilde{h}_{j_K,1}}\le
\left(\sup_{K}|F|^2\right)\int_{\{\Psi<-t_0\}\cap K}|f|^2_{\tilde{h}_{j_K,1}}<+\infty.$$
Then we know that
$$\sup_{\delta} \int_{\{\Psi<-t_1\}\cap K}|F_{\delta}|^2_{\tilde{h}_{j_K,1}}<+\infty.$$
By Montel theorem and diagonal method, there exists a subsequence of $\{F_\delta\}$ (also denoted by $F_\delta$) compactly convergent to a holomorphic $(n,0)$ form $\tilde{F}_1$ on $\{\Psi<-t_1\}\backslash E_0$.
It follows from Fatou's Lemma and inequality \eqref{2nd formula in L2 method JM concavity} that we have

\begin{equation}\label{3rd formula in L2 method JM concavity}
\begin{split}
  & \int_{\{\Psi<-t_1\}\backslash E_0}|\tilde{F}_1-(1-b_{t_0,B}(\Psi))fF|^2_{\tilde{h}}
    e^{v_{t_0,B}(\Psi)}c(-v_{t_0,B}(\Psi)) \\
    \le &\liminf_{\delta\to +\infty} \int_{\{\Psi<-t_1\}\backslash E_0}|F_{\delta}-(1-b_{t_0,B}(\Psi))fF|^2_{\tilde{h}}
    e^{v_{t_0,B}(\Psi)}c(-v_{t_0,B}(\Psi)) \\
      \le &\liminf_{\delta\to +\infty} \int_{\{\Psi<-t_1\}}|F_{\delta}-(1-b_{t_0,B}(\Psi))fF|^2_{\tilde{h}}
    e^{v_{t_0,B}(\Psi)}c(-v_{t_0,B}(\Psi)) \\
       \le&\liminf_{\delta\to +\infty}
     \bigg(\frac{1}{\delta}c(t_1)e^{-t_1}+\int_{t_1}^{t_0+B}c(s)e^{-s}ds\bigg)
       \int_{M}\frac{1}{B}\mathbb{I}_{\{-t_0-B<\Psi<-t_0\}}{|fF|}^2_{\tilde{h}}\\
       \le &\left(\int_{t_1}^{t_0+B}c(s)e^{-s}ds\right)
       \int_{M}\frac{1}{B}\mathbb{I}_{\{-t_0-B<\Psi<-t_0\}}{|fF|}^2_{\tilde{h}}.
\end{split}
\end{equation}

Note that $E_0\subset\{\Psi=-\infty\}\subset\{\Psi<-t_1\}$ and $\{\Psi<-t_1\}$ is open, then for any $z\in E_0$, there exists an open neighborhood $U_z$ of $z$ such that $U_z\Subset\{\Psi<-t_1\}$. As $(M,E,\Sigma,M_j,\tilde{h},\tilde{h}_{j,s})$ is a singular metric on $E$ and $\Theta_{\tilde{h}}(E)\ge^s_{Nak} 0$,  there exist a relatively compact subset $M_{j''}\subset M$ containing $U_z$ and a $C^2$ smooth metric $\tilde{h}_{j'',1}\le \tilde{h}$ on $V_z\subset M_{j''}$. As $v_{t_0,B}(t)\ge -t_0-\frac{B}{2}$, we have $c(-v_{t_0,B}(\Psi_1))e^{v_{t_0,B}(\Psi_1)}\ge c(t_0+\frac{B}{2})e^{-t_0-\frac{B}{2}}>0$. Thus we have
\begin{equation*}
  \begin{split}
      & \int_{U_z\backslash E_0}|\tilde{F}_1-(1-b_{t_0,B}(\Psi))fF|^2_{\tilde{h}_{j'',1}} \\
      \le  & \int_{U_z\backslash E_0}|\tilde{F}_1-(1-b_{t_0,B}(\Psi))fF|^2_{\tilde{h}} \\
      \le & \frac{1}{C_1}\int_{U_z\backslash E_0}|\tilde{F}_1-(1-b_{t_0,B}(\Psi))fF|^2_{\tilde{h}}e^{v_{t_0,B}(\Psi)}c(-v_{t_0,B}(\Psi))
      <+\infty,
  \end{split}
\end{equation*}
where $C_1$ is some positive number.

As $U_z\Subset\{\Psi<-t_1\}$, we have
\begin{equation*}
  \begin{split}
       \int_{U_z\backslash E_0}|(1-b_{t_0,B}(\Psi))fF|^2_{\tilde{h}_{j'',1}}
      \le
      \left(\sup_{U_z}|F|^2\right)\int_{U_z}|f|^2_{\tilde{h}_{j'',1}} <+\infty.
  \end{split}
\end{equation*}
Hence we have
$$\int_{U_z\backslash E_0}|\tilde{F}_1|^2_{\tilde{h}_{j'',1}}<+\infty. $$
As $E_0$ is contained in some analytic subset of $M$, we can find a holomorphic extension $\tilde{F}_0$ of $\tilde{F}_1$ from $\{\Psi<-t_1\}\backslash E_0$ to $\{\Psi<-t_1\}$ such that

\begin{equation}\label{3rd formula in L2 method JM concavity}
\begin{split}
  & \int_{\{\Psi<-t_1\}}|\tilde{F}_1-(1-b_{t_0,B}(\Psi))fF|^2_{\tilde{h}}
    e^{v_{t_0,B}(\Psi)}c(-v_{t_0,B}(\Psi)) \\
       \le &\left(\int_{t_1}^{t_0+B}c(s)e^{-s}ds\right)
       \int_{M}\frac{1}{B}\mathbb{I}_{\{-t_0-B<\Psi<-t_0\}}{|fF|}^2_{\tilde{h}}.
\end{split}
\end{equation}

Denote $\tilde{F}:=\frac{\tilde{F}_0}{F}$. Note that $\tilde{h}=he^{-\psi}$ and on $\{\Psi<-t_1\}$, we have $\Psi=\psi-2\log|F|$. It follows from inequality \eqref{3rd formula in L2 method JM concavity} that we have
\begin{equation}\nonumber
\begin{split}
  & \int_{\{\Psi<-t_1\}}|\tilde{F}-(1-b_{t_0,B}(\Psi))f|^2_h
    e^{v_{t_0,B}(\Psi)-\Psi}c(-v_{t_0,B}(\Psi)) \\
       \le &\left(\int_{t_1}^{t_0+B}c(s)e^{-s}ds\right)
       \int_{M}\frac{1}{B}\mathbb{I}_{\{-t_0-B<\Psi<-t_0\}}|f|^2_he^{-\Psi}.
\end{split}
\end{equation}

Lemma \ref{L2 method in JM concavity} is proved.

\end{proof}
\subsection{Properties of $\mathcal{O}_{M,z_0}$-module $J_{z_0}$}
\label{sec:properties of module}
In this section, we present some properties of $\mathcal{O}_{M,z_0}$-module $J_{z_0}$.

We recall the following property of closedness of holomorphic functions on a neighborhood of $o$.

\begin{Lemma}[Closedness of Submodules, see \cite{G-R}] Let $N$ be a submodule of $\mathcal{O}^q_{\mathbb{C}^n,0}$, $1\le q < +\infty$, let $f_j \in \mathcal{O}^q_{\mathbb{C}^n}(U)$ be a sequence of $q$-tuples holomorphic in an open neighborhood $U$ of the origin. Assume that the $f_j$ converge uniformly in $U$ towards a $q$-tuple $f \in \mathcal{O}^q_{\mathbb{C}^n}(U)$, assume furthermore that all germs $f_{j,0}$ belong to $N$. Then $f_0 \in N$.
\label{closedness}
\end{Lemma}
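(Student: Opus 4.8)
The plan is to reduce the statement to finite-dimensional linear algebra by passing to $\mathfrak{m}$-adic quotients. Write $\mathfrak{m}\subset\mathcal{O}_{\mathbb{C}^n,0}$ for the maximal ideal, so that $\mathfrak{m}^k\mathcal{O}^q_{\mathbb{C}^n,0}$ is precisely the submodule of $q$-tuples of germs vanishing to order $\ge k$ at the origin. Since $\mathcal{O}_{\mathbb{C}^n,0}$ is Noetherian and $\mathcal{O}^q_{\mathbb{C}^n,0}$ is a finitely generated (indeed free) module over it, the Artin--Rees lemma, equivalently the Krull intersection theorem applied to $\mathcal{O}^q_{\mathbb{C}^n,0}/N$, yields
\[
\bigcap_{k\ge 1}\bigl(N+\mathfrak{m}^k\mathcal{O}^q_{\mathbb{C}^n,0}\bigr)=N .
\]
Thus it suffices to show that $f_0\in N+\mathfrak{m}^k\mathcal{O}^q_{\mathbb{C}^n,0}$ for every fixed $k\ge 1$.

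Fixing such a $k$, I would consider the finite-dimensional complex vector space $V_k:=\mathcal{O}^q_{\mathbb{C}^n,0}/\mathfrak{m}^k\mathcal{O}^q_{\mathbb{C}^n,0}$, an element of which is recorded by the Taylor coefficients of order $<k$ of the $q$ components. The key point is that the composite map $\pi_k\colon\mathcal{O}^q_{\mathbb{C}^n}(U)\to\mathcal{O}^q_{\mathbb{C}^n,0}\to V_k$, sending a holomorphic $q$-tuple on $U$ to the class of its germ at the origin, is continuous for the topology of uniform convergence on $U$: by the Cauchy integral formula each Taylor coefficient at $0$ of order $<k$ is obtained by integrating the $q$-tuple against a fixed kernel over a small polytorus contained in $U$, hence depends continuously on the $q$-tuple in the sup norm on $U$. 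Therefore $\pi_k(f_j)\to\pi_k(f)$ in $V_k$ as $j\to+\infty$.

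By hypothesis every germ $f_{j,0}$ lies in $N$, so $\pi_k(f_j)$ lies in the linear subspace $\bar N_k:=\bigl(N+\mathfrak{m}^k\mathcal{O}^q_{\mathbb{C}^n,0}\bigr)/\mathfrak{m}^k\mathcal{O}^q_{\mathbb{C}^n,0}$ of $V_k$. Being a linear subspace of a finite-dimensional space, $\bar N_k$ is closed, so $\pi_k(f)=\lim_{j\to+\infty}\pi_k(f_j)\in\bar N_k$, which means exactly that $f_0\in N+\mathfrak{m}^k\mathcal{O}^q_{\mathbb{C}^n,0}$. Since $k$ was arbitrary, the displayed intersection identity gives $f_0\in N$, completing the argument.

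The only substantive inputs are the Noetherian property of the ring of convergent power series (R\"uckert's basis theorem) and the Artin--Rees/Krull intersection statement built on it, together with the elementary Cauchy estimates; in a fully self-contained treatment, re-deriving the first two would be the main work, but here they may be quoted from standard references such as \cite{G-R}. One could alternatively realize $N$ as the stalk of a coherent subsheaf of $\mathcal{O}^q$ (via Oka's coherence theorem) and conclude from the open mapping theorem for Fr\'echet spaces over a small Stein polydisc; that route, however, must cope with the sections $f_j$ lying in the sheaf only over $j$-dependent neighbourhoods, a technical nuisance the $\mathfrak{m}$-adic argument avoids entirely.
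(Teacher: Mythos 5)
The paper does not prove this lemma; it is quoted verbatim from Grauert--Remmert \cite{G-R}, so there is no in-paper argument to compare against. Your proof is correct and is, as far as I can tell, exactly the standard argument behind the cited result: reduce via the Krull intersection theorem (a consequence of Artin--Rees for the Noetherian local ring $\mathcal{O}_{\mathbb{C}^n,0}$ and the finitely generated module $\mathcal{O}^q_{\mathbb{C}^n,0}/N$) to showing membership in $N+\mathfrak{m}^k\mathcal{O}^q_{\mathbb{C}^n,0}$ for every $k$; then use that the truncation map to $V_k=\mathcal{O}^q_{\mathbb{C}^n,0}/\mathfrak{m}^k\mathcal{O}^q_{\mathbb{C}^n,0}$ is continuous for the sup-norm on $U$ by Cauchy estimates over a fixed closed polydisc in $U$, and that $\bar N_k$ is a linear, hence closed, subspace of the finite-dimensional space $V_k$. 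All the steps hold as stated. Your closing remark correctly identifies the alternative route via Oka coherence and the Fr\'echet open-mapping theorem and correctly flags the technical nuisance (the $f_j$ live on $j$-dependent neighbourhoods) that makes the $\mathfrak{m}$-adic argument the cleaner one here.
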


We recall the following lemma which will be used in the proof of Lemma \ref{closedness of module}.
\begin{Lemma}
	\label{l:converge}
	Let $M$ be a complex manifold. Let $dV_M$ be a continuous volume form on $M$. Let $S$ be an analytic subset of $M$.  	Let $E$ be a holomorphic vector bundle on $M$ with rank $r$.
Let $\hat{h}$ be a smooth metric on $E$. Let $h$ be a \textit{measurable metric} on $E$.

	Let $\{g_j\}_{j=1,2,...}$ be a sequence of nonnegative Lebesgue measurable functions on $M$, which satisfies that $g_j$ are almost everywhere convergent to $g$ on  $M$ when $j\rightarrow+\infty$,  where $g$ is a nonnegative Lebesgue measurable function on $M$. Assume that for any compact subset $K$ of $M\backslash S$, we have  $|e_x|_h^2g_j\ge C_K|e_x|^2_{\hat{h}}$ for any $ x\in K$ and any $j\in\mathbb{Z}_{+}$, where $C_K>0$ is a constant and $e_x$ is any section of $E_x$.
	
 Let $\{F_j\}_{j=1,2,...}$ be a sequence of $E$-valued holomorphic $(n,0)$ forms on $M$. Assume that $\liminf_{j\rightarrow+\infty}\int_{M}|F_j|^2_hg_j\leq C$, where $C$ is a positive constant. Then there exists a subsequence $\{F_{j_l}\}_{l=1,2,...}$, which satisfies that $\{F_{j_l}\}$ is uniformly convergent to an $E$-valued holomorphic $(n,0)$ form $F$ on $M$ on any compact subset of $M$ when $l\rightarrow+\infty$, such that
 $$\int_{M}|F|^2_h g\leq C.$$
\end{Lemma}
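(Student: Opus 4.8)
The plan is to establish the convergence statement in Lemma~\ref{l:converge} by a standard normal-families argument followed by a Fatou-type lower-semicontinuity estimate, with the positivity hypothesis on $h$ used only to transfer $L^2$ bounds to local $\hat h$-bounds so that the Montel/diagonal machinery applies.

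\textbf{Step 1: Reduce to a uniform local $\hat h$-bound off $S$.} Passing to a subsequence, assume $\lim_{j\to+\infty}\int_M|F_j|^2_hg_j\le C$; in particular $\int_M|F_j|^2_hg_j$ is bounded, say by $C'$. Fix a compact subset $K\subset M\backslash S$. By hypothesis there is $C_K>0$ with $|e_x|^2_h\,g_j\ge C_K|e_x|^2_{\hat h}$ for all $x\in K$ and all $j$; applying this pointwise to the value of $F_j$ gives $|F_j|^2_{\hat h}\le C_K^{-1}|F_j|^2_hg_j$ on $K$, hence $\int_K|F_j|^2_{\hat h}\le C_K^{-1}C'$ uniformly in $j$. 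Since $\hat h$ is a smooth metric and $dV_M$ a continuous volume form, a locally bounded $L^2$ sequence of holomorphic sections is locally uniformly bounded (the value at a point is controlled by the $L^2$-norm over a small polydisc via the sub-mean-value property for holomorphic functions).

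\textbf{Step 2: Montel plus diagonal argument on $M\backslash S$.} By Step~1 the family $\{F_j\}$ is uniformly bounded on every compact subset of $M\backslash S$, so by Montel's theorem and a diagonal argument over an exhaustion of $M\backslash S$ by compacts we extract a subsequence $\{F_{j_l}\}$ converging uniformly on compact subsets of $M\backslash S$ to an $E$-valued holomorphic $(n,0)$ form $F$ on $M\backslash S$. Since $S$ is an analytic subset and the $F_{j_l}$ are holomorphic on all of $M$ with locally uniformly bounded $L^2_{\hat h}$-norms near points of $S$ as well — here I would note that the same $L^2$-bound $\int_M|F_j|^2_hg_j\le C'$ together with $g_j\to g$ a.e. does \emph{not} directly bound things near $S$, so instead I use that each $F_{j_l}$ is already globally holomorphic and apply the Riemann-type extension / Montel argument near $S$ using any smooth metric: one cannot yet conclude the limit extends, so the cleaner route is to first get $F$ holomorphic on $M\backslash S$ and then invoke that $F$ (a priori only on $M\backslash S$) has locally finite $L^2_{\hat h}$-norm near $S$ by the Fatou estimate of Step~3, whence $F$ extends holomorphically across $S$ because $S$ is locally negligible (an analytic set has this property). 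Thus $F$ is an $E$-valued holomorphic $(n,0)$ form on $M$, and $F_{j_l}\to F$ uniformly on compacts of $M$ by normal-family convergence on compacts of $M$ once the limit is known holomorphic on $M$.

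\textbf{Step 3: Fatou's lemma for the norm.} On $M\backslash S$ we have $|F_{j_l}|^2_h\to|F|^2_h$ pointwise (pointwise convergence of $F_{j_l}$ and continuity of $h$ as a measurable metric, a.e.) and $g_j\to g$ a.e., so $|F_{j_l}|^2_hg_{j_l}\to|F|^2_hg$ a.e.\ on $M$ (the set $S$ has measure zero). By Fatou's lemma,
\begin{displaymath}
\int_M|F|^2_hg\le\liminf_{l\to+\infty}\int_M|F_{j_l}|^2_hg_{j_l}\le C.
\end{displaymath}
In particular $|F|^2_hg\in L^1_{\mathrm{loc}}$, and combined with the local lower bound from the hypothesis (applied on compacts of $M\backslash S$) this gives $\int_K|F|^2_{\hat h}<+\infty$ for every compact $K\subset M\backslash S$, which is exactly what is needed to run the extension across the analytic set $S$ in Step~2.

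\textbf{Main obstacle.} The one delicate point is the logical order in Step~2: the uniform $L^2_{\hat h}$-bound from the hypothesis is only available on compacts of $M\backslash S$, so a priori the subsequence only converges on $M\backslash S$ and one must justify extension of the limit across $S$. I expect to handle this by first producing $F$ on $M\backslash S$, deriving via Fatou that $|F|^2_{\hat h}$ is locally integrable near $S$, and then invoking the fact that an analytic subset is locally negligible for $L^2$ holomorphic forms (as in condition~$(A)$.\,$\uppercase\expandafter{\romannumeral1}$) — or, more simply, using the Riemann extension theorem since $F$ is already holomorphic and locally $L^2$ near $S$. Once the limit is globally holomorphic, uniform convergence on compacts of $M$ is automatic, and the norm estimate from Step~3 completes the proof.
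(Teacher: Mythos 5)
Your overall plan (Montel off $S$, then Fatou, then extend across $S$) differs from the paper's, and the extension step has a genuine gap. You correctly observe the key difficulty yourself — "the uniform $L^2_{\hat h}$-bound from the hypothesis is only available on compacts of $M\backslash S$" — but the fix you propose does not close it. From Fatou you get $\int_M|F|^2_hg\le C$, and combining with the hypothesis you conclude $\int_K|F|^2_{\hat h}\le C/C_K<+\infty$ for every compact $K\subset M\backslash S$. This is \emph{not} the local $L^2$ finiteness across $S$ that a Riemann-type extension requires: the constants $C_K$ depend on $K$ and can deteriorate to $0$ as $K$ exhausts a punctured neighborhood of a point of $S$ (indeed nothing in the hypotheses prevents $g$ from vanishing along $S$). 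The function $1/z$ on $\mathbb{C}\setminus\{0\}$ has finite $L^2$ norm on every compact of $\mathbb{C}\setminus\{0\}$ but does not extend, so "locally $L^2$ on compacts of $M\backslash S$" is strictly weaker than what the locally-negligible property (or Riemann extension) demands, which is $L^2$ on full neighborhoods of points of $S$. Thus as written you cannot conclude that $F$ extends holomorphically past $S$, nor that the convergence is uniform on compacts of $M$.

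Your parenthetical "uniform convergence on compacts of $M$ is automatic, once the limit is known holomorphic on $M$" is also unjustified: you only have uniform Cauchy estimates on compacts of $M\backslash S$, and to transfer these to compacts meeting $S$ you need precisely the kind of maximum-principle control that the paper establishes. The paper's proof avoids the extension problem entirely by running that argument first: in a local trivialization it invokes the Local Parametrization Theorem together with the Maximum Principle to produce, for each compact $\tilde K$ in a chart, a compact $\tilde K_1\subset U\backslash\tilde S$ with $\sup_{\tilde K}|f|^2_1\le \tilde C_1\sup_{\tilde K_1}|f|^2_1$. This gives a uniform sup-bound on $\sup_K|F_j|^2_{\hat h}/dV_M$ over arbitrary compacts $K\subset M$ (including those meeting $S$) from the $L^2$-bound over a compact $K_2\subset M\backslash S$, so Montel and the diagonal argument apply directly on $M$, and Fatou finishes. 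If you want to salvage your route, you would need to insert exactly this maximum-principle lemma to upgrade compact convergence on $M\backslash S$ to compact convergence on $M$, at which point the separate extension step becomes unnecessary.
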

\begin{proof} Let $(U\subset\subset M, \theta)$ be a local trivialization of $E$, where $E$ is a holomorphic vector bundle on $M$. For any $f=(f_1,\ldots,f_r)\in H^0(U,\mathcal{O}(E))$, denote $|f|^2_1:=\sum_{i=1}^{r}|f_i|^2$. Then there exists a constant $\lambda>0$ such that $\frac{1}{\lambda} |f|^2_1\le|f|_{\hat{h}}^2\le \lambda |f|^2_1$ on $U$. Let $\tilde{K}$ be any compact subset of $U$ and $\tilde{S}$ be an analytic subset of $M$. By Local Parametrization Theorem (see \cite{Demaillybook}) and Maximum Principle, there exists a compact subset $\tilde{K}_1\subset U\backslash \tilde{S}$ such that
$$\sup_{z\in \tilde{K}}|f(z)|^2_{1}\le \tilde{C}_1 \sup_{z\in \tilde{K}_1}|f(z)|^2_{1}.$$
Hence we have
$$\sup_{z\in \tilde{K}}|f(z)|^2_{\hat{h}}\le \lambda \sup_{z\in \tilde{K}}|f(z)|^2_{1}\le \lambda \tilde{C}_1 \sup_{z\in \tilde{K}_1}|f(z)|^2_{1}\le \lambda^2 \tilde{C}_1\sup_{z\in \tilde{K}_1}|f(z)|^2_{\hat{h}}.$$
Recall that $S$ is an analytic subset of $M$. By the argument above, for any compact set $K\subset\subset M$, there exists $K_1\subset M\backslash S$ such that for any $j\ge 0$, we have
\begin{equation}\label{l:converge formula 1}
\sup_{z\in K}\frac{|F_j(z)|^2_{\hat{h}}}{dV_M}\le C_1 \sup_{z\in K_1}\frac{|F_j(z)|^2_{\hat{h}}}{dV_M},
\end{equation}
where $C_1>0$ (depends on $K$ and $\hat{h}$) is a real number. Then there exists a compact subset $K_2\subset M\backslash S$ such that $K_1\subset K_2$ and for any $z\in K_1$ and $j\ge 0$,
\begin{equation}\nonumber
\frac{|F_j(z)|^2_{\hat{h}}}{dV_M}\le C_2 \int_{K_2}|F_j(z)|^2_{\hat{h}}\le \frac{C_2}{C_{K_2}} \int_{K_2}|F_j(z)|^2_{h}g_j\le \frac{C_2}{C_{K_2}} C<+\infty.
\end{equation}
Hence we know that
$\sup_{K_1}\frac{|F_j(z)|^2_{\hat{h}}}{dV_M}$ is uniformly bounded with respect to $j$. Then it follows from inequality \eqref{l:converge formula 1}, Montel theorem and diagonal method that we have a subsequence of $\{F_j\}$ (still denoted by $\{F_j\}$) uniformly converges to an $E$-valued holomorphic $(n,0)$ form $F$ on any compact subset of $M$. It follows from Fatou's Lemma and $\liminf_{j\rightarrow+\infty}\int_{M}|F_j|^2_hg_j\leq C$ that we have
\begin{equation}\nonumber
\int_{M}|F|^2_hg\le \liminf_{j\rightarrow+\infty}\int_{M}|F_j|^2_hg_j\leq C.
\end{equation}
Lemma \ref{l:converge} has been proved.
\end{proof}

Since the properties of $J_{z_0}$
 is local, we assume that $D$ is a pseudoconvex domain in $\mathbb{C}^n$ containing the origin $o\in \mathbb{C}^n$. Let $F$ be a holomorphic function on $D$.
Let $f=(f_1,f_2,\ldots,f_r)$ be a holomorphic section of $D\times \mathbb{C}^r$. Let $\psi$ be a plurisubharmonic function on $D$. Let $h$ be a \textit{measurable metric} on $D\times \mathbb{C}^r$. Denote $\tilde{h}:=he^{-\psi}$. Let $(D,D\times \mathbb{C}^r,\Sigma,D_j,\tilde{h},\tilde{h}_{j,s})$ be a singular metric on $E:=D\times \mathbb{C}^r$ which satisfies $\Theta_{\tilde{h}}(E)\ge^s_{Nak} 0$. Let $T\in [-\infty,+\infty)$. Denote $$\Psi:=\min\{\psi-2\log|F|,-T\}.$$
If $F(z)=0$ for some $z \in M$, we set $\Psi(z)=-T$. Let $T_1>T$ be a real number.

Denote
$$\tilde{M}:=\max\{\psi+T,2\log|F|\},$$
$$\varphi_1:=2\max\{\psi+T_1,2\log|F|\},$$
and
$$\Psi_1:=\min\{\psi-2\log|F|,-T_1\}.$$
If $F(z)=0$ for some $z \in M$, we set $\Psi_1(z)=-T_1$. We also note that by definition $I(h,\Psi_1)_o=I(h,\Psi)_o$.

Let $c(t)$ be a positive measurable function on $(T,+\infty)$ such that $c(t)\in\tilde{P}_{T,D,\Psi,h}$.

Let $dV_D$ be a continuous volume form on $D$.
Denote that $H_o:=\{f_o\in J(E,\Psi)_o:\int_{\{\Psi<-t\}\cap V_0}|f|^2_hc(-\Psi)dV_D<+\infty \text{ for some }t>T \text{ and } V_0 \text{ is an open neighborhood of o}\}$ and
$\mathcal{H}_o:=\{(F,o)\in \mathcal{O}^r_{\mathbb{C}^n,o}:\int_{U_0}|F|^2_he^{-\varphi_1}c(-\Psi_1)dV_D<+\infty \text{ for some open neighborhood} \ U_0 \text{ of } o\}$.

As  $c(t)\in\tilde{P}_{T,D,\Psi,h}$, $c(t)e^{-t}$ is decreasing with respect to $t$ and we have $I(h,\Psi_1)_o=I(h,\Psi)_o\subset H_o$.
We also note that $\mathcal{H}_o$ is an submodule of $\mathcal{O}^r_{\mathbb{C}^n,o}$.

\begin{Lemma}\label{construction of morphism} For any $f_o\in H_o$, there exist a pseudoconvex domain  $D_0\subset D$ containing $o$ and a holomorphic section $\tilde{F}$ of $D\times \mathbb{C}^r$  on $D_0$ such that $(\tilde{F},o)\in \mathcal{H}_o$ and
$$\int_{\{\Psi_1<-t_1\}\cap D_0}|\tilde{F}-fF^2|^2_he^{-\varphi_1-\Psi_1}<+\infty,$$
for some $t_1>T_1$.
\end{Lemma}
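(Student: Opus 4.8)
The plan is to run Lemma \ref{L2 method for c(t)} (or rather its proof-scheme) locally near $o$, with a suitable choice of the parameters, and then package the resulting solution of the $\bar\partial$-type estimate into the submodule $\mathcal H_o$. First I would observe that since $f_o\in H_o$, there exist $t_0>T_1$ and an open neighborhood $V_0$ of $o$ with $\int_{\{\Psi<-t_0\}\cap V_0}|f|^2_hc(-\Psi)dV_D<+\infty$; shrinking $V_0$ we may take a pseudoconvex $D_0\Subset D$ containing $o$ with $D_0\subset V_0$, and since $\{\Psi<-t_0\}=\{\Psi_1<-t_0\}$ and $\Psi_1=\Psi$ there, $f$ is a holomorphic section on $\{\Psi_1<-t_0\}\cap D_0$ with finite weighted $L^2$ norm. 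Because $c(t)e^{-t}$ is decreasing and $\tilde h=he^{-\psi}$, the annulus integral $\int_{D_0}\frac1B\mathbb I_{\{-t_0-B<\Psi_1<-t_0\}}|fF|^2_{\tilde h}$ is finite for any $B>0$.

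Next I would apply Lemma \ref{L2 method for c(t)} on $D_0$ (a weakly pseudoconvex Kähler manifold, with the trivial bundle $D\times\mathbb C^r$, the s.h.m.\ $\tilde h$, and $\delta=1$): this produces an $E$-valued holomorphic $(n,0)$ form $\tilde F$ on $D_0$ with
\begin{equation*}
\int_{D_0}|\tilde F-(1-b_{t_0,B}(\Psi_1))fF^{2}|^2_{\tilde h}e^{v_{t_0,B}(\Psi_1)-\tilde M}c(-v_{t_0,B}(\Psi_1))\le C_0<+\infty,
\end{equation*}
where $b_{t_0,B},v_{t_0,B}$ are as in that lemma and $C_0=\bigl(c(T_1)e^{-T_1}+\int_{T_1}^{t_0+B}c(s)e^{-s}ds\bigr)\int_{D_0}\frac1B\mathbb I_{\{-t_0-B<\Psi_1<-t_0\}}|fF|^2_{\tilde h}$. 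The point is now to convert this into the two asserted conclusions. For the membership $(\tilde F,o)\in\mathcal H_o$: on a small neighborhood $U_0\ni o$ with $U_0\Subset\{\Psi_1<-t_0-B\}$ (possible since $\Psi_1(o)=-\infty$ after possibly enlarging $t_0$, or more carefully choosing $B$ so that $o$ lies in the region where $b_{t_0,B}(\Psi_1)=1$), the cutoff factor $1-b_{t_0,B}(\Psi_1)$ vanishes, so the integrand above reduces to $|\tilde F|^2_{\tilde h}e^{v_{t_0,B}(\Psi_1)-\tilde M}c(-v_{t_0,B}(\Psi_1))$; comparing $v_{t_0,B}(\Psi_1)$, $\tilde M$, $\varphi_1$ and $\Psi_1$ on $U_0$ — where all these are essentially controlled by $\psi-2\log|F|$ up to bounded continuous factors — and using $\tilde h=he^{-\psi}$, I get $\int_{U_0}|\tilde F|^2_he^{-\varphi_1}c(-\Psi_1)dV_D<+\infty$, i.e.\ $(\tilde F,o)\in\mathcal H_o$.

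For the second estimate, on $\{\Psi_1<-t_1\}\cap D_0$ for an appropriate $t_1>T_1$ (chosen so that $b_{t_0,B}(\Psi_1)\equiv0$ there, e.g.\ $t_1\ge t_0$), again $1-b_{t_0,B}(\Psi_1)=1$, so $\tilde F-(1-b_{t_0,B}(\Psi_1))fF^2=\tilde F-fF^2$; then using $v_{t_0,B}(\Psi_1)\ge-t_0-\tfrac B2$ to bound $c(-v_{t_0,B}(\Psi_1))e^{v_{t_0,B}(\Psi_1)}$ from below by a positive constant, and $e^{-\tilde M}=e^{-\max\{\psi+T,2\log|F|\}}$, $\tilde h=he^{-\psi}$, so that $|\cdot|^2_{\tilde h}e^{-\tilde M}\gtrsim|\cdot|^2_he^{-\psi-\varphi_1}$ up to constants on $D_0$ — while on $\{\Psi_1<-t_1\}$ one has $\Psi_1=\psi-2\log|F|$, giving $e^{-\psi}=e^{-\Psi_1}|F|^{-2}\gtrsim e^{-\Psi_1}$ locally after shrinking — I conclude $\int_{\{\Psi_1<-t_1\}\cap D_0}|\tilde F-fF^2|^2_he^{-\varphi_1-\Psi_1}<+\infty$. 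The main obstacle I anticipate is bookkeeping the various weights $v_{t_0,B}(\Psi_1)$, $\tilde M$, $\varphi_1$, $\Psi_1$, $\psi$ against one another on the shrunken neighborhoods and checking that the comparison constants are genuinely positive and finite there (in particular that $F$ does not vanish identically on $D_0$ and that $\sup_{D_0}|F|<+\infty$); the underlying $L^2$ existence is entirely supplied by Lemma \ref{L2 method for c(t)}.
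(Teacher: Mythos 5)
The high-level plan—apply Lemma \ref{L2 method for c(t)} with $\delta=1$ on a pseudoconvex $D_0\ni o$ and then translate the resulting weighted estimate into the $\mathcal H_o$ and $e^{-\varphi_1-\Psi_1}$ conditions—is exactly the paper's. The treatment of the second conclusion (finiteness of $\int_{\{\Psi_1<-t_1\}\cap D_0}|\tilde F-fF^2|^2_he^{-\varphi_1-\Psi_1}$) is also essentially correct, up to the threshold $t_1$ needing to be at least $t_0+B$ rather than $t_0$, and up to noting that the weight comparison here is not merely ``up to constants'': the identity $\psi+\tilde M=\varphi_1+\Psi_1$ (a pointwise equality valid on all of $D_0$, checked by splitting into the cases $\psi+T_1\gtrless2\log|F|$, keeping in mind that for Lemma \ref{L2 method for c(t)} one takes $\tilde M=\max\{\psi+T_1,2\log|F|\}$) makes $|\cdot|_{\tilde h}^2e^{-\tilde M}=|\cdot|_h^2e^{-\varphi_1-\Psi_1}$ exactly, so no comparison constants are needed.

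The argument for $(\tilde F,o)\in\mathcal H_o$, however, has a genuine error. On $U_0\Subset\{\Psi_1<-t_0-B\}$ one has $b_{t_0,B}(\Psi_1)\equiv0$, so the cutoff factor $1-b_{t_0,B}(\Psi_1)$ equals $1$ there, not $0$: the integrand does not reduce to $|\tilde F|^2$ but to $|\tilde F-fF^2|^2$, exactly the opposite of what you wrote. (Indeed you assert $1-b_{t_0,B}(\Psi_1)=1$ on $\{\Psi_1<-t_1\}$ a few lines later, inconsistently with the claim here.) Moreover the proposed shrinking cannot be carried out: $o$ is a boundary point of the sublevel sets, so $\Psi_1(o)$ need not be $-\infty$ and no neighborhood of $o$ need lie in $\{\Psi_1<-t_0-B\}$; nor can one choose $B$ to put a neighborhood of $o$ into $\{b_{t_0,B}(\Psi_1)=1\}=\{\Psi_1\ge-t_0\}$ while simultaneously requiring $U_0\Subset\{\Psi_1<-t_0-B\}$. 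The paper avoids all of this by estimating globally on $D_0$: from $v_{t_0,B}(\Psi_1)\ge\Psi_1$ and the monotonicity of $c(t)e^{-t}$ one gets $e^{v_{t_0,B}(\Psi_1)}c(-v_{t_0,B}(\Psi_1))\ge e^{\Psi_1}c(-\Psi_1)$ everywhere, hence the output of Lemma \ref{L2 method for c(t)} already bounds $\int_{D_0}|\tilde F-(1-b_{t_0,B}(\Psi_1))fF^2|^2_he^{-\varphi_1}c(-\Psi_1)$; one then adds back $\int_{D_0}|(1-b_{t_0,B}(\Psi_1))fF^2|^2_he^{-\varphi_1}c(-\Psi_1)\le\int_{D_0\cap\{\Psi<-t_0\}}|f|^2_hc(-\Psi)<+\infty$ using that $1-b_{t_0,B}$ is supported in $\{\Psi_1<-t_0\}$, where $|F|^4e^{-\varphi_1}\equiv1$ and $\Psi_1=\Psi$. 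That triangle-inequality step is the missing piece in your argument.
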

\begin{proof}It follows from $f_o\in H_o$ that there exist $t_0>T_1>T$ and a pseudoconvex domain  $D_0\Subset D$ containing $o$ such that
\begin{equation}\label{construction of morphism formula 1}
\int_{\{\Psi<-t_0\}\cap D_0}|f|^2_hc(-\Psi)<+\infty.
\end{equation}
Then it follows from Lemma \ref{L2 method for c(t)} that there exists a holomorphic section $\tilde F$ of $D\times \mathbb{C}^r$ on $D_0$ such that
\begin{equation}\label{construction of morphism formula 1+}
  \begin{split}
      & \int_{D_0}|\tilde{F}-(1-b_{t_0}(\Psi_1))fF^{2}|^2_{\tilde{h}}e^{v_{t_0}(\Psi_1)- \tilde{M}}c(-v_{t_0}(\Psi_1)) \\
      \le & \left(c(T_1)e^{-T_1}+\int_{T_1}^{t_0+1}c(s)e^{-s}ds\right)
       \int_{D_0}\mathbb{I}_{\{-t_0-1<\Psi_1<-t_0\}}|fF|^2_{\tilde{h}},
  \end{split}
\end{equation}
where $b_{t_0}(t)=\int^{t}_{-\infty} \mathbb{I}_{\{-t_0-1< s < -t_0\}}ds$,
$v_{t_0}(t)=\int^{t}_{-t_0}b_{t_0}(s)ds-t_0$. Note that $\tilde{h}=he^{-\psi}$, $\psi+ \tilde{M}=\varphi_1+\Psi_1$ and $\Psi=\Psi_1=\psi-2\log|F|$ on $\{\Psi<-t_0\}$. Hence, by \eqref{construction of morphism formula 1+}, we have
\begin{equation*}
  \begin{split}
      & \int_{D_0}|\tilde{F}-(1-b_{t_0}(\Psi_1))fF^{2}|^2_{h}e^{-\varphi_1-\Psi_1+v_{t_0}(\Psi_1)}c(-v_{t_0}(\Psi_1)) \\
      \le & \left(c(T_1)e^{-T_1}+\int_{T_1}^{t_0+1}c(s)e^{-s}ds\right)
       \int_{D_0}\mathbb{I}_{\{-t_0-1<\Psi_1<-t_0\}}|f|^2_{h}e^{-\Psi_1}.
  \end{split}
\end{equation*}
Denote $C:=c(T_1)e^{-T_1}+\int_{T_1}^{t_0+B}c(s)e^{-s}ds$, we note that $C$ is a positive number.

 As $v_{t_0}(t)>-t_0-1$, we have $e^{v_{t_0}(\Psi)}c(-v_{t_0}(\Psi))\ge c(t_0+1)e^{-(t_0+1)}>0$. As $b_{t_0}(t)\equiv 0$ on $(-\infty,-t_0-1)$, we have
\begin{equation}\label{construction of morphism formula 2}
\begin{split}
   &\int_{D_0\cap\{\Psi_1<-t_0-1\}}|\tilde{F}-fF^2|^2_he^{-\varphi_1-\Psi_1} \\
   \le & \frac{1}{c(t_0+1)e^{-(t_0+1)}}
   \int_{D_0}|\tilde{F}-(1-b_{t_0}(\Psi_1))fF^2|^2_he^{-\varphi_1-\Psi_1+v_{t_0}(\Psi_1)}c(-v_{t_0}(\Psi_1))\\
   \le &\frac{C}{c(t_0+1)e^{-(t_0+1)}}
   \int_{D_0}\mathbb{I}_{\{-t_0-1<\Psi_1<-t_0\}}|f|^2_he^{-\Psi_1}<+\infty.
\end{split}
\end{equation}
Note that on $\{\Psi_1<-t_0\}$, $|F|^4e^{-\varphi_1}=1$. As $v_{t_0}(\Psi_1)\ge \Psi_1$, we have $c(-v_{t_0}(\Psi_1))e^{v_{t_0}(\Psi_1)}\ge c(-\Psi_1)e^{-\Psi_1}$. Hence we have
\begin{equation}\nonumber
\begin{split}
   &\int_{D_0}|\tilde{F}|^2_he^{-\varphi_1}c(-\Psi_1) \\
   \le & 2\int_{D_0}|\tilde{F}-(1-b_{t_0}(\Psi_1))fF^2|^2_he^{-\varphi_1}c(-\Psi_1)\\
   +&2\int_{D_0}|(1-b_{t_0}(\Psi_1))fF^2|^2_he^{-\varphi_1}c(-\Psi_1)\\
   \le&
   2\int_{D_0}|\tilde{F}-(1-b_{t_0}(\Psi_1))fF^2|^2_he^{-\varphi_1-\Psi_1+v_{t_0}(\Psi_1)}c(-v_{t_0}(\Psi_1))\\
   +&2\int_{D_0\cap\{\Psi<-t_0\}}|f|^2_hc(-\Psi)\\
   < &+\infty.
\end{split}
\end{equation}
Hence we know that $(\tilde{F},o)\in \mathcal{H}_o$.
\end{proof}

For any $(\tilde{F},o)\in\mathcal{H}_o$ and $(\tilde{F}_1,o)\in\mathcal{H}_o$ such that $\int_{D_1\cap\{\Psi_1<-t_1\}}|\tilde{F}-fF^2|^2_he^{-\varphi_1-\Psi_1}<+\infty$ and
$\int_{D_1\cap\{\Psi_1<-t_1\}}|\tilde{F}_1-fF^2|^2_he^{-\varphi_1-\Psi_1}<+\infty$, for some open neighborhood $D_1$ of $o$ and $t_1> T_1$, we have
$$\int_{D_1\cap\{\Psi_1<-t_1\}}|\tilde{F}_1-\tilde{F}|^2_he^{-\varphi_1-\Psi_1}<+\infty.$$
As $(\tilde{F},o)\in\mathcal{H}_o$ and $(\tilde{F}_1,o)\in\mathcal{H}_o$, there exists a neighborhood $D_2$ of $o$ such that
\begin{equation}\label{construction of morphism formula 3}
\int_{D_2}|\tilde{F}_1-\tilde{F}|^2_he^{-\varphi_1}c(-\Psi_1)<+\infty.
\end{equation}
Note that we have $c(-\Psi_1)e^{\Psi_1}\ge c(t_1)e^{-t_1}$ on $\{\Psi\ge-t_1\}$. It follows from inequality \eqref{construction of morphism formula 3} that we have
$$\int_{D_2\cap \{\Psi\ge-t_1\}}|\tilde{F}_1-\tilde{F}|^2_he^{-\varphi_1-\Psi_1}<+\infty.$$
Hence we have $(\tilde{F}-\tilde{F}_1,o)\in \mathcal{E}(he^{-\varphi_1-\Psi_1})_o
$.

Thus it follows from Lemma \ref{construction of morphism} that there exists a map $\tilde{P}:H_o\to \mathcal{H}_o/\mathcal{E}(he^{-\varphi_1-\Psi_1})_o$ given by
$$\tilde{P}(f_o)=[(\tilde{F},o)]$$
for any $f_o\in H_o$, where $(\tilde{F},o)$ satisfies $(\tilde{F},o)\in \mathcal{H}_o$ and
$\int_{D_1\cap\{\Psi_1<-t_1\}}|\tilde{F}-fF^2|^2_he^{-\varphi_1-\Psi_1}<+\infty,$
for some $t_1>T_1$ and some open neighborhood $D_1$ of $o$, and $[(\tilde{F},o)]$ is the equivalence class of $(\tilde{F},o)$ in $\mathcal{H}_o/\mathcal{E}(he^{-\varphi_1-\Psi_1})_o$.

\begin{Proposition}\label{proposition of morphism}
$\tilde{P}$ is an $\mathcal{O}_{\mathbb{C}^n,o}$-module homomorphism and $Ker(\tilde{P})=I(h,\Psi_1)_o$.
\end{Proposition}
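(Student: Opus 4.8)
The plan is to verify directly that $\tilde{P}$ respects the module operations and then to identify its kernel by a double inclusion. First I would check well-definedness and $\mathcal{O}_{\mathbb{C}^n,o}$-linearity: given $f_o, g_o \in H_o$ with associated representatives $\tilde{F}, \tilde{G} \in \mathcal{H}_o$ satisfying $\int_{D_1 \cap \{\Psi_1 < -t_1\}}|\tilde{F} - fF^2|^2_h e^{-\varphi_1 - \Psi_1} < +\infty$ and the analogous bound for $\tilde{G}$, one shrinks to a common neighborhood and a common $t_1$, observes that $\tilde{F} + \tilde{G}$ is a valid representative for $(f+g)_o$ (the integrand of $|(\tilde{F}+\tilde{G}) - (f+g)F^2|^2_h e^{-\varphi_1-\Psi_1}$ is controlled by $2$ times the sum of the two individual integrands by the triangle inequality for $|\cdot|_h$), and similarly that $q\tilde{F}$ represents $(qf)_o$ for any germ $(q,o) \in \mathcal{O}_{\mathbb{C}^n,o}$, using that $q$ is bounded near $o$ and $\mathcal{H}_o$ is a submodule. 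Since the discussion preceding the Proposition already established that two representatives of $\tilde{P}(f_o)$ differ by an element of $\mathcal{E}(he^{-\varphi_1-\Psi_1})_o$, these computations show the induced map on the quotient is a well-defined homomorphism.

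Next I would prove $I(h,\Psi_1)_o \subseteq \mathrm{Ker}(\tilde{P})$. If $f_o \in I(h,\Psi_1)_o = I(h,\Psi)_o$, then $\int_{\{\Psi<-t\}\cap V}|f|^2_h e^{-\Psi}dV_D < +\infty$ for some $t > T$ and neighborhood $V$; in particular $f_o \in H_o$ (using $c(-\Psi)e^{\Psi} \geq c(t_0)e^{-t_0}$ on $\{\Psi \geq -t_0\}$ together with the gain property, or more directly the decreasing of $c(t)e^{-t}$). I claim $\tilde{F} := fF^2$ is itself an admissible representative: the difference $\tilde{F} - fF^2 \equiv 0$ trivially has finite weighted integral, so it only remains to check $(fF^2, o) \in \mathcal{H}_o$, i.e. $\int_{U_0}|fF^2|^2_h e^{-\varphi_1}c(-\Psi_1)dV_D < +\infty$ for some $U_0$. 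On $\{\Psi_1 < -T_1\}$ one has $|F|^4 e^{-\varphi_1} = 1$ and $\Psi_1 = \psi - 2\log|F|$, so $|fF^2|^2_h e^{-\varphi_1}c(-\Psi_1) = |f|^2_h c(-\Psi_1)$, which is integrable by the $H_o$ condition; on the complement $\{\Psi_1 = -T_1\}$ the weight $e^{-\varphi_1}c(-\Psi_1)$ is locally bounded and $|fF^2|^2_h$ is locally integrable (it is a smooth form times a bounded measurable function on a set where $h$ can be compared to $\hat h$ away from the measure-zero set $\Sigma$), so after shrinking $U_0$ the integral is finite. Hence $\tilde{P}(f_o) = [(fF^2,o)]$, and $fF^2 \in \mathcal{E}(he^{-\varphi_1-\Psi_1})_o$ precisely because $\int_{U_0}|fF^2|^2_h e^{-\varphi_1-\Psi_1}dV_D = \int_{U_0}|f|^2_h e^{-\Psi_1}dV_D$ is finite near $o$ by the defining property of $I(h,\Psi_1)_o$; thus $\tilde{P}(f_o) = 0$.

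For the reverse inclusion $\mathrm{Ker}(\tilde{P}) \subseteq I(h,\Psi_1)_o$, suppose $f_o \in H_o$ with $\tilde{P}(f_o) = 0$, so there is a representative $\tilde{F}$ with $(\tilde{F},o) \in \mathcal{E}(he^{-\varphi_1-\Psi_1})_o$, meaning $\int_{D_2}|\tilde{F}|^2_h e^{-\varphi_1-\Psi_1}dV_D < +\infty$ for some neighborhood $D_2$ of $o$, and simultaneously $\int_{D_1 \cap \{\Psi_1 < -t_1\}}|\tilde{F} - fF^2|^2_h e^{-\varphi_1-\Psi_1} < +\infty$. Adding these (after intersecting neighborhoods) and using the triangle inequality gives $\int_{D_3 \cap \{\Psi_1 < -t_1\}}|fF^2|^2_h e^{-\varphi_1-\Psi_1}dV_D < +\infty$. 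On $\{\Psi_1 < -t_1\} \subset \{\Psi_1 < -T_1\}$ we again have $|F|^4 e^{-\varphi_1} = 1$ and $\Psi_1 = \Psi = \psi - 2\log|F|$, so the integrand equals $|f|^2_h e^{-\Psi}$, whence $\int_{D_3 \cap \{\Psi < -t_1\}}|f|^2_h e^{-\Psi}dV_D < +\infty$, which is exactly the condition $f_o \in I(h,\Psi)_o = I(h,\Psi_1)_o$. This closes the loop.

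The main obstacle I anticipate is not the algebra but the careful bookkeeping of domains and the measure-zero set $\Sigma$: one must make sure that all the local integrability comparisons between $h$, $\hat h$ and the smooth metrics $\tilde{h}_{j,s}$ (needed to know that a holomorphic section with finite weighted $h$-norm is genuinely a holomorphic section of $E$, not just defined off $\Sigma$) go through on small enough neighborhoods, and that the various $t_1$, $D_i$ can be shrunk to a common choice without losing finiteness — this is routine given the setup around Lemma \ref{construction of morphism} but is where the proof spends most of its words. A secondary point is verifying $H_o$ is stable under the operations so that the source of $\tilde{P}$ is indeed a module on which linearity makes sense; this follows from $c(t)e^{-t}$ being decreasing together with the triangle inequality, as in the well-definedness step.
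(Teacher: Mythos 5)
Your proof of the homomorphism part is correct and is a mild, arguably cleaner variant of the paper's: rather than invoking a representative $\tilde{H}$ for $\tilde{P}(f_o+g_o)$ and showing $\tilde H - (\tilde F + \tilde G) \in \mathcal{E}(he^{-\varphi_1-\Psi_1})_o$, you observe directly that $\tilde F + \tilde G$ already satisfies the two defining conditions for a representative of $\tilde P(f_o+g_o)$ and invoke the well-definedness established beforehand. Similarly, your argument for $\mathrm{Ker}(\tilde P)\subseteq I(h,\Psi_1)_o$ matches the paper's triangle-inequality computation.

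However, there is a genuine gap in your inclusion $I(h,\Psi_1)_o\subseteq\mathrm{Ker}(\tilde P)$. You take $\tilde F := fF^2$ as the representative of $\tilde P(f_o)$, but $(fF^2,o)$ need not be a germ in $\mathcal{O}^r_{\mathbb{C}^n,o}$: an element $f_o\in H_o\subset J(E,\Psi)_o$ is only the equivalence class of a holomorphic section defined on a ``lens'' $\{\Psi<-t\}\cap V$ for some $t$ and some neighborhood $V$ of $o$, and when $o$ is a boundary point of the sublevel sets (which is the interesting case throughout this paper) that lens does not contain a full neighborhood of $o$. So $fF^2$ does not define an element of $\mathcal{H}_o$, and your verification that ``$\tilde F - fF^2\equiv 0$ trivially has finite weighted integral'' is arguing about an object that is not of the right type. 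This is precisely the difficulty that Lemma \ref{construction of morphism} is designed to overcome: it runs the $L^2$ estimate of Lemma \ref{L2 method for c(t)} to produce a genuine holomorphic section $\tilde F$ on a full pseudoconvex neighborhood $D_0\ni o$, and the paper's proof of $I(h,\Psi_1)_o\subseteq\mathrm{Ker}(\tilde P)$ uses that $\tilde F$, first bounding $\int_{\{\Psi_1<-t_1\}\cap D_3}|\tilde F|^2_he^{-\varphi_1-\Psi_1}$ via the triangle inequality against $|\tilde F - fF^2|$ and $|fF^2|$ on the lens (where the latter term is exactly controlled by $f_o\in I(h,\Psi_1)_o$), then covering the complementary region $\{\Psi_1\ge -t_1\}\cap D_3$ by combining $(\tilde F,o)\in\mathcal{H}_o$ with the lower bound $c(-\Psi_1)e^{\Psi_1}\ge C_0>0$ that holds there. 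Your proof should be repaired by replacing the proposed representative $fF^2$ with the Lemma \ref{construction of morphism} extension and running that two-region estimate.
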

\begin{proof}For any $f_o,g_o\in H_o$. Denote that $\tilde{P}(f_o)=[(\tilde{F},o)]$, $\tilde{P}(g_o)=[(\tilde{G},o)]$ and $\tilde{P}(f_o+g_o)=[(\tilde{H},o)]$.

Note that there exist an open neighborhood $D_1$ of $o$ and $t> T_1$ such that $\int_{D_1\cap\{\Psi_1<-t\}}|\tilde{F}-fF^2|^2_he^{-\varphi_1-\Psi_1}<+\infty$,
$\int_{D_1\cap\{\Psi_1<-t\}}|\tilde{G}-gF^2|^2_he^{-\varphi_1-\Psi_1}<+\infty$, and
$\int_{D_1\cap\{\Psi_1<-t\}}|\tilde{H}-(f+g)F^2|^2_he^{-\varphi_1-\Psi_1}<+\infty$. Hence we have
$$\int_{D_1\cap\{\Psi_1<-t\}}|\tilde{H}-(\tilde{F}+\tilde{G})|^2_he^{-\varphi_1-\Psi_1}<+\infty.$$
As $(\tilde{F},o),(\tilde{G},o)$ and $(\tilde{H},o)$ belong to $ \mathcal{H}_o$, there exists an open neighborhood $\tilde{D}_1\subset D_1$ of $o$ such that
$\int_{\tilde{D}_1}|\tilde{H}-(\tilde{F}+\tilde{G})|^2_he^{-\varphi_1}c(-\Psi_1)<+\infty$.
As $c(t)e^{-t}$ is decreasing with respect to $t$, we have $c(-\Psi_1)e^{\Psi_1}\ge c(t)e^{-t}$ on $\{\Psi_1\ge -t\}$. Hence we have
$$\int_{\tilde{D}_1\cap\{\Psi_1\ge -t\}}|\tilde{H}-(\tilde{F}+\tilde{G})|^2_he^{-\varphi_1-\Psi_1}
\le\frac{1}{c(t)e^{-t}}\int_{\tilde{D}_1\cap\{\Psi_1\ge -t\}}|\tilde{H}-(\tilde{F}+\tilde{G})|^2_he^{-\varphi_1}c(-\Psi_1)<+\infty.$$
Thus we have $\int_{\tilde{D}_1}|\tilde{H}-(\tilde{F}+\tilde{G})|^2_he^{-\varphi_1-\Psi_1}<+\infty$, which implies that $\tilde{P}(f_o+g_o)=\tilde{P}(f_o)+\tilde{P}(g_o)$.

For any $(q,o) \in \mathcal{O}_{\mathbb{C}^n,o}$. Denote $\tilde{P}((qf)_o)=[(\tilde{F}_q,o)]$. Note that there exist an open neighborhood $D_2$ of $o$ and $t> T_1$ such that $\int_{D_2\cap\{\Psi_1<-t\}}|\tilde{F}_q-(qf)F^2|^2_he^{-\varphi_1-\Psi_1}<+\infty$. It follows from $\int_{D_2\cap\{\Psi_1<-t\}}|\tilde{F}-fF^2|^2_he^{-\varphi_1-\Psi_1}<+\infty$ and $q$ is holomorphic on $\overline{D_2}$ (shrink $D_2$ if necessary) that $\int_{D_2\cap\{\Psi_1<-t\}}|q\tilde{F}-qfF^2|^2_he^{-\varphi_1-\Psi_1}<+\infty$. Then we have
$$\int_{D_2\cap\{\Psi_1<-t\}}|\tilde{F}_q-q\tilde{F}|^2_he^{-\varphi_1-\Psi_1}<+\infty.$$
Note that $(q\tilde{F},o) $ and $(\tilde{F}_q,o)$ belong to $ \mathcal{H}_o$, we have
$\int_{D_2}|\tilde{F}_q-q\tilde{F}|^2_he^{-\varphi_1}c(-\Psi_1)<+\infty$.
As $c(t)e^{-t}$ is decreasing with respect to $t$, we have $c(-\Psi_1)e^{\Psi_1}\ge c(t)e^{-t}$ on $\{\Psi_1\ge -t\}$. Hence we have
$$\int_{D_2\cap\{\Psi_1\ge -t\}}|\tilde{F}_q-q\tilde{F}|^2_he^{-\varphi_1-\Psi_1}
\le\frac{1}{c(t)e^{-t}}\int_{D_2\cap\{\Psi_1\ge -t\}}|\tilde{F}_q-q\tilde{F}|^2_he^{-\varphi_1}c(-\Psi_1)<+\infty.$$
Thus we have $\int_{D_2}|\tilde{F}_q-q\tilde{F}|^2_he^{-\varphi_1-\Psi_1}<+\infty$, which implies that $\tilde{P}(qf_o)=(q,o)\tilde{P}(f_o)$.
We have proved that $\tilde{P}$ is an $\mathcal{O}_{\mathbb{C}^n,o}$-module homomorphism.

Next, we prove $Ker(\tilde{P})=I(h,\Psi_1)_o$.

If $f_o\in I(h,\Psi_1)_o$. Denote $\tilde{P}(f_o)=[(\tilde{F},o)]$. It follows from Lemma \ref{construction of morphism} that $(\tilde{F},o)\in \mathcal{H}_o$ and there exist an open neighborhood $D_3$ of $o$ and a real number $t_1>T_1$ such that
$$\int_{\{\Psi_1<-t_1\}\cap D_3}|\tilde{F}-fF^2|^2_he^{-\varphi_1-\Psi_1}<+\infty.$$
As $f_o\in I(h,\Psi_1)_o$, shrink $D_3$ and $t_1$ if necessary, we have
\begin{equation}\label{proposition of morphism formula 1}
\begin{split}
&\int_{\{\Psi_1<-t_1\}\cap D_3}|\tilde{F}|^2_he^{-\varphi_1-\Psi_1}\\
\le &2\int_{\{\Psi_1<-t_1\}\cap D_3}|\tilde{F}-fF^2|^2_he^{-\varphi_1-\Psi_1}
+2\int_{\{\Psi_1<-t_1\}\cap D_3}|fF^2|^2_he^{-\varphi_1-\Psi_1}\\
\le &2\int_{\{\Psi_1<-t_1\}\cap D_3}|\tilde{F}-fF^2|^2_he^{-\varphi_1-\Psi_1}
+2\int_{\{\Psi_1<-t_1\}\cap D_3}|f|^2_he^{-\Psi_1}\\
<&+\infty.
\end{split}
\end{equation}
As $c(t)e^{-t}$ is decreasing with respect to $t$, $c(-\Psi_1)e^{\Psi_1}\ge C_0>0$ for some positive number $C_0$ on $\{\Psi_1\ge-t_1\}$. Then we have
\begin{equation}\label{proposition of morphism formula 1'}
\begin{split}
\int_{\{\Psi_1\ge-t_1\}\cap D_3}|\tilde{F}|^2_he^{-\varphi_1-\Psi_1}
\le\frac{1}{C_0}\int_{\{\Psi_1\ge-t_1\}\cap D_3}|\tilde{F}|^2_he^{-\varphi_1}c(-\Psi_1)<+\infty.
\end{split}
\end{equation}
Combining inequality \eqref{proposition of morphism formula 1} and inequality  \eqref{proposition of morphism formula 1'}, we know that $\tilde{F}\in \mathcal{E}(he^{-\varphi_1-\Psi_1})_o$, which means $\tilde{P}(f_o)=0$ in $\mathcal{H}_o/\mathcal{E}(he^{-\varphi_1-\Psi_1})_o$. Hence we know $I(h,\Psi_1)_o\subset Ker(\tilde{P})$.

If $f_o\in Ker(\tilde{P})$, we know $\tilde{F}\in \mathcal{E}(he^{-\varphi_1-\Psi_1})_o$.
We can assume that $\tilde{F}$ satisfies $\int_{D_4}|\tilde{F}|^2_he^{-\varphi_1-\Psi_1}<+\infty$ for some open neighborhood $D_4$ of $o$. Then we have
\begin{equation}\label{proposition of morphism formula 2'}
\begin{split}
&\int_{ \{\Psi_1<-t_1\}\cap D_4}|f|^2_he^{-\Psi_1}\\
=&\int_{\{\Psi_1<-t_1\}\cap D_4}|fF^2|^2_he^{-\varphi_1-\Psi_1}\\
\le & \int_{\{\Psi_1<-t_1\}\cap D_4}|\tilde{F}|^2_he^{-\varphi_1-\Psi_1}+\int_{\{\Psi_1<-t_1\}\cap D_4}|\tilde{F}-fF^2|^2_he^{-\varphi_1-\Psi_1}\\
< &+\infty.
\end{split}
\end{equation}
By definition, we know $f_o\in I(h,\Psi_1)_o$. Hence $ Ker(\tilde{P})\subset I(h,\Psi_1)_o$.

$ Ker(\tilde{P})= I(h,\Psi_1)_o$ is proved.
\end{proof}

Now we can define an $\mathcal{O}_{\mathbb{C}^n,o}$-module homomorphism $P:H_o/I(h,\Psi_1)_o\to \mathcal{H}_o/\mathcal{E}(he^{-\varphi_1-\Psi_1})_o$ as follows,
$$P([f_o])=\tilde{P}(f_o)$$
for any $[f_o]\in H_o/I(h,\Psi_1)_o$, where $f_o\in H_o$ is any representative of $[f_o]$. It follows from Proposition \ref{proposition of morphism} that $P([f_o])$ is independent of the choices of the representatives of $[f_o]$.

Let $(\tilde{F},o)\in \mathcal{H}_o$, i.e. $\int_{U}|\tilde{F}|^2_he^{-\varphi_1}c(-\Psi_1)<+\infty$ for some neighborhood $U$ of $o$. Note that $|F|^4e^{-\varphi_1}\equiv 1$ on $\{\Psi_1<-T\}$. Hence we have $\int_{U\cap \{\Psi_1<-t\}}|\frac{\tilde{F}}{F^2}|^2_hc(-\Psi_1)<+\infty$ for some $t>T$, i.e. $(\frac{\tilde{F}}{F^2})_o\in H_o$. And if $(\tilde{F},o)\in \mathcal{E}(he^{-\varphi_1-\Psi_1})_o$, it is easy to verify that $(\frac{\tilde{F}}{F^2})_o\in I(h,\Psi_1)_o$. Hence we have an $\mathcal{O}_{\mathbb{C}^n,o}$-module homomorphism $Q:\mathcal{H}_o/\mathcal{E}(he^{-\varphi_1-\Psi_1})_o\to H_o/I(h,\Psi_1)_o$ defined as follows,
$$Q([(\tilde{F},o)])=[(\frac{\tilde{F}}{F^2})_o].$$

The above discussion shows that $Q$ is independent of the choices of the representatives of $[(\tilde{F},o)]$ and hence $Q$ is well defined.

\begin{Proposition}\label{module isomorphism}$P:H_o/I(h,\Psi_1)_o\to \mathcal{H}_o/\mathcal{E}(he^{-\varphi_1-\Psi_1})_o$ is an $\mathcal{O}_{\mathbb{C}^n,o}$-module isomorphism and $P^{-1}=Q$.
\end{Proposition}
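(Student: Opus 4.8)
The plan is to observe that the two $\mathcal{O}_{\mathbb{C}^n,o}$-module homomorphisms $P$ and $Q$ have already been constructed, so it suffices to check that they are mutually inverse, i.e. $Q\circ P=\mathrm{id}$ on $H_o/I(h,\Psi_1)_o$ and $P\circ Q=\mathrm{id}$ on $\mathcal{H}_o/\mathcal{E}(he^{-\varphi_1-\Psi_1})_o$; then $P$ is an isomorphism with $P^{-1}=Q$. Two elementary facts will be used repeatedly. First, on $\{\Psi_1<-t\}$ for any $t>T$ one has $|F|^4e^{-\varphi_1}\equiv1$, since there $\varphi_1=4\log|F|$; hence $|\tilde F-fF^2|^2_he^{-\varphi_1}=|\tfrac{\tilde F}{F^2}-f|^2_h$ on such a set. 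Second, since $c\in\tilde P_{T,D,\Psi,h}$, the function $c(t)e^{-t}$ is decreasing, so on $\{\Psi_1\ge-t_1\}$ one has $e^{-\Psi_1}\le\frac{1}{c(t_1)e^{-t_1}}c(-\Psi_1)$, which converts a $c(-\Psi_1)$-weighted integral into an $e^{-\Psi_1}$-weighted one on this ``near'' region.

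For $Q\circ P=\mathrm{id}$: given $f_o\in H_o$, write $\tilde P(f_o)=[(\tilde F,o)]$, so that $\int_{D_1\cap\{\Psi_1<-t_1\}}|\tilde F-fF^2|^2_he^{-\varphi_1-\Psi_1}<+\infty$ for some $t_1>T_1$ and some neighborhood $D_1$ of $o$. By the first fact this integral equals $\int_{D_1\cap\{\Psi_1<-t_1\}}|\tfrac{\tilde F}{F^2}-f|^2_he^{-\Psi_1}$, which is therefore finite, so $(\tfrac{\tilde F}{F^2}-f)_o\in I(h,\Psi_1)_o=I(h,\Psi)_o$ by definition; hence $Q(P([f_o]))=[(\tfrac{\tilde F}{F^2})_o]=[f_o]$.

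For $P\circ Q=\mathrm{id}$: given $(\tilde F,o)\in\mathcal{H}_o$, set $g:=\tfrac{\tilde F}{F^2}$, so $g_o\in H_o$ and $Q([(\tilde F,o)])=[g_o]$. Write $\tilde P(g_o)=[(\tilde G,o)]$, so $\int_{D_1\cap\{\Psi_1<-t_1\}}|\tilde G-gF^2|^2_he^{-\varphi_1-\Psi_1}<+\infty$; since $gF^2=\tilde F$ this reads $\int_{D_1\cap\{\Psi_1<-t_1\}}|\tilde G-\tilde F|^2_he^{-\varphi_1-\Psi_1}<+\infty$. Because $(\tilde G,o),(\tilde F,o)\in\mathcal{H}_o$, on a smaller neighborhood $D_2$ of $o$ we have $\int_{D_2}|\tilde G-\tilde F|^2_he^{-\varphi_1}c(-\Psi_1)<+\infty$; applying the second fact on $\{\Psi_1\ge-t_1\}$ and adding the two estimates gives $\int_{D_1\cap D_2}|\tilde G-\tilde F|^2_he^{-\varphi_1-\Psi_1}<+\infty$, i.e. $(\tilde G-\tilde F)_o\in\mathcal{E}(he^{-\varphi_1-\Psi_1})_o$, so $P(Q([(\tilde F,o)]))=[(\tilde G,o)]=[(\tilde F,o)]$.

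There is essentially no substantial obstacle here: both $P$ and $Q$ were defined precisely so that these identities close up, and the only points requiring care are shrinking the various neighborhoods of $o$ consistently so that all finiteness conditions hold simultaneously, and invoking the gain hypothesis that $c(t)e^{-t}$ is decreasing in order to control the region $\{\Psi_1\ge-t_1\}$ --- the same argument used throughout Section~\ref{sec:properties of module}. I would state the two facts once at the outset and then run the two verifications above.
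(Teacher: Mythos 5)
Your proof is correct and takes essentially the same approach as the paper; the only organizational difference is that the paper invokes the injectivity of $P$ from Proposition~\ref{proposition of morphism} and then verifies only $P\circ Q=\mathrm{id}$, whereas you verify both $Q\circ P=\mathrm{id}$ and $P\circ Q=\mathrm{id}$ directly, and your $Q\circ P$ check via $|F|^4e^{-\varphi_1}\equiv1$ is the same computation underlying the paper's proof that $\mathrm{Ker}(\tilde P)\subset I(h,\Psi_1)_o$. One small imprecision: $|F|^4e^{-\varphi_1}\equiv1$ holds on $\{\Psi_1<-t\}$ only for $t\ge T_1$, not for all $t>T$ as you phrase it, but since you apply it with $t_1>T_1$ this is harmless.
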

\begin{proof} It follows from Proposition \ref{proposition of morphism} that we know $P$ is injective.

Now we prove $P$ is surjective.

For any $[(\tilde{F},o)]$ in $\mathcal{H}_o/\mathcal{E}(he^{-\varphi_1-\Psi_1})_o$. Let $(\tilde{F},o)$ be any representatives of $[(\tilde{F},o)]$ in $\mathcal{H}_o$. Denote that $[(f_1)_o]:=[(\frac{\tilde{F}}{F^2})_o]=Q([(\tilde{F},o)])$. Let $(f_1)_o:=(\frac{\tilde{F}}{F^2})_o\in H_o$ be the representative of $[(f_1)_o]$. Denote $[(\tilde{F}_1,o)]:=\tilde{P}((f_1)_o)=P([(f_1)_o])$. By the construction of $\tilde{P}$, we know that $(\tilde{F}_1,o)\in \mathcal{H}_o$ and
$$\int_{D_1\cap\{\Psi_1<-t\}}|\tilde{F}_1-f_1F^2|^2_he^{-\varphi_1-\Psi_1}<+\infty,$$
where $t>T$ and $D_1$ is some neighborhood of $o$. Note that $(f_1)_o:=(\frac{\tilde{F}}{F^2})_o$. Hence  we have
$$\int_{D_1\cap\{\Psi_1<-t\}}|\tilde{F}_1-\tilde{F}|^2_he^{-\varphi_1-\Psi_1}<+\infty.$$
It follows from $(\tilde{F},o)\in \mathcal{H}_o$ and $(\tilde{F}_1,o)\in \mathcal{H}_o$ that there exists a neighborhood $D_2\subset D_1$ of $o$ such that
$$\int_{D_2}|\tilde{F}-\tilde{F}_1|^2_he^{-\varphi_1}c(-\Psi_1)<+\infty.$$
Note that on $\{\Psi_1\ge -t\}$, we have $c(-\Psi_1)e^{\Psi_1}\ge c(t)e^{-t}>0$. Hence we have
$$\int_{D_2\cap \{\Psi_1\ge-t\}}|\tilde{F}-\tilde{F}_1|^2_he^{-\varphi_1-\Psi_1}<+\infty.$$
Thus we know that $(\tilde{F}_1-\tilde{F},o) \in \mathcal{E}(he^{-\varphi_1-\Psi_1})_o$, i.e. $[(\tilde{F},o)]=[(\tilde{F}_1,o)]$ in $ \mathcal{H}_o/\mathcal{E}(he^{-\varphi_1-\Psi_1})_o$. Hence we have $P\circ Q([(\tilde{F},o)])=[(\tilde{F},o)]$, which implies that $P$ is surjective.

We have proved that $P:H_o/I(h,\Psi_1)_o\to \mathcal{H}_o/\mathcal{E}(he^{-\varphi_1-\Psi_1})_o$ is an $\mathcal{O}_{\mathbb{C}^n,o}$-module isomorphism and $P^{-1}=Q$.
\end{proof}

The following lemma shows the closedness of submodules of $H_o$.

Recall that $D$ is a pseudoconvex domain in $\mathbb{C}^n$ containing the origin $o\in \mathbb{C}^n$, $F$ is a holomorphic function on $D$ and
$f=(f_1,f_2,\ldots,f_r)$ be a holomorphic section of $E:=D\times \mathbb{C}^r$. Let $\psi$ be a plurisubharmonic function on $D$. Let $h$ be a \textit{measurable metric} on $D\times \mathbb{C}^r$ and $\tilde{h}:=he^{-\psi}$. Let $(D,D\times \mathbb{C}^r,\Sigma,D_j,\tilde{h},\tilde{h}_{j,s})$ be a singular metric on $E:=D\times \mathbb{C}^r$ which satisfies $\Theta_{\tilde{h}}(E)\ge^s_{Nak} 0$. Let $c(t)\in \tilde{P}_{T,D,\Psi,h}$.
\begin{Lemma}\label{closedness of module}
Let $U_0\Subset D$ be a Stein neighborhood of $o$.
Let $J_o$ be an $\mathcal{O}_{\mathbb{C}^n,o}$-submodule of $H_o$ such that $I(h,\Psi)_o\subset J_o$. Assume that $f_o\in J(\Psi)_o$. Let $\{f_j\}_{j\ge 1}$ be a sequence of $E$-valued holomorphic $(n,0)$ forms on $U_0\cap \{\Psi<-t_j\}$ for any $j\ge 1$, where $t_j>T$. Assume that $t_0:=\lim_{j\to +\infty}t_j\in[T,+\infty)$,
\begin{equation}\label{convergence property of module}
\limsup\limits_{j\to+\infty}\int_{U_0\cap\{\Psi<-t_j\}}|f_j|^2_hc(-\Psi)\le C<+\infty,
\end{equation}
and $(f_j-f)_o\in J_o$. Then there exists a subsequence of $\{f_j\}_{j\ge 1}$ compactly convergent to an $E$-valued  holomorphic $(n,0)$ form $f_0$ on $\{\Psi<-t_0\}\cap U_0$ which satisfies
$$\int_{U_0\cap\{\Psi<-t_0\}}|f_0|^2_hc(-\Psi)\le C,$$
and $(f_0-f)_o\in J_o$.
\end{Lemma}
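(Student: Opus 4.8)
The plan is to reduce the statement to the scalar-type situation handled by Lemma \ref{l:converge} via the isomorphism $P$ of Proposition \ref{module isomorphism}, and to use Lemma \ref{closedness} (closedness of submodules) to pass the constraint $(f_j-f)_o\in J_o$ to the limit. First I would dispose of the trivial cases: if $t_0\in(T,+\infty)$ the exhaustion $\{\Psi<-t_0\}\cap U_0$ is an increasing union of the sets $\{\Psi<-t_j\}\cap U_0$ (up to passing to a subsequence with $t_j$ monotone), so the problem is genuinely one of a compactly convergent sequence of sections on a fixed open set; and when $t_0=T$ one argues on $\{\Psi<-t\}$ for each $t>T$ and then lets $t\to T$. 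In either case, since $c\in\tilde P_{T,D,\Psi,h}$, on any compact subset $K$ of $\{\Psi<-t_0\}\cap U_0$ avoiding the exceptional set $E_0\subset Z\cap\{\Psi=-\infty\}$ we have a lower bound $|e_x|_h^2c(-\Psi)\ge C_K|e_x|^2_{\hat h}$ (and near points of $E_0$ we use a local $C^2$ metric $\tilde h_{j',1}\le\tilde h$ together with the fact that $c(-\Psi)e^{\Psi}$ is bounded below on relatively compact sets, exactly as in the proof of Lemma \ref{L2 method for c(t)}).

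Next I would apply Lemma \ref{l:converge} with $g_j:=\mathbb{I}_{\{\Psi<-t_j\}}c(-\Psi)$, $g:=\mathbb{I}_{\{\Psi<-t_0\}}c(-\Psi)$, and $S$ an analytic subset containing $E_0$: this is legitimate because $g_j\to g$ a.e.\ (the symmetric difference of the sublevel sets has measure tending to zero, or one restricts to a fixed $\{\Psi<-t\}$ first), and the uniform lower bound against $\hat h$ holds on compacts away from $S$. Lemma \ref{l:converge} then produces a subsequence $\{f_{j_l}\}$ converging uniformly on compact subsets of $\{\Psi<-t_0\}\cap U_0$ (after the extension across the analytic set $E_0$, exactly as in the interior-extension steps of Lemma \ref{L2 method for c(t)}) to an $E$-valued holomorphic $(n,0)$ form $f_0$ with $\int_{U_0\cap\{\Psi<-t_0\}}|f_0|^2_hc(-\Psi)\le C$. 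This already gives everything except the membership $(f_0-f)_o\in J_o$.

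For that last point I would argue at the germ level near $o$. Apply the homomorphism $\tilde P$ (or rather work with the representatives directly): there is a fixed neighborhood on which $f_{j_l}\to f_0$ uniformly, hence $(f_{j_l}-f)\to(f_0-f)$ uniformly near $o$; all germs $(f_{j_l}-f)_o$ lie in the $\mathcal O_{\mathbb C^n,o}$-submodule $J_o$ of $H_o$. Transporting through the isomorphism $P:H_o/I(h,\Psi_1)_o\xrightarrow{\sim}\mathcal H_o/\mathcal E(he^{-\varphi_1-\Psi_1})_o$ — more precisely, multiplying the relevant local extensions by $F^2$ as in the construction of $\tilde P$ — the images are honest germs of holomorphic $\mathbb C^r$-valued functions lying in a fixed finitely generated submodule $N\subset\mathcal O^r_{\mathbb C^n,o}$ (the preimage of $\tilde P(J_o)$), and they converge uniformly; Lemma \ref{closedness} then forces the limit germ to lie in $N$, hence $(f_0-f)_o\in J_o$.

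The main obstacle is the bookkeeping around the moving sublevel sets $\{\Psi<-t_j\}$ together with the exceptional set $E_0$: one must simultaneously (i) guarantee a.e.\ convergence $g_j\to g$ and the local lower bounds needed to invoke Lemma \ref{l:converge}, which requires the passage to a monotone subsequence $t_j\downarrow t_0$ or $t_j\to t_0$ and, when $t_0=T$, an extra diagonal argument over an exhausting sequence $t\downarrow T$; and (ii) carry out the holomorphic extension of the limit across the analytic set containing $E_0$ with the $L^2$ bound preserved, which is where the singular-metric hypothesis $\Theta_{\tilde h}(E)\ge^s_{Nak}0$ enters (via the existence of local $C^2$ metrics $\tilde h_{j,1}\le\tilde h$) exactly as in Lemma \ref{L2 method for c(t)}. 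Once these are in place, the transfer of the module constraint through $P$ and Lemma \ref{closedness} is routine.
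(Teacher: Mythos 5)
Your first two steps match the paper: you invoke Lemma \ref{l:converge} (together with a diagonal argument over the moving sublevel sets) to extract a locally uniformly convergent subsequence on $\{\Psi<-t_0\}\cap U_0$, and Fatou to preserve the $L^2$ bound. That part is sound, and the remark about covering the exceptional set $E_0\subset Z\cap\{\Psi=-\infty\}$ with the $C^2$ comparison metrics $\tilde h_{j,1}\le\tilde h$ is exactly what is needed.

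The gap is in the last paragraph, where you transfer the constraint $(f_j-f)_o\in J_o$ to the limit. You write that, after pushing through $P$, ``the images are honest germs of holomorphic $\mathbb C^r$-valued functions \ldots\ and they converge uniformly,'' and then you invoke Lemma \ref{closedness}. But the uniform convergence of those images is precisely what has to be \emph{proved}, and it does not follow from the uniform convergence of $f_j$. The homomorphism $\tilde P$ is defined by choosing, for each $f_o\in H_o$, an $L^2$-extension $\tilde F$ of (roughly) $fF^2$ across the boundary $\{\Psi=-T_1\}$ by means of Lemma \ref{L2 method for c(t)}; the output is only well defined as a class modulo $\mathcal E(he^{-\varphi_1-\Psi_1})_o$, and the extension operator from the $\bar\partial$-equation is neither linear-continuous nor canonical. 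So ``multiplying by $F^2$'' is not the content of the map, and knowing $f_j\to f_0$ on $\{\Psi<-t_0\}\cap U_0$ gives you no control, a priori, on a choice of representatives $\tilde F_j$ on a full neighborhood of $o$.

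What closes the gap in the paper is an explicit re-run of the compactness argument on the lifted side: one applies Lemma \ref{L2 method for c(t)} to each $f_j$ to produce an extension $\tilde F_j$ on $U_0$ together with the estimate \eqref{convergence property of module formula 1}; the hypothesis \eqref{convergence property of module} together with \eqref{convergence property of module formula 3} then yields $\sup_j\int_{U_0}|\tilde F_j|^2_he^{-\varphi_1}c(-\Psi_1)<+\infty$; one invokes Lemma \ref{l:converge} a second time (with the weight $e^{-\varphi_1}c(-\Psi_1)$ and the bound \eqref{closedness of module formula 2}) to extract a locally uniformly convergent subsequence $\tilde F_j\to\tilde F_0$ on $U_0$; and one verifies by Fatou (inequality \eqref{convergence property of module formula 5}) that $\tilde F_0$ actually represents $P([(f_0)_o])$. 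Only at that point, with germs $(\tilde F_j-\tilde F_1,o)$ in the fixed submodule $\tilde J\subset\mathcal O^r_{\mathbb C^n,o}$ and converging uniformly near $o$, is Lemma \ref{closedness} applicable. Your proposal compresses all of this into the unjustified assertion of uniform convergence, which is the crux of the lemma rather than a ``routine'' transfer.
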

\begin{proof}

It follows from $c(t)\in\tilde{P}_{T,D,\Psi,h}$ that there exists an analytic subset $Z$ of $D$ and for any compact subset $K\subset D\backslash Z$, $|e_x|_h^2c(-\psi)\ge C_K|e_x|^2_{\hat{h}}$ for any $ x\in K\cap\{\Psi<-t_0\}$, where $C_K>0$ is a constant and $e_x\in E_x$.

It follows from inequality \eqref{convergence property of module}, Lemma \ref{l:converge} and diagonal method that there exists a subsequence of $\{f_j\}_{j\ge 1}$ (also denoted by $\{f_j\}_{j\ge 1}$) compactly convergent to an $E$-valued holomorphic $(n,0)$ form $f_0$ on $\{\Psi<-t_0\}\cap U_0$. It follows from Fatou's Lemma that
$$\int_{U_0\cap\{\Psi<-t_0\}}|f_0|^2_hc(-\Psi)\le \liminf\limits_{j\to+\infty}\int_{U_0\cap\{\Psi<-t_j\}}|f_j|^2_hc(-\Psi)\le C.$$

Now we prove $(f_0-f)_o\in J_o$. We firstly recall some constructions in Lemma \ref{construction of morphism}.

As $t_0:=\lim_{j\to +\infty}t_j\in[T,+\infty)$. We can assume that $\{t_j\}_{j\ge 0}$ is upper bounded by some real number $T_1+1$. Denote $\Psi_1:=\min\{\psi-2\log|F|,-T_1\}$, and
if $F(z)=0$ for some $z \in M$, we set $\Psi_1(z)=-T_1$. We note that \begin{equation}\nonumber
\limsup\limits_{j\to+\infty}\int_{U_0\cap\{\Psi<-T_1-1\}}|f_j|^2_hc(-\Psi)\le C<+\infty.
\end{equation}

It follows from $c(t)\in\tilde{P}_{T,D,\Psi,h}$ and Lemma \ref{L2 method for c(t)} that there exists an $E$-valued holomorphic $(n,0)$ form $\tilde{F}_j$ on $U_0$ such that
\begin{equation}\label{convergence property of module formula 1}
  \begin{split}
      & \int_{U_0}|\tilde{F}_j-(1-b_{1}(\Psi_1))f_jF^{2}|^2_he^{-\varphi_1+v_{1}(\Psi_1)-\Psi_1}c(-v_{1}(\Psi_1)) \\
      \le & \left(c(T_1)e^{-T_1}+\int_{T_1}^{T_1+2}c(s)e^{-s}ds\right)
       \int_{U_0}\mathbb{I}_{\{-T_1-2<\Psi_1<-T_1-1\}}|f_j|^2_he^{-\Psi_1},
  \end{split}
\end{equation}
where $b_{1}(t)=\int^{t}_{-\infty} \mathbb{I}_{\{-T_1-2< s < -T_1-1\}}ds$,
$v_{1}(t)=\int^{t}_{-T_1-1}b_{1}(s)ds-(T_1+1)$. Denote $C_1:=c(T_1)e^{-T_1}+\int_{T_1}^{T_1+1}c(s)e^{-s}ds$.

Note that $v_{1}(t)>-T_1-2$. We have $e^{v_{1}(\Psi_1)}c(-v_{1}(\Psi))\ge c(T_1+2)e^{-(T_1+2)}>0$. As $b_{1}(t)\equiv 0$ on $(-\infty,-T_1-2)$, we have
\begin{equation}\label{convergence property of module formula 2}
\begin{split}
   &\int_{U_0\cap\{\Psi<-T_1-2\}}|\tilde{F}_j-f_jF^2|^2_he^{-\varphi_1-\Psi_1} \\
   \le & \frac{1}{c(T_1+2)e^{-(T_1+2)}}
   \int_{U_0}|\tilde{F}_j-(1-b_{1}(\Psi_1))f_jF^2|^2_he^{-\varphi_1-\Psi_1+v_{1}(\Psi_1)}c(-v_{t_j}(\Psi_1))\\
   \le &\frac{C_1}{c(T_1+2)e^{-(T_1+2)}}
   \int_{U_0}\mathbb{I}_{\{-T_1-2<\Psi_1<-T_1-1\}}|f_j|^2_he^{-\Psi_1}<+\infty.
\end{split}
\end{equation}
Note that $|F^2|^2e^{-\varphi_1}=1$ on $\{\Psi_1<-T_1-1\}$. As $v_{t_j}(\Psi_1)\ge \Psi_1$, we have $c(-v_{t_j}(\Psi_1))e^{v_{t_j}(\Psi_1)}\ge c(-\Psi_1)e^{-\Psi_1}$. Hence we have
\begin{equation}\label{convergence property of module formula 3}
\begin{split}
   &\int_{U_0}|\tilde{F}_j|^2_he^{-\varphi_1}c(-\Psi_1) \\
   \le & 2\int_{U_0}|\tilde{F}_j-(1-b_{1}(\Psi_1))f_jF^2|^2_he^{-\varphi_1}c(-\Psi_1)\\
   +&2\int_{U_0}|(1-b_{1}(\Psi_1))f_jF^2|^2_he^{-\varphi_1}c(-\Psi_1)\\
   \le&
   2\int_{U_0}|\tilde{F}_j-(1-b_{1}(\Psi_1))f_jF^2|^2_he^{-\varphi_1-\Psi_1+v_{1}(\Psi_1)}c(-v_{1}(\Psi_1))\\
   +&2\int_{U_0\cap\{\Psi_1<-T_1-1\}}|f_j|^2_hc(-\Psi_1)\\
   < &+\infty.
\end{split}
\end{equation}
Hence we know that $(\tilde{F}_j,o)\in \mathcal{H}_o$.

It follows from inequality \eqref{convergence property of module}, $\sup_{j\ge1}\left(\int_{U_0}\mathbb{I}_{\{-T_1-2<\Psi<-T_1-1\}}|f_j|^2_he^{-\Psi}\right)<+\infty$ and inequality \eqref{convergence property of module formula 3} that we actually have \begin{equation}\label{closedness of module formula 1}
\sup_j\left(\int_{U_0}|\tilde{F}_j|^2_he^{-\varphi_1}c(-\Psi_1)\right)<+\infty.
\end{equation}
Note that
$c(t)e^{-t}$ is decreasing with respect to $t$ and there exists an analytic subset $S$ of $D$ and for any compact subset $K\subset D\backslash S$,  $|e_x|_h^2c(-\psi)\ge C_K|e_x|^2_{\hat{h}}$ for any $ x\in K\cap\{\Psi<-t_0\}$, where $C_K>0$ is a constant and $e_x\in E_x$.

 Let $K\subset U_0\backslash S\subset D\backslash S$ be any compact set, then for any $f$ being an $E$-valued holomorphic $(n,0)$ form, we have
 $$|f_x|^2_h e^{-\varphi_1}c(-\Psi_1)\ge \tilde{C}_K|f_x|^2_{\hat{h}}$$ for any $x\in {K\cap\{\Psi_1<-T_1\}}$ and
  $$|f_x|^2_h e^{-\varphi_1}c(-\Psi_1)\ge C_1|f_x|^2_he^{-\varphi_1-\Psi_1}=|f_x|^2_{\tilde{h}}e^{-\delta \tilde{M}}\ge C_1C_2|f_x|^2_{\hat{h}}$$ for any $x\in {K\cap\{\Psi_1\ge-T_1\}}$, where $C_K,C_1,C_2>0$ are constants. Hence

\begin{equation}\label{closedness of module formula 2}
  \begin{split}
    |f_x|^2_h e^{-\varphi_1}c(-\Psi_1)
     \ge \min\{\tilde{C}_K,C_1C_2\}|f_x|^2_{\hat{h}},
  \end{split}
\end{equation}
for any $x\in {K\cap\{\Psi_1\ge-T_1\}}$.
It follows from inequality \eqref{closedness of module formula 1}, inequality \eqref{closedness of module formula 2} and Lemma \ref{l:converge} that there exists a subsequence of $\{\tilde{F}_j\}_{j\ge 1}$ (also denoted by $\{\tilde{F}_j\}_{j\ge 1}$) compactly convergent to an $E$-valued holomorphic $(n,0)$ form $\tilde{F}_0$ on $U_0$ and
\begin{equation}\label{convergence property of module formula 4}
\int_{U_0}|\tilde{F}_0|^2_he^{-\varphi_1}c(-\Psi_1)\le
\liminf_{j\to +\infty}\int_{U_0}|\tilde{F}_j|^2_he^{-\varphi_1}c(-\Psi_1)<+\infty.
\end{equation}
As $f_j$ converges to $f_0$, it follows from Fatou's Lemma and inequality \eqref{convergence property of module formula 1} that
\begin{equation}\nonumber
  \begin{split}
  &\int_{U_0}|\tilde{F}_0-(1-b_{1}(\Psi))f_0F^{2}|^2_he^{-\varphi_1+v_{1}(\Psi_1)-\Psi_1}c(-v_{1}(\Psi_1)) \\
     \le & \liminf_{j\to+\infty} \int_{U_0}|\tilde{F}_j-(1-b_{1}(\Psi))f_jF^{2}|^2_he^{-\varphi_1+v_{1}(\Psi)-\Psi}c(-v_{1}(\Psi_1)) \\
     < &+\infty,
  \end{split}
\end{equation}
which implies that
\begin{equation}\label{convergence property of module formula 5}
  \begin{split}
\int_{U_0\cap\{\Psi<-T_1-2\}}|\tilde{F}_0-f_0F^2|^2_he^{-\varphi_1-\Psi_1}<+\infty.
  \end{split}
\end{equation}
It follows from inequality \eqref{convergence property of module formula 2}, inequality \eqref{convergence property of module formula 3}, inequality \eqref{convergence property of module formula 4}, inequality \eqref{convergence property of module formula 5} and definition of $P:H_o/I(h,\Psi_1)_o\to \mathcal{H}_o/\mathcal{E}(he^{-\varphi_1-\Psi_1})_o$ that for any $j\ge 0$, we have
$$P([(f_j)_o])=[(\tilde{F}_j,o)].$$

Note that $I(h,\Psi_1)_o=I(h,\Psi)_o\subset J_o$. As $(f_j-f)_o\in J_o$ for any $j\ge 1$, we have $(f_j-f_1)_o\in J_o$ for any $j\ge 1$.
It follows from Proposition \ref{module isomorphism} that there exists a submodule $\tilde{J}$ of $\mathcal{O}^r_{\mathbb{C}^n,o}$ such that $\mathcal{E}(he^{-\varphi_1-\Psi_1})_o\subset \tilde{J}\subset \mathcal{H}_o$ and $\tilde{J}/\mathcal{E}(he^{-\varphi_1-\Psi_1})_o=\text{Im}(P|_{J_o/I(h,\Psi_1)_o})$. It follows from $(f_j-f_1)_o\in J_o$ and $P([(f_j)_o])=[(F_j,o)]$ for any $j\ge 1$ that we have
$$(\tilde{F}_j-\tilde{F}_1)\in \tilde{J},$$
for any $j\ge 1$.

As $\tilde{F}_j$ compactly converges to $\tilde{F}_0$, using Lemma \ref{closedness}, we obtain that $(\tilde{F}_0-\tilde{F}_1,o)\in\tilde{J}$. Note that $P$ is an  $\mathcal{O}_{\mathbb{C}^n,o}$-module isomorphism and $\tilde{J}/\mathcal{E}(he^{-\varphi_1-\Psi_1})_o=\text{Im}(P|_{J_o/I(h,\Psi_1)_o})$. We have $(f_0-f_1)_o\in J_o$, which implies that $(f_0-f)_o\in J_o$.

Lemma \ref{closedness of module} is proved.
\end{proof}

 Let $c\equiv1$,  and note that $H_o=I(h, 0\Psi)_o$ and   $\mathcal{H}_o=\mathcal{E}(he
^{-\varphi_1})_o$.
It is clear that $I(h,a\Psi)_o\subset I(h,a'\Psi)_o$ for any $0\le a'<a<+\infty$. Denote that $I_+(h,a\Psi)_o:=\cup_{p>a}I(h,p\Psi)_o$ is an $\mathcal{O}_{\mathbb{C}^n,o}$-submodule of $H_o$, where $a\ge0$.
\begin{Lemma}
	\label{l:m5} There exists $a'>a$ such that $I(h,a'\Psi)_o=I_+(h,a\Psi)_o$ for any $a\ge0$.
\end{Lemma}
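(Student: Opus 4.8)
The plan is to reduce the statement to the ascending chain condition in the Noetherian local ring $\mathcal{O}_{\mathbb{C}^n,o}$. Since the assertion is local I would work in the local setting of this subsection, so that $H_o=I(h,0\Psi)_o$ and $\mathcal{H}_o=\mathcal{E}(he^{-\varphi_1})_o$. First observe that $I_+(h,a\Psi)_o=\bigcup_{n\ge 1}I(h,(a+\tfrac1n)\Psi)_o$, and that for $n$ large this is an \emph{increasing} union of $\mathcal{O}_{\mathbb{C}^n,o}$-submodules of $H_o$: a smaller coefficient yields a larger module since $\Psi\le 0$, and $I(h,p\Psi)_o\subseteq I(h,0\Psi)_o=H_o$ for every $p\ge 0$. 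Hence it suffices to prove that the chain $\{I(h,(a+\tfrac1n)\Psi)_o\}_n$ stabilizes: if $I(h,(a+\tfrac1N)\Psi)_o=I(h,(a+\tfrac1n)\Psi)_o$ for all $n\ge N$, then $a':=a+\tfrac1N>a$ works.

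For the stabilization I would invoke Proposition \ref{module isomorphism}, which provides an $\mathcal{O}_{\mathbb{C}^n,o}$-module isomorphism $H_o/I(h,\Psi_1)_o\cong\mathcal{H}_o/\mathcal{E}(he^{-\varphi_1-\Psi_1})_o$. Since $\mathcal{O}_{\mathbb{C}^n,o}$ is Noetherian, $\mathcal{O}^r_{\mathbb{C}^n,o}$ is a Noetherian module, and the right-hand side is a quotient of the submodule $\mathcal{H}_o\subseteq\mathcal{O}^r_{\mathbb{C}^n,o}$ by the submodule $\mathcal{E}(he^{-\varphi_1-\Psi_1})_o$, hence Noetherian; therefore $H_o/I(h,\Psi_1)_o$ is a Noetherian $\mathcal{O}_{\mathbb{C}^n,o}$-module. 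When $a<1$ this already finishes the argument: for $n$ large one has $a+\tfrac1n<1$, so $I(h,(a+\tfrac1n)\Psi)_o\supseteq I(h,\Psi_1)_o$, the chain lives between $I(h,\Psi_1)_o$ and $H_o$, and its image in the Noetherian module $H_o/I(h,\Psi_1)_o$ stabilizes by the ascending chain condition; pulling back gives the required $N$.

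For general $a\ge 0$ — in particular for $a\ge 1$, where the chain instead lies inside $I(h,\Psi_1)_o$ — one needs in addition that $I(h,\Psi_1)_o=I(h,\Psi)_o$ is a finitely generated $\mathcal{O}_{\mathbb{C}^n,o}$-module, so that $H_o$ itself is Noetherian; this is the main obstacle. When $o\in\bigcap_{t>T}\{\Psi<-t\}$ it is immediate, since then $H_o=\mathcal{O}^r_{\mathbb{C}^n,o}$. At a genuine boundary point I would obtain it from the $L^2$-extension mechanism behind Proposition \ref{module isomorphism} — applying the extension of Lemma \ref{L2 method for c(t)} with the heavier weight $e^{-\Psi_1}$ (legitimate since $\Theta_{\tilde h}(E)\ge^s_{Nak}0$) to identify $I(h,\Psi_1)_o$, up to an $\mathcal{E}$-type submodule, with a subquotient of $\mathcal{O}^r_{\mathbb{C}^n,o}$, which is finitely generated; alternatively one may invoke coherence of the relevant multiplier-type sheaf. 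Once $H_o$ is known to be a Noetherian $\mathcal{O}_{\mathbb{C}^n,o}$-module, the ascending chain condition applied to $\{I(h,(a+\tfrac1n)\Psi)_o\}_n\subseteq H_o$ yields $N$, hence $a'$, and completes the proof.
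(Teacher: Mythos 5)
Your reduction to the ascending chain condition is exactly the paper's strategy, and your argument for $a<1$ is correct and matches the role that Proposition \ref{module isomorphism} plays in the paper. The problem is the case $a\ge 1$, which you correctly flagged as the main obstacle — but the patch you propose does not work, and the missing device is in fact elementary.

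Your patch is to show that $I(h,\Psi)_o$ is a finitely generated $\mathcal{O}_{\mathbb{C}^n,o}$-module so that $H_o$ itself becomes Noetherian. That is both unproved and, at a genuine boundary point $o\in\partial\{\Psi<-t\}$, most likely false: the modules $J(E,\Psi)_o$ and their submodules such as $I(h,\Psi)_o$ consist of germs of sections defined only on one side of the boundary, and there is no embedding into $\mathcal{O}^r_{\mathbb{C}^n,o}$. The $L^2$-extension mechanism you invoke only produces module homomorphisms after passing to quotients by $I(h,k\Psi)_o$-type submodules — this is precisely what Proposition \ref{module isomorphism} says — so it gives Noetherianness of quotients like $I(h,\Psi)_o/I(h,2\Psi)_o$ but never finite generation of $I(h,\Psi)_o$ itself; attempting to iterate produces an infinite regress. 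Likewise, coherence of a multiplier-type sheaf does not help, because coherence is a statement about stalks of $\mathcal{O}$-module sheaves at points where sections extend, not about germs at boundary points of sublevel sets.

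The paper's fix is a rescaling trick. Given $a\ge 0$, fix an integer $k>a$ and replace $(\psi,F,T,T_1)$ by $(k\psi,F^k,kT,kT_1)$. Then $\Psi$ is replaced by $\tilde\Psi=k\Psi$, and the hypothesis $\Theta_{he^{-k\psi}}(E)\ge^s_{Nak}0$ needed to invoke Proposition \ref{module isomorphism} in the rescaled setting follows from $\Theta_{he^{-\psi}}(E)\ge^s_{Nak}0$ together with plurisubharmonicity of $(k-1)\psi$ via Remark \ref{example of singular metric}. Proposition \ref{module isomorphism} (with $c\equiv1$) now gives an $\mathcal{O}_{\mathbb{C}^n,o}$-module isomorphism
$$P:\ I(h,0\Psi)_o/I(h,k\Psi)_o\ \longrightarrow\ \mathcal{E}(he^{-\tilde\varphi_1})_o/\mathcal{E}(he^{-\tilde\varphi_1-\tilde\Psi})_o,$$
and the right-hand side is a subquotient of $\mathcal{O}^r_{\mathbb{C}^n,o}$, hence Noetherian. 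Since $a+\tfrac1n<k$ for $n$ large, each $I(h,(a+\tfrac1n)\Psi)_o$ contains $I(h,k\Psi)_o$, so the whole chain sits inside the Noetherian module $H_o/I(h,k\Psi)_o$ and the ascending chain condition (equivalently, as the paper phrases it, finite generation of $L=\cup_{a<p<k}P(I(h,p\Psi)_o/I(h,k\Psi)_o)$) gives the stabilization. This completely sidesteps any question about whether $H_o$ or $I(h,\Psi)_o$ is finitely generated, which is the step your proposal cannot supply.
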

\begin{proof}
The definition of $I_+(h,a\Psi)_o$ shows $I(h,p\Psi)_o\subset I_+(h,a\Psi)_o$ for any $p>a$. It suffices to prove that  there exists $a'>a$ such that $I_+(h,a\Psi)_o\subset I(h,a'\Psi)_o$.

 Denote that $\tilde\varphi_1:=k\varphi_1=2\max\{k\psi+kT,2\log|F^k|\}$ and $\tilde \Psi:=k\Psi=\min\{k\psi-2\log|F^k|,-kT\}$, where  $k>a$ is an integer. As $he^{-\psi}\ge_{Nak}^s0$ and $\psi$ is plurisubharmonic on $M$, it follows from Remark \ref{example of singular metric} that $he^{-k\psi}\ge_{Nak}^s0$. Proposition \ref{module isomorphism} shows that there exists an $\mathcal{O}_{\mathbb{C}^n,o}$-module isomorphism $P$ from $I(h,0\Psi)_o/I(h,\tilde \Psi)_o\rightarrow \mathcal{E}(he^{-\varphi_1})_o/\mathcal{E}(he^{-\tilde\varphi_1-\tilde \Psi})_o$, which implies that for any $p\in(0,k)$, there exists an $\mathcal{O}_{\mathbb{C}^n,o}$-submodule $K_p$ of $\mathcal{O}^r_{\mathbb{C}^n,o}$ such that
 $$P(I(h,p\Psi)_o/I(h,\tilde \Psi)_o)=K_p/\mathcal{E}(he^{-\tilde\varphi_1-\tilde \Psi})_o.$$
	Denote that
	$$L:=\cup_{a<p<k}K_p$$
	 be an $\mathcal{O}_{\mathbb{C}^n,o}$-submodule $K_p$ of $\mathcal{O}^r_{\mathbb{C}^n,o}$. Hence $P|_{I_+(h,a\Psi)_o/I(h,\tilde \Psi)_o}$ is an  $\mathcal{O}_{\mathbb{C}^n,o}$-module isomorphism from $I_+(h,a\Psi)_o/I(h,\tilde \Psi)_o$ to $L/\mathcal{E}(he^{-\tilde\varphi_1-\tilde \Psi})_o$. As $\mathcal{O}_{\mathbb{C}^n,o}$ is a Noetherian ring (see \cite{hormander}), we know that $\mathcal{O}^r_{\mathbb{C}^n,o}$ is a Noetherian $\mathcal{O}_{\mathbb{C}^n,o}$-module, which implies that $L$ is finitely generated. Thus,  we have a finite set $\{(f_1)_o,\ldots,(f_m)_o\}\subset I_+(h,a\Psi)_o$, which satisfies that for any $f_o\in I_+(h,a\Psi)_o$, there exists  $(h_j,o)\in\mathcal{O}_{\mathbb{C}^n,o}$ for $1\le j\le m$ such that
	 $$f_o-\sum_{j=1}^{m}(h_j,o)\cdot (f_j)_o\in I(h,\tilde \Psi)_o.$$
	  By the definition of $I_+(h,a\Psi)_o$, there exists $a'\in(a,k)$ such that $\{(f_1)_o,\ldots,(f_m)_o\}\subset I(h,a'\Psi)_o$. Note that  $I(h,\tilde\Psi)_o=I(h,k\Psi)_o\subset I(h,a'\Psi)_o$. Then we obtain that $I_+(h,a\Psi)_o\subset I(h,a'\Psi)_o$.
	
	  Thus, Lemma \ref{l:m5} holds.
\end{proof}

\section{Properties of $G(t)$}
Following the notations in Section \ref{sec:Main result}, we present some properties of the function $G(t)$ in this section.

For any $t\ge T$, denote
\begin{equation}\nonumber
\begin{split}
\mathcal{H}^2(t;c,f,H):=\Bigg\{\tilde{f}:\int_{ \{ \Psi<-t\}}|\tilde{f}|^2_hc(-\Psi)<+\infty,\  \tilde{f}\in
H^0(\{\Psi<-t\},\mathcal{O} (K_M \otimes E) ) \\
\& (\tilde{f}-f)_{z_0}\in
\mathcal{O} (K_M)_{z_0} \otimes (J_{z_0}\cap H_{z_0}),\ \text{for any }  z_0\in Z_0  \Bigg\},
\end{split}
\end{equation}
where $f$ is an $E$-valued holomorphic $(n,0)$ form on $\{\Psi<-t_0\}\cap V$ for some $V\supset Z_0$ is an open subset of $M$ and some $t_0\ge T$, $c(t)$ is a positive measurable function on $(T,+\infty)$ and $H_{z_0}=\{f_o\in J(\Psi)_o:\int_{\{\Psi<-t\}\cap V_0}|f|^2e^{-\varphi}c(-\Psi)<+\infty \text{ for some }t>T_0 \text{ and } V_0 \text{ is an open neighborhood of}\  z_0\}$ (the definition of $H_{z_0}$ can be referred to Section \ref{sec:properties of module}).

If $G(t_1;c,\Psi,\varphi,J,f)<+\infty$, then there exists an $E$-valued holomorphic $(n,0)$ form $\tilde{f}_0$ on $\{\Psi<-t_1\}$ such that $(\tilde{f}_0-f)_{z_0}\in
\mathcal{O} (K_M)_{z_0} \otimes J_{z_0},\text{for any }  z_0\in Z_0$ and $$\int_{ \{ \Psi<-t_1\}}|\tilde{f}_0|^2_hc(-\Psi)<+\infty.$$

\begin{Lemma}
\label{module in def of G(t)}If $G(t_1;c,\Psi,\varphi,J,f)<+\infty$ for some $t_1\ge T$, we have $\mathcal{H}^2(t;c,f)=\mathcal{H}^2(t;c,\tilde{f}_0)=\mathcal{H}^2(t;c,\tilde{f}_0,H)$ for any $t\ge T$.
\end{Lemma}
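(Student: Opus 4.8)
The plan is to reduce both equalities to elementary manipulations with the $\mathcal{O}_{M,z_0}$-modules $J_{z_0}$ and $H_{z_0}$ and the rank-one free twist $\mathcal{O}(K_M)_{z_0}$, the only point requiring care being that, for $z_0\in Z_0\subset\cap_{t>T}\overline{\{\Psi<-t\}}$, all the germs that occur are well-defined elements of $\mathcal{O}(K_M)_{z_0}\otimes J(E,\Psi)_{z_0}$. First I would record that the admissible competitor $\tilde f_0$ for $G(t_1)<+\infty$ satisfies $\int_{\{\Psi<-t_1\}}|\tilde f_0|^2_hc(-\Psi)<+\infty$ and $(\tilde f_0-f)_{z_0}\in\mathcal{O}(K_M)_{z_0}\otimes J_{z_0}$ for every $z_0\in Z_0$, and that with $V=M$ and base level $t_1$ the form $\tilde f_0$ is itself an admissible center, so $\mathcal{H}^2(t;c,\tilde f_0)$ and $\mathcal{H}^2(t;c,\tilde f_0,H)$ are defined for all $t\ge T$, including the slightly delicate range $t<t_1$.

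For $\mathcal{H}^2(t;c,f)=\mathcal{H}^2(t;c,\tilde f_0)$ I would argue as follows: both membership conditions impose the same $L^2$ bound, so it suffices to compare the module conditions. For any $\tilde f\in H^0(\{\Psi<-t\},\mathcal{O}(K_M\otimes E))$ and any $z_0\in Z_0$, the forms $\tilde f$, $\tilde f_0$ and $f$ are all holomorphic on $\{\Psi<-s\}$ intersected with a neighborhood of $z_0$ for $s$ large, so the germ identity $(\tilde f-f)_{z_0}=(\tilde f-\tilde f_0)_{z_0}+(\tilde f_0-f)_{z_0}$ is legitimate in $\mathcal{O}(K_M)_{z_0}\otimes J(E,\Psi)_{z_0}$; since $(\tilde f_0-f)_{z_0}$ lies in the submodule $\mathcal{O}(K_M)_{z_0}\otimes J_{z_0}$, the condition $(\tilde f-f)_{z_0}\in\mathcal{O}(K_M)_{z_0}\otimes J_{z_0}$ holds for all $z_0$ if and only if $(\tilde f-\tilde f_0)_{z_0}\in\mathcal{O}(K_M)_{z_0}\otimes J_{z_0}$ for all $z_0$. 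Hence the two sets coincide for every $t\ge T$.

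For $\mathcal{H}^2(t;c,\tilde f_0)=\mathcal{H}^2(t;c,\tilde f_0,H)$ the inclusion $\supseteq$ is immediate from $J_{z_0}\cap H_{z_0}\subset J_{z_0}$. For the reverse, I would take $\tilde f\in\mathcal{H}^2(t;c,\tilde f_0)$ and, for each $z_0\in Z_0$, choose a coordinate neighborhood $V_0$ of $z_0$ on which $K_M$ is trivialized by a local generator and write $\tilde f$ and $\tilde f_0$ in terms of that generator; the finiteness of $\int_{\{\Psi<-t\}}|\tilde f|^2_hc(-\Psi)$ and of $\int_{\{\Psi<-t_1\}}|\tilde f_0|^2_hc(-\Psi)$, restricted to $V_0$ and rewritten as the scalar integrals defining $H_{z_0}$, shows that the germs of $\tilde f$ and $\tilde f_0$ at $z_0$ both lie in $H_{z_0}$, hence so does the germ of their difference, $H_{z_0}$ being a module. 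Since also $(\tilde f-\tilde f_0)_{z_0}\in\mathcal{O}(K_M)_{z_0}\otimes J_{z_0}$ and $\bigl(\mathcal{O}(K_M)_{z_0}\otimes J_{z_0}\bigr)\cap\bigl(\mathcal{O}(K_M)_{z_0}\otimes H_{z_0}\bigr)=\mathcal{O}(K_M)_{z_0}\otimes(J_{z_0}\cap H_{z_0})$ by freeness of $\mathcal{O}(K_M)_{z_0}$, we obtain $(\tilde f-\tilde f_0)_{z_0}\in\mathcal{O}(K_M)_{z_0}\otimes(J_{z_0}\cap H_{z_0})$ for every $z_0\in Z_0$, i.e. $\tilde f\in\mathcal{H}^2(t;c,\tilde f_0,H)$.

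The hard part will not be any analytic estimate but the boundary-point bookkeeping: one must verify that the germs $(\tilde f_0-f)_{z_0}$ and $(\tilde f-\tilde f_0)_{z_0}$ genuinely belong to $J(E,\Psi)_{z_0}$ even when $t<t_1$, so that $\tilde f_0$ is only defined on the proper open subset $\{\Psi<-t_1\}$ of $\{\Psi<-t\}$ — this uses $z_0\in\overline{\{\Psi<-s\}}$ for all $s>T$ together with the definition of $J(E,\Psi)_{z_0}$ by germs on sublevel sets of $\Psi$ — and one must pass carefully from the global $(n,n)$-form integrals $\int|\cdot|^2_hc(-\Psi)$ to the local scalar integrals $\int|\cdot|^2_hc(-\Psi)\,dV_M$ appearing in the definition of $H_{z_0}$ via a choice of local frame of $K_M$. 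Both are routine once the conventions of Sections \ref{sec:Main result} and \ref{sec:properties of module} are in place.
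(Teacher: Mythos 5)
Your proposal is correct and takes essentially the same route as the paper's proof: the first equality is handled exactly by the observation $(\tilde f_0-f)_{z_0}\in\mathcal{O}(K_M)_{z_0}\otimes J_{z_0}$, and for the second equality the paper likewise combines finiteness of the $L^2$ integrals of $\tilde f$ and $\tilde f_0$ on a common sublevel set $\{\Psi<-\max\{t,t_1\}\}$ to conclude $(\tilde f-\tilde f_0)_{z_0}\in\mathcal{O}(K_M)_{z_0}\otimes H_{z_0}$, then intersects with the $J_{z_0}$-condition. The only cosmetic difference is that you place each germ in $H_{z_0}$ separately and subtract, while the paper estimates the difference directly; these are interchangeable, and your closing remarks on the germ conventions at boundary points and on passing from $(n,n)$-form integrals to the scalar integrals of $H_{z_0}$ are accurate and correctly identified as routine.
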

\begin{proof}

 As $(\tilde{f}_0-f)_{z_0}\in
\mathcal{O} (K_M)_{z_0} \otimes J_{z_0},\text{for any }  z_0\in Z_0$, we have $\mathcal{H}^2(t;c,f)=\mathcal{H}^2(t;c,\tilde{f}_0)$ for any $t\ge T$.

 Now we prove $\mathcal{H}^2(t;c,\tilde{f}_0)=\mathcal{H}^2(t;c,\tilde{f}_0,H)$ for any $t\ge T$. It is obviously that $\mathcal{H}^2(t;c,\tilde{f}_0)\supset\mathcal{H}^2(t;c,\tilde{f}_0,H)$. We only need to show $\mathcal{H}^2(t;c,\tilde{f}_0)\subset\mathcal{H}^2(t;c,\tilde{f}_0,H)$.

Let $\tilde{f}_1\in \mathcal{H}^2(t_2;c,\tilde{f}_0)$ for some $t_2\ge T$. As $\int_{ \{ \Psi<-t_2\}}|\tilde{f}_1|^2_hc(-\Psi)<+\infty$, denote $t=\max\{t_1,t_2\}$, we know that
$$\int_{ \{ \Psi<-t\}}|\tilde{f}_1-\tilde{f}_0|^2_hc(-\Psi)<+\infty,$$
which implies that $(\tilde{f}_1-\tilde{f}_0)_{z_0}\in
\mathcal{O} (K_M)_{z_0} \otimes  H_{z_0},\ \text{for any }  z_0\in Z_0$. Hence $(\tilde{f}_1-\tilde{f}_0)_{z_0}\in
\mathcal{O} (K_M)_{z_0} \otimes  (J_{z_0}\cap H_{z_0}),\ \text{for any }  z_0\in Z_0$, which implies that $\tilde{f}_1\in \mathcal{H}^2(t;c,\tilde{f}_0, H)$. Hence $\mathcal{H}^2(t;c,\tilde{f}_0)=\mathcal{H}^2(t;c,\tilde{f}_0,H)$.
\end{proof}
\begin{Remark}
\label{module equivalence in def of G}
If $G(t_1;c,\Psi,\varphi,J,f)<+\infty$ for some $t_1\ge T$, we can always assume that $J_{z_0}$ is an $\mathcal{O}_{M,z_0}$-submodule of $H_{z_0}$ such that $I\big(h,\Psi\big)_{z_0}\subset J_{z_0}$, for any $z_0\in Z_0$ in the definition of $G(t;c,\Psi,h,J,f)$, where $t\in[T,+\infty)$.
\end{Remark}
\begin{proof} If $G(t_1;c,\Psi,\varphi,J,f)<+\infty$ for some $t_1\ge T$, it follows from Lemma \ref{module in def of G(t)} that $\mathcal{H}^2(t;c,f)=\mathcal{H}^2(t;c,\tilde{f}_0)=\mathcal{H}^2(t;c,\tilde{f}_0,H)$ for any $t\ge T$. By definition, we have $G(t;c,\Psi,h,J,f)=G(t;c,\Psi,h,J,\tilde{f}_0)=G(t;c,\Psi,h,J\cap H,\tilde{f}_0)$.

Hence we can always assume that $J_{z_0}$ is an $\mathcal{O}_{M,z_0}$-submodule of $H_{z_0}$ such that $I\big(h,\Psi\big)_{z_0}\subset J_{z_0}$, for any $z_0\in Z_0$.
\end{proof}

In the following discussion, we assume that $J_{z_0}$ is an $\mathcal{O}_{M,z_0}$-submodule of $H_{z_0}$ such that $I\big(h,\Psi\big)_{z_0}\subset J_{z_0}$, for any $z_0\in Z_0$.

Let $c(t)\in \tilde{P}_{T,M,\Psi,h}$.
The following lemma will be used to discuss the convergence property of $E$-valued holomorphic forms on $\{\Psi<-t\}$.

\begin{Lemma}\label{global convergence property of module}
 Let $f$ be an $E$-valued holomorphic $(n,0)$ form on $\{\Psi<-\hat{t}_0\}\cap V$, where $V\supset Z_0$ is an open subset of $M$ and $\hat{t}_0>T$
is a real number. For any $z_0\in Z_0$, let $J_{z_0}$ be an $\mathcal{O}_{M,z_0}$-submodule of $H_{z_0}$ such that $I\big(h,\Psi\big)_{z_0}\subset J_{z_0}$.

Let $\{f_j\}_{j\ge 1}$ be a sequence of $E$-valued holomorphic $(n,0)$ forms on $\{\Psi<-t_j\}$. Assume that $t_0:=\lim_{j\to +\infty}t_j\in[T,+\infty)$,
\begin{equation}\label{global convergence property of module 1}
\limsup\limits_{j\to+\infty}\int_{\{\Psi<-t_j\}}|f_j|^2_hc(-\Psi)\le C<+\infty,
\end{equation}
and $(f_j-f)_{z_0}\in \mathcal{O} (K_M)_{z_0}\otimes J_{z_0}$ for any $z_0\in Z_0$. Then there exists a subsequence of $\{f_j\}_{j\in \mathbb{N}^+}$ compactly convergent to an $E$-valued  holomorphic $(n,0)$ form $f_0$ on $\{\Psi<-t_0\}$ which satisfies
$$\int_{\{\Psi<-t_0\}}|f_0|^2_hc(-\Psi)\le C,$$
and $(f_0-f)_{z_0}\in \mathcal{O} (K_M)_{z_0}\otimes  J_{z_0}$ for any $z_0\in Z_0$.
\end{Lemma}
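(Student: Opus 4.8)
\emph{Plan.} I would prove Lemma \ref{global convergence property of module} by combining a global normal‑families argument with the local closedness statement Lemma \ref{closedness of module}: first produce the limit $f_0$ and the $L^2$ bound globally on $\{\Psi<-t_0\}$, and then verify the module condition $(f_0-f)_{z_0}\in\mathcal O(K_M)_{z_0}\otimes J_{z_0}$ point by point. \emph{Step 1 (global extraction).} Since $c\in\tilde P_{T,M,\Psi,h}$, for each $k\in\mathbb N$ the gain property with threshold $t_0+\tfrac1k>T$ furnishes a closed set $E_0^{(k)}\subset Z\cap\{\Psi=-\infty\}$ with $|e_x|^2_hc(-\psi)\ge C_K|e_x|^2_{\hat h}$ on $K\cap\{\Psi<-t_0-\tfrac1k\}$ for every compact $K\subset M\setminus E_0^{(k)}$. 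Because $t_j\to t_0$, every compact $K\subset\{\Psi<-t_0\}$ lies in $\{\Psi<-t_j\}$ for $j$ large, so the $f_j$ are eventually defined on $K$, and $c(-\Psi)\mathbb I_{\{\Psi<-t_j\}}\to c(-\Psi)\mathbb I_{\{\Psi<-t_0\}}$ a.e. Running the Montel‑and‑diagonal argument of the proof of Lemma \ref{l:converge} on the complex manifold $\{\Psi<-t_0-\tfrac1k\}\setminus X$ (with the analytic set $Z\supset E_0^{(k)}$ playing the role of the exceptional set $S$, and the weights $c(-\Psi)\mathbb I_{\{\Psi<-t_j\}}$ in place of the $g_j$), and then extending the limit across $X$ by the local negligibility in condition $(A)$ and across $Z$ by the singular‑metric mechanism $\tilde h=he^{-\psi}$ — using the $C^2$ metrics $\tilde h_{j,s}\le\tilde h$ provided by $\Theta_{\tilde h}(E)\ge^s_{Nak}0$, exactly as in the proofs of Lemmas \ref{L2 method for c(t)} and \ref{closedness of module} — one obtains on each $\{\Psi<-t_0-\tfrac1k\}$ a subsequence converging uniformly on compacts; a further diagonal over $k$, together with $\{\Psi<-t_0\}=\bigcup_k\{\Psi<-t_0-\tfrac1k\}$, yields a subsequence $\{f_{j_l}\}_{l\ge1}$ converging uniformly on compact subsets of $\{\Psi<-t_0\}$ to an $E$-valued holomorphic $(n,0)$ form $f_0$ on $\{\Psi<-t_0\}$. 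Fatou's lemma along $\{f_{j_l}\}$ gives $\int_{\{\Psi<-t_0\}}|f_0|^2_hc(-\Psi)\le\limsup_{j\to+\infty}\int_{\{\Psi<-t_j\}}|f_j|^2_hc(-\Psi)\le C$.

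\emph{Step 2 (the module condition).} This is a local statement, so fix $z_0\in Z_0$, take a coordinate polydisc $D\ni z_0$ on which $E$ is trivial and a Stein neighborhood $U_0\Subset D$ of $z_0$, and use the canonical isomorphism $K_D\cong\mathcal O_D$ (via $dz$) to identify the $E$-valued $(n,0)$ forms near $z_0$ with sections of $D\times\mathbb C^r$. Under this identification $\{f_{j_l}|_{U_0\cap\{\Psi<-t_{j_l}\}}\}$, the bound $\limsup_l\int_{U_0\cap\{\Psi<-t_{j_l}\}}|f_{j_l}|^2_hc(-\Psi)\le C<+\infty$, the hypothesis $f_{z_0}\in J(E,\Psi)_{z_0}$, and $(f_{j_l}-f)_{z_0}\in J_{z_0}$ (recall $I(h,\Psi)_{z_0}\subset J_{z_0}\subset H_{z_0}$) are precisely the hypotheses of Lemma \ref{closedness of module}. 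That lemma produces a sub‑subsequence converging compactly on $U_0\cap\{\Psi<-t_0\}$ to a section whose germ differs from $f_{z_0}$ by an element of $J_{z_0}$; since $\{f_{j_l}\}$ already converges to $f_0$ uniformly on compacts, this section must be $f_0$, and hence $(f_0-f)_{z_0}\in\mathcal O(K_M)_{z_0}\otimes J_{z_0}$. As $z_0\in Z_0$ was arbitrary, Lemma \ref{global convergence property of module} follows.

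\emph{Main obstacle.} The real work sits in Step 1: the compact limit $f_0$ must be shown to be holomorphic across the exceptional sets $X$ and $E_0^{(k)}\subset Z$, where $c(-\Psi)$ carries no lower bound, so that an $L^2_{\hat h}$ estimate near those sets cannot be read off directly from the $c(-\Psi)$-weighted estimate; the way around this is to route through the comparison metrics $\tilde h_{j,s}\le\tilde h$ and the local negligibility of $X$, i.e.\ to repeat verbatim the removable‑singularity bookkeeping already carried out in the proofs of Lemmas \ref{L2 method for c(t)}, \ref{L2 method in JM concavity} and \ref{closedness of module}. Step 2 is then routine, the one point of care being that a single global subsequence must be chosen in Step 1 \emph{before} Lemma \ref{closedness of module} is invoked at each $z_0$, since $Z_0$ need not be countable.
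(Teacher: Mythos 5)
Your proof follows the same route as the paper — extract a global compactly‑convergent subsequence via normal families, then invoke Lemma \ref{closedness of module} locally at each $z_0\in Z_0$ — and both steps work. Step 2 is exactly the paper's argument, and your remark that one must fix a single global subsequence \emph{before} applying Lemma \ref{closedness of module} pointwise (since $Z_0$ need not be countable) is correct and is indeed implicit in the paper.

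However, your Step 1 overcomplicates the extraction of $f_0$, and your identified ``main obstacle'' is not actually present. You propose to run Lemma \ref{l:converge} on $\{\Psi<-t_0-\tfrac1k\}\setminus X$ and then ``extend the limit across $X$ by local negligibility and across $Z$ by the singular‑metric mechanism.'' But the $f_j$ are already holomorphic across $X$ and $Z$ (they are holomorphic $(n,0)$ forms on the open set $\{\Psi<-t_j\}$, which contains $\{\Psi<-t_0-\tfrac1k\}$ for $j$ large); there is no $\bar\partial$‑equation being solved here and hence no removable‑singularity step to perform. The removable‑singularity bookkeeping in Lemmas \ref{L2 method for c(t)} and \ref{L2 method in JM concavity} is needed because those lemmas \emph{produce} a section by $L^2$ methods on a smaller manifold; in the present lemma the sections are given and one only extracts a normal‑families limit. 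Lemma \ref{l:converge} is designed precisely so that the limit is holomorphic on \emph{all} of the manifold $M$ in its statement, including $S$: the Local Parametrization Theorem and the Maximum Principle bound $\sup_K|F_j|^2_{\hat h}$ (for any compact $K$, even $K$ meeting $S$) by $\sup_{K_1}|F_j|^2_{\hat h}$ for some $K_1\subset M\setminus S$, and on $K_1$ the $c(-\Psi)$‑weighted integral gives a uniform bound. So one simply applies Lemma \ref{l:converge} directly on $\{\Psi<-t_0-\tfrac1k\}$ with $S=Z$ (using $E_0\subset Z$ from the definition of $\tilde P_{T,M,\Psi,h}$), diagonalizes over $k$, and gets $f_0$ holomorphic on $\{\Psi<-t_0\}$ with no extension step; the set $X$ plays no role in this lemma. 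Your version would eventually reach the same conclusion, but it inserts work that the statement of Lemma \ref{l:converge} has already done.
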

\begin{proof}

It follows from $c(t)\in\tilde{P}_{T,M,\Psi,h}$ that there exists an analytic subset $Z$ of $D$ and for any compact subset $K\subset D\backslash Z$, $|e|_h^2c(-\psi)\ge C_K|e|^2_{\hat{h}}$ on $K\cap\{\Psi<-t_0\}$, where $C_K>0$ is a constant and $e$ is any $E$-valued holomorphic $(n,0)$ form.
It follows from inequality \eqref{global convergence property of module 1}, Lemma \ref{l:converge} and diagonal method that there exists a subsequence of $\{f_j\}_{j\ge 1}$ (also denoted by $\{f_j\}_{j\ge 1}$) compactly convergent to an $E$-valued holomorphic $(n,0)$ form $f_0$ on $\{\Psi<-t_0\}$. It follows from Fatou's Lemma that
$$\int_{\{\Psi<-t_0\}}|f_0|^2_hc(-\Psi)\le \liminf\limits_{j\to+\infty}\int_{\{\Psi<-t_j\}}|f_j|^2_hc(-\Psi)\le C.$$

Next we prove $(f_0-f)_{z_0}\in \mathcal{O} (K_M)_{z_0}\otimes  J_{z_0}$ for any $z_0\in Z_0$.

Let $z_0\in Z_0$ be a point. As $\limsup\limits_{j\to+\infty}\int_{\{\Psi<-t_j\}}|f_j|^2_hc(-\Psi)\le C<+\infty$,
 there exists an open Stein neighborhood $U_{z_0}\Subset M$ of $z_0$ such that
$$\limsup\limits_{j\to+\infty}\int_{U_{z_0}\cap\{\Psi<-t_j\}}|f_j|^2_hc(-\Psi)\le C<+\infty.$$
Note that we also have $(f_j-f)_{z_0}\in J_{z_0}$.
 It follows from Lemma \ref{closedness of module} and the uniqueness of limit function that  $(f_0-f)_{z_0}\in
\mathcal{O} (K_M)_{z_0} \otimes J_{z_0}$ for any $z_0\in Z_0$.

Lemma \ref{global convergence property of module} is proved.
\end{proof}

\begin{Lemma}
\label{characterization of g(t)=0} Let $t_0>T$.
The following two statements are equivalent,\\
(1) $G(t_0)=0$;\\
(2) $f_{z_0}\in
\mathcal{O} (K_M)_{z_0} \otimes J_{z_0}$, for any  $ z_0\in Z_0$.
\end{Lemma}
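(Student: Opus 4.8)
The plan is to prove the two implications separately, with the implication $(2)\Rightarrow(1)$ being essentially immediate and the implication $(1)\Rightarrow(2)$ being the substantive direction. For $(2)\Rightarrow(1)$: if $f_{z_0}\in\mathcal{O}(K_M)_{z_0}\otimes J_{z_0}$ for every $z_0\in Z_0$, then the zero section $\tilde f\equiv 0$ on $\{\Psi<-t_0\}$ is an admissible competitor in the infimum defining $G(t_0)$, since $(\tilde f-f)_{z_0}=(-f)_{z_0}\in\mathcal{O}(K_M)_{z_0}\otimes J_{z_0}$ for all $z_0\in Z_0$ (using that $J_{z_0}$ is a module, hence closed under negation), and $\int_{\{\Psi<-t_0\}}|0|_h^2 c(-\Psi)=0$. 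Therefore $G(t_0)=0$.

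For the direction $(1)\Rightarrow(2)$, suppose $G(t_0)=0$. First I would handle the trivial case: if $G(t_0)=+\infty$ there is nothing in statement $(1)$ to contradict, so we may assume $G(t_0)$ is finite, and in fact equal to $0$. By definition of the infimum there is a sequence $\{f_j\}_{j\ge1}$ of $E$-valued holomorphic $(n,0)$ forms on $\{\Psi<-t_0\}$ with $(f_j-f)_{z_0}\in\mathcal{O}(K_M)_{z_0}\otimes J_{z_0}$ for all $z_0\in Z_0$, and with $\int_{\{\Psi<-t_0\}}|f_j|_h^2 c(-\Psi)\to 0$; in particular this sequence is bounded, so $\limsup_j\int_{\{\Psi<-t_0\}}|f_j|_h^2 c(-\Psi)\le C<+\infty$ for some $C$. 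Then I would invoke Lemma \ref{global convergence property of module} with all $t_j=t_0$ (so $t_0=\lim_j t_j$): it produces a subsequence converging locally uniformly to an $E$-valued holomorphic $(n,0)$ form $f_0$ on $\{\Psi<-t_0\}$ with $\int_{\{\Psi<-t_0\}}|f_0|_h^2 c(-\Psi)\le \liminf_j\int_{\{\Psi<-t_0\}}|f_j|_h^2 c(-\Psi)=0$ and, crucially, with $(f_0-f)_{z_0}\in\mathcal{O}(K_M)_{z_0}\otimes J_{z_0}$ for all $z_0\in Z_0$. Since $c$ is a gain (positive on $(T,+\infty)$) and $h$ has the positive local lower bound built into the class $\tilde P_{T,M,\Psi,h}$, the vanishing of $\int_{\{\Psi<-t_0\}}|f_0|_h^2 c(-\Psi)$ forces $|f_0|_{\hat h}^2\equiv 0$ a.e. on $\{\Psi<-t_0\}$ away from the exceptional set $E_0$, hence $f_0\equiv 0$ identically on $\{\Psi<-t_0\}$ by continuity and the fact that $E_0$ lies in an analytic set. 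Therefore $0=(f_0)_{z_0}$ and $(f_0-f)_{z_0}=(-f)_{z_0}\in\mathcal{O}(K_M)_{z_0}\otimes J_{z_0}$, so $f_{z_0}\in\mathcal{O}(K_M)_{z_0}\otimes J_{z_0}$ for every $z_0\in Z_0$, which is statement $(2)$.

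The main obstacle is extracting a \emph{holomorphic} limit $f_0$ that simultaneously (a) converges in the $L^2(c(-\Psi))$ norm well enough to conclude $f_0\equiv 0$ and (b) still satisfies the module constraint $(f_0-f)_{z_0}\in\mathcal{O}(K_M)_{z_0}\otimes J_{z_0}$ at the boundary points $z_0\in Z_0$; the subtlety is that the sets $\{\Psi<-t_j\}$ (here all equal, which simplifies matters) need not be relatively compact and $h$ is only a singular/measurable metric, so pointwise and $L^2$ control has to be transferred to the auxiliary smooth metric $\hat h$ using the lower-bound property of the class $\tilde P_{T,M,\Psi,h}$. This is exactly the content packaged into Lemma \ref{global convergence property of module} (which in turn rests on Lemma \ref{closedness of module}, Lemma \ref{l:converge}, and the closedness of submodules Lemma \ref{closedness}), so once that lemma is available the argument is short. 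I would also note for completeness that the equivalence does not require $t_0$ to be an interior point in any sense — only $t_0>T$ is used, to ensure $\{\Psi<-t_0\}$ is a genuine open set on which $\Psi$ is plurisubharmonic and the competitors live.
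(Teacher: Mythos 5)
Your proof is correct and follows essentially the same route as the paper: the easy direction by taking $\tilde f\equiv 0$, and the substantive direction by extracting a compactly convergent subsequence via Lemma \ref{global convergence property of module} (applied with $t_j=t_0$), obtaining a limit $f_0$ with vanishing integral and the same module constraint, and concluding $f_0\equiv 0$. The extra remarks you give on why $\int_{\{\Psi<-t_0\}}|f_0|^2_hc(-\Psi)=0$ forces $f_0\equiv 0$ (the lower bound from the class $\tilde P_{T,M,\Psi,h}$ plus the fact that $E_0$ lies in an analytic set) are a helpful elaboration of a step the paper leaves implicit.
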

\begin{proof}If $f_{z_0}\in
\mathcal{O} (K_M)_{z_0} \otimes J_{z_0}$, for any  $ z_0\in Z_0$, then take $\tilde{f}\equiv 0$ in the definition of $G(t)$ and we get $G(t_0)\equiv 0$.

If $G(t_0)=0$, by definition, there exists a sequence of $E$-valued holomorphic $(n,0)$ forms $\{f_j\}_{j\in\mathbb{Z}^+}$ on $\{\Psi<-t_0\}$ such that
\begin{equation}\label{estimate in G(t)=0}
\lim_{j\to+\infty}\int_{\{\Psi<-t_0\}}|f_j|^2_hc(-\Psi)=0,
\end{equation}
 and $(f_j-f)_{z_0}\in
\mathcal{O} (K_M)_{z_0} \otimes J_{z_0}$, for any  $ z_0\in Z_0$ and $j\ge 1$. It follows from Lemma \ref{global convergence property of module} that there
exists a subsequence of $\{f_j\}_{j\in \mathbb{N}^+}$ compactly convergent to an $E$-valued holomorphic $(n,0)$ form $f_0$ on $\{\Psi<-t_0\}$ which satisfies
$$\int_{\{\Psi<-t_0\}}|f_0|^2_hc(-\Psi)=0$$
and
$(f_0-f)_{z_0}\in \mathcal{O} (K_M)_{z_0}\otimes J_{z_0}$ for any $z_0\in Z_0$. It follows from $\int_{\{\Psi<-t_0\}}|f_0|^2_hc(-\Psi)=0$ that we know $f_0\equiv 0$. Hence we have $f_{z_0}\in \mathcal{O} (K_M)_{z_0}\otimes J_{z_0}$ for any $z_0\in Z_0$. Statement (2) is proved.
\end{proof}

The following lemma shows the existence and uniqueness of the $E$-valued holomorphic $(n,0)$ form related to $G(t)$.
\begin{Lemma}
\label{existence of F}
Assume that $G(t)<+\infty$ for some $t\in [T,+\infty)$. Then there exists a unique $E$-valued
holomorphic $(n,0)$ form $F_t$ on $\{\Psi<-t\}$ satisfying
$$\ \int_{\{\Psi<-t\}}|F_t|^2_hc(-\Psi)=G(t)$$  and
$\ (F_t-f)\in
\mathcal{O} (K_M)_{z_0} \otimes J_{z_0}$, for any  $ z_0\in Z_0$.
\par
Furthermore, for any $E$-valued holomorphic $(n,0)$ form $\hat{F}$ on $\{\Psi<-t\}$ satisfying
$$\int_{\{\Psi<-t\}}|\hat{F}|^2_hc(-\Psi)<+\infty$$ and $\ (\hat{F}-f)\in
\mathcal{O} (K_M)_{z_0} \otimes J_{z_0}$, for any  $ z_0\in Z_0$. We have the following equality
\begin{equation}
\begin{split}
&\int_{\{\Psi<-t\}}|F_t|^2_hc(-\Psi)+
\int_{\{\Psi<-t\}}|\hat{F}-F_t|^2_hc(-\Psi)\\
=&\int_{\{\Psi<-t\}}|\hat{F}|^2_hc(-\Psi).
\label{orhnormal F}
\end{split}
\end{equation}
\end{Lemma}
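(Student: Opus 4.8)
The plan is to follow the standard Hilbert-space argument for minimal $L^2$ extensions, adapted to the $E$-valued setting with the measurable metric $h$ and gain $c(-\Psi)$. First I would fix $t \in [T,+\infty)$ with $G(t)<+\infty$ and consider the set
$$
\mathcal{H}^2(t;c,f)=\Big\{\tilde f \in H^0(\{\Psi<-t\},\mathcal{O}(K_M\otimes E)) : \int_{\{\Psi<-t\}}|\tilde f|^2_h c(-\Psi)<+\infty,\ (\tilde f-f)_{z_0}\in \mathcal{O}(K_M)_{z_0}\otimes J_{z_0}\ \forall z_0\in Z_0\Big\}.
$$
This is nonempty by hypothesis, and $G(t)=\inf_{\tilde f\in\mathcal{H}^2(t;c,f)}\int_{\{\Psi<-t\}}|\tilde f|^2_h c(-\Psi)$. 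The functional $\tilde f\mapsto \int_{\{\Psi<-t\}}|\tilde f|^2_h c(-\Psi)$ is the squared norm of a Hilbert space: let $W$ be the space of $E$-valued holomorphic $(n,0)$ forms on $\{\Psi<-t\}$ with $\int |\cdot|^2_h c(-\Psi)<+\infty$, equipped with the inner product $\langle u,v\rangle = \int_{\{\Psi<-t\}}\langle u,\bar v\rangle_h c(-\Psi)$ (interpreted via the $(n,n)$-form convention $|g|^2_{h_0}$ introduced in the paper). I would first verify that $W$ is complete: given a Cauchy sequence, Lemma \ref{l:converge} (applied on $\{\Psi<-t\}$, using that $c\in\tilde P_{T,M,\Psi,h}$ supplies the locally-uniform lower bound $|e_x|^2_h c(-\Psi)\ge C_K|e_x|^2_{\hat h}$ away from a closed set $E_0$ contained in an analytic set, together with the removable-singularity part already carried out in earlier lemmas) gives a compactly convergent subsequence with limit in $W$, and completeness follows in the usual way.

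Next I would show $\mathcal{H}^2(t;c,f)$ is a closed affine subspace of $W$. The linear model subspace is $\mathcal{H}^2_0:=\{\tilde f\in W : \tilde f_{z_0}\in\mathcal{O}(K_M)_{z_0}\otimes J_{z_0}\ \forall z_0\in Z_0\}$, and $\mathcal{H}^2(t;c,f)=\tilde f_0 + \mathcal{H}^2_0$ for any fixed $\tilde f_0\in\mathcal{H}^2(t;c,f)$. Closedness of $\mathcal{H}^2_0$ is exactly Lemma \ref{global convergence property of module}: a norm-convergent (hence, by the lower bound above, locally uniformly convergent along a subsequence) sequence in $\mathcal{H}^2_0$ has limit whose germ at each $z_0$ again lies in $\mathcal{O}(K_M)_{z_0}\otimes J_{z_0}$, using $I(h,\Psi)_{z_0}\subset J_{z_0}$ and the closedness of submodules (Lemma \ref{closedness of module}, ultimately Lemma \ref{closedness}). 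Once $\mathcal{H}^2(t;c,f)$ is a nonempty closed affine (in particular convex) subset of the Hilbert space $W$, the classical projection theorem yields a unique element $F_t$ of minimal norm, i.e. $\int_{\{\Psi<-t\}}|F_t|^2_h c(-\Psi)=G(t)$ and $(F_t-f)_{z_0}\in\mathcal{O}(K_M)_{z_0}\otimes J_{z_0}$ for all $z_0\in Z_0$. This gives existence and uniqueness.

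For the Pythagorean identity \eqref{orhnormal F}: let $\hat F\in\mathcal{H}^2(t;c,f)$ be arbitrary with finite norm. Then $\hat F - F_t\in\mathcal{H}^2_0$, so for every $\lambda\in\mathbb{C}$ we have $F_t+\lambda(\hat F-F_t)\in\mathcal{H}^2(t;c,f)$, whence $\|F_t\|^2\le \|F_t+\lambda(\hat F-F_t)\|^2$. Expanding and optimizing over $\lambda$ forces $\mathrm{Re}\,\langle F_t,\hat F - F_t\rangle = 0$, and testing with $\lambda$ imaginary gives the full orthogonality $\langle F_t,\hat F-F_t\rangle=0$. Then
$$
\|\hat F\|^2 = \|F_t + (\hat F - F_t)\|^2 = \|F_t\|^2 + \|\hat F - F_t\|^2,
$$
which is precisely \eqref{orhnormal F}. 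I expect the only real subtlety to be the completeness of $W$ and the closedness of $\mathcal{H}^2_0$ — that is, making rigorous the passage from $L^2$-convergence to local uniform convergence of holomorphic forms under the merely measurable metric $h$ — but this is already packaged in Lemmas \ref{l:converge}, \ref{closedness of module} and \ref{global convergence property of module}, so the argument reduces to invoking them correctly; the Hilbert-space projection and the expansion identity are then routine.
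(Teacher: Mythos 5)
Your proposal is correct and amounts to the same argument the paper gives: the paper carries out the minimizing-sequence extraction (via Lemma \ref{global convergence property of module}), the parallelogram-law uniqueness, and the first-variation orthogonality directly, which is exactly the content of the Hilbert-space projection theorem you invoke after first packaging completeness of $W$ through Lemma \ref{l:converge}. The only cosmetic difference is that you make the Hilbert-space structure explicit and name the projection theorem, whereas the paper reproves that theorem inline for this specific $W$ and affine subset $\mathcal H^2(t;c,f)$.
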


\begin{proof} We firstly show the existence of $F_t$. As $G(t)<+\infty$, then there exists a sequence of $E$-valued holomorphic $(n,0)$ forms $\{f_j\}_{j\in \mathbb{N}^+}$ on $\{\Psi<-t\}$ such that $$\lim\limits_{j \to +\infty}\int_{\{\Psi<-t\}}|f_j|^2_hc(-\Psi)=G(t)$$ and $(f_j-f)\in
\mathcal{O} (K_M)_{z_0} \otimes J_{z_0}$, for any  $ z_0\in Z_0$ and any $j\ge 1$.
It follows from Lemma \ref{global convergence property of module} that there
exists a subsequence of $\{f_j\}_{j\in \mathbb{N}^+}$ compactly convergent to an $E$-valued holomorphic $(n,0)$ form $F$ on $\{\Psi<-t\}$ which satisfies
$$\int_{\{\Psi<-t\}}|F|^2_hc(-\Psi)\le G(t)$$
and
$(F-f)_{z_0}\in \mathcal{O} (K_M)_{z_0}\otimes J_{z_0}$ for any $z_0\in Z_0$. By the definition of $G(t)$, we have $\int_{\{\Psi<-t\}}|F|^2_hc(-\Psi)= G(t)$. Then we obtain the existence of $F_t(=F)$.

We prove the uniqueness of $F_t$ by contradiction: if not, there exist
two different holomorphic $(n,0)$ forms $f_1$ and $f_2$ on $\{\Psi<-t\}$
satisfying $\int_{\{\Psi<-t\}}|f_1|^2_hc(-\Psi)=
\int_{\{\Psi<-t\}}|f_2|^2_hc(-\Psi)=G(t)$, $(f_1-f)_{z_0}\in \mathcal{O} (K_M)_{z_0}\otimes J_{z_0}$ for any $z_0\in Z_0$ and $(f_2-f)_{z_0}\in \mathcal{O} (K_M)_{z_0}\otimes J_{z_0}$ for any $z_0\in Z_0$. Note that
\begin{equation}\nonumber
\begin{split}
\int_{\{\Psi<-t\}}|\frac{f_1+f_2}{2}|^2_hc(-\Psi)+
\int_{\{\Psi<-t\}}|\frac{f_1-f_2}{2}|^2_hc(-\Psi)\\
=\frac{1}{2}(\int_{\{\Psi<-t\}}|f_1|^2_hc(-\Psi)+
\int_{\{\Psi<-t\}}|f_1|^2_hc(-\Psi))=G(t),
\end{split}
\end{equation}
then we obtain that
\begin{equation}\nonumber
\begin{split}
\int_{\{\Psi<-t\}}|\frac{f_1+f_2}{2}|^2_hc(-\Psi)
< G(t)
\end{split}
\end{equation}
and $(\frac{f_1+f_2}{2}-f)_{z_0}\in \mathcal{O} (K_M)_{z_0}\otimes J_{z_0}$ for any $z_0\in Z_0$, which contradicts to the definition of $G(t)$.

Now we prove equality \eqref{orhnormal F}. Let $q$ be an $E$-valued holomorphic $(n,0)$ form on $\{\Psi<-t\}$
such that $\int_{\{\Psi<-t\}}|q|^2_hc(-\Psi)<+\infty$ and $q \in \mathcal{O} (K_M)_{z_0}\otimes J_{z_0}$ for any $z_0\in Z_0$.  It is clear that for any complex
number $\alpha$, $F_t+\alpha q$ satisfying $((F_t+\alpha q)-f) \in \mathcal{O} (K_M)_{z_0}\otimes J_{z_0}$ for any $z_0\in Z_0$ and
$\int_{\{\Psi<-t\}}|F_t|^2_hc(-\Psi) \leq \int_{\{\Psi<-t\}}|F_t+\alpha
q|^2_hc(-\Psi)$. Note that
\begin{equation}\nonumber
\begin{split}
\int_{\{\Psi<-t\}}|F_t+\alpha
q|^2_hc(-\Psi)-\int_{\{\psi<-t\}}|F_t|^2_hc(-\Psi)\geq 0
\end{split}
\end{equation}
(By considering $\alpha \to 0$) implies
\begin{equation}\nonumber
\begin{split}
\mathfrak{R} \int_{\{\Psi<-t\}}\langle F_t,\bar{q}\rangle_hc(-\Psi)=0,
\end{split}
\end{equation}
then we have
\begin{equation}\nonumber
\begin{split}
\int_{\{\Psi<-t\}}|F_t+q|^2_hc(-\Psi)=
\int_{\{\Psi<-t\}}(|F_t|^2_h+|q|^2_h)c(-\Psi).
\end{split}
\end{equation}
\par
Letting $q=\hat{F}-F_t$, we obtain equality \eqref{orhnormal F}.
\end{proof}

The following lemma shows the  lower semicontinuity property of $G(t)$.
\begin{Lemma}$G(t)$ is decreasing with respect to $t\in
[T,+\infty)$, such that $\lim \limits_{t \to t_0+0}G(t)=G(t_0)$ for any $t_0\in
[T,+\infty)$, and if $G(t)<+\infty$ for some $t>T$, then $\lim \limits_{t \to +\infty}G(t)=0$. Especially, $G(t)$ is lower semicontinuous on $[T,+\infty)$.
 \label{semicontinuous}
\end{Lemma}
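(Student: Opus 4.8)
The plan is to establish three separate assertions: monotonicity of $G(t)$, right continuity (i.e. $\lim_{t\to t_0+0}G(t)=G(t_0)$), and the limit $0$ at $+\infty$; lower semicontinuity then follows formally from monotonicity together with right continuity. First I would prove monotonicity. If $t\le t'$, then $\{\Psi<-t'\}\subset\{\Psi<-t\}$, and every competitor $\tilde f$ in the definition of $G(t)$ restricts to a competitor in the definition of $G(t')$ (the condition $(\tilde f-f)_{z_0}\in \mathcal{O}(K_M)_{z_0}\otimes J_{z_0}$ is a germ condition at points $z_0\in Z_0\subset \cap_{s>T}\overline{\{\Psi<-s\}}$, hence is unaffected by shrinking the domain), with integral over the smaller set no larger than over the larger set since the integrand $|\tilde f|^2_h c(-\Psi)$ is nonnegative. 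Taking infima gives $G(t')\le G(t)$.

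Next, right continuity. Fix $t_0\in[T,+\infty)$. By monotonicity $\lim_{t\to t_0+0}G(t)$ exists and is $\le G(t_0)$; I must show the reverse inequality, which is trivial if $\lim_{t\to t_0+0}G(t)=+\infty$, so assume it is finite, equal to some $C<+\infty$. Pick a decreasing sequence $t_j\downarrow t_0$ with $t_j>T$. For each $j$, by Lemma \ref{existence of F} there is an $E$-valued holomorphic $(n,0)$ form $F_{t_j}$ on $\{\Psi<-t_j\}$ with $\int_{\{\Psi<-t_j\}}|F_{t_j}|^2_h c(-\Psi)=G(t_j)\le C$ and $(F_{t_j}-f)_{z_0}\in\mathcal{O}(K_M)_{z_0}\otimes J_{z_0}$ for all $z_0\in Z_0$. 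Then $\limsup_j\int_{\{\Psi<-t_j\}}|F_{t_j}|^2_h c(-\Psi)\le C$, so Lemma \ref{global convergence property of module} applies (with $c\in\tilde P_{T,M,\Psi,h}$): a subsequence converges locally uniformly on $\{\Psi<-t_0\}$ to a holomorphic $F_0$ with $\int_{\{\Psi<-t_0\}}|F_0|^2_h c(-\Psi)\le C$ and $(F_0-f)_{z_0}\in\mathcal{O}(K_M)_{z_0}\otimes J_{z_0}$ for all $z_0\in Z_0$. Thus $F_0$ is a competitor for $G(t_0)$ and $G(t_0)\le C=\lim_{t\to t_0+0}G(t)$, giving equality.

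For the behavior at $+\infty$: suppose $G(t')<+\infty$ for some $t'>T$, and let $F_{t'}$ be as in Lemma \ref{existence of F}, so $\int_{\{\Psi<-t'\}}|F_{t'}|^2_h c(-\Psi)=G(t')<+\infty$. For $t\ge t'$, $F_{t'}|_{\{\Psi<-t\}}$ is a competitor for $G(t)$, whence $G(t)\le\int_{\{\Psi<-t\}}|F_{t'}|^2_h c(-\Psi)$. Since $\Psi$ is (locally bounded above but) unbounded below only on the pluripolar set where it equals $-\infty$, the sets $\{\Psi<-t\}$ decrease as $t\to+\infty$ to $\cap_{t>T}\{\Psi<-t\}=\{\Psi=-\infty\}$, a set of Lebesgue measure zero; hence the monotone (dominated) convergence theorem gives $\int_{\{\Psi<-t\}}|F_{t'}|^2_h c(-\Psi)\to\int_{\{\Psi=-\infty\}}|F_{t'}|^2_h c(-\Psi)=0$, so $\lim_{t\to+\infty}G(t)=0$. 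Finally, lower semicontinuity on $[T,+\infty)$ follows: at any $t_0$, $\liminf_{t\to t_0}G(t)\ge\lim_{t\to t_0+0}G(t)=G(t_0)$ using monotonicity (values to the left of $t_0$ are $\ge G(t_0)$) and the right-continuity just proved.

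The main obstacle is the right-continuity step: one cannot simply restrict a near-optimal competitor for $G(t_0)$ to the smaller sets (that would prove the easy inequality in the wrong direction), so one genuinely needs the compactness/closedness package—Lemma \ref{global convergence property of module}, which in turn rests on Lemma \ref{l:converge} and the closedness of submodules Lemma \ref{closedness of module}—to produce a limiting section on $\{\Psi<-t_0\}$ that still satisfies the germ membership condition at every point of $Z_0$. Verifying that the hypothesis $c\in\tilde P_{T,M,\Psi,h}$ supplies the local lower bound $|e_x|_h^2 c(-\psi)\ge C_K|e_x|^2_{\hat h}$ needed to invoke that lemma is the one point requiring care.
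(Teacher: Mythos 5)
Your proposal is correct and follows essentially the same route as the paper: monotonicity from restriction of competitors, right-continuity by extracting the unique minimizers $F_{t_j}$ from Lemma \ref{existence of F} and passing to a compactly convergent subsequence via Lemma \ref{global convergence property of module} (the paper phrases this as a proof by contradiction, but the substance is identical), and the limit at $+\infty$ by dominated convergence applied to a single optimal form, using that $\{\Psi=-\infty\}$ has measure zero.
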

\begin{proof}By the definition of $G(t)$, it is clear that $G(t)$ is decreasing on
$[T,+\infty)$. If $G(t)<+\infty$ for some $t>T$, by the dominated convergence theorem, we know $\lim\limits_{t\to +\infty}G(t)=0$. It suffices
to prove $\lim \limits_{t \to t_0+0}G(t)=G(t_0)$ . We prove it by
contradiction: if not, then $\lim \limits_{t \to t_0+0}G(t)<
G(t_0)$.

By using Lemma \ref{existence of F}, for any $t>t_0$, there exists a unique $E$-valued holomorphic $(n,0)$ form
$F_t$ on $\{\Psi<-t\}$ satisfying
$\int_{\{\Psi<-t\}}|F_t|^2_hc(-\Psi)=G(t)$ and $(F_t-f) \in \mathcal{O} (K_M)_{z_0}\otimes J_{z_0}$ for any $z_0\in Z_0$. Note that $G(t)$ is decreasing with respect to $t$. We have $\int_{\{\Psi<-t\}}|F_t|^2_hc(-\Psi)\leq \lim
\limits_{t \to t_0+0}G(t)$ for any $t>t_0$. If $\lim\limits_{t \to t_0+0}G(t)=+\infty$, the equality $\lim \limits_{t \to t_0+0}G(t)=G(t_0)$ obviously holds, thus it suffices to prove the case $\lim\limits_{t \to t_0+0}G(t)<+\infty$. It follows from $\int_{\{\Psi<-t\}}|F_t|^2_hc(-\Psi)\le \lim\limits_{t \to t_0+0}G(t)<+\infty$  holds for any $t\in (t_0,t_1]$ (where $t_1>t_0$ is a fixed number) and Lemma \ref{global convergence property of module} that there exists a subsequence of $\{F_t\}$ (denoted by $\{F_{t_j}\}$) compactly convergent to an $E$-valued holomorphic $(n,0)$ form $\hat{F}_{t_0}$ on $\{\Psi<-t_0\}$ satisfying
$$\int_{\{\Psi<-t_0\}}|\hat{F}_{t_0}|^2_hc(-\Psi)\le \lim\limits_{t \to t_0+0}G(t)<+\infty$$
and $(\hat{F}_{t_0}-f)_{z_0} \in \mathcal{O} (K_M)_{z_0}\otimes J_{z_0}$ for any $z_0\in Z_0$.

Then we obtain that $G(t_0)\leq
\int_{\{\Psi<-t_0\}}|\hat{F}_{t_0}|^2_hc(-\Psi)
\leq \lim \limits_{t\to t_0+0} G(t)$,
which contradicts $\lim \limits_{t\to t_0+0} G(t) <G(t_0)$. Thus we have $\lim \limits_{t \to t_0+0}G(t)=G(t_0)$.
\end{proof}

We consider the derivatives of $G(t)$ in the following lemma.

\begin{Lemma}
\label{derivatives of G}
Assume that $G(t_1)<+\infty$, where $t_1\in (T,+\infty)$. Then for any $t_0>t_1$, we have
\begin{equation}\nonumber
\begin{split}
\frac{G(t_1)-G(t_0)}{\int^{t_0}_{t_1} c(t)e^{-t}dt}\leq
\liminf\limits_{B \to
0+0}\frac{G(t_0)-G(t_0+B)}{\int_{t_0}^{t_0+B}c(t)e^{-t}dt},
\end{split}
\end{equation}
i.e.
\begin{equation}\nonumber
\frac{G(t_0)-G(t_1)}{\int_{T_1}^{t_0}
c(t)e^{-t}dt-\int_{T_1}^{t_1} c(t)e^{-t}dt} \geq
\limsup \limits_{B \to 0+0}
\frac{G(t_0+B)-G(t_0)}{\int_{T_1}^{t_0+B}
c(t)e^{-t}dt-\int_{T_1}^{t_0} c(t)e^{-t}dt}.
\end{equation}
\end{Lemma}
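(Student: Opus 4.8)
The plan is: build, for each small $B>0$, an admissible competitor for $G(t_1)$ out of the minimizer $F_{t_0}$ at level $t_0$ via the twisted $L^2$ estimate of Lemma~\ref{L2 method in JM concavity}; let $B\to0+0$; and then strip off $G(t_0)$ using the orthogonality identity \eqref{orhnormal F}. Since $G$ is decreasing (Lemma~\ref{semicontinuous}) and $G(t_1)<+\infty$, also $G(t_0)\le G(t_1)<+\infty$, so by Lemma~\ref{existence of F} there is a unique $F_{t_0}$ on $\{\Psi<-t_0\}$ with $\int_{\{\Psi<-t_0\}}|F_{t_0}|^2_hc(-\Psi)=G(t_0)$ and $(F_{t_0}-f)_{z_0}\in\mathcal O(K_M)_{z_0}\otimes J_{z_0}$ for all $z_0\in Z_0$. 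Its restriction to $\{\Psi<-t_0-B\}$ is admissible for $G(t_0+B)$, so
\[
\int_{\{-t_0-B\le\Psi<-t_0\}}|F_{t_0}|^2_hc(-\Psi)=G(t_0)-\int_{\{\Psi<-t_0-B\}}|F_{t_0}|^2_hc(-\Psi)\le G(t_0)-G(t_0+B).
\]
We may assume $R:=\liminf_{B\to0+0}\big(G(t_0)-G(t_0+B)\big)\big/\int_{t_0}^{t_0+B}c(s)e^{-s}ds<+\infty$, otherwise there is nothing to prove.

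Next, I fix $B>0$ and apply Lemma~\ref{L2 method in JM concavity} with $f:=F_{t_0}$ and levels $t_0>t_1$, getting an $E$-valued holomorphic $(n,0)$ form $\tilde F_B$ on $\{\Psi<-t_1\}$ with
\[
\int_{\{\Psi<-t_1\}}\big|\tilde F_B-(1-b_{t_0,B}(\Psi))F_{t_0}\big|^2_h\,e^{v_{t_0,B}(\Psi)-\Psi}c(-v_{t_0,B}(\Psi))\le\Big(\int_{t_1}^{t_0+B}c(s)e^{-s}ds\Big)\frac{1}{B}\int_{\{-t_0-B<\Psi<-t_0\}}|F_{t_0}|^2_h\,e^{-\Psi}.
\]
A direct check of the definitions gives $v_{t_0,B}(t)\ge t$ for all $t$ and $v_{t_0,B}\ge-t_0-\tfrac B2$; as $c(t)e^{-t}$ is decreasing this yields $e^{v_{t_0,B}(\Psi)-\Psi}c(-v_{t_0,B}(\Psi))\ge c(-\Psi)$ on $\{\Psi<-t_1\}$ together with a positive locally uniform lower bound for this weight. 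Hence $\tilde F_B$ has finite $c(-\Psi)$-norm on $\{\Psi<-t_1\}$ (using that $(1-b_{t_0,B}(\Psi))F_{t_0}$ is supported in $\{\Psi<-t_0\}$ with norm $\le G(t_0)$), and near each $z_0\in Z_0$ the lower bound forces $(\tilde F_B-F_{t_0})_{z_0}\in\mathcal O(K_M)_{z_0}\otimes I(h,\Psi)_{z_0}\subset\mathcal O(K_M)_{z_0}\otimes J_{z_0}$ — on $\{\Psi<-t_0-B\}$ one has $(1-b_{t_0,B}(\Psi))F_{t_0}=F_{t_0}$ — whence $(\tilde F_B-f)_{z_0}\in\mathcal O(K_M)_{z_0}\otimes J_{z_0}$ and $\tilde F_B\in\mathcal H^2(t_1;c,f)$, exactly as in the proof of Lemma~\ref{L2 method for c(t)}. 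Finally, on the annulus $e^{-\Psi}\le c(-\Psi)\big/\big(c(t_0+B)e^{-(t_0+B)}\big)$ and $\int_{t_0}^{t_0+B}c(s)e^{-s}ds\ge B\,c(t_0+B)e^{-(t_0+B)}$, so combining with the first display and the fact that $\tfrac1B\int_{t_0}^{t_0+B}c(s)e^{-s}ds$ and $c(t_0+B)e^{-(t_0+B)}$ have the same positive limit as $B\to0+0$, one gets
\[
\liminf_{B\to0+0}\Big(\int_{t_1}^{t_0+B}c(s)e^{-s}ds\Big)\frac{1}{B}\int_{\{-t_0-B<\Psi<-t_0\}}|F_{t_0}|^2_h\,e^{-\Psi}\le\Big(\int_{t_1}^{t_0}c(s)e^{-s}ds\Big)R.
\]

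Now I let $B\to0+0$ along a sequence realizing this $\liminf$. The $\tilde F_B$ then have uniformly bounded $c(-\Psi)$-norm on $\{\Psi<-t_1\}$, so Lemma~\ref{global convergence property of module} gives a subsequence converging locally uniformly to some $\tilde F_0\in\mathcal H^2(t_1;c,f)$. Since $1-b_{t_0,B}(\Psi)\to\mathbb I_{\{\Psi<-t_0\}}$ pointwise a.e., Fatou's lemma applied to the displayed $L^2$ estimate and the previous display give
\[
\int_{\{\Psi<-t_0\}}|\tilde F_0-F_{t_0}|^2_hc(-\Psi)+\int_{\{-t_0\le\Psi<-t_1\}}|\tilde F_0|^2_hc(-\Psi)\le\Big(\int_{t_1}^{t_0}c(s)e^{-s}ds\Big)R.
\]
Since $\tilde F_0|_{\{\Psi<-t_0\}}$ is admissible for $G(t_0)$, the identity \eqref{orhnormal F} (with $t=t_0$, minimizer $F_{t_0}$, $\hat F=\tilde F_0$) gives $\int_{\{\Psi<-t_0\}}|\tilde F_0|^2_hc(-\Psi)=G(t_0)+\int_{\{\Psi<-t_0\}}|\tilde F_0-F_{t_0}|^2_hc(-\Psi)$; hence
\[
G(t_1)\le\int_{\{\Psi<-t_1\}}|\tilde F_0|^2_hc(-\Psi)=G(t_0)+\int_{\{\Psi<-t_0\}}|\tilde F_0-F_{t_0}|^2_hc(-\Psi)+\int_{\{-t_0\le\Psi<-t_1\}}|\tilde F_0|^2_hc(-\Psi),
\]
which by the previous display is $\le G(t_0)+\big(\int_{t_1}^{t_0}c(s)e^{-s}ds\big)R$. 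Dividing by $\int_{t_1}^{t_0}c(s)e^{-s}ds>0$ proves the first inequality, and the equivalent second form follows from $\int_{T_1}^{t_0}-\int_{T_1}^{t_1}=\int_{t_1}^{t_0}$, $\int_{T_1}^{t_0+B}-\int_{T_1}^{t_0}=\int_{t_0}^{t_0+B}$ and $\limsup(-x)=-\liminf x$.

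The main obstacle is the passage $B\to0+0$. One must check that each $\tilde F_B$, and then $\tilde F_0$, genuinely lies in $\mathcal H^2(t_1;c,f)$ — i.e. satisfies the module condition at every $z_0\in Z_0$, including boundary points — which is precisely what the weight lower bound, the inclusion $I(h,\Psi)_{z_0}\subset J_{z_0}$, and Lemma~\ref{global convergence property of module} are designed to supply; and one must verify that $(1-b_{t_0,B}(\Psi))F_{t_0}$ converges to $\mathbb I_{\{\Psi<-t_0\}}F_{t_0}$, so that $\tilde F_0-F_{t_0}$ is holomorphic on all of $\{\Psi<-t_0\}$ and the orthogonality identity \eqref{orhnormal F} can be invoked there, letting one subtract $G(t_0)$ without any lossy Cauchy–Schwarz step. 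The innocuous-looking inequality $v_{t_0,B}(t)\ge t$ is what forces the twisted weight in Lemma~\ref{L2 method in JM concavity} to dominate $c(-\Psi)$, and is thus the linchpin of the whole estimate.
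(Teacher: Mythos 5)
Your proof is correct and follows essentially the same route as the paper's: construct $\tilde F_B$ from the minimizer $F_{t_0}$ via Lemma~\ref{L2 method in JM concavity}, control the annulus integral by $G(t_0)-G(t_0+B)$ using the decreasing weight $c(t)e^{-t}$, pass $B\to0+0$ along a liminf-realizing sequence with Lemma~\ref{global convergence property of module} and Fatou, and finish with the orthogonality identity \eqref{orhnormal F}. The only cosmetic difference is your use of $c(t_0+B)e^{-(t_0+B)}$ in place of the paper's $e^{t_0+B}/\inf_{(t_0,t_0+B)}c$ in the annulus weight comparison; both converge to the same positive right limit and yield the same bound.
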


\begin{proof}
It follows from Lemma \ref{semicontinuous} that $G(t)<+\infty$ for any $t>t_1$. By Lemma \ref{existence of F}, there exists an $E$-valued holomorphic $(n,0)$ form $F_{t_0}$ on $\{\Psi<-t_0\}$, such that $(F_{t_0}-f)_{z_0}\in \mathcal{O} (K_M)_{z_0}\otimes J_{z_0}$ for any $z_0\in Z_0$ and $G(t_0)=\int_{\{\Psi<-t_0\}}|F_{t_0}|^2_hc(-\Psi)$.

It suffices to consider that $\liminf\limits_{B\to 0+0} \frac{G(t_0)-G(t_0+B)}{\int_{t_0}^{t_0+B}c(t)e^{-t}dt}\in [0,+\infty)$ because of the decreasing property of $G(t)$. Then there exists $1\ge B_j\to 0+0$ (as $j\to+\infty$) such that
\begin{equation}
	\label{derivatives of G c(t)1}
\lim\limits_{j\to +\infty} \frac{G(t_0)-G(t_0+B_j)}{\int_{t_0}^{t_0+B_j}c(t)e^{-t}dt}=\liminf\limits_{B\to 0+0} \frac{G(t_0)-G(t_0+B)}{\int_{t_0}^{t_0+B}c(t)e^{-t}dt}
\end{equation}
and $\{\frac{G(t_0)-G(t_0+B_j)}{\int_{t_0}^{t_0+B_j}c(t)e^{-t}dt}\}_{j\in\mathbb{N}^{+}}$ is bounded. As $c(t)e^{-t}$ is decreasing and positive on $(t,+\infty)$, then
\begin{equation}\label{derivatives of G c(t)2}
\begin{split}
\lim\limits_{j\to +\infty} \frac{G(t_0)-G(t_0+B_j)}{\int_{t_0}^{t_0+B_j}c(t)e^{-t}dt}
=&\left(\lim\limits_{j\to +\infty} \frac{G(t_0)-G(t_0+B_j)}{B_j}\right)\left(\frac{1}{\lim\limits_{t\to t_0+0}c(t)e^{-t}}\right)\\
=&\left(\lim\limits_{j\to +\infty} \frac{G(t_0)-G(t_0+B_j)}{B_j}\right)\left(\frac{e^{t_0}}{\lim\limits_{t\to t_0+0}c(t)}\right).
\end{split}
\end{equation}
Hence $\{\frac{G(t_0)-G(t_0+B_j)}{B_j}\}_{j\in\mathbb{N}^+}$ is uniformly bounded with respect to $j$.

As $t \leq v_{t_0,j}(t)$, the decreasing property of $c(t)e^{-t}$ shows that
\begin{equation}\nonumber
e^{-\Psi+v_{t_0,B_j}(\Psi)}c(-v_{t_0,B_j}(\Psi))\geq c(-\Psi).
\end{equation}
\par
It follows from Lemma \ref{L2 method in JM concavity} that, for any $B_j$, there exists an $E$-valued holomorphic $(n,0)$ form $\tilde{F}_j$ on $\{\Psi<-t_1\}$ such that
\begin{flalign}
&\int_{\{\Psi<-t_1\}}|\tilde{F}_j-(1-b_{t_0,B_j}(\Psi))F_{t_0}|^2_hc(-\Psi)\nonumber\\
\leq &
\int_{\{\Psi<-t_1\}}|\tilde{F}_j-(1-b_{t_0,B_j}(\Psi))F_{t_0}|^2_he^{-\Psi+v_{t_0,B_j}(\Psi)}c(-v_{t_0,B_j}(\Psi))\nonumber\\
\leq &
\int^{t_0+B_j}_{t_1}c(t)e^{-t}dt\int_{\{\Psi<-t_1\}}\frac{1}{B_j}
\mathbb{I}_{\{-t_0-B_j<\Psi<-t_0\}}|F_{t_0}|^2_he^{-\Psi}\nonumber\\
\leq &
\frac{e^{t_0+B_j}\int^{t_0+B_j}_{t_1}c(t)e^{-t}dt}{\inf
\limits_{t\in(t_0,t_0+B_j)}c(t)}\int_{\{\Psi<-t_1\}}\frac{1}{B_j}
\mathbb{I}_{\{-t_0-B_j<\Psi<-t_0\}}|F_{t_0}|^2_hc(-\Psi)\nonumber\\
= &
\frac{e^{t_0+B_j}\int^{t_0+B_j}_{t_1}c(t)e^{-t}dt}{\inf
\limits_{t\in(t_0,t_0+B_j)}c(t)}\times
\bigg(\int_{\{\Psi<-t_1\}}\frac{1}{B_j}\mathbb{I}_{\{\Psi<-t_0\}}|F_{t_0}|^2_hc(-\Psi)\nonumber\\
&-\int_{\{\Psi<-t_1\}}\frac{1}{B_j}\mathbb{I}_{\{\Psi<-t_0-B_j\}}|F_{t_0}|^2_hc(-\Psi)\bigg)\nonumber\\
\leq &
\frac{e^{t_0+B_j}\int^{t_0+B_j}_{t_1}c(t)e^{-t}dt}{\inf
\limits_{t\in(t_0,t_0+B_j)}c(t)} \times
\frac{G(t_0)-G(t_0+B_j)}{B_j}<+\infty.
\label{derivative of G 1}
\end{flalign}

Note that $b_{t_0,B_j}(t)=0$ for $t\le-t_0-B_j$, $b_{t_0,B_j}(t)=1$ for $t\ge t_0$, $v_{t_0,B_j}(t)>-t_0-B_j$ and $c(t)e^{-t}$ is decreasing with respect to $t$. It follows from inequality \eqref{derivative of G 1} that $(F_j-F_{t_0})_{z_0}\in \mathcal{O} (K_M)_{z_0}\otimes I(h,\Psi)_{z_0} \subset \mathcal{O} (K_M)_{z_0}\otimes J_{z_0}$ for any $z_0\in Z_0$.

Note that
\begin{equation}\label{derivative of G 2}
\begin{split}
&\int_{\{\Psi<-t_1\}}|\tilde{F}_j|^2_hc(-\Psi)\\
\le&2\int_{\{\Psi<-t_1\}}|\tilde{F}_j-(1-b_{t_0,B_j}(\Psi))F_{t_0}|^2_hc(-\Psi)
+2\int_{\{\Psi<-t_1\}}|(1-b_{t_0,B_j}(\Psi))F_{t_0}|^2_hc(-\Psi)\\
\le&2
\frac{e^{t_0+B_j}\int^{t_0+B_j}_{t_1}c(t)e^{-t}dt}{\inf
\limits_{t\in(t_0,t_0+B_j)}c(t)} \times
\frac{G(t_0)-G(t_0+B_j)}{B_j}
+2\int_{\{\Psi<-t_0\}}|F_{t_0}|^2_hc(-\Psi).
\end{split}
\end{equation}

We also note that $B_j\le 1$, $\frac{G(t_0)-G(t_0+B_j)}{B_j}$ is uniformly bounded with respect to $j$ and $G(t_0)=\int_{\{\Psi<-t_0\}}|F_{t_0}|^2_hc(-\Psi)$. It follows from inequality \eqref{derivative of G 2} that we know $\int_{\{\Psi<-t_1\}}|\tilde{F}_j|^2e^{-\varphi}c(-\Psi)$ is uniformly bounded with respect to $j$.

It follows from Lemma \ref{global convergence property of module} that there exists a subsequence of $\{\tilde{F}_j\}_{j\in \mathbb{N}^+}$ compactly convergent to an $E$-valued holomorphic $(n,0)$ form $\tilde{F}_{t_1}$ on $\{\Psi<-t_1\}$ which satisfies
$$\int_{\{\Psi<-t_1\}}|\tilde{F}_{t_1}|^2_hc(-\Psi)\le \liminf_{j\to+\infty} \int_{\{\Psi<-t_1\}}|\tilde{F}_j|^2_hc(-\Psi)<+\infty,$$
and $(\tilde{F}_{t_1}-F_{t_0})_{z_0}\in \mathcal{O} (K_M)_{z_0}\otimes  J_{z_0}$ for any $z_0\in Z_0$.

Note that
$\lim_{j\to+\infty}b_{t_0,B_j}(t)=\mathbb{I}_{\{t\ge -t_0\}}$ and
\begin{equation}\nonumber
v_{t_0}(t):=\lim_{j\to+\infty}v_{t_0,B_j}(t)=\left\{
\begin{aligned}
&-t_0  &\text{ if } & x<-t_0, \\
&\ t  &\text{ if }  & x\ge t_0 .
\end{aligned}
\right.
\end{equation}

It follows from inequality \eqref{derivative of G 1} and Fatou's lemma that

\begin{flalign}
\label{derivative of G 3}
&\int_{\{\Psi<-t_0\}}|\tilde{F}_{t_1}-F_{t_0}|^2_hc(-\Psi)
+\int_{\{-t_0\le\Psi<-t_1\}}|\tilde{F}_{t_1}|^2_hc(-\Psi)\nonumber\\
\leq &
\int_{\{\Psi<-t_1\}}|\tilde{F}_{t_1}-\mathbb{I}_{\{\Psi< -t_0\}}F_{t_0}|^2_he^{-\Psi+v_{t_0}(\Psi)}c(-v_{t_0}(\Psi))\nonumber\\
\le&\liminf_{j\to+\infty}\int_{\{\Psi<-t_1\}}|\tilde{F}_j-(1-b_{t_0,B_j}(\Psi))F_{t_0}|^2_hc(-\Psi)\nonumber\\
\leq &\liminf_{j\to+\infty}
\bigg(\frac{e^{t_0+B_j}\int^{t_0+B_j}_{t_1}c(t)e^{-t}dt}{\inf
\limits_{t\in(t_0,t_0+B_j)}c(t)} \times
\frac{G(t_0)-G(t_0+B_j)}{B_j}\bigg).
\end{flalign}

It follows from Lemma \ref{existence of F}, equality \eqref{derivatives of G c(t)1}, equality \eqref{derivatives of G c(t)2} and inequality \eqref{derivative of G 3} that we have

\begin{equation}
\label{derivative of G 4}
\begin{split}
&\int_{\{\Psi<-t_1\}}|\tilde{F}_{t_1}|^2_hc(-\Psi)
-\int_{\{\Psi<-t_0\}}|F_{t_0}|^2_hc(-\Psi)\\
\le&\int_{\{\Psi<-t_0\}}|\tilde{F}_{t_1}-F_{t_0}|^2_hc(-\Psi)
+\int_{\{-t_0\le\Psi<-t_1\}}|\tilde{F}_{t_1}|^2_hc(-\Psi)\\
\leq &
\int_{\{\Psi<-t_1\}}|\tilde{F}_{t_1}-\mathbb{I}_{\{\Psi< -t_0\}}F_{t_0}|^2_he^{-\Psi+v_{t_0}(\Psi)}c(-v_{t_0}(\Psi))\\
\le&\liminf_{j\to+\infty}\int_{\{\Psi<-t_1\}}|\tilde{F}_j-(1-b_{t_0,B_j}(\Psi))F_{t_0}|^2_hc(-\Psi)\\
\leq &\liminf_{j\to+\infty}
\big(\frac{e^{t_0+B_j}\int^{t_0+B_j}_{t_1}c(t)e^{-t}dt}{\inf
\limits_{t\in(t_0,t_0+B_j)}c(t)} \times
\frac{G(t_0)-G(t_0+B_j)}{B_j}\big)\\
\le &\bigg(\int^{t_0}_{t_1}c(t)e^{-t}dt\bigg)\liminf\limits_{B\to 0+0} \frac{G(t_0)-G(t_0+B)}{\int_{t_0}^{t_0+B}c(t)e^{-t}dt}.
\end{split}
\end{equation}
Note that $(\tilde{F}_{t_1}-F_{t_0})_{z_0}\in \mathcal{O} (K_M)_{z_0}\otimes  J_{z_0}$ for any $z_0\in Z_0$. It follows from the definition of $G(t)$ and inequality \eqref{derivative of G 4} that we have

\begin{equation}
\label{derivative of G 5}
\begin{split}
&G(t_1)-G(t_0)\\
\le&\int_{\{\Psi<-t_1\}}|\tilde{F}_{t_1}|^2_hc(-\Psi)
-\int_{\{\Psi<-t_0\}}|F_{t_0}|^2_hc(-\Psi)\\
\le&\int_{\{\Psi<-t_1\}}|\tilde{F}_{t_1}-\mathbb{I}_{\{\Psi< -t_0\}}F_{t_0}|^2_hc(-\Psi)\\
\leq &
\int_{\{\Psi<-t_1\}}|\tilde{F}_{t_1}-\mathbb{I}_{\{\Psi< -t_0\}}F_{t_0}|^2_he^{-\Psi+v_{t_0}(\Psi)}c(-v_{t_0}(\Psi))\\\
\le &\big(\int^{t_0}_{t_1}c(t)e^{-t}dt\big)\liminf\limits_{B\to 0+0} \frac{G(t_0)-G(t_0+B)}{\int_{t_0}^{t_0+B}c(t)e^{-t}dt}.
\end{split}
\end{equation}

Lemma \ref{derivatives of G} is proved.
\end{proof}

The following property of concave
functions will be used in the proof of Theorem \ref{main theorem}.
\begin{Lemma}[see \cite{G16}]
Let $H(r)$ be a lower semicontinuous function on $(0,R]$. Then $H(r)$ is concave
if and only if
\begin{equation}\nonumber
\begin{split}
\frac{H(r_1)-H(r_2)}{r_1-r_2} \leq
\liminf\limits_{r_3 \to r_2-0}
\frac{H(r_3)-H(r_2)}{r_3-r_2}
\end{split}
\end{equation}
holds for any $0<r_2<r_1 \leq R$.
\label{characterization of concave function}
\end{Lemma}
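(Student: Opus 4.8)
The plan is to prove the two implications separately; lower semicontinuity is needed only for the sufficiency direction.

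\emph{Necessity.} Suppose $H$ is concave. I would first record the standard monotone-slope (three-chord) property: for $0<r_3<r_2<r_1\le R$, concavity applied to these three points gives
\[
\frac{H(r_2)-H(r_3)}{r_2-r_3}\ \ge\ \frac{H(r_1)-H(r_3)}{r_1-r_3}\ \ge\ \frac{H(r_1)-H(r_2)}{r_1-r_2}.
\]
Rewriting the leftmost quotient as $\frac{H(r_3)-H(r_2)}{r_3-r_2}$ and taking $\liminf_{r_3\to r_2-0}$, with the right-hand side being constant in $r_3$, yields exactly the asserted inequality. This direction uses no regularity of $H$.

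\emph{Sufficiency.} Assume the difference-quotient inequality holds for all $0<r_2<r_1\le R$. Fix $0<a<c\le R$; it suffices to show that $H$ dominates on $[a,c]$ the affine function $\ell$ with $\ell(a)=H(a)$ and $\ell(c)=H(c)$. Put $\phi:=H-\ell$ on $[a,c]$. Then $\phi$ is lower semicontinuous, $\phi(a)=\phi(c)=0$, and $\phi$ satisfies the same one-sided inequality as $H$, since passing from $H$ to $H-\ell$ merely subtracts the (constant) slope of $\ell$ from both sides and commutes with $\liminf$. Suppose $\phi<0$ somewhere on $[a,c]$. By lower semicontinuity on the compact interval $[a,c]$, $\phi$ attains its infimum $-m<0$ at some $r_0\in[a,c]$, and since $\phi(a)=\phi(c)=0>-m$ we have $r_0\in(a,c)$. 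Apply the hypothesis (for $\phi$) at $r_2=r_0$: for any $r_1\in(r_0,c]$ we have $\frac{\phi(r_1)-\phi(r_0)}{r_1-r_0}\ge 0$ (nonnegative numerator, positive denominator), whereas for $r_3<r_0$ close to $r_0$ we have $\frac{\phi(r_3)-\phi(r_0)}{r_3-r_0}\le 0$ (nonnegative numerator, negative denominator), so the $\liminf$ as $r_3\to r_0-0$ is $\le 0$. Hence $\frac{\phi(r_1)-\phi(r_0)}{r_1-r_0}=0$, i.e.\ $\phi(r_1)=\phi(r_0)=-m$, for every $r_1\in(r_0,c]$; taking $r_1=c$ contradicts $\phi(c)=0$. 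Therefore $\phi\ge 0$ on $[a,c]$, which is precisely the chord inequality characterizing concavity.

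The single genuinely delicate point is the existence and location of the minimizer $r_0$: this is exactly where lower semicontinuity together with compactness of $[a,c]$ enters, and where the normalization $\phi(a)=\phi(c)=0$ is used to force $r_0$ into the open interval, so that the hypothesis becomes applicable with a strictly larger argument $r_1$. If one wishes to allow $H$ to take the value $+\infty$, one first restricts the chord argument to the (relatively open) subset where $H$ is finite, or notes that the hypothesis at a point with $H=+\infty$ propagates that value to its right so that concavity still holds in the extended sense; I expect no other obstacles.
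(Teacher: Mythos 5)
Your proof is correct and complete. The paper itself does not include a proof of this lemma — it simply cites \cite{G16}, so there is no in-paper argument to compare against. Your necessity direction is the standard three-chord slope monotonicity, and your sufficiency direction is the standard auxiliary-function argument: subtract the chord $\ell$, use lower semicontinuity plus compactness to produce an interior minimizer $r_0$ of $\phi=H-\ell$, and then observe that at a minimizer the right-difference quotients are nonnegative while the left-liminf is nonpositive, so the hypothesis forces $\phi$ to be constant (equal to the minimum) to the right of $r_0$, contradicting $\phi(c)=0$. One small point worth flagging is that the reduction from the inequality for $H$ to the same inequality for $\phi$ is exactly the observation that subtracting an affine function shifts every difference quotient by the same constant, which passes through the $\liminf$ — you state this, and it is correct. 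The closing remark about $+\infty$ values is harmless but unnecessary, since the lemma as stated treats $H$ as a real-valued (finite) function.
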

\section{Proof of Theorem \ref{main theorem}, Remark \ref{infty2}, Corollary
\ref{necessary condition for linear of G} and  Remark \ref{rem:linear}}
We firstly prove Theorem \ref{main theorem}.

\begin{proof} We firstly show that if $G(t_0)<+\infty$ for some $t_0> T$, then $G(t_1)<+\infty$ for any $T< t_1<t_0$. As $G(t_0)<+\infty$, it follows from Lemma \ref{existence of F} that there exists an unique $E$-valued
holomorphic $(n,0)$ form $F_{t_0}$ on $\{\Psi<-t\}$ satisfying
$$\ \int_{\{\Psi<-t_0\}}|F_{t_0}|^2_hc(-\Psi)=G(t_0)<+\infty$$  and
$\ (F_{t_0}-f)_{z_0}\in
\mathcal{O} (K_M)_{z_0} \otimes J_{z_0}$, for any  $ z_0\in Z_0$.

It follows from Lemma \ref{L2 method in JM concavity} that there exists an $E$-valued holomorphic $(n,0)$ form $\tilde{F}_1$ on $\{\Psi<-t_1\}$ such that

 \begin{equation}\label{main theorem 1}
  \begin{split}
      & \int_{\{\Psi<-t_1\}}|\tilde{F}_1-(1-b_{t_0,B}(\Psi))F_{t_0}|^2_he^{v_{t_0,B}(\Psi)-\Psi}c(-v_{t_0,B}(\Psi)) \\
      \le & (\int_{t_1}^{t_0+B}c(s)e^{-s}ds)
       \int_{M}\frac{1}{B}\mathbb{I}_{\{-t_0-B<\Psi<-t_0\}}|F_{t_0}|^2_he^{-\Psi}<+\infty.
  \end{split}
\end{equation}
Note that $b_{t_0,B}(t)=0$ on $\{\Psi<-t_0-B\}$ and $v_{t_0,B}(\Psi)> -t_0-B$. We have $e^{v_{t_0,B}(\Psi)}c(-v_{t_0,B}(\Psi))$ has a positive lower bound. It follows from inequality \eqref{main theorem 1} that
we have  $\ (\tilde{F}_{1}-F_{t_0})_{z_0}
\in \mathcal{O} (K_M)_{z_0}\otimes I(h,\Psi)_{z_0} \subset \mathcal{O} (K_M)_{z_0}\otimes J_{z_0}$ for any $z_0\in Z_0$, which implies that
$(\tilde{F}_{1}-f)_{z_0}\in
\mathcal{O} (K_M)_{z_0} \otimes J_{z_0}$, for any  $ z_0\in Z_0$. As $v_{t_0,B}(\Psi)\ge\Psi$ and $c(t)e^{-t}$ is decreasing with respect to $t$, it follows from inequality \eqref{main theorem 1} that we have
 \begin{equation}\label{main theorem 2}
  \begin{split}
      & \int_{\{\Psi<-t_1\}}|\tilde{F}_1-(1-b_{t_0,B}(\Psi))F_{t_0}|^2_hc(-\Psi)
      \\
      \le&\int_{\{\Psi<-t_1\}}|\tilde{F}_1-(1-b_{t_0,B}(\Psi))F_{t_0}|^2_he^{v_{t_0,B}(\Psi)-\Psi}c(-v_{t_0,B}(\Psi)) \\
      \le & (\int_{t_1}^{t_0+B}c(s)e^{-s}ds)
       \int_{M}\frac{1}{B}\mathbb{I}_{\{-t_0-B<\Psi<-t_0\}}|F_{t_0}|^2_he^{-\Psi}<+\infty.
  \end{split}
\end{equation}
Then we have
 \begin{equation}\label{main theorem 3}
  \begin{split}
  &\int_{\{\Psi<-t_1\}}|\tilde{F}_1|^2_hc(-\Psi)\\
     \le & 2\int_{\{\Psi<-t_1\}}|\tilde{F}_1-(1-b_{t_0,B}(\Psi))F_{t_0}|^2_hc(-\Psi)
      +2\int_{\{\Psi<-t_1\}}|(1-b_{t_0,B}(\Psi))F_{t_0}|^2_hc(-\Psi)\\
      \le & 2(\int_{t_1}^{t_0+B}c(s)e^{-s}ds)
       \int_{M}\frac{1}{B}\mathbb{I}_{\{-t_0-B<\Psi<-t_0\}}|F_{t_0}|^2_he^{-\Psi}
       +2\int_{\{\Psi<-t_0\}}|F_{t_0}|^2_hc(-\Psi)\\
       <&+\infty.
  \end{split}
\end{equation}
Hence we have $G(t_1)\le \int_{\{\Psi<-t_1\}}|\tilde{F}_1|^2_hc(-\Psi)<+\infty$.

Now, it follows from Lemma \ref{semicontinuous}, Lemma \ref{derivatives of G} and Lemma \ref{characterization of concave function} that we know $G(h^{-1}(r))$ is concave with respect to $r$. It follows from Lemma \ref{semicontinuous} that $\lim\limits_{t\to T+0}G(t)=G(T)$ and $\lim\limits_{t\to+\infty}G(t)=0$.

Theorem \ref{main theorem} is proved.
\end{proof}

Now we prove Remark \ref{infty2}.
\begin{proof}Note that if there exists a positive decreasing concave function $g(t)$ on $(a,b)\subset\mathbb{R}$ and $g(t)$ is not a constant function, then $b<+\infty$.

Assume that $G(t_0)<+\infty$ for some $t_0\geq T$. As $f_{z_0}\notin
\mathcal{O} (K_M)_{z_0} \otimes J_{z_0}$ for some  $ z_0\in Z_0$, Lemma \ref{characterization of g(t)=0} shows that $G(t_0)\in(0,+\infty)$. Following from Theorem \ref{main theorem} we know $G({h}^{-1}(r))$ is concave with respect to $r\in(\int_{T_1}^{T}c(t)e^{-t}dt,\int_{T_1}^{+\infty}c(t)e^{-t}dt)$ and $G({h}^{-1}(r))$ is not a constant function, therefore we obtain $\int_{T_1}^{+\infty}c(t)e^{-t}dt<+\infty$, which contradicts to $\int_{T_1}^{+\infty}c(t)e^{-t}dt=+\infty$. Thus we have $G(t)\equiv+\infty$.

When $G(t_2)\in(0,+\infty)$ for some $t_2\in[T,+\infty)$, Lemma \ref{characterization of g(t)=0} shows that $f_{z_0}\notin
\mathcal{O} (K_M)_{z_0} \otimes J_{z_0}$, for any  $ z_0\in Z_0$. Combining the above discussion, we know $\int_{T_1}^{+\infty}c(t)e^{-t}dt<+\infty$. Using Theorem \ref{main theorem}, we obtain that $G(\hat{h}^{-1}(r))$ is concave with respect to  $r\in (0,\int_{T}^{+\infty}c(t)e^{-t}dt)$, where $\hat{h}(t)=\int_{t}^{+\infty}c(l)e^{-l}dl$.

Thus, Remark \ref{infty2} holds.
\end{proof}

Now we prove Corollary \ref{necessary condition for linear of G}.

\begin{proof} As $G(h^{-1}(r))$ is linear with respect to $r\in[0,\int_T^{+\infty}c(s)e^{-s}ds)$, we have $G(t)=\frac{G(T_1)}{\int_{T_1}^{+\infty}c(s)e^{-s}ds}\int_{t}^{+\infty}c(s)e^{-s}ds$ for any $t\in[T,+\infty)$ and $T_1 \in (T,+\infty)$.

We follow the notation and the construction in Lemma \ref{derivatives of G}. Let $t_0>t_1> T$ be given. It follows from $G(h^{-1}(r))$ is linear with respect to $r\in[0,\int_T^{+\infty}c(s)e^{-s}ds)$ that we know that all inequalities in \eqref{derivative of G 5} should be equalities, i.e., we have

\begin{equation}
\label{necessary condition for linear of G 1}
\begin{split}
&G(t_1)-G(t_0)\\
=&\int_{\{\Psi<-t_1\}}|\tilde{F}_{t_1}|^2_hc(-\Psi)
-\int_{\{\Psi<-t_0\}}|F_{t_0}|^2_hc(-\Psi)\\
=&\int_{\{\Psi<-t_1\}}|\tilde{F}_{t_1}-\mathbb{I}_{\{\Psi< -t_0\}}F_{t_0}|^2_hc(-\Psi)\\
= &
\int_{\{\Psi<-t_1\}}|\tilde{F}_{t_1}-\mathbb{I}_{\{\Psi< -t_0\}}F_{t_0}|^2_he^{-\Psi+v_{t_0}(\Psi)}c(-v_{t_0}(\Psi))\\
= &\big(\int^{t_0}_{t_1}c(t)e^{-t}dt\big)\liminf\limits_{B\to 0+0} \frac{G(t_0)-G(t_0+B)}{\int_{t_0}^{t_0+B}c(t)e^{-t}dt}.
\end{split}
\end{equation}
Note that $G(t_0)=\int_{\{\Psi<-t_0\}}|F_{t_0}|^2_hc(-\Psi)$. Equality \eqref{necessary condition for linear of G 1} shows that $G(t_1)=\int_{\{\Psi<-t_1\}}|\tilde{F}_{t_1}|^2_hc(-\Psi)$.

Note that on $\{\Psi\ge -t_0\}$, we have $e^{-\Psi+v_{t_0}(\Psi)}c(-v_{t_0}(\Psi))=c(-\Psi)$. It follows from
\begin{equation}\nonumber
\begin{split}
&\int_{\{\Psi<-t_1\}}|\tilde{F}_{t_1}-\mathbb{I}_{\{\Psi< -t_0\}}F_{t_0}|^2_hc(-\Psi)\\
= &
\int_{\{\Psi<-t_1\}}|\tilde{F}_{t_1}-\mathbb{I}_{\{\Psi< -t_0\}}F_{t_0}|^2_he^{-\Psi+v_{t_0}(\Psi)}c(-v_{t_0}(\Psi))\\
\end{split}
\end{equation}
that we have (note that $v_{t_0}(\Psi)=-t_0$ on  $\{\Psi< -t_0\}$)
\begin{equation}
\begin{split}
\label{necessary condition for linear of G 2}
&\int_{\{\Psi<-t_0\}}|\tilde{F}_{t_1}-F_{t_0}|^2_hc(-\Psi)\\
= &
\int_{\{\Psi<-t_0\}}|\tilde{F}_{t_1}-F_{t_0}|^2_he^{-\Psi-t_0}c(t_0).\\
\end{split}
\end{equation}
As $\int_{T}^{+\infty}c(t)e^{-t}dt<+\infty$ and $c(t)e^{-t}$ is decreasing with respect to $t$, we know that there exists $t_2>t_0$ such that $c(t)e^{-t}<c(t_0)e^{-t_0}-\epsilon$ for any $t\ge t_2$, where $\epsilon>0$ is a constant. Then equality \eqref{necessary condition for linear of G 2} implies that
\begin{equation}
\begin{split}
\label{necessary condition for linear of G 3}
&\epsilon\int_{\{\Psi<-t_2\}}|\tilde{F}_{t_1}-F_{t_0}|^2_he^{-\Psi}\\
\le &
\int_{\{\Psi<-t_2\}}|\tilde{F}_{t_1}-F_{t_0}|^2_h(e^{-\Psi-t_0}c(t_0)-c(-\Psi))\\
\le &
\int_{\{\Psi<-t_0\}}|\tilde{F}_{t_1}-F_{t_0}|^2_h(e^{-\Psi-t_0}c(t_0)-c(-\Psi))\\
= &0.
\end{split}
\end{equation}
Note that for any relatively compact subset  $K\subset \{\Psi<-t_2\}$, $|\tilde{F}_{t_1}-F_{t_0}|^2_he^{-\Psi}=|(\tilde{F}_{t_1}-F_{t_0})F|^2_he^{-\psi}
=|\tilde{F}_{t_1}F-F_{t_0}F|^2_{\tilde{h}}\ge |\tilde{F}_{t_1}F-F_{t_0}F|^2_{\tilde{h}_{K,1}}$ on $K$, and the integrand in \eqref{necessary condition for linear of G 3} is nonnegative, we must have $\tilde{F}_{t_1}|_{\{\Psi<-t_0\}}=F_{t_0}$.

It follows from Lemma \ref{existence of F} that for any $t>T$, there exists an unique $E$-valued holomorphic $(n,0)$ form $F_t$ on $\{\Psi<-t\}$ satisfying
$$\ \int_{\{\Psi<-t\}}|F_t|^2_hc(-\Psi)=G(t)$$  and
$\ (F_t-f)_{z_0}\in
\mathcal{O} (K_M)_{z_0} \otimes J_{z_0}$, for any  $ z_0\in Z_0$.  By the above discussion, we know $F_{t}=F_{t'}$ on $\{\Psi<-\max{\{t,t'\}}\}$ for any $t\in(T,+\infty)$ and $t'\in(T,+\infty)$. Hence combining $\lim_{t\rightarrow T+0}G(t)=G(T)$, we obtain that there  exists an unique $E$-valued holomorphic $(n,0)$ form $\tilde{F}$ on $\{\Psi<-T\}$ satisfying $(\tilde{F}-f)_{z_0}\in
\mathcal{O} (K_M)_{z_0} \otimes J_{z_0}$ for any  $z_0\in Z_0$ and $G(t)=\int_{\{\Psi<-t\}}|\tilde{F}|^2_hc(-\Psi)$ for any $t\ge T$.

Secondly, we prove equality \eqref{other a also linear}.

As $a(t)$ is a nonnegative measurable function on $(T,+\infty)$, then there exists a sequence of functions $\{\sum\limits_{j=1}^{n_i}a_{ij}\mathbb{I}_{E_{ij}}\}_{i\in\mathbb{N}^+}$ $(n_i<+\infty$ for any $i\in\mathbb{N}^+)$ satisfying that $\sum\limits_{j=1}^{n_i}a_{ij}\mathbb{I}_{E_{ij}}$ is increasing with respect to $i$ and $\lim\limits_{i\to +\infty}\sum\limits_{j=1}^{n_i}a_{ij}\mathbb{I}_{E_{ij}}=a(t)$ for any $t\in(T,+\infty)$, where $E_{ij}$ is a Lebesgue measurable subset of $(T,+\infty)$ and $a_{ij}\ge 0$ is a constant for any $i,j$.  It follows from Levi's Theorem that it suffices to prove the case that $a(t)=\mathbb{I}_{E}(t)$, where $E\subset\subset (T,+\infty)$ is a Lebesgue measurable set.

Note that $G(t)=\int_{\{\Psi<-t\}}|\tilde{F}|^2_hc(-\Psi)=\frac{G(T_1)}{\int_{T_1}^{+\infty}c(s)e^{-s}ds}
\int_{t}^{+\infty}c(s)e^{-s}ds$ where $T_1 \in (T,+\infty)$, then
  \begin{equation}\label{linear 3.4}
\int_{\{-t_1\le\Psi<-t_2\}}|\tilde{F}|^2_hc(-\Psi)=\frac{G(T_1)}{\int_{T_1}^{+\infty}c(s)e^{-s}ds}
\int_{t_2}^{t_1}c(s)e^{-s}ds
  \end{equation}
  holds for any $T\le t_2<t_1<+\infty$. It follows from the dominated convergence theorem and equality \eqref{linear 3.4} that
    \begin{equation}\label{linear 3.5}
\int_{\{z\in M:-\Psi(z)\in N\}}|\tilde{F}|^2_h=0
  \end{equation}
  holds for any $N\subset\subset (T,+\infty)$ such that $\mu(N)=0$, where $\mu$ is the Lebesgue measure on $\mathbb{R}$.

  As $c(t)e^{-t}$ is decreasing on $(T,+\infty)$, there are at most countable points denoted by $\{s_j\}_{j\in \mathbb{N}^+}$ such that $c(t)$ is not continuous at $s_j$. Then there is a decreasing sequence of open sets $\{U_k\}$,  such that
$\{s_j\}_{j\in \mathbb{N}^+}\subset U_k\subset (T,+\infty)$ for any $k$, and $\lim\limits_{k \to +\infty}\mu(U_k)=0$. Choosing any closed interval $[t'_2,t'_1]\subset (T,+\infty)$, then we have
\begin{equation}\label{linear 3.6}
\begin{split}
&\int_{\{-t'_1\le\Psi<-t'_2\}}|\tilde{F}|^2_h\\
=&\int_{\{z\in M:-\Psi(z)\in(t'_2,t'_1]\backslash U_k\}}|\tilde{F}|^2_h+
\int_{\{z\in M:-\Psi(z)\in[t'_2,t'_1]\cap U_k\}}|\tilde{F}|^2_h\\
=&\lim_{n\to+\infty}\sum_{i=0}^{n-1}\int_{\{z\in M:-\Psi(z)\in I_{i,n}\backslash U_k\}}|\tilde{F}|^2_h+
\int_{\{z\in M:-\Psi(z)\in[t'_2,t'_1]\cap U_k\}}|\tilde{F}|^2_h,
\end{split}
\end{equation}
where $I_{i,n}=(t'_1-(i+1)\alpha_n,t'_1-i\alpha_n]$ and $\alpha_n=\frac{t'_1-t'_2}{n}$. Note that
\begin{equation}\label{linear 3.7}
\begin{split}
&\lim_{n\to+\infty}\sum_{i=0}^{n-1}\int_{\{z\in M:-\Psi(z)\in I_{i,n}\backslash U_k\}}|\tilde{F}|^2_h\\
\le&\limsup_{n\to+\infty}\sum_{i=0}^{n-1}\frac{1}{\inf_{I_{i,n}\backslash U_k}c(t)}\int_{\{z\in M:-\Psi(z)\in I_{i,n}\backslash U_k\}}|\tilde F|^2_hc(-\Psi).
\end{split}
\end{equation}

It follows from equality \eqref{linear 3.4} that inequality \eqref{linear 3.7} becomes
\begin{equation}\label{linear 3.8}
\begin{split}
&\lim_{n\to+\infty}\sum_{i=0}^{n-1}\int_{\{z\in M:-\Psi(z)\in I_{i,n}\backslash U_k\}}|\tilde F|^2_h\\
\le&\frac{G(T_1)}{\int_{T_1}^{+\infty}c(s)e^{-s}ds}
\limsup_{n\to+\infty}\sum_{i=0}^{n-1}\frac{1}{\inf_{I_{i,n}\backslash U_k}c(t)}\int_{I_{i,n}\backslash U_k}c(s)e^{-s}ds.
\end{split}
\end{equation}
It is clear that $c(t)$ is uniformly continuous and has positive lower bound and upper bound on $[t'_2,t'_1]\backslash U_k$. Then we have
\begin{equation}\label{linear 3.9}
\begin{split}
&\limsup_{n\to+\infty}\sum_{i=0}^{n-1}\frac{1}{\inf_{I_{i,n}\backslash U_k}c(t)}\int_{I_{i,n}\backslash U_k}c(s)e^{-s}ds \\
\le&\limsup_{n\to+\infty}\sum_{i=0}^{n-1}\frac{\sup_{I_{i,n}\backslash U_k}c(t)}{\inf_{I_{i,n}\backslash U_k}c(t)}\int_{I_{i,n}\backslash U_k}e^{-s}ds\\
=&\int_{(t'_2,t'_1]\backslash U_k}e^{-s}ds.
\end{split}
\end{equation}

Combining inequality \eqref{linear 3.6}, \eqref{linear 3.8} and \eqref{linear 3.9}, we have
\begin{equation}\label{linear 3.10}
\begin{split}
&\int_{\{-t'_1\le\Psi<-t'_2\}}|\tilde{F}|^2_h\\
=&\int_{\{z\in M:-\Psi(z)\in(t'_2,t'_1]\backslash U_k\}}|\tilde{F}|^2_h+
\int_{\{z\in M:-\Psi(z)\in[t'_2,t'_1]\cap U_k\}}|\tilde{F}|^2_h\\
\le&\frac{G(T_1)}{\int_{T_1}^{+\infty}c(s)e^{-s}ds}\int_{(t'_2,t'_1]\backslash U_k}e^{-s}ds+
\int_{\{z\in M:-\Psi(z)\in[t'_2,t'_1]\cap U_k\}}|\tilde{F}|^2_h.
\end{split}
\end{equation}
Let $k\to +\infty$, following from equality \eqref{linear 3.5} and inequality \eqref{linear 3.10}, then we obtain that
\begin{equation}\label{linear 3.11}
\begin{split}
\int_{\{-t'_1\le\Psi<-t'_2\}}|\tilde{F}|^2_h
\le\frac{G(T_1)}{\int_{T_1}^{+\infty}c(s)e^{-s}ds}\int_{t'_2}^{t'_1}e^{-s}ds.
\end{split}
\end{equation}
Following from a similar discussion we can obtain that
\begin{equation}\nonumber
\begin{split}
\int_{\{-t'_1\le\Psi<-t'_2\}}|\tilde{F}|^2_h
\ge\frac{G(T_1)}{\int_{T_1}^{+\infty}c(s)e^{-s}ds}\int_{t'_2}^{t'_1}e^{-s}ds.
\end{split}
\end{equation}
Then combining inequality \eqref{linear 3.11}, we know
\begin{equation}\label{linear 3.12}
\begin{split}
\int_{\{-t'_1\le\Psi<-t'_2\}}|\tilde{F}|^2_h
=\frac{G(T_1)}{\int_{T_1}^{+\infty}c(s)e^{-s}ds}\int_{t'_2}^{t'_1}e^{-s}ds.
\end{split}
\end{equation}
Then it is clear that for any open set $U\subset (T,+\infty)$ and compact set $V\subset (T,+\infty)$,
$$
\int_{\{z\in M;-\Psi(z)\in U\}}|\tilde{F}|^2_h
=\frac{G(T_1)}{\int_{T_1}^{+\infty}c(s)e^{-s}ds}\int_{U}e^{-s}ds,
$$
and
$$
\int_{\{z\in M;-\Psi(z)\in V\}}|\tilde{F}|^2_h
=\frac{G(T_1)}{\int_{T_1}^{+\infty}c(s)e^{-s}ds}\int_{V}e^{-s}ds.
$$
As $E\subset\subset (T,+\infty)$, then $E\cap(t_2,t_1]$ is a Lebesgue measurable subset of $(T+\frac{1}{n},n)$ for some large $n$, where $T\le t_2<t_1\le+\infty$. Then there exists a sequence of compact sets $\{V_j\}$ and a sequence of open subsets $\{V'_j\}$ satisfying $V_1\subset \ldots \subset V_j\subset V_{j+1}\subset\ldots \subset E\cap(t_2,t_1]\subset \ldots \subset V'_{j+1}\subset V'_j\subset \ldots\subset V'_1\subset\subset (T,+\infty)$ and $\lim\limits_{j\to +\infty}\mu(V'_j-V_j)=0$, where $\mu$ is the Lebesgue measure on $\mathbb{R}$. Then we have
\begin{equation}\nonumber
\begin{split}
\int_{\{-t'_1\le\Psi<-t'_2\}}|\tilde{F}|^2_h\mathbb{I}_E(-\Psi)
=&\int_{z\in M:-\Psi(z)\in E\cap (t_2,t_1]}|\tilde{F}|^2_h\\
\le&\liminf_{j\to+\infty}\int_{\{z\in M:-\Psi(z)\in V'_j\}}|\tilde{F}|^2_h\\
\le&\liminf_{j\to+\infty}\frac{G(T_1)}{\int_{T_1}^{+\infty}c(s)e^{-s}ds}\int_{V'_j}e^{-s}ds\\
\le&\frac{G(T_1)}{\int_{T_1}^{+\infty}c(s)e^{-s}ds}\int_{E\cap(t_2,t_1]}e^{-s}ds\\
=&\frac{G(T_1)}{\int_{T_1}^{+\infty}c(s)e^{-s}ds}\int_{t_2}^{t_1}e^{-s}\mathbb{I}_E(s)ds,
\end{split}
\end{equation}
and
\begin{equation}\nonumber
\begin{split}
\int_{\{-t'_1\le\Psi<-t'_2\}}|\tilde{F}|^2_h\mathbb{I}_E(-\Psi)
\ge&\liminf_{j\to+\infty}\int_{\{z\in M:-\Psi(z)\in V_j\}}|\tilde{F}|^2_h\\
\ge&\liminf_{j\to+\infty}\frac{G(T_1)}{\int_{T_1}^{+\infty}c(s)e^{-s}ds}\int_{V_j}e^{-s}ds\\
=&\frac{G(T_1)}{\int_{T_1}^{+\infty}c(s)e^{-s}ds}\int_{t_2}^{t_1}e^{-s}\mathbb{I}_E(s)ds,
\end{split}
\end{equation}
which implies that
$$\int_{\{-t'_1\le\Psi<-t'_2\}}|\tilde{F}|^2_h\mathbb{I}_E(-\Psi)=
\frac{G(T_1)}{\int_{T_1}^{+\infty}c(s)e^{-s}ds}\int_{t_2}^{t_1}e^{-s}\mathbb{I}_E(s)ds.$$
Hence we know that equality \eqref{other a also linear} holds.

Corollary \ref{necessary condition for linear of G} is proved.
\end{proof}

Now we prove Remark \ref{rem:linear}.

\begin{proof}[Proof of Remark \ref{rem:linear}]
By the definition of $G(t;\tilde{c})$, we have $G(t_0;\tilde{c})\le\int_{\{\Psi<-t_0\}}|\tilde{F}|^2_h\tilde{c}(-\Psi)$, where $\tilde{F}$ is the holomorphic $(n,0)$ form on $\{\Psi<-T\}$ such that $G(t)=\int_{\{\Psi<-t\}}|\tilde{F}|^2_hc(-\Psi)$ for any $t\ge T$.  Hence we only consider the case $G(t_0;\tilde{c})<+\infty$.

By the definition of $G(t;\tilde{c})$, we can choose an $E$-valued holomorphic $(n,0)$ form $F_{t_0,\tilde{c}}$ on $\{\Psi<-t_0\}$ satisfying $\ (F_{t_0,\tilde{c}}-f)_{z_0}\in
\mathcal{O} (K_M)_{z_0} \otimes J_{z_0}$, for any  $ z_0\in Z_0$ and $\int_{ \{ \Psi<-t_0\}}|F_{t_0,\tilde{c}}|^2_h\tilde{c}(-\Psi)<+\infty$. As $\mathcal{H}^2(\tilde{c},t_0)\subset \mathcal{H}^2(c,t_0)$, we have $\int_{ \{ \Psi<-t_0\}}|F_{t_0,\tilde{c}}|^2_hc(-\Psi)<+\infty$. Using
Lemma \ref{existence of F}, we obtain that
\begin{equation}\nonumber
\begin{split}
\int_{ \{ \Psi<-t\}}|F_{t_0,\tilde{c}}|^2_hc(-\Psi)
=&\int_{ \{ \Psi<-t\}}|\tilde{F}|^2_hc(-\Psi)\\
+&\int_{ \{ \Psi<-t\}}|F_{t_0,\tilde{c}}-\tilde{F}|^2_hc(-\Psi)
\end{split}
\end{equation}
for any $t\ge t_0,$ then
\begin{equation}\label{linear 3.13}
\begin{split}
\int_{ \{-t_3\le \Psi<-t_4\}}|F_{t_0,\tilde{c}}|^2_hc(-\Psi)
=&\int_{ \{-t_3\le \Psi<-t_4\}}|\tilde{F}|^2_hc(-\Psi)\\
+&\int_{\{-t_3\le \Psi<-t_4\}}|F_{t_0,\tilde{c}}-\tilde{F}|^2_hc(-\Psi)
\end{split}
\end{equation}
holds for any $t_3>t_4\ge t_0$. It follows from the dominated convergence theorem, equality \eqref{linear 3.13}, \eqref{linear 3.5} and $c(t)>0$ for any $t>T$, that
\begin{equation}\label{linear 3.14}
\begin{split}
\int_{ \{z\in M:-\Psi(z)=t\}}|F_{t_0,\tilde{c}}|^2_h
=\int_{\{z\in M:-\Psi(z)=t\}}|F_{t_0,\tilde{c}}-\tilde{F}|^2_h
\end{split}
\end{equation}
holds for any $t>t_0$.

Choosing any closed interval $[t'_4,t'_3]\subset (t_0,+\infty)\subset (T,+\infty)$. Note that $c(t)$ is uniformly continuous and have positive lower bound and upper bound on $[t'_4,t'_3]\backslash U_k$, where $\{U_k\}$ is the decreasing sequence of open subsets of $(T,+\infty)$, such that $c$ is continuous on $(T,+\infty)\backslash U_k$ and $\lim\limits_{k \to +\infty}\mu(U_k)=0$. Take $N=\cap_{k=1}^{+\infty}U_k.$ Note that
\begin{equation}\label{linear 3.15}
\begin{split}
&\int_{ \{-t'_3\le\Psi<-t'_4\}}|F_{t_0,\tilde{c}}|^2_h\\
=&\lim_{n\to+\infty}\sum_{i=0}^{n-1}\int_{\{z\in M:-\Psi(z)\in S_{i,n}\backslash U_k\}}|F_{t_0,\tilde{c}}|^2_h
+\int_{\{z\in M:-\Psi(z)\in(t'_4,t'_3]\cap U_k\}}|F_{t_0,\tilde{c}}|^2_h\\
\le&\limsup_{n\to+\infty}\sum_{i=0}^{n-1}\frac{1}{\inf_{S_{i,n}}c(t)}\int_{\{z\in M:-\Psi(z)\in S_{i,n}\backslash U_k\}}|F_{t_0,\tilde{c}}|^2_hc(-\Psi)\\
&+\int_{\{z\in M:-\Psi(z)\in(t'_4,t'_3]\cap U_k\}}|F_{t_0,\tilde{c}}|^2_h,
\end{split}
\end{equation}
where $S_{i,n}=(t'_4-(i+1)\alpha_n,t'_3-i\alpha_n]$ and $\alpha_n=\frac{t'_3-t'_4}{n}$.
It follows from equality \eqref{linear 3.13},\eqref{linear 3.14}, \eqref{linear 3.5} and the dominated convergence theorem that
\begin{equation}\label{linear 3.16}
\begin{split}
&\int_{\{z\in M:-\Psi(z)\in S_{i,n}\backslash U_k\}}|F_{t_0,\tilde{c}}|^2_hc(-\Psi)\\
=&\int_{\{z\in M:-\Psi(z)\in S_{i,n}\backslash U_k\}}|\tilde F|^2_hc(-\Psi)
+\int_{\{z\in M:-\Psi(z)\in S_{i,n}\backslash U_k\}}|F_{t_0,\tilde{c}}-\tilde F|^2_hc(-\Psi).
\end{split}
\end{equation}
As $c(t)$ is uniformly continuous and have positive lower bound and upper bound on $[t'_3,t'_4]\backslash U_k$, combining equality \eqref{linear 3.16}, we have
\begin{equation}\label{linear 3.17}
\begin{split}
&\limsup_{n\to+\infty}\sum_{i=0}^{n-1}\frac{1}{\inf_{S_{i,n}\backslash U_k}c(t)}\int_{\{z\in M:-\Psi(z)\in S_{i,n}\backslash U_k\}}|F_{t_0,\tilde{c}}|^2_hc(-\Psi)\\
=&\limsup_{n\to+\infty}\sum_{i=0}^{n-1}\frac{1}{\inf_{S_{i,n}\backslash U_k}c(t)}(\int_{\{z\in M:-\Psi(z)\in S_{i,n}\backslash U_k\}}|\tilde F|^2_hc(-\Psi)\\
&+\int_{\{z\in M:-\Psi(z)\in S_{i,n}\backslash U_k\}}|F_{t_0,\tilde{c}}-\tilde F|^2_hc(-\Psi))\\
\le & \limsup_{n\to+\infty}\sum_{i=0}^{n-1}\frac{\sup_{S_{i,n}\backslash U_k}c(t)}{\inf_{S_{i,n}\backslash U_k}c(t)}(\int_{\{z\in M:-\Psi(z)\in S_{i,n}\backslash U_k\}}|\tilde F|^2_h\\
&+\int_{\{z\in M:-\Psi(z)\in S_{i,n}\backslash U_k\}}|F_{t_0,\tilde{c}}-\tilde F|^2_h)\\
=&\int_{\{z\in M:-\Psi(z)\in (t'_4,t'_3]\backslash U_k\}}|\tilde F|^2_h
+\int_{\{z\in M:-\Psi(z)\in (t'_4,t'_3]\backslash U_k\}}|F_{t_0,\tilde{c}}-\tilde F|^2_h.
\end{split}
\end{equation}
If follows from inequality \eqref{linear 3.15} and \eqref{linear 3.17} that
\begin{equation}\label{linear 3.18}
\begin{split}
&\int_{ \{-t'_3\le\Psi<-t'_4\}}|F_{t_0,\tilde{c}}|^2_h\\
\le & \int_{\{z\in M:-\Psi(z)\in (t'_4,t'_3]\backslash U_k\}}|\tilde F|^2_h
+\int_{\{z\in M:-\Psi(z)\in (t'_4,t'_3]\backslash U_k\}}|F_{t_0,\tilde{c}}-\tilde F|^2_h\\
&+\int_{ \{z\in M: -\Psi(z)\in(t'_4,t'_3]\cap U_k\}}|F_{t_0,\tilde{c}}|^2_h.
\end{split}
\end{equation}
It follows from $F_{t_0,\tilde{c}}\in\mathcal{H}^2(c,t_0)$ that $\int_{ \{-t'_3\le\Psi<-t'_4\}}|F_{t_0,\tilde{c}}|^2_h<+\infty$. Let $k\to+\infty,$ by equality \eqref{linear 3.5}, inequality \eqref{linear 3.18} and the dominated theorem, we have
\begin{equation}\label{linear 3.19}
\begin{split}
&\int_{ \{-t'_3\le\Psi<-t'_4\}}|F_{t_0,\tilde{c}}|^2e^{-\varphi}\\
\le & \int_{\{z\in M:-\Psi(z)\in (t'_4,t'_3]\}}|\tilde F|^2_h
+\int_{\{z\in M:-\Psi(z)\in (t'_4,t'_3]\backslash N\}}|F_{t_0,\tilde{c}}-\tilde F|^2_h\\
&+\int_{ \{z\in M: -\Psi(z)\in(t'_4,t'_3]\cap N\}}|F_{t_0,\tilde{c}}|^2_h.
\end{split}
\end{equation}
By similar discussion, we also have that
\begin{equation}\nonumber
\begin{split}
&\int_{ \{-t'_3\le\Psi<-t'_4\}}|F_{t_0,\tilde{c}}|^2_h\\
\ge & \int_{\{z\in M:-\Psi(z)\in (t'_4,t'_3]\}}|\tilde F|^2_h
+\int_{\{z\in M:-\Psi(z)\in (t'_4,t'_3]\backslash N\}}|F_{t_0,\tilde{c}}-\tilde F|^2_h\\
&+\int_{ \{z\in M: -\Psi(z)\in(t'_4,t'_3]\cap N\}}|F_{t_0,\tilde{c}}|^2_h.
\end{split}
\end{equation}
then combining inequality \eqref{linear 3.19}, we have
\begin{equation}\label{linear 3.20}
\begin{split}
&\int_{ \{-t'_3\le\Psi<-t'_4\}}|F_{t_0,\tilde{c}}|^2_h\\
= & \int_{\{z\in M:-\Psi(z)\in (t'_4,t'_3]\}}|\tilde F|^2_h
+\int_{\{z\in M:-\Psi(z)\in (t'_4,t'_3]\backslash N\}}|F_{t_0,\tilde{c}}-\tilde F|^2_h\\
&+\int_{ \{z\in M: -\Psi(z)\in(t'_4,t'_3]\cap N\}}|F_{t_0,\tilde{c}}|^2_h.
\end{split}
\end{equation}
Using equality \eqref{linear 3.5}, \eqref{linear 3.14} and Levi's Theorem, we have
\begin{equation}\label{linear 3.21}
\begin{split}
&\int_{ \{z\in M:-\Psi(z)\in U\}}|F_{t_0,\tilde{c}}|^2_h\\
= & \int_{\{z\in M:-\Psi(z)\in U\}}|\tilde F|^2_h
+\int_{\{z\in M:-\Psi(z)\in U\backslash N\}}|F_{t_0,\tilde{c}}-\tilde F|^2_h\\
&+\int_{ \{z\in M: -\Psi(z)\in U\cap N\}}|F_{t_0,\tilde{c}}|^2_h
\end{split}
\end{equation}
holds for any open set $U\subset\subset (t_0,+\infty)$, and
\begin{equation}\label{linear 3.22}
\begin{split}
&\int_{ \{z\in M:-\Psi(z)\in V\}}|F_{t_0,\tilde{c}}|^2_h\\
= & \int_{\{z\in M:-\Psi(z)\in V\}}|\tilde F|^2_h
+\int_{\{z\in M:-\Psi(z)\in V\backslash N\}}|F_{t_0,\tilde{c}}-\tilde F|^2_h\\
&+\int_{ \{z\in M: -\Psi(z)\in V\cap N\}}|F_{t_0,\tilde{c}}|^2_h
\end{split}
\end{equation}
holds for any compact set $V\subset (t_0,+\infty)$. For any measurable set $E\subset\subset (t_0,+\infty)$, there exists a sequence of compact set $\{V_l\}$, such that $V_l\subset V_{l+1}\subset E$ for any $l$ and $\lim\limits_{l\to +\infty}\mu(V_l)=\mu(E)$, hence by equality \eqref{linear 3.22}, we have
\begin{equation}\label{linear 3.23}
\begin{split}
\int_{ \{\Psi<-t_0\}}|F_{t_0,\tilde{c}}|^2_h\mathbb{I}_E(-\Psi)
\ge&\lim_{l \to +\infty} \int_{ \{\Psi<-t_0\}}|F_{t_0,\tilde{c}}|^2_h\mathbb{I}_{V_j}(-\Psi)\\
\ge&\lim_{l \to +\infty} \int_{ \{\Psi<-t_0\}}|\tilde F|^2_h\mathbb{I}_{V_j}(-\Psi)\\
=& \int_{ \{\Psi<-t_0\}}|\tilde F|^2_h\mathbb{I}_{V_j}(-\Psi).
\end{split}
\end{equation}
It is clear that for any $t>t_0$, there exists a sequence of functions $\{\sum_{j=1}^{n_i}\mathbb{I}_{E_{i,j}}\}_{i=1}^{+\infty}$ defined on $(t,+\infty)$, satisfying $E_{i,j}\subset\subset (t,+\infty)$, $\sum_{j=1}^{n_{i+1}}\mathbb{I}_{E_{i+1,j}}(s)\ge \sum_{j=1}^{n_{i}}\mathbb{I}_{E_{i,j}}(s)$ and $\lim\limits_{i\to+\infty}\sum_{j=1}^{n_i}\mathbb{I}_{E_{i,j}}(s)=\tilde{c}(s)$ for any $s>t$. Combining Levi's Theorem and inequality \eqref{linear 3.23}, we have
\begin{equation}\label{linear 3.24}
\begin{split}
\int_{ \{\Psi<-t_0\}}|F_{t_0,\tilde{c}}|^2_h\tilde{c}(-\Psi)
\ge\int_{ \{\Psi<-t_0\}}|\tilde F|^2_h\tilde{c}(-\Psi).
\end{split}
\end{equation}
By the definition of $G(t_0,\tilde{c})$, we have $G(t_0,\tilde{c})=\int_{ \{\Psi<-t_0\}}|\tilde F|^2_h\tilde{c}(-\Psi).$ Equality \eqref{other c also linear} is proved.

\end{proof}
\section{Proofs of Theorem \ref{p:DK} and Corollary \ref{thm:soc}}

In this section, we prove Theorem \ref{p:DK} and Corollary \ref{thm:soc}.

\subsection{Proof of Theorem \ref{p:DK}}

	Lemma \ref{l:m5} tells us that there exists $p_0>2a_{z_0}^f(\Psi;h)$ such that $I(h,p_0\Psi)_{z_0}=I_+(h,2a_{z_0}^f(\Psi;h)\Psi)_{z_0}$. Following from the definition of $a_{z_0}^{f}(\Psi;h)$ and Lemma \ref{characterization of g(t)=0}, we obtain that
\begin{equation}
	\label{eq:0222d}G(0;c\equiv1,\Psi,h,I_+(h,2a_{z_0}^f(\Psi;h)\Psi)_{z_0},f)>0.
\end{equation}
Without loss of generality, assume that there exists $t>t_0$ such that $\int_{\{\Psi<-t\}}|f|^2_h<+\infty$.
Denote that $t_1:=\inf\{t\ge t_0:\int_{\{\Psi<-t\}}|f|^2_h<+\infty\}.$
 Denote
\begin{displaymath}\begin{split}
 	\inf\Bigg\{\int_{\{p\Psi<-t\}}|\tilde f|^2_h:\tilde f\in H^0(\{p\Psi<-t\},&\mathcal{O}(K_M\otimes E))
 	\\ &\&\,(\tilde f-f)_{z_0}\in \mathcal{O}(K_M)_{z_0}\otimes I(h,p\Psi)_{z_0}\Bigg\}
 	\end{split}
 \end{displaymath}
by $G_{p}(t)$, where $t\in[0,+\infty)$ and $p>2a_{z_0}^f(\Psi;h)$. Then we know that $G_{p}(0)\ge G(0;c\equiv1,\Psi,h,I_+(h,2a_{z_0}^f(\Psi;h)\Psi)_{z_0},f)$ for any $p>2a_{z_0}^f(\Psi;h)$.
Note that
$$p\Psi=\min\{p\psi+(2\lceil p\rceil-2p)\log|F|-2\log|F^{\lceil p\rceil}|,0\},$$
where $\lceil p\rceil=\min\{n\in \mathbb{Z}:n\geq p\}$,
 and
 $$G_{p}(pt)\le\int_{\{\Psi<-t\}}|f|^2_h<+\infty$$
 for any $t>t_1$. Note that $\Theta_{\tilde h}(E)\ge_{Nak}^s 0$ and $(p-2a_{z_0}^f(\Psi;h))\psi+(2\lceil p\rceil-2p)\log|F|$ is plurisubharmonic on $M$. Remark \ref{example of singular metric} implies that $\Theta_{he^{-(p\psi+(2\lceil p\rceil-2p)\log|F|)}}(E)\ge _{Nak}^s0$. Note that $h$ has a positive locally lower  bound.
  Theorem \ref{main theorem} tells us  that $G_{p}(-\log r)$ is concave with respect to $r\in(0,1]$ and $\lim_{t\rightarrow+\infty}G_{p}(t)=0$, which implies that
 \begin{equation}
 	\label{eq:0222b}
 	\begin{split}
 	\frac{1}{r_1^2}\int_{\{p\Psi<2\log r_1\}}|f|^2_h&\ge \frac{1}{r_1^2}G_{p}(-2\log r_1)\\
 	&\ge  G_{p}(0)\\
 	&\ge G(0;c\equiv1,\Psi,h,I_+(h,2a_{z_0}^f(\Psi;h)\Psi)_{z_0},f),
 	\end{split}
 \end{equation}
where  $0<r_1\le e^{-\frac{pt_0}{2}}$.

We prove $a_{z_0}^f(\Psi;h)>0$ by contradiction: if $a_{z_0}^f(\Psi;h)=0$, as $\int_{\{\Psi<-t_1-1\}}|f|^2_h<+\infty$, it follows from the dominated convergence theorem and inequality \eqref{eq:0222b} that
\begin{equation}
	\label{eq:0222c}\begin{split}
	\frac{1}{r_1^2}\int_{\{\Psi=-\infty\}}|f|^2_h&=\lim_{p\rightarrow0+0}\frac{1}{r_1^2}\int_{\{p\Psi<2\log r_1\}}|f|^2_h\\
	&\ge  G(0;c\equiv1,\Psi,h,I_+(h,2a_{z_0}^f(\Psi;h)\Psi)_{z_0},f).\end{split}
\end{equation}
Note that $\mu(\{\Psi=-\infty\})=\mu(\{\psi=-\infty\})=0$, where $\mu$ is the Lebesgue measure on $M$. Inequality \eqref{eq:0222c} implies that $G(0;c\equiv1,\Psi,h,I_+(h,2a_{z_0}^f(\Psi;h)\Psi)_{z_0},f)=0$, which contradicts inequality \eqref{eq:0222d}. Thus, we get that $a_{z_0}^f(\Psi;h)>0$.

For any $r_2\in (0,e^{-a_{z_0}^{f}(\Psi;h)t_1})$, note that $\frac{2\log r_2}{p}<-t_1$ for any $p\in(2a_0^f(\Psi;h),-\frac{2\log r_2}{t_1})$, which implies that
$\int_{\{p\Psi<2\log r_2\}}|f|^2_h<+\infty$ for any $p\in(2a_0^f(\Psi;h),-\frac{2\log r_2}{t_1}).$
 Then it follows from the dominated convergence theorem and inequality \eqref{eq:0222b} that
\begin{equation}
	\label{eq:0222e}\begin{split}
	\frac{1}{r_2^2}\int_{\{2a_0^f(\Psi;h)\Psi\le2\log r_2\}}|f|^2_h&=\lim_{p\rightarrow2a_0^f(\Psi;h)+0}\frac{1}{r_2^2}\int_{\{p\Psi<2\log r_2\}}|f|^2_h\\
	&\ge G(0;c\equiv1,\Psi,h,I_+(h,2a_{z_0}^f(\Psi;h)\Psi)_{z_0},f).	
	\end{split}
\end{equation}
For any $r\in(0,e^{-a_{z_0}^f(\Psi;h)t_0}]$, if $r>e^{-a_{z_0}^{f}(\Psi;h)t_1}$, we have $\int_{\{a_0^f(\Psi;h)\Psi<\log r\}}|f|^2_h=+\infty>G(0;c\equiv1,\Psi,h,I_+(h,2a_{z_0}^f(\Psi;h)\Psi)_{z_0},f)$, and if $r\in(0,e^{-a_{z_0}^f(\Psi;h)t_1}]$, it follows from  $\{a_{z_0}^f(\Psi;h)\Psi<\log r\}=\cup_{0<r_2<r}\{a_{z_0}^f(\Psi;h)\Psi<\log r_2\}$ and inequality \eqref{eq:0222e}  that
\begin{displaymath}
	\begin{split}
\int_{\{a_0^f(\Psi;h)\Psi<\log r\}}|f|^2_h&=
\sup_{r_2\in(0,r)}\int_{\{2a_0^f(\Psi;h)\Psi\le2\log r_2\}}|f|^2_h
\\&\ge \sup_{r_2\in(0,r)}r_2^2G(0;c\equiv1,\Psi,h,I_+(h,2a_{z_0}^f(\Psi;h)\Psi)_{z_0},f)\\
&=r^2G(0;c\equiv1,\Psi,h,I_+(h,2a_{z_0}^f(\Psi;h)\Psi)_{z_0},f).
	\end{split}
\end{displaymath}

Thus, Theorem \ref{p:DK} holds.

\subsection{Proof of Corollary \ref{thm:soc}}

	It is clear that $I_+(h,a\Psi)_{z_0}\subset I(h,a\Psi)_{z_0}$, hence it suffices to prove that   $I(h,a\Psi)_{z_0}\subset I_+(h,a\Psi)_{z_0}$.

If there exists $f_{z_0}\in I(h,a\Psi)_{z_0}$ such that $f_{z_0}\not\in I_+(h,a\Psi)_{z_0}$, then $a_{z_0}^f(\Psi;h)_{z_0}=\frac{a}{2}<+\infty$.  Theorem \ref{p:DK} shows that $a>0$. Without loss of generality, assume that $M=D$ is a domain in $\mathbb{C}^n$ and  $f\in H^0(\{\Psi<-t_0\}\cap D,\mathcal{O}(E))$, where $t_0>0$. For any neighborhood $U\subset D$ of $z_0$, it follows from Proposition \ref{p:DK} that  there exists $C_U>0$ such that
\begin{equation}
	\label{eq:0222f}\frac{1}{r^2}\int_{\{a\Psi<2\log r\}\cap U}|f|^2_h\ge C_U
\end{equation}
for any $r\in(0,e^{-\frac{at_0}{2}}]$. For any $t>at_0$, it follows from Fubini's Theorem and inequality \eqref{eq:0222f} that
\begin{displaymath}
	\begin{split}
		\int_{\{a\Psi<-t\}\cap U}|f|^2_he^{-a\Psi}
		=&\int_{\{a\Psi<-t\}\cap U}\left(|f|^2_h\int_0^{e^{-a\Psi}}dl\right)\\
		=&\int_0^{+\infty}\left(\int_{\{l<e^{-a\Psi}\}\cap\{a\Psi<-t\}\cap U}|f|^2_h\right)dl\\
		\ge&\int_{e^t}^{+\infty}\left(\int_{\{a\Psi<-\log l\}\cap U}|f|^2_h\right)dl\\
		\ge&C_U\int_{e^t}^{+\infty}\frac{1}{l}dl\\
		=&+\infty,
	\end{split}
\end{displaymath}
which contradicts  $f_{z_0}\in I(h,a\Psi)_{z_0}$. Thus, we have $I(h,a\Psi)_{z_0}\backslash I_+(h,a\Psi)_{z_0}=\emptyset$ for any $a\ge 0$, which shows that $I(h,a\Psi)_{z_0}=I_+(h,a\Psi)_{z_0}$ for any $a\ge0$.

\section{Proof of Theorem \ref{thm:effe}}
In this section, we prove Theorem \ref{thm:effe} by using Theorem \ref{main theorem}.
	
	For any Lebesgue measurable function $c$ on $(0,+\infty)$ and any $q'> 2a_{z_0}^f(\Psi;h)\ge1$, denote that\begin{displaymath}
		\begin{split}
			G_{c,q'}(t):=\inf\Bigg\{\int_{\{q'\Psi<-t\}}|\tilde f|^2_hc(-q'\Psi):&(\tilde f-f)_{z_0}\in\mathcal{O}(K_M)_{z_0}\otimes I(h,q'\Psi)_{z_0}\\
			&\&\,\tilde f\in H^0(\{q'\Psi<-t\},\mathcal{O}(K_M\otimes E)) \Bigg\}.
		\end{split}
	\end{displaymath}
Note that there exist a plurisubharmonic function $\psi_1=q'\psi+(2k-2q')\log|F|$ and a holomorphic function $F_1=F^k$ on $M$ such that
 $$\psi_1-2\log|F_1|= q'(\psi-2\log|F|)$$
 on $M$, where $k>q'$ is a integer. Denote that $\Psi_1:=\min\{\psi_1-2\log|F_1|,0\}=q'\Psi$ on $M$.
		
	Firstly, we prove  inequality
	\begin{equation}
		\label{eq:0307c}\int_{\{\Psi<-\frac{l}{a}\}}|f|^2_he^{-(1-a)\Psi}\ge e^{-\frac{a-1+q'}{a}l}\frac{1}{K_{\Psi,f,h,a}(z_0)}
	\end{equation}
	in two case $a\in(0,1]$ and $a>1$, where  $l\ge0$ and $q'> 2a_{z_0}^f(\Psi;h)$.
	
	We prove inequality \eqref{eq:0307c} for the case $a\in(0,1]$. Let $c_1(t)=e^{\frac{1-a}{q'}t}$ on $(0,+\infty)$, hence $c_1(t)e^{-t}$ is decreasing on $(0,+\infty)$ and $c_1(-q'\Psi)=e^{-(1-a)\Psi}\ge1$ on $M$. Note that $h$ has a positive locally lower bound. As
	$\Theta_{\tilde h}\ge_{Nak}^s0$ and $\psi$ is plurisubharmonic, where $\tilde h=he^{-2a_{z_0}^f(\Psi,h)\psi}$, then we have   $$\Theta_{he^{-\psi_1}}\ge_{Nak}^s0.$$ Theorem \ref{main theorem} (replace $\Psi$ and $c$ by $\Psi_1$ and $c_1$, respectively) shows that $G_{c_1,q'}(h^{-1}(r))$ is concave with respect to $r$, where $h(t)=\int_t^{+\infty}c_1(s)e^{-s}ds$. Note that
	$$G_{c_1,q'}(0)\ge\frac{1}{K_{\Psi,f,h,a}(z_0)}$$ for any $q'> 2a_{z_0}^f(\Psi;h)$. Hence we have
	\begin{equation*}
	\begin{split}
			\int_{\{\Psi<-\frac{l}{a}\}}|f|^2_he^{-(1-a)\Psi}\ge& G_{c_1,q'}\left(\frac{q'l}{a}\right)\\
			\ge&\frac{\int_{\frac{q'l}{a}}^{+\infty}c_1(s)e^{-s}ds}{\int_0^{+\infty}c_1(s)e^{-s}ds}G_{c_1,q'}(0)\\
			\ge&e^{-\frac{a-1+q'}{a}l}\frac{1}{K_{\Psi,f,h,a}(z_0)}.		\end{split}
	\end{equation*}
	
 We prove inequality \eqref{eq:0307c} for the case $a>1$. Take $\tilde c_m(t)=e^{\frac{1-a}{q'}t}$ on $(0,m)$ and $\tilde c_m(t)=e^{\frac{1-a}{q'}m}$ on $(m,+\infty)$, then $\tilde c_{m}(t)$ is a continuous function on $(0,+\infty)$ and $c_1(t)e^{-t}$ is decreasing on $(0,+\infty)$, where $m$ is any positive integer. Note that $c(t)\ge e^{\frac{1-a}{q'}m}$ on $(0,+\infty)$ and $h$ has a positive locally lower bound. Theorem \ref{main theorem} (replace $\Psi$ and $c$ by $\Psi_1$ and $\tilde c_m$, respectively)  shows that $G_{\tilde c_m,q'}(h_m^{-1}(r))$ is concave with respect to $r$, where $h_m(t)=\int_t^{+\infty}\tilde c_m(s)e^{-s}ds$. Note that $$G_{\tilde c_m,q'}(0)\ge\frac{1}{K_{\Psi,f,h,a}(z_0)}$$ for any $q'> 2a_{z_0}^f(\Psi;h)$. Hence we have
	\begin{equation}\label{eq:0307d}
	\begin{split}
			\int_{\{\Psi<-\frac{l}{a}\}}|f|^2_h\tilde c_m(-q'\Psi)\ge& G_{\tilde c_m,q'}\left(\frac{q'l}{a}\right)\\
			\ge&\frac{\int_{\frac{q'l}{a}}^{+\infty}\tilde c_m(s)e^{-s}ds}{\int_0^{+\infty}\tilde c_m(s)e^{-s}ds}G_{\tilde c_m,q'}(0)\\
			\ge&\frac{\int_{\frac{q'l}{a}}^{+\infty}\tilde c_m(s)e^{-s}ds}{\int_0^{+\infty}\tilde c_m(s)e^{-s}ds}\frac{1}{K_{\Psi,f,h,a}(z_0)}.		\end{split}
	\end{equation}
As $\int_{\{\Psi<0\}}|f|^2_he^{-\Psi}\le C_1<+\infty$, it follows from $\tilde c_m(-q'\Psi)\le e^{-\Psi}$, the dominated convergence theorem and inequality \eqref{eq:0307d} that
\begin{displaymath}
	\begin{split}
		\int_{\{\Psi<-\frac{l}{a}\}}|f|^2_he^{-(1-a)\Psi}
		=&\lim_{m\rightarrow+\infty}\int_{\{\Psi<-\frac{l}{a}\}}|f|^2_h\tilde c_m(-q'\Psi)\\
		\ge&\lim_{m\rightarrow+\infty}\frac{\int_{\frac{q'l}{a}}^{+\infty}\tilde c_m(s)e^{-s}ds}{\int_0^{+\infty}\tilde c_m(s)e^{-s}ds}\frac{1}{K_{\Psi,f,h,a}(z_0)}\\
		=&e^{-\frac{a-1+q'}{a}l}\frac{1}{K_{\Psi,f,h,a}(z_0)}.
	\end{split}
\end{displaymath}

Next, we complete the proof. Following from Fubini's Theorem, we have
	\begin{equation*}
\begin{split}
			&\int_{\{\Psi<0\}}|f|^2_he^{-\Psi}\\
			=&\int_{\{\Psi<0\}}\left(|f|^2_he^{-\Psi+a\Psi}\int_0^{e^{-a\Psi}}ds\right)\\
			=&\int_0^{+\infty}\left(\int_{\{\Psi<0\}\cap\{s<e^{-a\Psi}\}}|f|^2_he^{-\Psi+a\Psi} \right)ds\\
			=&\int_{-\infty}^{+\infty}\left(\int_{\{\Psi<-\frac{l}{a}\}\cap\{\Psi<0\}}|f|^2_he^{-\Psi+a\Psi} \right)e^{l}dl\\
			=&\int_{\{\Psi<0\}}|f|^2_he^{-\Psi+a\Psi} +\int_{0}^{+\infty}\left(\int_{\{\Psi<-\frac{l}{a}\}}|f|^2_he^{-\Psi+a\Psi} \right)e^{l}dl.
		\end{split}
	\end{equation*}
Using inequality \eqref{eq:0307c} and the definition of $K_{\Psi,f,h,a}(z_0)$, we obtain that
	\begin{equation}
		\label{eq:0307e}\begin{split}
			&\int_{\{\Psi<0\}}|f|^2_he^{-\Psi}\\
			=&\int_{\{\Psi<0\}}|f|^2_he^{-\Psi+a\Psi} +\int_{0}^{+\infty}\left(\int_{\{\Psi<-\frac{l}{a}\}}|f|^2_he^{-\Psi+a\Psi} \right)e^{l}dl\\
			\ge&\left(1+\int_0^{+\infty}e^{-\frac{-1+q'}{a}l}dl\right)\frac{1}{K_{\Psi,f,h,a}(z_0)}\\
			=&\frac{a+q'-1}{q'-1}\cdot\frac{1}{K_{\Psi,f,h,a}(z_0)}
		\end{split}
	\end{equation}
	for any $q'>2a_{z_0}^f(\Psi;h)$.
	Let $q'\rightarrow2a_{z_0}^f(\Psi;h)$, we get that inequality \eqref{eq:0307e} also holds when $q'=2a_{z_0}^f(\Psi;h)$. Thus, if $q>1$ satisfies
	$$\frac{q+a-1}{q-1}>\frac{C_1}{C_2}\geq K_{\Psi,f,h,a}(z_0)\int_{\{\Psi<0\}}|f|^2_he^{-\Psi},$$
	we have $p<2a_{z_0}^f(\Psi;h)$, i.e. $f_{z_0}\in \mathcal{O}(K_M)_{z_0}\otimes I(h,p\Psi)_{z_0}$.

\section{Proof of Theorem \ref{p:soc-twist}}

In this section, we prove Theorem \ref{p:soc-twist} by using Remark \ref{infty2} and Theorem \ref{p:DK}.
Firstly, we recall two basic lemmas, which will be used in the proof of Theorem \ref{p:soc-twist}.

\begin{Lemma}[see \cite{GY-twisted}]\label{l:2}
Let $a(t)$ be a positive measurable function on $(-\infty,+\infty)$,
such that $a(t)e^{t}$ is increasing near $+\infty$,
and $a(t)$ is not integrable near $+\infty$.
Then there exists a positive measurable function $\tilde a(t)$ on $(-\infty,+\infty)$ satisfying the following statements:
		
		$(1)$ there exists $T<+\infty$ such that $\tilde{a}(t)\leq a(t)$ for any $t>T$;
		
		$(2)$  $\tilde a(t)e^{t}$ is strictly increasing and continuous near $+\infty$;
		
		$(3)$ $\tilde a(t)$ is not integrable near $+\infty$.
\end{Lemma}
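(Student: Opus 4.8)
The plan is to repair the continuity and strict monotonicity of $a(t)e^{t}$ in one stroke, by a fixed-window averaging whose integrand carries a strictly increasing weight so that nothing in the construction is ever flat. Since $a(t)e^{t}$ is increasing near $+\infty$, fix $T_{0}\ge 0$ with $b(t):=a(t)e^{t}$ increasing on $(T_{0},+\infty)$; then $b$ is locally bounded there, so $a=be^{-t}$ is locally integrable near $+\infty$ and non-integrability of $a$ means $\int^{+\infty}b(t)e^{-t}\,dt=\int^{+\infty}a(t)\,dt=+\infty$. I would then pick any continuous strictly increasing $u\colon(T_{0},+\infty)\to(0,1)$ (for instance $u(t)=1-e^{-t}$, which lies in $(0,1)$ since $T_{0}\ge 0$) and set
\[
\tilde a(t):=e^{-t}\int_{t-1}^{t}b(s)\,u(s)\,ds\qquad(t>T_{0}+1),
\]
with $\tilde a(t):=a(t)$ for $t\le T_{0}+1$, so $\tilde a$ is positive and measurable on $\mathbb{R}$.

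Conditions (1) and (2) are then immediate. For (2), note $\tilde a(t)e^{t}=\int_{t-1}^{t}b(s)u(s)\,ds$ is continuous in $t$, and for $T_{0}+1<t<t'$,
\[
\tilde a(t')e^{t'}-\tilde a(t)e^{t}=\int_{t-1}^{t'-1}\bigl(b(s+1)u(s+1)-b(s)u(s)\bigr)\,ds>0,
\]
since on $[t-1,t'-1]\subset(T_{0},+\infty)$ one has $b(s+1)u(s+1)-b(s)u(s)\ge b(s)\bigl(u(s+1)-u(s)\bigr)>0$ ($b$ positive and increasing, $u$ positive and strictly increasing); thus $\tilde a(t)e^{t}$ is strictly increasing and continuous near $+\infty$, i.e.\ (2) holds with $T=T_{0}+1$. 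For (1), since $u<1$ and $b(s)\le b(t)$ for $s\in[t-1,t]$, we get $\tilde a(t)\le e^{-t}\int_{t-1}^{t}b(s)\,ds\le b(t)e^{-t}=a(t)$ for $t>T_{0}+1$.

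For (3) the only computation is a Tonelli interchange (legitimate as all integrands are $\ge 0$):
\[
\int_{T_{0}+1}^{\infty}\tilde a(t)\,dt=\int_{T_{0}}^{\infty}b(s)u(s)\Bigl(\int_{\max(s,\,T_{0}+1)}^{s+1}e^{-t}\,dt\Bigr)ds\ge(1-e^{-1})\int_{T_{0}+1}^{\infty}b(s)u(s)e^{-s}\,ds,
\]
and this is at least $(1-e^{-1})\,u(T_{0}+2)\int_{T_{0}+2}^{\infty}b(s)e^{-s}\,ds=(1-e^{-1})\,u(T_{0}+2)\int_{T_{0}+2}^{\infty}a(s)\,ds=+\infty$, so $\tilde a$ is not integrable near $+\infty$.

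There is no real obstacle once the formula is found. A plain average $t\mapsto\int_{t-1}^{t}b$ already gives continuity, lies below $b$ by monotonicity, and — as the Tonelli bound shows — keeps the entire tail mass up to the harmless factor $1-e^{-1}$; its one flaw is that it is constant on any stretch where $b$ is constant over an interval of length $\ge 1$, and inserting the strictly increasing weight $u$ into the integrand removes precisely this flaw, forcing $b(s+1)u(s+1)-b(s)u(s)>0$ everywhere at the cost of only another bounded factor in the mass estimate. The only place that asks for care is (3): keeping all integrands nonnegative for Tonelli and tracking the unit shift so the lower bound comes back out as $\int a$.
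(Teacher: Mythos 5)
Your construction is correct, and every step checks out. The identity
\[
\tilde a(t')e^{t'}-\tilde a(t)e^{t}=\int_{t-1}^{t'-1}\bigl(b(s+1)u(s+1)-b(s)u(s)\bigr)\,ds
\]
holds for all $t'>t>T_0+1$ (whether or not the windows $[t-1,t]$ and $[t'-1,t']$ overlap), and the integrand is pointwise $>0$ because $b>0$ is increasing and $u$ is strictly increasing on $(T_0,+\infty)$; this gives strict monotonicity, while continuity follows since $b u$ is locally integrable. The bound $\tilde a(t)\le e^{-t}b(t)=a(t)$ uses only $u<1$ and the monotonicity of $b$, and the Tonelli computation for (3) correctly reduces the tail mass of $\tilde a$ to $(1-e^{-1})\int b(s)u(s)e^{-s}\,ds\ge (1-e^{-1})u(T_0+2)\int_{T_0+2}^{\infty}a(s)\,ds=+\infty$.

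The paper itself does not give a proof of this lemma; it is quoted from \cite{GY-twisted}, so there is nothing in the present text to compare against line by line. Your approach is self-contained and economical: the single device of a sliding unit-window average of $b=ae^{t}$ weighted by a strictly increasing $u\in(0,1)$ simultaneously produces continuity, strict monotonicity, the domination $\tilde a\le a$, and preservation of the divergent tail. One very small point of hygiene: the phrase ``pick $T_0\ge 0$'' should be ``pick $T_0> 0$'' (or use a different $u$) so that $u(t)=1-e^{-t}$ is guaranteed to take values in $(0,1)$ on all of $(T_0,+\infty)$; with the statement's hypothesis that $a(t)e^t$ is increasing \emph{near} $+\infty$, you are of course free to enlarge $T_0$, so this is cosmetic.
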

\begin{Lemma}
	[see \cite{GZ-soc17}]\label{l:m} For any two measurable spaces $(X_i,\mu_i)$ and two measurable functions $g_i$ on $X_i$ respectively ($i\in\{1,2\}$), if $\mu_1(\{g_1\geq s^{-1}\})\geq\mu_2(\{g_2\geq s^{-1}\})$ for any $s\in(0,s_0]$, then $\int_{\{g_1\geq s_0^{-1}\}}g_1d\mu_1\geq\int_{\{g_2\geq s_0^{-1}\}}g_2d\mu_2$.
\end{Lemma}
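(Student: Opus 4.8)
\textbf{Plan of proof for Lemma \ref{l:m}.}
The plan is to prove this by the standard layer-cake (Cavalieri) representation of integrals together with the hypothesis rewritten in a more convenient form. First I would reduce the statement to a pure distribution-function comparison: for nonnegative measurable $g_i$ on $(X_i,\mu_i)$ and $s_0\in(0,+\infty)$, set $\lambda_i(u):=\mu_i(\{g_i\ge u\})$ for $u>0$. The hypothesis $\mu_1(\{g_1\ge s^{-1}\})\ge\mu_2(\{g_2\ge s^{-1}\})$ for all $s\in(0,s_0]$ is exactly the assertion $\lambda_1(u)\ge\lambda_2(u)$ for all $u\ge s_0^{-1}$. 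So it suffices to show that $\lambda_1\ge\lambda_2$ on $[s_0^{-1},+\infty)$ implies $\int_{\{g_1\ge s_0^{-1}\}}g_1\,d\mu_1\ge\int_{\{g_2\ge s_0^{-1}\}}g_2\,d\mu_2$.

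Next I would apply the layer-cake formula in the form
\[
\int_{\{g_i\ge s_0^{-1}\}}g_i\,d\mu_i=\int_{\{g_i\ge s_0^{-1}\}}\left(\int_0^{g_i}dt\right)d\mu_i=\int_0^{+\infty}\mu_i\big(\{g_i\ge s_0^{-1}\}\cap\{g_i> t\}\big)\,dt,
\]
where the interchange of integrals is justified by Tonelli's theorem since the integrand is nonnegative and measurable. On the region $\{g_i\ge s_0^{-1}\}$ one has $\{g_i>t\}=X_i$ (restricted to that region) whenever $t<s_0^{-1}$, and equals $\{g_i>t\}$ when $t\ge s_0^{-1}$; more precisely $\mu_i(\{g_i\ge s_0^{-1}\}\cap\{g_i>t\})=\mu_i(\{g_i\ge s_0^{-1}\})$ for $t\le s_0^{-1}$ and $=\mu_i(\{g_i>t\})\le\lambda_i(t)$ for $t>s_0^{-1}$, with the bound sharpened to equality up to the at-most-countable set of atoms of the distribution function, which has measure zero in $t$ and hence is irrelevant. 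Splitting the $t$-integral at $s_0^{-1}$, the first piece contributes $s_0^{-1}\lambda_i(s_0^{-1})$ and the second contributes $\int_{s_0^{-1}}^{+\infty}\mu_i(\{g_i>t\})\,dt$; both quantities are monotone in $\lambda_i$, and by hypothesis $\lambda_1(u)\ge\lambda_2(u)$ for all $u\ge s_0^{-1}$, so each piece for $i=1$ dominates the corresponding piece for $i=2$, giving the desired inequality.

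The only subtle point — the main (though minor) obstacle — is handling the distinction between $\{g_i>t\}$ and $\{g_i\ge t\}$: the hypothesis is phrased with $\ge$, whereas the natural layer-cake decomposition produces strict inequalities $\{g_i>t\}$. This is resolved by noting $\mu_i(\{g_i>t\})=\lim_{u\downarrow t}\mu_i(\{g_i\ge u\})$, so $\mu_i(\{g_i>t\})\le\inf_{u>t}\lambda_i(u)$, and applying the hypothesis at each such $u>t\ge s_0^{-1}$ yields $\mu_1(\{g_1>t\})\ge\mu_2(\{g_2>t\})$ for a.e.\ $t\ge s_0^{-1}$ (in fact for all $t$ where one uses right-continuity of $u\mapsto\lambda_i(u)$ is not available, but the $\ge$-hypothesis gives $\lambda_1\ge\lambda_2$ pointwise on $[s_0^{-1},+\infty)$ directly, and passing to the limit preserves $\ge$). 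Once this bookkeeping is in place, integrating in $t$ and using monotonicity of the integral finishes the argument; I expect the write-up to be short, essentially a careful invocation of Tonelli plus the pointwise comparison of distribution functions.
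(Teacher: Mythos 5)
The paper states Lemma \ref{l:m} only as a quoted result from \cite{GZ-soc17} and gives no proof of its own, so there is nothing internal to compare against; your layer-cake argument is correct and is essentially the standard proof of this distribution-function comparison. The two points that need care — the harmless endpoint $t=s_0^{-1}$ in the split of the $t$-integral, and converting the hypothesis on $\{g_i\ge u\}$ into the comparison $\mu_1(\{g_1>t\})\ge\mu_2(\{g_2>t\})$ for $t\ge s_0^{-1}$ via $\mu_i(\{g_i>t\})=\lim_{u\downarrow t}\mu_i(\{g_i\ge u\})$ — are both handled correctly in your final paragraph.
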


\begin{proof}[Proof of Theorem \ref{p:soc-twist}]
	We prove Theorem \ref{p:soc-twist} in two cases, that $a(t)$ satisfies condition $(1)$ or condition $(2)$.

\

\emph{Case $(1)$. $a(t)$ is decreasing near $+\infty$.}

\

Firstly, we prove $(B)\Rightarrow(A)$.
Consider $F\equiv 1$, $f=(f_1,f_2,\ldots,f_r)=(1,1,\ldots,1)$, $h\equiv1$  and $\psi=\log|z_1|$ on the unit polydisc $\Delta^n\subset\mathbb{C}^n$. Note that $a_{o}^f(\log|z_1|;h)=1$ and
\begin{displaymath}
	\begin{split}	&\int_{\Delta_{s_0}^n}|f_h^2e^{-2a_{o}^f(\log|z_1|;h)\Psi}a(-2a_{o}^f(\log|z_1|;h)\Psi)\\
	=&
			r\int_{\Delta_{s_0}^n}a(-2\log|z_1|)\frac{1}{|z_1|^2}\\
			=&r(\pi s_0^2)^{n-1}\int_{\Delta_{s_0}}a(-2\log|z_1|)\frac{1}{|z_1|^2}\\
		=&r(\pi s_0^2)^{n-1}2\pi\int_0^{s_0}a(-2\log r)r^{-1}dr\\
		=&r(\pi s_0^2)^{n-1}\pi\int_{-2\log{s_0}}^{+\infty}a(t)dt
	\end{split}
\end{displaymath}
for $s_0\in(0,1)$,
hence we obtain $(B)\Rightarrow (A)$.

Then, we prove $(A)\Rightarrow(B)$.
Corollary \ref{thm:soc} shows that  $f_o\not\in I(h,2a_o^f(\Psi;h)\Psi)_o$ and $a_o^f(\Psi;h)>0$.
Now we assume that there exist $t_0>0$ and a pseudoconvex domain $D_0\subset D$ containing $o$ such that  $\int_{\{\Psi<-t_0\}\cap D_0}|f|_h^2e^{-2a_o^f(\Psi;h)\Psi}a(-2a_o^f(\Psi;h)\Psi)<+\infty$  to get a contradiction. As $f_o\in I(h,0\Psi)_o$, there exist $t_1>t_0$ and
a pseudoconvex domain $D_1\subset D_0$ containing $o$ such that
$\int_{D_1\cap\{\Psi<-t_1\}}|f|^2_h<+\infty$.
Set $c(t)=a(t)e^{t}+1$, then we have
\begin{equation}
	\label{eq:0305a}
	\int_{D_1\cap\{\Psi<-t_1\}}|f|^2_hc(-2a_o^f(\Psi;h)\Psi)<+\infty.
\end{equation}
Without loss of generality, assume that $a(t)$ is decreasing on $(2a_o^f(\Psi;h)t_1,+\infty)$. Note that
 $c(t)e^{-t}=a(t)+e^{-t}$ is decreasing on $(2a_o^f(\Psi;h)t_1,+\infty)$ and $\liminf_{t\rightarrow+\infty}c(t)>0$. As $a(t)$ is not integrable near $+\infty$, so is $c(t)e^{-t}$. Note that there exist a plurisubharmonic function $\psi_1=2a_o^f(\Psi;h)\psi+2(k-a_o^f(\Psi;h))\log|F|$ and a holomorphic function $F_1=F^k$ on $D_1$ such that
 $$\psi_1-2\log|F_1|= 2a_o^f(\Psi;h)(\psi-2\log|F|)$$
 on $D_1$, where $k>a_o^f(\Psi;h)$ is a integer. Denote that $\Psi_1:=\min\{\psi_1-2\log|F_1|,-2a_o^f(\Psi;h)t_1\}$ on $D_1$. Note that $h$ has a positive locally lower bound, $c(t)\ge1$ on $(2a_o^f(\Psi;h)t_1,+\infty)$ and $\Theta_{he^{-\psi_1}}\ge_{Nak}^s0$.
  Using Remark \ref{infty2} (replacing $M$, $\Psi$ and $T$ by $D_1$, $\Psi_1$ and $2a_o^f(\Psi;h)t_1$ respectively), as $f_o\not\in I(h,2a_o^f(\Psi;h)\Psi)_o=I(h,\Psi_1)_o$, then we have $G(2a_o^f(\Psi;h)t_1;c,\Psi_1,h,I(\Psi_1+\varphi)_o,f)=+\infty$,
which contradicts to inequality \eqref{eq:0305a}. Thus, we obtain $(A)\Rightarrow (B)$.

\

\emph{Case $(2)$. $a(t)e^{t}$ is increasing near $+\infty$.}

\

In this case, the proof of $(B)\Rightarrow(A)$ is the same as  the case $(1)$, hence it suffices to prove $(A)\Rightarrow(B)$.

Assume that statement $(A)$ holds.
 Lemma \ref{l:2} shows that there exists a positive function $\tilde a(t)$ on $(-\infty,+\infty)$ satisfying that: $\tilde a(t)\leq a(t)$ near $+\infty$; $\tilde a(t)e^{t}$ is strictly increasing and continuous near $+\infty$; $\tilde a(t)$ is not integrable near $+\infty$. Thus, it suffices to prove that for any $\Psi$, $h$ and $f_o\in I(h,0\Psi)_o$ satisfying $a_{o}^{f}(\Psi;h)<+\infty$, $|f|^2_he^{-2a_o^{f}(\Psi;h)\Psi}a(-2a_o^{f}(\Psi;h)\Psi)\not\in L^1(U\cap\{\Psi<-t\})$ for any neighborhood $U$  of $o$ and any  $t>0$.

Take any   $t_0\gg0$ and any  pseudoconvex domain $D_0\subset D$ containing the origin $o$ such that $f\in\mathcal{O}(D_0\cap\{\Psi<-t_0\})$.
 Let $\mu_1(X)=\int_{X}|f|^2_h$, where $X$ is a Lebesgue measurable subset of $D_0\cap\{\Psi<-t_0\}$,  and let $\mu_2$ be the Lebeague measure on $(0,1]$. Denote that $Y_s=\{-2a_{o}^f(\Psi;h)\Psi\geq -\log s\}$. Theorem \ref{p:DK} shows that there exists a positive constant $C$ such that $\mu_1(Y_s)\geq Cs$ holds for any $s\in(0,e^{-2a_o^f(\Psi;h)t_0}]$.

Let $g_1=\tilde a(-2a_o^{f}(\Psi;h)\Psi)\exp(-2a_o^{f}(\Psi;h)\Psi)$ and $g_2(x)=\tilde a(-\log x+\log C)Cx^{-1}$. As $\tilde a(t)e^{-t}$ is strictly increasing near $+\infty$, then $g_1\geq \tilde a(-\log s)s^{-1}$ on $Y_s$ implies that
\begin{equation}
	\label{eq:210820h}
	\mu_1(\{g_1\geq \tilde a(-\log s)s^{-1}\})\geq\mu_1(Y_s)\geq Cs
\end{equation}
holds for any  $s>0$ small enough.
As $\tilde a(t)e^{t}$ is strictly increasing near $+\infty$, then there exists $s_0\in(0,e^{-2a_o^f(\Psi)t_0})$ such that
\begin{equation}
	\label{eq:210820i}
	\mu_2(\{x\in(0,s_0]:g_2(x)\geq \tilde a(-\log s)s^{-1}\})=\mu_2(\{0<x\leq Cs\})=Cs
\end{equation}
for any $s\in(0,s_0]$.
As  $\tilde a(-\log s)s^{-1}$ converges to $+\infty$ (when $s\rightarrow0+0$) and $\tilde a(t)$ is continuous near $+\infty$, we obtain that
$$\mu_1(\{g_1\geq s^{-1}\})\geq\mu_2(\{x\in(0,s_0]:g_2(x)\geq s^{-1}\})$$
holds for any $s>0$ small enough. Following from Lemma \ref{l:m} and $\tilde a(t)$ is not integrable near $+\infty$, we obtain $|f|^2_he^{-2a_o^{f}(\Psi;h)\Psi}a(-2a_o^{f}(\Psi;h)\Psi)\not\in L^1(U\cap\{\Psi<-t\})$.

Thus, Theorem \ref{p:soc-twist} holds.
\end{proof}

\section{Proof of Proposition \ref{p:1}}

In this section, we prove Proposition \ref{p:1} by using Theorem \ref{main theorem}.

\begin{proof}
Let
\begin{displaymath}
	h(x)=\left\{ \begin{array}{cc}
	e^{-\frac{1}{1-(x-1)^2}} & \textrm{if $|x-1|<1$}\\
	0 & \textrm{if $|x-1|\geq1$}
    \end{array} \right.
\end{displaymath}
be a real function defined on $\mathbb R$, and let $g_n(x)=\frac{n}{(n+1)d}\int_0^{nx}h(s)ds$, where $d=\int_{\mathbb R}h(s)ds$.
    Note that $h(x)\in C_0^\infty(\mathbb R)$ and $h(x)\geq0$ for any $x\in \mathbb R$. Then  we get that $g_n(x)$ is increasing with respect to $x$, $g_n(x)\leq g_{n+1}(x)$ for any $n\in \mathbb{N}$ and $x\in \mathbb R$, and $\lim_{n\rightarrow+\infty}g_n(x)=\mathbb I_{\{s\in \mathbb R: s>0\}}(x)$ for any $x\in \mathbb R$.
Setting $c_t^n(x)=1-g_n(x-t)$, where $t$ is the given positive number in Proposition \ref{p:1}, it follows from the properties of $\{g_n(x)\}_{n\in \mathbb{N}}$ that $c_t^n(x)$ is decreasing with respect to $x$, $c_t^n(x)\geq c_t^{n+1}(x)$ for any $n\in \mathbb{N}$ and $x\in \mathbb R$, and $\lim_{n\rightarrow+\infty}c_t^n(x)=\mathbb I_{\{s\in \mathbb R: s\leq t\}}(x)$ for any $x\in \mathbb R$.

Denote \begin{equation*}
\begin{split}
\inf\Bigg\{\int_{\{ \Psi<-t\}}|\tilde{f}|^2_hc_t^n(-\Psi): &\tilde{f}\in
H^0(\{\Psi<-t\},\mathcal{O} (K_M\otimes E)  ) \\
&\&\, (\tilde{f}-f)_{z_0}\in
\mathcal{O} (K_M)_{z_0} \otimes I(h,\Psi)_{z_0}\,\text{for any }  z_0\in Z_0\Bigg\}
\end{split}
\end{equation*}
 by $G_{t,n}(s)$. Note that $\Theta_{he^{-\psi}}\ge_{Nak}^s0$, $h$ has a positive locally lower bound and $$c_t^n(x)\in[\frac{1}{n+1},1]$$
 on $(0,+\infty)$.
    By using Theorem \ref{main theorem}, we have
    \begin{equation}
    	\label{eq:proof1}
    	\int_{\{\Psi<-l\}}|f|^2_h
c_t^n(-\Psi)\geq G_{t,n}(l)\geq \frac{\int_l^{+\infty}c_t^n(s)e^{-s}ds}{\int_0^{+\infty}c_t^n(s)e^{-s}ds}G_{t,n}(0)
	    \end{equation}
	    for any $l>0$.
	    Following from $\int_{\{\Psi<-l\}}|f|_h^2<+\infty$ for any $l>0$, the properties of $\{c_t^n\}_{n\in \mathbb{N}}$ and the dominated convergence theorem, we obtain that
	    \begin{equation}
	    	\label{eq:proof2}
	    	\lim_{n\rightarrow+\infty}\int_{\{\Psi<-l\}}|f|_h^2
c_t^n(-\Psi)=\int_{\{-t\leq\Psi<-l\}}|f|_h^2.	    \end{equation}
As $c_t^n(x)\geq\mathbb I_{\{s\in\mathbb R:s\leq t\}}(x)$ for any $x>0$ and $n\in \mathbb{N}$, then it follows from the definitions of $G_{t,n}(0)$ and $C_{\Psi,f,h,t}(Z_0)$ that
\begin{equation}
	\label{eq:proof3}
	G_{t,n}(0)\geq C_{\Psi,f,h,t}(Z_0).
\end{equation}
Combining inequality \eqref{eq:proof1}, equality \eqref{eq:proof2}, and inequality \eqref{eq:proof3}, we obtain that
\begin{displaymath}
	\begin{split}
		\int_{\{-t\leq\Psi<-l\}}|f|^2_h &=	\lim_{n\rightarrow+\infty}\int_{\{\Psi<-l\}}|f|^2_h
c_t^n(-\Psi)\\
&\geq \lim_{n\rightarrow+\infty} \frac{\int_l^{+\infty}c_t^n(s)e^{-s}ds}{\int_0^{+\infty}c_t^n(s)e^{-s}ds}C_{\Psi,f,h,t}(Z_0)\\
&=\frac{e^{-l}-e^{-t}}{1-e^{-t}}C_{\Psi,f,h,t}(Z_0)
	\end{split}	
\end{displaymath}
for any $l\in (0,t)$. Following from the definition of $C_{\Psi,f,h,t}(Z_0)$, we have $\int_{\{-t\leq\Psi<0\}}|f|^2_h\ge C_{\Psi,f,h,t}(Z_0)$. Thus, we have
    \begin{equation}
    \label{eq:proof4}
    	\int_{\{-t\leq\Psi<-l\}}|f|^2_h\geq\frac{e^{-l}-e^{-t}}{1-e^{-t}}C_{\Psi,f,h,t}(Z_0)
    	 \end{equation}
    for any $l\in [0,t)$.
Following from Fubini's Theorem and inequality \eqref{eq:proof4}, we obtain that
\begin{displaymath}
	\begin{split}
		\int_{M_t}|f|^2_h
e^{-\Psi}&=\int_{M_t}\left(|f|^2_h
\int_0^{e^{-\Psi}}dr\right)\\
&=\int_0^{+\infty}\left(\int_{M_t\cap\{r<e^{-\Psi}\}}|f|^2_h \right)dr\\
&=\int_{-\infty}^t\left(\int_{\{-t\leq\Psi<\min\{-l,0\}\}}|f|^2_h\right)e^ldl\\
&=\int_{-\infty}^0\left(\int_{\{-t\leq\Psi<\min\{-l,0\}\}}|f|^2_h\right)e^ldl+\int_{0}^t\left(\int_{\{-t\leq\Psi<-l\}}|f|^2_h\right)e^ldl\\
&\geq C_{\Psi,f,h,t}(Z_0)\left(\int_{-\infty}^0e^ldl+\int_0^t\frac{1-e^{l-t}}{1-e^{-t}}dl\right)\\
&=\frac{t}{1-e^{-t}}C_{\Psi,f,h,t}(Z_0).	\end{split}
\end{displaymath}
Then Proposition \ref{p:1} has thus been proved.	
\end{proof}

\section{Appendix: Proof of Lemma \ref{L2 method}}
In this section, we prove Lemma \ref{L2 method}.
\subsection{Some results used in the proof of Lemma \ref{L2 method}}

In this section, we do some preparations for the proof of Lemma \ref{L2 method}.

Let $M$ be a complex manifold. Let $\omega$ be a continuous hermitian metric on $M$. Let $dV_M$ be a continuous volume form on $M$. We denote by $L^2_{p,q}(M,\omega,dV_M)$ the spaces of $L^2$ integrable $(p,q)$ forms over $M$ with respect to $\omega$ and $dV_M$. It is known that $L^2_{p,q}(M,\omega,dV_M)$ is a Hilbert space.
\begin{Lemma}
\label{weakly convergence}
Let $\{u_n\}_{n=1}^{+\infty}$ be a sequence of $(p,q)$ forms in $L^2_{p,q}(M,\omega,dV_M)$ which is weakly convergent to $u$. Let $\{v_n\}_{n=1}^{+\infty}$ be a sequence of Lebesgue measurable real functions on $M$ which converges pointwisely to $v$. We assume that there exists a constant $C>0$ such that $|v_n|\le C$ for any $n$. Then $\{v_nu_n\}_{n=1}^{+\infty}$ weakly converges to $vu$ in $L^2_{p,q}(M,\omega,dV_M)$.
\end{Lemma}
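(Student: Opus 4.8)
The plan is to verify weak convergence directly by testing against an arbitrary element of the Hilbert space. Fix $g\in L^2_{p,q}(M,\omega,dV_M)$; it then suffices to show that $\langle v_nu_n,g\rangle\to\langle vu,g\rangle$ as $n\to+\infty$, where $\langle\cdot,\cdot\rangle$ is the inner product of $L^2_{p,q}(M,\omega,dV_M)$. First I would record that $|v|\le C$ almost everywhere on $M$, since $v$ is the pointwise limit of the $v_n$ and $|v_n|\le C$ for every $n$; hence $vg$ and each $v_ng$ again lie in $L^2_{p,q}(M,\omega,dV_M)$. Because every $v_n$ (and $v$) is real-valued, we have $\langle v_nu_n,g\rangle=\langle u_n,v_ng\rangle$ and $\langle vu,g\rangle=\langle u,vg\rangle$, which reduces the problem to controlling $\langle u_n,v_ng\rangle-\langle u,vg\rangle$.

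Next I would split this difference as
\[
\langle u_n,v_ng\rangle-\langle u,vg\rangle=\langle u_n,(v_n-v)g\rangle+\langle u_n-u,vg\rangle .
\]
The second term tends to $0$ directly from the weak convergence $u_n\rightharpoonup u$, since $vg$ is a fixed element of $L^2_{p,q}(M,\omega,dV_M)$. For the first term I would apply the Cauchy--Schwarz inequality, $|\langle u_n,(v_n-v)g\rangle|\le\|u_n\|\,\|(v_n-v)g\|$, and then use two standard facts: by the Banach--Steinhaus theorem a weakly convergent sequence is bounded, so $\sup_n\|u_n\|<+\infty$; and $\|(v_n-v)g\|\to0$ by the dominated convergence theorem, because $|(v_n-v)g|^2\to0$ pointwise almost everywhere while $|(v_n-v)g|^2\le(2C)^2|g|^2$ with $|g|^2$ integrable. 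Combining these, $\langle u_n,(v_n-v)g\rangle\to0$.

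Putting the two estimates together yields $\langle v_nu_n,g\rangle\to\langle vu,g\rangle$ for every $g\in L^2_{p,q}(M,\omega,dV_M)$, which is exactly the claimed weak convergence. There is no real obstacle here; the only points requiring a little care are the transfer of the pointwise hypotheses on the scalar functions $v_n$ into $L^2$-convergence of $(v_n-v)g$ (handled by dominated convergence via the uniform bound $C$) and the uniform bound $\sup_n\|u_n\|<+\infty$ (handled by Banach--Steinhaus). No geometric input from the ambient manifold or the metrics is needed.
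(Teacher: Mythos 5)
Your proof is correct and follows essentially the same route as the paper: the same decomposition of $\langle v_nu_n,g\rangle-\langle vu,g\rangle$ into a term handled by Cauchy--Schwarz, uniform boundedness of $\|u_n\|$, and dominated convergence on $\|(v_n-v)g\|$, plus a term that vanishes directly by weak convergence of $u_n$ against the fixed element $vg$. The only cosmetic difference is that you explicitly invoke Banach--Steinhaus for $\sup_n\|u_n\|<+\infty$, which the paper uses implicitly.
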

\begin{proof}Let $g\in L^2_{p,q}(M,\omega,dV_M)$. Consider
\begin{equation*}
  \begin{split}
     I & =|\langle v_nu_n,g\rangle-\langle vu,g\rangle| \\
       &=|\int_{M}(v_nu_n,g)_{\omega}dV_M-\int_{M}(vu,g)_{\omega}dV_M|\\
       &\le|\int_{M}(v_nu_n-vu_n,g)_{\omega}dV_M|+|\int_{M}(vu_n-vu,g)_{\omega}dV_M|\\
       &=|\int_{M}(u_n,v_ng-vg)_{\omega}dV_M|+|\int_{M}(u_n-u,vg)_{\omega}dV_M|\\
       &\le||u_n||\cdot||v_ng-vg||+|\int_{M}(u_n-u,vg)_{\omega}dV_M|.
  \end{split}
\end{equation*}
Denote $I_1:=||u_n||\cdot||v_ng-vg||$ and $I_2:=|\int_{M}(u_n-u,vg)_{\omega}dV_M|$. It follows from $\{u_n\}_{n=1}^{+\infty}$ weakly converges to $u$ that $||u_n||$ is uniformly bounded with respect to $n$. Note that $|v_n|$ is uniformly bounded with respect to $n$. We know that $|v|<C$ and then $vg\in L^2_{p,q}(M,\omega,dV_M)$. Hence we have $I_2 \to 0$ as $n\to+\infty$. It follows from Lebesgue dominated convergence theorem that we have $\lim_{n\to+\infty}I_1 = 0$.

Hence $\lim_{n\to+\infty}I = 0$ and we know $\{v_nu_n\}_{n=1}^{+\infty}$ weakly converges to $vu$ in $L^2_{p,q}(M,\omega,dV_M)$.
\end{proof}

\begin{Lemma}[see \cite{Demailly00}]
\label{BKN Identity}
Let Q be a Hermitian vector bundle on a K\"ahler manifold M of dimension $n$ with a
K\"ahler metric $\omega$. Assume that $\eta , g >0$ are smooth functions on M. Then
for every form $v\in D(M,\wedge^{n,q}T^*M \otimes Q)$ with compact support we have
\begin{equation}
\begin{split}
\label{BKN Identiy formula}
&\int_M (\eta+g^{-1})|D^{''*}v|^2_QdV_M+\int_M \eta|D^{''}v|^2_QdV_M \\
\ge  &\int_M \langle[\eta \sqrt{-1}\Theta_Q-\sqrt{-1}\partial \bar{\partial}
\eta-\sqrt{-1}g
\partial\eta \wedge\bar{\partial}\eta, \Lambda_{\omega}]v,v\rangle_QdV_M.
\end{split}
\end{equation}
\end{Lemma}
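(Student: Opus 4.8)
The inequality is Demailly's twisted Bochner--Kodaira--Nakano estimate, and the plan is to reproduce its standard derivation from the untwisted identity on the K\"ahler manifold $(M,\omega)$. First I would record the Bochner--Kodaira--Nakano identity: since $\omega$ is K\"ahler, the K\"ahler commutation relations $[\Lambda_\omega,D'']=-\sqrt{-1}\,D'^{*}$ and $[\Lambda_\omega,D']=\sqrt{-1}\,D^{''*}$ hold for $Q$-valued forms, whence $\Delta''=\Delta'+[\sqrt{-1}\Theta_Q,\Lambda_\omega]$ as operators. Pairing this with $v$, and using that $D'v$ vanishes (it has bidegree $(n+1,q)$), one gets for every smooth compactly supported $Q$-valued $(n,q)$-form
\[
\|D''v\|^2+\|D^{''*}v\|^2=\|D'^{*}v\|^2+\int_M\langle[\sqrt{-1}\Theta_Q,\Lambda_\omega]v,v\rangle_Q\,dV_M .
\]

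Next I would insert the weight $\eta$. Since $\eta v$ is again smooth with compact support and $D'',D^{''*}$ (resp. $D',D'^{*}$) are formal adjoints, pairing $\Delta''$ against $\eta v$ and expanding $D''(\eta v)=\eta D''v+\bar\partial\eta\wedge v$ and $D^{''*}(\eta v)=\eta D^{''*}v+[D^{''*},\eta]v$, then subtracting the analogous expansion for $\Delta'$ and invoking the identity above, yields
\[
\int_M\eta\bigl(|D''v|^2+|D^{''*}v|^2\bigr)\,dV_M=\int_M\eta\,|D'^{*}v|^2\,dV_M+\int_M\langle[\eta\sqrt{-1}\Theta_Q,\Lambda_\omega]v,v\rangle_Q\,dV_M+R ,
\]
where $R$ collects the cross terms $\langle\langle D''v,\bar\partial\eta\wedge v\rangle\rangle$, $\langle\langle D^{''*}v,[D^{''*},\eta]v\rangle\rangle$ and $-\langle\langle D'^{*}v,[D'^{*},\eta]v\rangle\rangle$. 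The computational core is to show, using the K\"ahler commutators once more and a single further integration by parts, that $R=-\int_M\langle[\sqrt{-1}\partial\bar\partial\eta,\Lambda_\omega]v,v\rangle_Q\,dV_M+R'$, where the residual $R'$ is (up to sign) twice the real part of an $L^2$ pairing of $D^{''*}v$ with a zero-order expression built from $\partial\eta$ and $v$.

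Finally I would absorb $R'$ by the elementary bound $|2\,\mathrm{Re}\langle a,b\rangle|\le g^{-1}|a|^2+g\,|b|^2$ applied pointwise, with $a$ the $D^{''*}v$-part chosen so that $\int_M g\,|b|^2\,dV_M=\int_M\langle[\sqrt{-1}g\,\partial\eta\wedge\bar\partial\eta,\Lambda_\omega]v,v\rangle_Q\,dV_M$; this moves a term $\int_M g^{-1}|D^{''*}v|^2\,dV_M$ to the left (producing the coefficient $\eta+g^{-1}$ in front of $|D^{''*}v|^2$) and contributes $-\sqrt{-1}g\,\partial\eta\wedge\bar\partial\eta$ inside the curvature bracket on the right. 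Discarding the nonnegative term $\int_M\eta\,|D'^{*}v|^2\,dV_M$ then gives exactly \eqref{BKN Identiy formula}. The only genuine obstacle is the bookkeeping of the middle step --- identifying $R$ and splitting it into the $\sqrt{-1}\partial\bar\partial\eta$-contribution and the residual $R'$ with the correct signs --- which is purely mechanical once the K\"ahler identities are in hand but unforgiving about conventions; for the detailed verification I would simply cite \cite{Demailly00}.
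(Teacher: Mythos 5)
The paper states Lemma \ref{BKN Identity} as a citation to \cite{Demailly00} and offers no proof of its own, so there is nothing in the paper for your argument to diverge from. Your plan is precisely the standard derivation that appears in the cited source (and in Demailly's book): start from the untwisted Bochner--Kodaira--Nakano identity on the K\"ahler manifold, insert the weight $\eta$ by pairing against $\eta v$ (or equivalently by applying the identity to $\eta^{1/2}v$), identify the $\sqrt{-1}\partial\bar\partial\eta$ contribution among the cross terms, absorb the residual $2\,\mathrm{Re}\,\langle D''^{*}v,\,\cdot\,\rangle$ term by the weighted Cauchy--Schwarz inequality with parameter $g$ (which is why $g^{-1}$ multiplies only $|D''^{*}v|^2$ and $g$ enters the curvature bracket), and discard the nonnegative $\int_M\eta|D'^{*}v|^2$. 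The one substantive caveat is the one you already flag yourself: the cross-term bookkeeping --- verifying that the commutators $[D''^{*},\eta]$ and $[D'^{*},\eta]$ combine, after one integration by parts and the K\"ahler commutation relations, into exactly $-[\sqrt{-1}\partial\bar\partial\eta,\Lambda_\omega]$ plus the residual $R'$ with the right sign --- is where all the content lives, and punting it to \cite{Demailly00} is appropriate but means the plan as written is an outline rather than a self-contained proof. Since the paper also defers to the same reference, this is entirely consistent with its treatment.
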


The following approximation result can be referred to \cite{Demaillybook}.
Let $(X,\omega)$ be a hermitian manifold. Let $Q$ be a holomorphic vector bundle on $X$ and $h$ be a hermitian metric on $Q$. Denote $D(M,\wedge^{n,q}T^*M \otimes Q)$ be the space of $Q$-valued smooth $(n,q)$ forms with compact support for any $q\ge 0$. Let $D'':L^2(X,\wedge^{n,q}T^*M \otimes Q)\to L^2(X,\wedge^{n,q+1}T^*M \otimes Q)$ be the extension of $\bar{\partial}$-operator in the sense of distribution. Let $D''^*$ be the adjoint operator of $D''$ in the Von-Neumann sense.
\begin{Lemma}[see \cite{Demaillybook}]
\label{approximation on complete mfld}
Assume that $(X,\omega)$ is complete. Then $D(M,\wedge^{n,\bullet}T^*M \otimes Q)$ is dense in $\text{Dom}D''$, $\text{Dom}D''^*$ and $\text{Dom}D''\cap \text{Dom}D''^*$ respectively for the graph norms

\centerline{$u\to ||u||+||D''u||$,  $u\to ||u||+||D''^*u||$, $u\to ||u||+||D''u||+||D''^*u||$.}
\end{Lemma}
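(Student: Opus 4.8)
The plan is to follow the classical two-step scheme of Andreotti--Vesentini (as in \cite{Demaillybook}): first cut off to compact support using completeness, then regularize by Friedrichs mollifiers. Completeness enters only in the first step, through the standard fact that a complete Hermitian manifold $(X,\omega)$ carries a sequence $\chi_\nu\in D(X,\mathbb{C})$ with $0\le\chi_\nu\le 1$, $\chi_\nu\uparrow 1$ locally uniformly on $X$, and $\sup_X|d\chi_\nu|_\omega\to 0$ as $\nu\to+\infty$; one produces such $\chi_\nu$ by smoothing the $1$-Lipschitz geodesic-distance exhaustion of $(X,\omega)$ and rescaling. Throughout, $D''$ is regarded as the maximal closed extension of the distributional $\bar\partial$, and $D''^*$ as its Hilbert-space adjoint, which on compactly supported $L^2$ forms coincides with the formal adjoint $\vartheta$.

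First I would treat the truncation step. Given $u\in\mathrm{Dom}\,D''$, set $u_\nu:=\chi_\nu u$; this has compact support, $\bar\partial u_\nu=\chi_\nu D''u+\bar\partial\chi_\nu\wedge u\in L^2$ so $u_\nu\in\mathrm{Dom}\,D''$, and $\|D''u_\nu-\chi_\nu D''u\|=\|\bar\partial\chi_\nu\wedge u\|\le(\sup_X|d\chi_\nu|_\omega)\|u\|\to 0$ while $u_\nu\to u$ in $L^2$ by dominated convergence; hence $u_\nu\to u$ for the graph norm of $D''$. For $u\in\mathrm{Dom}\,D''^*$ I would check, by pairing $\chi_\nu u$ against elements of $\mathrm{Dom}\,D''$ and using that multiplication by $\chi_\nu$ preserves $\mathrm{Dom}\,D''$, that $\chi_\nu u\in\mathrm{Dom}\,D''^*$ with $D''^*(\chi_\nu u)=\chi_\nu D''^*u+e_\nu(u)$, where $e_\nu$ is a zeroth-order term of pointwise operator norm $\le|d\chi_\nu|_\omega$; the identical estimate gives convergence for the graph norm of $D''^*$. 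Since the single sequence $\chi_\nu u$ does both jobs simultaneously, the case $u\in\mathrm{Dom}\,D''\cap\mathrm{Dom}\,D''^*$ follows for the combined graph norm as well. This reduces matters to approximating a \emph{compactly supported} element of each domain by forms in $D(X,\wedge^{n,\bullet}T^*M\otimes Q)$.

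Next I would carry out the regularization. For a compactly supported $v$ in the relevant domain, cover $\mathrm{supp}\,v$ by finitely many charts $U_j$ trivializing $Q$, pick a subordinate partition of unity $\{\rho_j\}\subset D(U_j,\mathbb{C})$, and write $v=\sum_j\rho_j v$; each $\rho_j v$ is compactly supported in $U_j$ and, by the zeroth-order commutator bound for multiplication by $\rho_j$, still lies in the relevant domain with $L^2$ image. In $U_j$ I would mollify, $(\rho_j v)_\varepsilon:=(\rho_j v)*\theta_\varepsilon$, which is smooth and compactly supported in $U_j$ for small $\varepsilon$ and converges to $\rho_j v$ in $L^2$; then I would invoke the Friedrichs commutator lemma --- for a first-order differential operator $P$ with smooth coefficients, $\|P(w*\theta_\varepsilon)-(Pw)*\theta_\varepsilon\|_{L^2}\to 0$ as $\varepsilon\to 0$ for every compactly supported $w\in L^2$, the commutators being uniformly $L^2$-bounded in $\varepsilon$ --- applied to $P=\bar\partial$ and to $P=\vartheta$. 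This yields $(\rho_j v)_\varepsilon\to\rho_j v$ for the relevant graph norm(s); summing over $j$, and finally running a diagonal argument combining the truncation and the regularization, produces smooth compactly supported approximants converging to the original $u$.

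The hard part will be the Friedrichs commutator lemma, which is the only genuinely analytic ingredient: one must show that $[P,{\ast\theta_\varepsilon}]$ acts on $L^2$ with norm bounded independently of $\varepsilon$, and tends to $0$ on the dense subspace of smooth compactly supported functions, hence on all of $L^2$ by uniform boundedness. The accompanying bookkeeping --- that in the third statement a single mollification sequence controls the $D''$- and $D''^*$-graph norms at once --- is handled simply by applying this lemma to the two operators $\bar\partial$ and $\vartheta$ separately. Everything else (the construction of the cutoffs $\chi_\nu$ from completeness, the commutator identities for $M_{\chi_\nu}$ and $M_{\rho_j}$, and the partition-of-unity estimates) is routine.
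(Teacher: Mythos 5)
Your proposal is correct and is exactly the standard Andreotti--Vesentini density argument (truncation via cutoffs with vanishing gradient supplied by completeness, followed by Friedrichs mollification and the commutator lemma) that the paper relies on by citing Demailly's book rather than reproducing a proof. The one point worth making explicit in a write-up is that for a compactly supported $v\in\mathrm{Dom}\,D''^*$ one has $D''^*v=\vartheta v$ in the sense of distributions, so that Friedrichs' lemma applied to $\vartheta$ really controls the $D''^*$-graph norm; with that noted, the argument is complete.
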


\begin{Lemma}[Lemma 4.2 in \cite{guan-zhou13ap}]
Let Q be a Hermitian vector bundle on a K\"ahler manifold M of dimension $n$ with a
K\"ahler metric $\omega$. Let $\theta$ be a continuous (1,0) form on M.
Then we have
\begin{equation}
[\sqrt{-1}\theta \wedge
\bar{\theta},\Lambda_\omega]\alpha=\bar{\theta}\wedge(\alpha\llcorner(\bar{\theta})^\sharp),
\end{equation}
for any (n,1) form $\alpha$ with value in Q. Moreover, for any positive (1,1) form
$\beta$, we have $[\beta,\Lambda_\omega]$ is semipositive.
\label{sempositive lemma}
\end{Lemma}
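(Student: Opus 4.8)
The plan is to prove this pointwise, since both the identity and the semipositivity assertion are $C^\infty(M)$-linear statements about the fibrewise exterior algebra; once $x\in M$ is fixed the claim only involves $\Lambda^{\bullet}T_x^*M\otimes Q_x$. First I would fix $x$ and choose an $\omega$-orthonormal coframe $(\epsilon_1,\dots,\epsilon_n)$ of $(1,0)$-forms at $x$ adapted to $\theta$, i.e. with $\theta = \tau\epsilon_1$ for some scalar $\tau$ (take $\epsilon_1=\theta/|\theta|_\omega$ and extend); this is possible because $\theta_x$ spans at most a line in $\Lambda^{1,0}T_x^*M$. In this coframe $\omega = \sqrt{-1}\sum_j \epsilon_j\wedge\bar\epsilon_j$, $\sqrt{-1}\theta\wedge\bar\theta = \sqrt{-1}\,|\tau|^2\,\epsilon_1\wedge\bar\epsilon_1$, and every $Q$-valued $(n,1)$-form is $\alpha = \Omega\wedge\big(\sum_k a_k\bar\epsilon_k\big)$ with $\Omega := \epsilon_1\wedge\cdots\wedge\epsilon_n$ and $a_k\in Q_x$.

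The key simplification I would use is a bidegree count: since $\theta$ has type $(1,0)$ and $\alpha$ has type $(n,1)$ on an $n$-dimensional manifold, $\theta\wedge\alpha$ has bidegree $(n+1,1)$ and hence vanishes, so a fortiori $\sqrt{-1}\theta\wedge\bar\theta\wedge\alpha=0$ and $\Lambda_\omega(\sqrt{-1}\theta\wedge\bar\theta\wedge\alpha)=0$. Thus the commutator collapses to the single term $[\sqrt{-1}\theta\wedge\bar\theta,\Lambda_\omega]\alpha = \sqrt{-1}\theta\wedge\bar\theta\wedge(\Lambda_\omega\alpha)$, and it remains to compute $\Lambda_\omega\alpha$ in the adapted coframe and recognize the result as $\bar\theta\wedge(\alpha\llcorner(\bar\theta)^\sharp)$. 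For $\Lambda_\omega\alpha$ I would use the $\mathfrak{sl}_2$-relation $[L_\omega,\Lambda_\omega]=\mathrm{Id}$ on $(n,1)$-forms together with $L_\omega\alpha = \omega\wedge\alpha = 0$, which gives $\omega\wedge(\Lambda_\omega\alpha)=\alpha$; solving this in the coframe shows $\Lambda_\omega(\Omega\wedge\bar\epsilon_k)$ is a unit constant times $\epsilon_1\wedge\cdots\widehat{\epsilon_k}\cdots\wedge\epsilon_n$, so wedging with $\sqrt{-1}\epsilon_1\wedge\bar\epsilon_1$ kills every $k\neq 1$ and leaves, after a short sign count, $\sqrt{-1}\theta\wedge\bar\theta\wedge\Lambda_\omega\alpha = |\tau|^2 a_1\,\Omega\wedge\bar\epsilon_1$. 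On the other side, in the adapted coframe $(\bar\theta)^\sharp$ — the vector metric-dual to $\bar\theta$, characterized by $\iota_{(\bar\theta)^\sharp}$ being the pointwise adjoint of $\bar\theta\wedge\cdot$ — has the single nonzero component $\tau$ along $\partial/\partial\bar z_1$, so $\alpha\llcorner(\bar\theta)^\sharp = (-1)^n\tau a_1\,\Omega$ and $\bar\theta\wedge(\alpha\llcorner(\bar\theta)^\sharp) = \bar\tau\,\bar\epsilon_1\wedge(-1)^n\tau a_1\Omega = |\tau|^2 a_1\,\Omega\wedge\bar\epsilon_1$. The two expressions coincide, which proves the identity.

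For the ``moreover'' I would diagonalize: at $x$ a positive semidefinite $(1,1)$-form $\beta$ can be written in an $\omega$-orthonormal coframe as $\beta = \sum_i\lambda_i\,\sqrt{-1}\,\epsilon_i\wedge\bar\epsilon_i$ with $\lambda_i\ge 0$, that is $\beta = \sum_i\sqrt{-1}\,\theta_i\wedge\bar\theta_i$ with $\theta_i = \sqrt{\lambda_i}\,\epsilon_i$. The same bidegree count gives $\theta_i\wedge\alpha=0$ for every $Q$-valued $(n,q)$-form $\alpha$, hence $[\beta,\Lambda_\omega]\alpha = \sum_i\sqrt{-1}\,\theta_i\wedge\bar\theta_i\wedge(\Lambda_\omega\alpha)$, and redoing the frame computation while retaining all eigendirections yields the standard eigenvalue formula $\langle[\beta,\Lambda_\omega]\alpha,\alpha\rangle = \sum_{|K|=q}\big(\textstyle\sum_{i\in K}\lambda_i\big)|\alpha_K|^2 = \sum_i\big|\alpha\llcorner(\bar\theta_i)^\sharp\big|^2\ge 0$, where $\alpha = \Omega\wedge\sum_{|K|=q}\alpha_K\,\bar\epsilon_K$ and the middle equality records once more the wedge/contraction adjunction. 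Since every $\lambda_i\ge 0$, this shows $[\beta,\Lambda_\omega]$ is semipositive (on $(n,q)$-forms for every $q$).

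I do not expect a real obstacle: this is routine multilinear algebra on $\Lambda^{\bullet}T_x^*M$, and the only point requiring genuine care is bookkeeping — fixing one consistent convention for $\Lambda_\omega$, for the musical isomorphism $(\,\cdot\,)^\sharp$ and for the contraction $\llcorner$, and then tracking the factor $\sqrt{-1}$ and the signs $(-1)^n$, $(-1)^{k-1}$ so that both sides of the claimed identity land on exactly $|\tau|^2 a_1\,\Omega\wedge\bar\epsilon_1$. If one prefers to avoid the frame, an equivalent route is to write $\sqrt{-1}\,\theta\wedge\bar\theta\wedge\cdot$ as the composition $\sqrt{-1}\,(\theta\wedge\cdot)\circ(\bar\theta\wedge\cdot)$ and push $\Lambda_\omega$ through it with the algebraic Kähler identities (which turn $[\Lambda_\omega,\,\eta\wedge\cdot]$ into $\pm\sqrt{-1}$ times contraction by the vector metric-dual to a $(1,0)$- or $(0,1)$-form $\eta$); the sub-term containing $\theta\wedge\alpha$ then vanishes for the same bidegree reason, and what is left is precisely $\bar\theta\wedge(\alpha\llcorner(\bar\theta)^\sharp)$.
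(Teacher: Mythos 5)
The paper does not prove this statement: it is imported verbatim as Lemma 4.2 of \cite{guan-zhou13ap}, so there is no in-paper argument to compare against. Your pointwise verification is correct and complete: the bidegree observation that $\theta\wedge\alpha$ (hence $\sqrt{-1}\theta\wedge\bar\theta\wedge\alpha$) vanishes for $(n,1)$-forms legitimately collapses the commutator to $\sqrt{-1}\theta\wedge\bar\theta\wedge(\Lambda_\omega\alpha)$, the determination of $\Lambda_\omega\alpha$ from $\omega\wedge(\Lambda_\omega\alpha)=\alpha$ via $[L_\omega,\Lambda_\omega]=\mathrm{Id}$ on $(n,1)$-forms is valid, and both sides do reduce to $|\tau|^2 a_1\,\Omega\wedge\bar\epsilon_1$ in the adapted coframe (the degenerate case $\theta_x=0$ is trivial). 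The semipositivity part via diagonalization of $\beta$ and the eigenvalue formula $\sum_{|K|=q}\bigl(\sum_{i\in K}\lambda_i\bigr)|\alpha_K|^2$ on $(n,q)$-forms is likewise the standard and correct argument.
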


We need the following propositions of positive definite hermitian matrices.

Let $\mathcal{M}:=\{M\in M_n(\mathbb{C}): M \text{\ is a positive definite hermitian matrix}\}$. Note that $M_n(\mathbb{C})$ is an $2n^2$-dimensional real manifold. Then $\mathcal{M}$ is an $n^2$-dimensional real sub-manifold of $M_n(\mathbb{C})$.
Denote $F:M_n(\mathbb{C})\to M_n(\mathbb{C})$ by $F(X)=X^2$ for any $X\in M_n(\mathbb{C})$. Denote $F|_\mathcal{M}: \mathcal{M}\to \mathcal{M}$. We have the following property of $F|_\mathcal{M}$.
\begin{Lemma}\label{sqrt of positive definite matrices}
$F|_\mathcal{M}: \mathcal{M}\to \mathcal{M}$ is a smooth diffeomorphism.
\end{Lemma}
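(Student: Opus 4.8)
The plan is to combine the inverse function theorem with the classical existence and uniqueness of the positive definite square root. First I would record that $F|_{\mathcal M}$ is well defined and smooth: if $X\in\mathcal M$ then $X^2$ is hermitian and $\langle X^2v,v\rangle=|Xv|^2>0$ for $v\ne 0$ (as $X$ is invertible), so $X^2\in\mathcal M$; and $X\mapsto X^2$ is the restriction to the submanifold $\mathcal M$ of the polynomial, hence $C^\infty$, self-map $X\mapsto X^2$ of $M_n(\mathbb C)$, so $F|_{\mathcal M}$ is smooth.

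Next I would show $F|_{\mathcal M}$ is a bijection. For surjectivity, given $A\in\mathcal M$ write $A=UDU^*$ with $U$ unitary and $D=\mathrm{diag}(\lambda_1,\dots,\lambda_n)$, $\lambda_i>0$, and set $X:=U\,\mathrm{diag}(\sqrt{\lambda_1},\dots,\sqrt{\lambda_n})\,U^*\in\mathcal M$; then $F(X)=A$. For injectivity, if $X\in\mathcal M$ and $X^2=A$, then $X$ commutes with $A$ (since $XA=X^3=AX$), so $X$ preserves each eigenspace $E_\lambda$ of $A$; on $E_\lambda$ the operator $X$ is positive definite hermitian with $X^2=\lambda\,\mathrm{Id}$, forcing $X=\sqrt{\lambda}\,\mathrm{Id}$ on $E_\lambda$, so $X$ is uniquely determined by $A$.

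Then I would compute the differential and invoke the inverse function theorem. Since $\mathcal M$ is open in the real vector space $\mathrm{Herm}_n(\mathbb C)$ of hermitian matrices, we may identify $T_X\mathcal M$ with $\mathrm{Herm}_n(\mathbb C)$, and $dF_X(H)=XH+HX$, which is again hermitian. As domain and target have equal finite dimension, it suffices to check $dF_X$ is injective: writing $X=\sum_i\lambda_iP_i$ with $\lambda_i>0$ and $P_i$ the orthogonal eigenprojections, the identity $XH+HX=0$ gives $P_k(XH+HX)P_l=(\lambda_k+\lambda_l)P_kHP_l=0$, hence $P_kHP_l=0$ for all $k,l$, so $H=\sum_{k,l}P_kHP_l=0$. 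Therefore $dF_X$ is an isomorphism for every $X\in\mathcal M$, so $F|_{\mathcal M}$ is a local diffeomorphism at each point; being also a bijection $\mathcal M\to\mathcal M$, its inverse is smooth, and hence $F|_{\mathcal M}$ is a smooth diffeomorphism.

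The routine parts are well-definedness and smoothness of $F$; the only steps requiring genuine (though standard) care are the uniqueness of the positive definite square root and the invertibility of the Lyapunov-type operator $H\mapsto XH+HX$ on $\mathrm{Herm}_n(\mathbb C)$, which is where I expect the main effort to go. If a cleaner smoothness argument for the inverse is desired, I would keep in reserve the explicit formula $A\mapsto\frac{1}{2\pi\sqrt{-1}}\oint_\gamma \sqrt{z}\,(z\,\mathrm{Id}-A)^{-1}\,dz$, with $\gamma$ a small contour around $\mathrm{spec}(A)\subset(0,+\infty)$ and $\sqrt{z}$ the principal branch, which is manifestly real-analytic in $A$ and shows directly that $(F|_{\mathcal M})^{-1}$ is $C^\infty$.
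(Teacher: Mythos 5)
Your proposal is correct and follows essentially the same route as the paper: establish bijectivity (surjectivity by unitary diagonalization, injectivity by uniqueness of the positive square root), compute $dF_X(H)=XH+HX$, show it is injective via the spectral decomposition of $X$, and conclude smoothness of the inverse from the inverse function theorem. The only cosmetic difference is that you identify $T_X\mathcal{M}$ with the hermitian matrices and check injectivity of the Lyapunov operator there, whereas the paper checks nondegeneracy of the same map on all of $M_n(\mathbb{C})$ and then restricts; both are fine, and your injectivity and uniqueness details (plus the reserve Dunford-integral argument) are, if anything, a bit more explicit than the paper's.
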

\begin{proof}It is easy to see that $F|_\mathcal{M}: \mathcal{M}\to \mathcal{M}$ is a smooth injection.

Let $M\in \mathcal{M}\subset M_n(\mathbb{C})$ be any positive definite hermitian matrix. Then $M$ can be viewed as a self-adjoint positive definite linear map on $\mathbb{C}^n$. Then we can find a unitary matrix $P$ such that $M=P^{-1}\tilde{M}P$, where $\tilde{M}=diag(\lambda_1,\lambda_2,\ldots,\lambda_n)$ is a diagonal matrix and all $\lambda_i\in\mathbb{R}_{>0}$. Denote $\tilde{N}:=diag(\sqrt{\lambda_1},\sqrt{\lambda_2},\ldots,\sqrt{\lambda_n})$. Then we have $M=P^{-1}\tilde{N}PP^{-1}\tilde{N}P=N^2$, where $N:=P^{-1}\tilde{N}P$ is a  positive definite hermitian matrix. Then we have $M=N^2$. By the theory of positive linear operator, we know that $N$ is unique.
Hence we know that $F|_\mathcal{M}: \mathcal{M}\to \mathcal{M}$ is surjective and the inverse mapping $(F|_\mathcal{M})^{-1}:\mathcal{M}\to \mathcal{M}$ of $F|_\mathcal{M}$ exists.

Assume that $X$ is a positive definite hermitian matrix. Let $dF_X$ be the tangent map induced by $F$ at point $X\in M_n(\mathbb{C})$. Then for any matrix $Y\in T_X(M_n(\mathbb{C}))\cong M_n(\mathbb{C})$, $dF(Y)=\lim_{t\to 0}\frac{F(X+tY)-F(X)}{t}=XY+YX$. As $X$ is a positive definite hermitian matrix. We can find a unitary matrix $Q$ such that $X=Q^{-1}\tilde{X}Q$, where $\tilde{X}$ is a diagonal matrix and denote $\tilde{Y}$ by the equation $Y=Q^{-1}\tilde{Y}Q$. Then $XY+YX=0$ if and only if $\tilde{X}\tilde{Y}+\tilde{Y}\tilde{X}=0$. As $\tilde{X}$ is a diagonal matrix, we know that $\tilde{X}\tilde{Y}+\tilde{Y}\tilde{X}=0$ if and only if $\tilde{Y}=0$ which implies that $XY+YX=0$ if and only if $Y=0$. Hence we know that $dF_X$ is non-degenerate at $X$ when $X$ is a positive definite hermitian matrix. Hence we know that $F^{-1}$ exists locally near $X$ and $F^{-1}$ is smooth.

 By the uniqueness of inverse map, we know that $(F|_\mathcal{M})^{-1}=F^{-1}|_{\mathcal{M}}$, hence $(F|_\mathcal{M})^{-1}$ is a smooth map from $\mathcal{M}\to \mathcal{M}$. We have proved that $F|_\mathcal{M}: \mathcal{M}\to \mathcal{M}$ is a smooth diffeomorphism.
\end{proof}

\begin{Remark}\label{convergence of sqrt of pos def matric}
Let $A_k=(a_{ij}^k)\in M_n(\mathbb{C})$ and $A=(a_{ij})\in M_n(\mathbb{C})$ be a family of $n\times n$ positive definite hermitian matrices such that $\lim_{k\to+\infty}A_k=A$ (which means for any $i,j\in\{1,2,\ldots.n\}$, $\lim_{k\to+\infty}a_{ij}^k=a_{ij}$). Then there exists a unique family of $n\times n$ positive definite hermitian matrices $B_k=(b_{ij}^k)$ and  $B=(b_{ij})$ such that   $B_k^2=A_k$ and $B^2=A$. More over, we have $\lim_{k\to+\infty}B_k=B$.
\end{Remark}
\begin{proof}
Denote $B_k:=(F|_\mathcal{M})^{-1}(A_k)$ and $B:=(F|_\mathcal{M})^{-1}(A)$. Then we have the
 existence and uniqueness of $B_k$ and $B$. As $\lim_{k\to+\infty}A_k=A$, by the smoothness of $(F|_\mathcal{M})^{-1}$, we know that $\lim_{k\to+\infty}B_k=B$.
Hence we have remark \ref{convergence of sqrt of pos def matric}.
\end{proof}

\begin{Lemma}\label{metric consturcution}
Let $A$ and $B$ be two $n\times n$ positive definite hermitian matrices. Then there exists a unique matrix $C$ with positive eigenvalue such that $A=CB\overline{C}^T$ and $CB=B\overline{C}^T$. The matrix $C$ depends smoothly on $A$ and $B$ in $\mathcal{M}\times\mathcal{M}$. Especially, if $\lim_{i\to+\infty}A_i=A_0$ and $\lim_{i\to+\infty}B_i=B_0$, then we have $\lim_{i\to+\infty}C_i=C_0$.
\end{Lemma}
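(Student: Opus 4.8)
The plan is to reduce both matrix equations to a statement about the positive definite square root of a single positive definite Hermitian matrix, for which existence, uniqueness and smooth dependence are already available from Lemma \ref{sqrt of positive definite matrices} and Remark \ref{convergence of sqrt of pos def matric}.

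First I would fix $B\in\mathcal{M}$ and let $S:=B^{1/2}$ denote its unique positive definite Hermitian square root, so that $S=\overline{S}^T>0$ and $B=S^2$. For a candidate matrix $C$ set $D:=S^{-1}CS$, so that $C=SDS^{-1}$ and $C$ is similar to $D$; in particular $C$ has positive eigenvalues if and only if $D$ does. Using $S=\overline{S}^T$ one computes $CB=SDS$ and $B\overline{C}^T=SD^*S$, so the relation $CB=B\overline{C}^T$ is equivalent to $D=\overline{D}^T$ (i.e. $D$ is Hermitian); and, granted this, $CB\overline{C}^T=SDD^*S=SD^2S$, so $A=CB\overline{C}^T$ is equivalent to $D^2=S^{-1}AS^{-1}$. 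Since $A>0$ and $S^{-1}$ is Hermitian, $S^{-1}AS^{-1}$ is a positive definite Hermitian matrix, hence has a unique positive definite Hermitian square root; taking $D$ to be this square root and $C:=SDS^{-1}$ produces a matrix satisfying both equations, with positive eigenvalues because $D>0$.

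For uniqueness, suppose $C$ has positive eigenvalues and satisfies both relations. Then $D:=S^{-1}CS$ is similar to $C$, hence also has only positive eigenvalues; by the equivalences above $D$ is Hermitian and $D^2=S^{-1}AS^{-1}$. A Hermitian matrix with positive eigenvalues is positive definite, so $D$ must be the unique positive definite square root of $S^{-1}AS^{-1}$, and therefore $C=SDS^{-1}$ is uniquely determined.

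Finally, smoothness follows by writing $C=B^{1/2}\bigl(B^{-1/2}AB^{-1/2}\bigr)^{1/2}B^{-1/2}$ and noting that this is a composition of maps already known to be smooth on $\mathcal{M}$ (resp. on $\mathcal{M}\times\mathcal{M}$): the square root map $\mathcal{M}\to\mathcal{M}$ (Lemma \ref{sqrt of positive definite matrices}), matrix inversion on $\mathcal{M}$, and matrix multiplication. In particular $C$ depends continuously on $(A,B)$, which gives $C_i\to C_0$ whenever $A_i\to A_0$ and $B_i\to B_0$; alternatively, this last convergence statement follows directly from Remark \ref{convergence of sqrt of pos def matric} applied twice. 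The only delicate point is the bookkeeping in the substitution $D=S^{-1}CS$ together with the observation that ``Hermitian with positive eigenvalues'' upgrades to ``positive definite'', and this presents no real obstacle.
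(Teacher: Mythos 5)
Your proof is correct and follows essentially the same route as the paper: both set $S=b:=B^{1/2}$, take the positive definite square root of $S^{-1}AS^{-1}$, and conjugate back to obtain $C$. Your phrasing of uniqueness is a touch tighter, since you explicitly note that "Hermitian with positive eigenvalues" forces positive definiteness before invoking uniqueness of the positive square root, a step the paper leaves implicit.
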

\begin{proof}It follows from Remark \ref{convergence of sqrt of pos def matric} that there exists a unique positive definite hermitian matrix $b$ such that $B=b^2$ and the matrix $b$ depends smoothly on $B$ in $\mathcal{M}$. As $b=\overline{b}^T$, we know that $b^{-1}Ab^{-1}$ is a positive definite hermitian matrices. It follows from Remark \ref{convergence of sqrt of pos def matric} that there exists a unique positive definite hermitian matrix $a$ such that $b^{-1}Ab^{-1}=a^2$ and we note that the matrix $a$ depends smoothly on $A$ and $B$ in $\mathcal{M}\times\mathcal{M}$. Denote $C:=bab^{-1}$. Then $C$ depends smoothly on $A$ and $B$ in $\mathcal{M}\times\mathcal{M}$. We note that all eigenvalues of $C$ are positive and $\overline{C}^T=b^{-1}ab$. We have
$$CB\overline{C}^T=bab^{-1}b^2b^{-1}ab=ba^2b=A,$$ and
$$CB=bab^{-1}b^2=bab=b^2b^{-1}ab=B\overline{C}^T.$$

Now we prove the uniqueness of $C$. Assume that there exists another $\tilde{C}$ satisfies $\tilde{C}B\overline{\tilde{C}}^T=A$ and
$\tilde{C}B=B\overline{\tilde{C}}^T$. It follows from $\tilde{C}B=B\overline{\tilde{C}}^T$ and $B=b^2$ that we have $b^{-1}\tilde{C}b=b\overline{\tilde{C}}^Tb^{-1}$, which shows that $b^{-1}\tilde{C}b$ is a hermitian matrix. We note that $$(b^{-1}\tilde{C}b)^2=b^{-1}\tilde{C}bb\overline{\tilde{C}}^Tb^{-1}=
b^{-1}\tilde{C}B\overline{\tilde{C}}^Tb^{-1}=b^{-1}Ab^{-1}.$$
By the uniqueness of $a$ such that $b^{-1}Ab^{-1}=a^2$, we know that $b^{-1}\tilde{C}b=a$ and then we have $C=\tilde{C}=bab^{-1}$.

If $\{A_i\}_{i=0}^{+\infty}$ and $\{B_i\}_{i=0}^{+\infty}$ satisfy $\lim_{i\to+\infty}A_i=A_0$ and $\lim_{i\to+\infty}B_i=B_0$, then we have $C_i$ such that $A_i=C_iB\overline{C_i}^T$ and $C_iB=B\overline{C_i}^T$, for any $i\ge 0$. As $C_i$ depends smoothly on $A_i$ and $B_i$ in $\mathcal{M}\times\mathcal{M}$ for any $i\ge 0$, we know that $\lim_{i\to+\infty}C_i=C_0$.
\end{proof}

Let $X$ be an $n-$dimensional complex manifold and $\omega$ be a hermitian metric on $X$. Let $Q$ be a vector bundle on $X$ with rank $r$. Let $\{h_i\}_{i=1}^{+\infty}$ be a family of $C^2$ smooth hermitian metric on $Q$ and $h$ be a measurable metric on $Q$ such that $\lim_{i\to+\infty}h_i=h$ almost everywhere on $X$.  We assume that $\{h_i\}_{i=1}^{+\infty}$ and $h$ satisfy one of the following conditions,\\
$(A)$ $h_i$ is increasingly convergent to $h$ as $i\to+\infty$;\\
$(B)$ there exists a continuous metric $\hat{h}$ on $Q$ and a constant $C>0$ such that for any $i\ge 0$, $\frac{1}{C}\hat{h}\le h_i\le C\hat{h}$ and $\frac{1}{C}\hat{h}\le h\le C\hat{h}$.

Denote $\mathcal{H}_i:=L^2(X,K_X\otimes Q,h_i,dV_{\omega})$ and $\mathcal{H}:=L^2(X,K_X\otimes Q,h,dV_{\omega})$. Note that $\mathcal{H}\subset \mathcal{H}_i\subset \mathcal{H}_1$ for any $i\in\mathbb{Z}_{>0}$.
\begin{Lemma}\label{existence of linear isomorphism}
There exists a linear isomorphism $H_i:\mathcal{H}_i\to \mathcal{H}_1$ (and $H:\mathcal{H}\to \mathcal{H}_1$) which preserves inner product, i.e., for any $\alpha,\beta\in \mathcal{H}_i$ (or $\tilde{\alpha},\tilde{\beta}\in \mathcal{H}$),
$$\langle\alpha,\beta\rangle_{h_i}=\langle H_i(\alpha),H_i(\beta)\rangle_{h_1}\ \big(\text{and\ } \langle\tilde{\alpha},\tilde{\beta}\rangle_{h}=\langle H(\tilde{\alpha}),H(\tilde{\beta})\rangle_{h_1}\big),$$
and satisfies $H^{-1}_{i}:\mathcal{H}_1\to \mathcal{H}_i\subset \mathcal{H}_1$
(and $H^{-1}:\mathcal{H}_1\to \mathcal{H}\subset \mathcal{H}_1$) is self-adjoint.
Moreover, $H_i^{-1}(\gamma)$ converges to $H^{-1}(\gamma)$ point-wisely for any $\gamma\in\mathcal{H}_1$.
\end{Lemma}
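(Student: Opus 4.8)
The plan is to construct the isometry $H_i$ pointwise (fiberwise) via the linear-algebra lemma just proved. At almost every point $x\in X$, the smooth metrics $h_1$ and $h_i$ give two positive definite hermitian matrices on the fiber $(K_X\otimes Q)_x\cong\mathbb{C}^r$ (after trivializing); by Lemma \ref{metric consturcution} (applied with $A$ and $B$ the matrices of $h_i$ and $h_1$ respectively, or vice versa) there is a unique matrix $C_i(x)$ with positive eigenvalues such that $C_i(x)$ intertwines the two metrics in the prescribed way, and $C_i(x)$ depends smoothly — hence measurably — on $x$. First I would check that the assignment $\alpha\mapsto C_i(\cdot)\alpha$ is a well-defined map $\mathcal{H}_i\to\mathcal{H}_1$ independent of the trivialization (the transition functions are absorbed because of the intertwining relation $CB=B\overline{C}^T$), and that the identity $\langle C_i\alpha, C_i\beta\rangle_{h_1}=\langle\alpha,\beta\rangle_{h_i}$ holds fiberwise from $A=CB\overline{C}^T$; integrating over $X$ against $dV_\omega$ gives the global isometry. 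The same construction with $h$ in place of $h_i$ (using condition $(A)$ or $(B)$ to guarantee $h$ is still a.e. positive definite and to control integrability) produces $H:\mathcal{H}\to\mathcal{H}_1$.

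Next I would verify the self-adjointness of $H_i^{-1}$. Since $H_i$ is given fiberwise by multiplication by $C_i(x)$, the inverse $H_i^{-1}$ is multiplication by $C_i(x)^{-1}$, and one computes $\langle H_i^{-1}\gamma,\delta\rangle_{h_1}=\langle\gamma,H_i^{-1}\delta\rangle_{h_1}$ directly from the fact that, in an $h_1$-orthonormal frame, $C_i(x)$ (and hence $C_i(x)^{-1}$) is represented by a hermitian matrix — this is exactly the content of the relation $\overline{C}^T=b^{-1}ab$ with $b,a$ hermitian in the proof of Lemma \ref{metric consturcution}, which shows $C_i(x)$ is $h_1$-self-adjoint. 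I would record this as a short lemma-internal computation rather than spelling out indices.

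For the final convergence statement, fix $\gamma\in\mathcal{H}_1$. The assertion $H_i^{-1}(\gamma)\to H^{-1}(\gamma)$ pointwise reduces to $C_i(x)^{-1}\to C(x)^{-1}$ for a.e. $x$, where $C(x)$ is the matrix built from $h$ and $h_1$. This follows from the last sentence of Lemma \ref{metric consturcution} (the "moreover" clause): since $h_i(x)\to h(x)$ for a.e. $x$ by hypothesis, the matrices converge, so $C_i(x)\to C(x)$ and hence $C_i(x)^{-1}\to C(x)^{-1}$ at every point of convergence; Lemma \ref{sqrt of positive definite matrices} and Remark \ref{convergence of sqrt of pos def matric} are the underlying inputs.

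The main obstacle I anticipate is not any single hard estimate but bookkeeping: making sure the fiberwise construction genuinely globalizes, i.e. that $C_i$ transforms correctly under change of local trivialization so that $H_i$ is defined on sections of the bundle rather than on coordinate vectors, and checking measurability/integrability so that $H_i$ actually lands in $\mathcal{H}_1$ (using that $h_i\le h_1$ in case $(A)$, or the two-sided bound by $\hat h$ in case $(B)$). One should also confirm that $C_i$ has the correct behavior on the factor $K_X$ — since $K_X$ is a line bundle the metric there is just a positive function and commutes through, so effectively the construction only sees the $Q$-part; I would remark on this explicitly to avoid confusion. Once these points are settled, the isometry property, the self-adjointness of the inverse, and the pointwise convergence are all immediate consequences of the linear-algebra lemmas already in hand.
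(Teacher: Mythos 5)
Your proposal is correct and follows essentially the same route as the paper: a fiberwise application of Lemma \ref{metric consturcution} to build the intertwiner $C_i$ (and $C$), the observation that $C_ih_1=h_1\overline{C_i}^T$ is precisely $h_1$-self-adjointness (hence self-adjointness of the inverse), the transition-function check via uniqueness of $C_i$ for globalization, and pointwise convergence from the ``moreover'' clause (Remark \ref{convergence of sqrt of pos def matric}). The only cosmetic difference is that the paper writes out the index computation for the transition-function compatibility in full, whereas you argue it by the absorption/uniqueness mechanism — both are the same argument.
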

\begin{proof}We firstly consider the local case.

Let $X=\Omega\subset \mathbb{C}^n$ be a bounded domain and $Q=\Omega\times \mathbb{C}^r$. In the local case, every metric $h_i$ (or $h$) on $Q$ can be viewed as a positive definite hermitian matrix $h_i=\big(h^{i}_{kl}(x)\big)$ (or $h=\big(h_{kl}(x)\big)$) where all $\{h^{i}_{kl}(x)\}_{k,l=1}^r$ are $C^2$ smooth functions on $\Omega$ (all $\{h_{kl}(x)\}_{k,l=1}^r$ are measurable functions on $\Omega$). It follows from Lemma \ref{metric consturcution} that there exists $C_i$ (or $C$) such that $h_i=C_ih_1\overline{C_i}^T$ and $C_ih_1=h_1\overline{C_i}^T$ ($h=Ch_1\overline{C}^T$ and $Ch_1=h_1\overline{C}^T$). By Lemma \ref{metric consturcution}, we also know that $C_i:=\big(C^i_{k,l}(x)\big)_{k,l=1}^r$ is $C^2$ smooth matrix functions, $C:=\big(C_{k,l}(x)\big)_{k,l=1}^r$ is measurable matrix function and $\lim_{i\to+\infty}C_i(x)=C(x)$ almost everywhere.
 Then for any measurable section $f=(f_1,f_2,\ldots,f_r)$ of $Q=\Omega\times \mathbb{C}^r$, denote $H_i(f):=(f_1,f_2,\ldots,f_r)C_i$ and $H(f):=(f_1,f_2,\ldots,f_r)C$. Then for any $\alpha,\beta\in \mathcal{H}_i$,
\begin{equation*}
\begin{split}
 &\langle\alpha,\beta\rangle_{h_i}=\alpha(h^i_{rl})
\bar{\beta}^{T}=\alpha C_ih_1\overline{C_i}^T\bar{\beta}^{T}=\langle H_i(\alpha),H_i(\beta)\rangle_{h_1},
\end{split}
\end{equation*}
As $C_ih_1=h_1\overline{C_i}^T$, we know that  $H^{-1}_{i}:\mathcal{H}_1\to \mathcal{H}_i\subset \mathcal{H}_1$ is self-adjoint.
Similar discussion shows that for any $\tilde{\alpha},\tilde{\beta}\in \mathcal{H}$,
$$\langle\tilde{\alpha},\tilde{\beta}\rangle_{h}=\langle H(\tilde{\alpha}),H(\tilde{\beta})\rangle_{h_1},$$
and $H^{-1}:\mathcal{H}_1\to \mathcal{H}\subset \mathcal{H}_1$ is self-adjoint.

It follows from $\lim_{i\to+\infty}C_i=C$ that we know that $H_i^{-1}(f)$ converges to $H^{-1}(f)$ almost everywhere on $\Omega$ for any measurable section $f=(f_1,f_2,\ldots,f_r)$ of $Q=\Omega\times \mathbb{C}^r$. $C_i$ and $C$ are obviously linear isomorphisms. Hence in the local case, we have found linear isomorphism satisfying all the requirements in Lemma \ref{existence of linear isomorphism}.

Now we prove the existences of $H_i$ and $H$ in the global case. Let $U_{\alpha}$ and $U_{\beta}$ be two open subsets of $X$ such that
$U_{\alpha}\cap U_{\beta}\neq\emptyset$. Let $G_{\alpha\beta}:U_{\alpha}\cap U_{\beta}\to GL_r(\mathbb{C})$ be the transition functions of $Q$. Then we know that the representative matrices of metric  $H^{\alpha}_i$ and $H^{\beta}_i$ under different basis are congruent, i.e. $H^{\alpha}_i=G_{\alpha\beta}^TH^{\beta}_i\overline{G_{\alpha\beta}}$, for all $i\in\mathbb{Z}_{\ge 0}$. On $U_{\alpha}$,  we have
$$H^{\alpha}_i=C^{\alpha}_{i}H^{\alpha}_{1}
(\overline{C^{\alpha}_{i}}^{T}) \text{\ and\ } C^{\alpha}_{i}H^{\alpha}_{1}=H^{\alpha}_{1}(\overline{C^{\alpha}_{i}}^{T}). $$
 Similarly, on $U_{\beta}$, we have
$$H^{\beta}_i=C^{\beta}_{i}H^{\beta}_{1}
(\overline{C^{\beta}_{i}}^{T})  \text{\ and\ } C^{\beta}_{i}H^{\beta}_{1}=H^{\beta}_{1}(\overline{C^{\beta}_{i}}^{T}). $$
 On $U_{\alpha}\cap U_{\beta}$, we have
\begin{equation}\label{existence of linear isomorphism: formula 1}
\begin{split}
   G_{\alpha\beta}^TH^{\beta}_i\overline{G_{\alpha\beta}}&=H^{\alpha}_i \\
     &=C^{\alpha}_{i}H^{\alpha}_{1}
(\overline{C^{\alpha}_{i}}^{T})=C^{\alpha}_{i}G_{\alpha\beta}^TH^{\beta}_1\overline{G_{\alpha\beta}}
(\overline{C^{\alpha}_{i}}^{T}).
\end{split}
\end{equation}
On $U_{\alpha}\cap U_{\beta}$, it follows from $C^{\alpha}_{i}H^{\alpha}_{1}=H^{\alpha}_{1}(\overline{C^{\alpha}_{i}}^{T})$ that
\begin{equation}\label{existence of linear isomorphism: formula 2}
\begin{split}
  C^{\alpha}_{i}G_{\alpha\beta}^TH^{\beta}_1\overline{G_{\alpha\beta}}
  =G_{\alpha\beta}^TH^{\beta}_1\overline{G_{\alpha\beta}}(\overline{C^{\alpha}_{i}}^{T})
\end{split}
\end{equation}
Denote $\hat{C}_{\alpha}:=(G_{\alpha\beta}^T)^{-1}C_i^{\alpha}G_{\alpha\beta}^T$. Then we have
$\overline{\hat{C}_{\alpha}}^{T}=(\overline{G_{\alpha\beta}})\overline{C_i^{\alpha}}^{T}
(\overline{G_{\alpha\beta}})^{-1}$. It follows from equalities \eqref{existence of linear isomorphism: formula 1}, \eqref{existence of linear isomorphism: formula 2} that we have
$$H_i^{\beta}=\hat{C}_{\alpha}H_1^{\beta}\overline{\hat{C}_{\alpha}}^{T} \text{\ and\ } \overline{\hat{C}_{\alpha}}^{T}H_1^{\beta}=H_1^{\beta}\overline{\hat{C}_{\alpha}}^{T}.$$
It follows from the uniqueness of $C_i^{\beta}$ that we have $C_i^{\beta}=\hat{C}_{\alpha}:=(G_{\alpha\beta}^T)^{-1}C_i^{\alpha}G_{\alpha\beta}^T$, which shows that $H_i(f)=(f_1,\ldots,f_r)C_i$ can be defined globally. Similar discussion shows that $H(f)=(f_1,\ldots,f_r)C$ can be defined globally.

Lemma \ref{existence of linear isomorphism} has been proved.

\end{proof}

Let $X$, $Q$, $\{h_i\}_{i=1}^{+\infty}$ and $h$ be as in Lemma \ref{existence of linear isomorphism}. Denote $P_i:=\mathcal{H}_i\to \text{Ker}D''$ and $P:=\mathcal{H}\to \text{Ker}D''$ be the orthogonal projections with respect to $h_i$ and $h$ respectively.
\begin{Lemma}\label{weakly converge lemma} For any sequence of $Q$-valued $(n,0)$-forms $\{f_i\}_{i=1}^{+\infty}$ which satisfies $f_i\in\mathcal{H}_i$ and $||f_i||_{h_i}\le C_1$ for some constant $C_1>0$, there exists a $Q$-valued $(n,0)$-form $f_0\in \mathcal{H}$ such that there exists a subsequence of $\{f_i\}_{i=1}^{+\infty}$ (also denoted by $\{f_i\}_{i=1}^{+\infty}$) weakly converges to $f_0$ in $\mathcal{H}_1$ and $P_i(f_i)$ weakly converges to $P(f_0)$ in $\mathcal{H}_1$.
\end{Lemma}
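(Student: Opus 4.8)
The plan is to use the linear isomorphisms constructed in Lemma \ref{existence of linear isomorphism} to transport everything to the fixed Hilbert space $\mathcal{H}_1$, where weak compactness is available, and then to descend the weak limit and the projection back through the isomorphisms. First I would apply $H_i$ to $f_i$ to obtain $g_i:=H_i(f_i)\in\mathcal{H}_1$. Since $H_i$ preserves the inner product, $\|g_i\|_{h_1}=\|f_i\|_{h_i}\le C_1$, so $\{g_i\}$ is a bounded sequence in the Hilbert space $\mathcal{H}_1$. By the Banach--Alaoglu theorem (weak compactness of bounded sets in a Hilbert space), a subsequence, still denoted $\{g_i\}$, converges weakly in $\mathcal{H}_1$ to some $g_0\in\mathcal{H}_1$. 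I then set $f_0:=H^{-1}(g_0)$, which a priori lies in $\mathcal{H}_1$; the point is to show $f_0\in\mathcal{H}$ and that $f_i\rightharpoonup f_0$ in $\mathcal{H}_1$, i.e. $H_i^{-1}(g_i)\rightharpoonup H^{-1}(g_0)$.

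For the weak convergence $f_i\rightharpoonup f_0$ in $\mathcal{H}_1$: write $f_i=H_i^{-1}(g_i)$. In the local picture $H_i^{-1}$ is multiplication by a matrix-valued function $C_i^{-1}$ (the inverse of the matrix $C_i$ of Lemma \ref{metric consturcution}), and by Lemma \ref{existence of linear isomorphism} together with Remark \ref{convergence of sqrt of pos def matric} these matrices converge pointwise a.e.\ to the matrix of $H^{-1}$, and under hypothesis $(A)$ or $(B)$ they are uniformly bounded in operator norm on compact sets (indeed globally, using $\hat h$ in case $(B)$, or monotonicity in case $(A)$). Thus $f_i=H_i^{-1}(g_i)$ is a product of a pointwise-convergent, uniformly bounded sequence of multipliers with a weakly convergent sequence $g_i$, so Lemma \ref{weakly convergence} applies (after a partition-of-unity reduction to the local case, or directly, since that lemma is stated for Lebesgue measurable $v_n$ with a uniform bound) and gives $f_i\rightharpoonup H^{-1}(g_0)=f_0$ in $\mathcal{H}_1$. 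Weak convergence forces $\|f_0\|_{h_1}\le\liminf\|g_i\|_{h_1}\le C_1$; to see $f_0\in\mathcal{H}$, i.e. $\|f_0\|_h<+\infty$, note $\|f_0\|_h=\|H(f_0)\|_{h_1}=\|g_0\|_{h_1}<+\infty$ since $g_0\in\mathcal{H}_1$ and $H$ preserves inner products.

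For the statement about the projections, I would use that $H_i$ conjugates $P_i$ to the $h_1$-orthogonal projection onto $H_i(\operatorname{Ker}D'')$, and similarly for $H$ and $P$. Observe first that $\operatorname{Ker}D''$ is a closed subspace independent of the metric, but the orthogonal complements differ. The cleanest route: for any $\varphi\in D(M,\wedge^{n,0}T^*M\otimes Q)\cap\operatorname{Ker}D''$ (a dense set in $\operatorname{Ker}D''\cap\mathcal{H}_1$ by Lemma \ref{approximation on complete mfld}, after passing to a complete metric) and any test form $\eta$, the defining relation $\langle f_i-P_i(f_i),\varphi\rangle_{h_i}=0$ can be rewritten via $H_i$ as an inner product in $\mathcal{H}_1$ involving $H_i(P_i(f_i))$; writing $H_i(P_i(f_i))=g_i-(I-\tilde P_i)g_i$ where $\tilde P_i$ is the $h_1$-projection onto $H_i(\operatorname{Ker}D'')$, and using the pointwise convergence $H_i\to H$ on the (fixed) dense set of smooth compactly supported representatives, one passes to the limit to get that the weak limit of $P_i(f_i)$, call it $p_0$, satisfies $\langle f_0-p_0,\varphi\rangle_h=0$ for all such $\varphi$, hence $p_0=P(f_0)$. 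Uniqueness of weak limits then shows $P_i(f_i)\rightharpoonup P(f_0)$; boundedness of $\{P_i(f_i)\}$ in $\mathcal{H}_1$, needed to extract a weakly convergent subsequence, follows from $\|P_i(f_i)\|_{h_i}\le\|f_i\|_{h_i}\le C_1$ and isometry of $H_i$.

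The main obstacle I anticipate is the bookkeeping with the two different metrics in the projection step: $P_i$ and $P$ are orthogonal projections with respect to \emph{different} inner products onto the \emph{same} closed subspace $\operatorname{Ker}D''$, so one cannot directly compare them and must consistently transport through $H_i$, $H$ to the common space $\mathcal{H}_1$ and exploit the pointwise convergence $H_i\to H$ on a dense set of smooth compactly supported forms, together with the uniform boundedness coming from hypotheses $(A)$/$(B)$. Controlling the passage to the limit in $\langle f_i-P_i(f_i),\varphi\rangle_{h_i}$ — in particular justifying that $H_i(\varphi)\to H(\varphi)$ strongly in $\mathcal{H}_1$ for fixed smooth compactly supported $\varphi$ (dominated convergence, using the uniform bound) while $g_i-H_i(P_i(f_i))$ only converges weakly — is the delicate point, and it is exactly the pairing of a strongly convergent sequence with a weakly convergent one that makes it go through.
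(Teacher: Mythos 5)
Your overall strategy---transport to the fixed Hilbert space $\mathcal{H}_1$ via the isometries $H_i$, extract a weak limit by Banach--Alaoglu, and descend through $H^{-1}$---is exactly the paper's. However, there is a genuine gap in your projection argument.

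You propose to test against $\varphi\in D(M,\wedge^{n,0}T^*M\otimes Q)\cap\operatorname{Ker}D''$ and claim this is dense in $\operatorname{Ker}D''\cap\mathcal{H}_1$ by Lemma \ref{approximation on complete mfld}. This fails: for $(n,0)$-forms, $\operatorname{Ker}D''$ consists of holomorphic $Q$-valued $(n,0)$-forms, and on a non-compact (connected) manifold a compactly supported holomorphic form is identically zero, so $D(M,\wedge^{n,0}T^*M\otimes Q)\cap\operatorname{Ker}D''=\{0\}$. Lemma \ref{approximation on complete mfld} gives density of $D(M,\cdot)$ in $\operatorname{Dom}D''$ for the graph norm; it does not give density of the compactly supported elements \emph{inside} $\operatorname{Ker}D''$. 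The paper avoids any density step entirely. Writing $b_i=f_i-P_i(f_i)$ and $b_0=H^{-1}(B_0)$ where $H_i(b_i)\rightharpoonup B_0$, it tests against an \emph{arbitrary} $\gamma\in\operatorname{Ker}D''\subset\mathcal{H}$ and splits
\[
\langle H(b_0),H(\gamma)\rangle_{h_1}=\lim_i\langle H_i(b_i),H_i(\gamma)\rangle_{h_1}+\lim_i\langle H_i(b_i),H(\gamma)-H_i(\gamma)\rangle_{h_1}.
\]
The first limit vanishes term-by-term because $\langle H_i(b_i),H_i(\gamma)\rangle_{h_1}=\langle b_i,\gamma\rangle_{h_i}=0$ exactly (orthogonality in $\mathcal{H}_i$, which holds since $\mathcal{H}\subset\mathcal{H}_i$). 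The second limit vanishes by Cauchy--Schwarz, the uniform bound $\|H_i(b_i)\|_{h_1}\le C_1$, and $\|H(\gamma)-H_i(\gamma)\|_{h_1}\to 0$, proved by dominated convergence under hypothesis $(A)$ or $(B)$. Together with $D''(a_0)=0$ (closedness of $D''$ under weak limits, where $a_0$ is the weak limit of $P_i(f_i)$), this identifies $a_0=P(f_0)$ with no recourse to dense test objects.

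A smaller issue: for the weak convergence $f_i\rightharpoonup f_0$ you invoke Lemma \ref{weakly convergence}, but that lemma is stated for scalar multipliers $v_n$, whereas $H_i^{-1}$ is an endomorphism-valued (matrix) multiplier. The paper sidesteps this by using the self-adjointness of $H_i^{-1}$: for fixed $\beta\in\mathcal{H}_1$, $\langle a_i,\beta\rangle_{h_1}=\langle H_i(a_i),H_i^{-1}(\beta)\rangle_{h_1}$, which is then split into $\langle H_i(a_i),H^{-1}(\beta)\rangle_{h_1}$ (converges by weak convergence of $H_i(a_i)$) plus an error controlled by $\|H_i(a_i)\|_{h_1}\,\|H_i^{-1}(\beta)-H^{-1}(\beta)\|_{h_1}\to 0$. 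You should adopt this self-adjointness-plus-strong-convergence device rather than a scalar multiplier lemma.
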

\begin{proof} Denote $a_i=P_i(f_i)$ and $b_i:=f_i-P_i(f_i)$. We note that $b_i\in (\text{Ker}D'')_i^{\bot}\subset \mathcal{H}_i$, where $(\text{Ker}D'')_i^{\bot}$ is the orthogonal complement space of $\text{Ker}D''$ in $\mathcal{H}_i$ with respect to $h_i$. It follows from $||f_i||_{h_i}\le C_1$ that we have $||a_i||_{h_i}\le C_1$ and $||b_i||_{h_i}\le C_1$. Denote
$$F_i:=H_i(f_i),\ A_i:=H_i(a_i)\text{\ and\ }B_i:=H_i(b_i).$$

Then we know that $||A_i||_{h_1}=||a_i||_{h_i}\le C_1$ is uniformly bounded. Since the closed unit ball of the Hilbert space is weakly compact, we can extract a subsequence of $\{A_i\}_{i=1}^{+\infty}$ (still denoted by $A_i$) weakly convergent to some $A_0$ in $\mathcal{H}_1$. For similar reason, we know that $\{B_i\}_{i=1}^{+\infty}$ weakly converges to some $B_0$ in $\mathcal{H}_1$ and $\{F_i\}_{i=1}^{+\infty}$ weakly converges to some $F_0$ in $\mathcal{H}_1$.

Let $\beta\in \mathcal{H}_1$. When $\{h_i\}_{i=1}^{+\infty}$ and $h$ satisfy condition $(A)$, it follows from dominated convergence theorem,
$$||H^{-1}(\beta)||_{h_1}\le ||H^{-1}(\beta)||_{h}=||\beta||_{h_1}\text{\ and\ }
||H^{-1}_i(\beta)||_{h_1}\le ||H^{-1}_i(\beta)||_{h_i}=||\beta||_{h_1},$$
 that we have $||H^{-1}_i(\beta)-H^{-1}(\beta)||_{h_1}\to 0$ as $i\to+\infty$.
 When $\{h_i\}_{i=1}^{+\infty}$ and $h$ satisfy condition $(B)$, it follows from dominated convergence theorem,
$$||H^{-1}(\beta)||_{h_1}\le C^2||H^{-1}(\beta)||_{h}=C^2||\beta||_{h_1}\text{\ and\ }
||H^{-1}_i(\beta)||_{h_1}\le C^2||H^{-1}_i(\beta)||_{h_i}=C^2||\beta||_{h_1},$$
that we have $||H^{-1}_i(\beta)-H^{-1}(\beta)||_{h_1}\to 0$ as $i\to+\infty$.

Then when $\{h_i\}_{i=1}^{+\infty}$ and $h$ satisfy condition $(A)$ or $(B)$, we have
\begin{equation}\nonumber
\begin{split}
&\lim_{i\to +\infty}\int_X\langle a_i,\beta\rangle_{h_1}dV_{\omega}\\
=&\lim_{i\to +\infty}\int_X\langle H_i(a_i),H^{-1}_i(\beta)\rangle_{h_1}dV_{\omega}\\
=&\lim_{i\to +\infty}\int_X\langle H_i(a_i),H^{-1}(\beta)\rangle_{h_1}dV_{\omega}
+\lim_{i\to +\infty}\int_X\langle H_i(a_i),H^{-1}_i(\beta)-H^{-1}(\beta)\rangle_{h_1}dV_{\omega}\\
=&\int_X\langle A_0,H^{-1}(\beta)\rangle_{h_1}dV_{\omega}\\
=&\int_X\langle H^{-1}(A_0),\beta\rangle_{h_1}dV_{\omega},
\end{split}
\end{equation}
where the first equality holds because of $H_i^{-1}$ is self-adjoint and the third equality holds because of
$$\int_M\langle H_i(a_i),H^{-1}_i(\beta)-H^{-1}(\beta)\rangle_{h_1}dV_{\omega}\le ||H_i(a_i)||_{h_1}||H^{-1}_i(\beta)-H^{-1}(\beta)||_{h_1},$$
$||H_i(a_i)||_{h_1}=||a_i||_{h_i}\le C_1$, $||H^{-1}_i(\beta)-H^{-1}(\beta)||_{h_1}\to 0$ as $i\to+\infty$. Denote $a_0:=H^{-1}(A_0)$. Then $a_0\in\mathcal{H}\subset\mathcal{H}_1$ and we know that $a_i$ weakly converges to $a_0$ in $\mathcal{H}_1$. It follows from $D''(a_i)=0$ that we have $D''(a_0)=0$. Denote $b_0:=H^{-1}(B_0)\in\mathcal{H}\subset\mathcal{H}_1$ and $f_0:=H^{-1}(F_0)\in\mathcal{H}\subset\mathcal{H}_1$. Using similar discussion, we know that $b_i$ weakly converges to $b_0$ in $\mathcal{H}_1$ and $f_i$ weakly converges to $f_0$ in $\mathcal{H}_1$.

It follows from the uniqueness of weak limit and $f_i=a_i+b_i$ that we have $f_0=a_0+b_0$ in $\mathcal{H}$. Now we prove that $b_0\in (\text{Ker}D'')^{\bot}\subset \mathcal{H}$, where $(\text{Ker}D'')^{\bot}$ is the orthogonal complement space of $\text{Ker}D''$ in $\mathcal{H}$ with respect to $h$. Let $\gamma\in \text{Ker}D''\subset \mathcal{H}$. We have
\begin{equation}\label{weakly converge lemma: b_0 orth}
\begin{split}
&\int_X\langle b_0,\gamma\rangle_{h}dV_{\omega}\\
=&\int_X\langle H(b_0),H(\gamma)\rangle_{h_1}dV_{\omega}\\
=&\lim_{i\to+\infty}\int_X\langle H_i(b_i),H_i(\gamma)\rangle_{h_1}dV_{\omega}
+\lim_{i\to+\infty}\int_X\langle H_i(b_i),H(\gamma)-H_i(\gamma)\rangle_{h_1}dV_{\omega}\\
\le &
\lim_{i\to+\infty}\int_X\langle H_i(b_i),H_i(\gamma)\rangle_{h_1}dV_{\omega}
+\lim_{i\to+\infty}||b_i||_{h_i}||H(\gamma)-H_i(\gamma)||_{h_1}\\
=&\lim_{i\to+\infty}\int_X\langle b_i,\gamma\rangle_{h_i}dV_{\omega}
+\lim_{i\to+\infty}||b_i||_{h_i}||H(\gamma)-H_i(\gamma)||_{h_1}
\end{split}
\end{equation}
When $\{h_i\}_{i=1}^{+\infty}$ and $h$ satisfy condition $(A)$, it follows from dominated convergence theorem and
$$||H(\gamma)||_{h_1}=||\gamma||_{h}\text{\ and\ }
||H_i(\gamma)||_{h_1}= ||\gamma||_{h_i}\le||\gamma||_{h},$$
 that we have $||H(\gamma)-H_i(\gamma)||_{h_1}\to 0$ as $i\to+\infty$. When $\{h_i\}_{i=1}^{+\infty}$ and $h$ satisfy condition $(B)$, it follows from dominated convergence theorem and
$$||H(\gamma)||_{h_1}=||\gamma||_{h}\text{\ and\ }
||H_i(\gamma)||_{h_1}=||\gamma||_{h_i}\le C^2||\gamma||_{h},$$
that we have $||H(\gamma)-H_i(\gamma)||_{h_1}\to 0$ as $i\to+\infty$. Note that $||b_i||_{h_i}$ is uniformly bounded with respect to $i$. It follows from above discussion, $b_i\in (\text{Ker}D'')^{\bot}_i$ and inequality \eqref{weakly converge lemma: b_0 orth} that we have
$$\int_X\langle b_0,\gamma\rangle_{h}dV_{\omega}=0.$$
Hence we know $b_0\in (\text{Ker}D'')^{\bot}\subset \mathcal{H}$. Hence we have $P(f_0)=a_0$ and we have showed that $a_i=P_i(f_i)$ weakly converges to $a_0=P(f_0)$ in $\mathcal{H}_1$.

Lemma \ref{weakly converge lemma} has been proved.

\end{proof}

\begin{Lemma}

\label{d-bar equation with error term}
Let ($M,\omega$) be a complete K\"ahler manifold equipped with a (non-necessarily
complete) K\"ahler metric $\omega$, and let $(Q,h)$ be a hermitian vector bundle over $M$.
Assume that $\eta$ and $g$ are smooth bounded positive functions on $M$ such that $\eta+g^{-1}$ is a smooth bounded positive function on $M$ and let
$B:=[\eta \sqrt{-1}\Theta_Q-\sqrt{-1}\partial \bar{\partial} \eta-\sqrt{-1}g
\partial\eta \wedge\bar{\partial}\eta, \Lambda_{\omega}]$. Assume that $\lambda \ge
0$ is a bounded continuous functions on $M$ such that $B+\lambda I$ is positive definite everywhere on
$\wedge^{n,q}T^*M \otimes Q$ for some $q \ge 1$. Then given a form $v \in
L^2(M,\wedge^{n,q}T^*M \otimes Q)$ such that $D^{''}v=0$ and $\int_M \langle
(B+\lambda I)^{-1}v,v\rangle_{Q,\omega} dV_{\omega} < +\infty$, there exists an approximate solution
$u \in L^2(M,\wedge^{n,q-1}T^*M \otimes Q)$ and a correcting term $\tau\in
L^2(M,\wedge^{n,q}T^*M \otimes Q)$ such that $D^{''}u+P_h(\sqrt{\lambda}\tau)=v$, where $P_h:L^2(M,\wedge^{n,q}T^*M \otimes Q)\to \text{Ker}{D''}$ is the orthogonal projection and
\begin{equation}
\int_M(\eta+g^{-1})^{-1}|u|^2_{Q,\omega}dV_{\omega}+\int_M|\tau|^2_{Q,\omega}dV_{\omega} \le \int_M \langle (B+\lambda
I)^{-1}v,v\rangle_{Q,\omega} dV_{\omega}.
\end{equation}
\end{Lemma}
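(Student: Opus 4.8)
# Proof Proposal for Lemma \ref{d-bar equation with error term}

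The plan is to reduce the statement to the twisted Bochner--Kodaira--Nakano inequality (Lemma \ref{BKN Identity}) together with a functional-analytic argument (a Hahn--Banach / Riesz-type duality argument) that is by now standard in the $L^2$ literature, adapted to carry the extra correcting term $\tau$ coming from $\lambda I$. First I would work on the dense subspace $D(M,\wedge^{n,q}T^*M\otimes Q)$ of compactly supported smooth forms: since $(M,\omega)$ admits a complete K\"ahler metric, Lemma \ref{approximation on complete mfld} lets me approximate elements of $\mathrm{Dom}\,D''\cap\mathrm{Dom}\,D''^*$ in the graph norm, so all integration-by-parts and the inequality \eqref{BKN Identiy formula} extend from $D(M,\ldots)$ to the relevant domains. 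The hypothesis that $\eta+g^{-1}$ is bounded (and bounded below by a positive constant, which follows from $\eta,g>0$ smooth bounded) guarantees the weight $(\eta+g^{-1})^{-1}$ is comparable to a bounded metric, so the integrals in the conclusion make sense.

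The core estimate: for $w\in D(M,\wedge^{n,q}T^*M\otimes Q)$ decompose $w=w_1+w_2$ with $w_1\in\mathrm{Ker}\,D''$ and $w_2\in(\mathrm{Ker}\,D'')^\perp$ (so $D''^*w_2=0$ and $D''^*w=D''^*w_1$). Using that $B+\lambda I>0$ on $(n,q)$-forms, Cauchy--Schwarz gives
\begin{equation*}
|\langle v,w_1\rangle|^2=|\langle v,w\rangle-\langle v,w_2\rangle|^2\le\Big(\int_M\langle(B+\lambda I)^{-1}v,v\rangle\,dV_\omega\Big)\Big(\int_M\langle(B+\lambda I)w_1,w_1\rangle\,dV_\omega\Big),
\end{equation*}
and then Lemma \ref{BKN Identity} applied to $w_1$, bounding $\langle Bw_1,w_1\rangle\le\langle(B+\lambda I)w_1,w_1\rangle$ and absorbing the $\lambda$-term, yields
\begin{equation*}
\int_M\langle(B+\lambda I)w_1,w_1\rangle\,dV_\omega\le\int_M(\eta+g^{-1})|D''^*w_1|^2\,dV_\omega+\int_M\lambda|w_1|^2\,dV_\omega.
\end{equation*}
Hence $|\langle v,w\rangle|\le\big(\int_M\langle(B+\lambda I)^{-1}v,v\rangle\,dV_\omega\big)^{1/2}\big(\|(\eta+g^{-1})^{1/2}D''^*w\|^2+\|\sqrt{\lambda}\,w_1\|^2\big)^{1/2}$ for all such $w$ (using $D''v=0$ so that $\langle v,w_2\rangle=0$ when $w$ runs over $\mathrm{Dom}\,D''^*$; more precisely one pairs $v$ against $D''^*w$). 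This says the linear functional $D''^*w\mapsto\langle v,w\rangle$ is bounded on the subspace $\{((\eta+g^{-1})^{1/2}D''^*w,\ \sqrt{\lambda}\,P_hw)\}$ of $L^2\oplus L^2$, so by Hahn--Banach and the Riesz representation theorem there exist $u\in L^2(M,\wedge^{n,q-1}T^*M\otimes Q)$ and $\tau\in L^2(M,\wedge^{n,q}T^*M\otimes Q)$ with $\|(\eta+g^{-1})^{-1/2}u\|^2+\|\tau\|^2\le\int_M\langle(B+\lambda I)^{-1}v,v\rangle\,dV_\omega$ representing it; unwinding the pairing gives $\langle v,w\rangle=\langle(\eta+g^{-1})^{-1/2}u,(\eta+g^{-1})^{1/2}D''^*w\rangle+\langle\tau,\sqrt\lambda\,P_hw\rangle$ for all $w\in\mathrm{Dom}\,D''^*$, i.e. $v=D''u+P_h(\sqrt\lambda\,\tau)$ in the weak sense, with the desired norm bound.

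I expect the main obstacle to be the careful bookkeeping of the orthogonal projection $P_h$ and the splitting $w=w_1+w_2$: one must check that the functional is well defined (independence of the representative $w$ of a given $D''^*w$, and that the $\sqrt\lambda\,P_h w$ component is genuinely controlled rather than merely $\sqrt\lambda\,w$), and that after applying Hahn--Banach the resulting $u,\tau$ assemble into the equation $D''u+P_h(\sqrt\lambda\,\tau)=v$ exactly — this is where the projection $P_h$ must reappear on the $\tau$-term. The completeness of $\omega$ enters only through Lemma \ref{approximation on complete mfld} to justify density, so there is no genuine analytic difficulty beyond this; the boundedness assumptions on $\eta$, $g$, $\eta+g^{-1}$ and $\lambda$ are exactly what is needed to keep all the auxiliary metrics mutually comparable to fixed bounded ones so that the various $L^2$ spaces coincide as sets and the weak limits live in the right spaces.
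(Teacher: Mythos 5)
Your core estimate (Cauchy--Schwarz against $(B+\lambda I)^{-1}$, then BKN applied to the $\mathrm{Ker}\,D''$-component, then Hahn--Banach to produce $u$ and $\tau$) is the right one and is exactly how the paper proceeds. But there is a genuine gap in the way you handle completeness. The hypothesis is that $M$ is a \emph{complete K\"ahler manifold equipped with a not-necessarily-complete K\"ahler metric $\omega$}: all the $L^2$ norms, $D''^*$, the projection $P_h$, the BKN inequality, and the conclusion are with respect to $\omega$, which need not be complete. You write ``since $(M,\omega)$ admits a complete K\"ahler metric, Lemma \ref{approximation on complete mfld} lets me approximate... so all integration-by-parts and the inequality \eqref{BKN Identiy formula} extend,'' and later ``the completeness of $\omega$ enters only through Lemma \ref{approximation on complete mfld}.'' This conflates ``$M$ carries a complete K\"ahler metric'' with ``$\omega$ is complete.'' Lemma \ref{approximation on complete mfld} requires the metric defining the graph norm to be complete, so you cannot invoke it for $\omega$ itself. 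Concretely, when you decompose $w=w_1+w_2$, the projection $w_1$ is no longer compactly supported, and the justification that the twisted BKN inequality applies to $w_1\in\mathrm{Dom}\,D''\cap\mathrm{Dom}\,D''^*$ genuinely uses density of compactly supported forms --- which fails without completeness of the underlying metric.

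The paper fixes this with Demailly's standard perturbation: pick a complete K\"ahler metric $\tilde\omega$ on $M$, set $\omega_\epsilon=\omega+\epsilon\tilde\omega$ (complete for $\epsilon>0$), run your exact argument with $\omega_\epsilon$ to obtain $u_\epsilon$, $\tau_\epsilon$ with a bound by $\int_M\langle(B+\lambda I)^{-1}v,v\rangle_{\omega_\epsilon,h}\,dV_{\omega_\epsilon}$, use the monotonicity $\int_M\langle(B+\lambda I)^{-1}v,v\rangle_{\omega_\epsilon,h}\,dV_{\omega_\epsilon}\le\int_M\langle(B+\lambda I)^{-1}v,v\rangle_{\omega,h}\,dV_\omega$ for $(n,q)$-forms with $q\ge1$, and then extract weak limits as $\epsilon\to0$. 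The passage to the limit is itself nontrivial: you must show that the $\omega_\epsilon$-projections $P_{\omega_\epsilon,h}(\sqrt\lambda\,\tau_\epsilon)$ converge weakly to $P_{\omega,h}(\sqrt\lambda\,\tau_0)$, which is the content of Lemma \ref{weakly converge lemma} and Lemma \ref{weakly convergence} in this paper. Your proposal omits the entire $\epsilon$-family and its limiting argument, so as written it only proves the statement in the special case where $\omega$ is itself complete. Adding the perturbation and the weak-compactness/projection-convergence step would close the gap and recover the paper's proof.
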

\begin{proof} Let $\tilde{\omega}$ be a complete K\"ahler metric on $M$. Denote $\omega_{\epsilon}=\omega+\epsilon\tilde{\omega}$, where $\epsilon\in [0,1]$. Then $\omega_{\epsilon}$ is a complete K\"ahler metric on $M$ for any $\epsilon>0$. For any $Q$-valued smooth $(n,q)$ form $\alpha$ with compact support, we have $\alpha=\alpha_1+\alpha_2$ where $\alpha_1\in \text{Ker}{D''}$ and $\alpha_2 \in (\text{Ker} D'')^{\bot}=\text{Im} \overline{D''^*}\subset \text{Ker}D''^*$. It follows from $\alpha\in \text{Dom}D''\cap \text{Dom}D''^*$ and $\alpha_2\in \text{Dom}D''^*$ that we have $\alpha_1\in \text{Dom}D''^*$. For similar reason, we know that $\alpha_2\in \text{Dom}D''$. Then it follows from Lemma \ref{approximation on complete mfld} and $\eta$, $g$ and $\eta+g^{-1}$ are smooth bounded positive functions on $M$ that inequality \eqref{BKN Identiy formula} in Lemma \ref{BKN Identity} also holds for $\alpha_1$ and $\alpha_2$.
By using Cauchy-Schwarz inequality, inequality \eqref{BKN Identiy formula} and $\alpha_2 \in \text{Ker}D''^*$, we have
\begin{equation}\label{L2 method: functional is bounded}
\begin{split}
&|\langle v,\alpha\rangle|^2_{\omega_{\epsilon},h}\\
=&|\langle v,\alpha_1\rangle|^2_{\omega_{\epsilon},h}\\
\le&\big(\int_M\langle (B+\lambda
I)^{-1}v,v\rangle_{\omega_{\epsilon},h}dV_{\omega_{\epsilon}}\big)\big(\int_M\langle B\alpha_1,\alpha_1\rangle_{\omega_{\epsilon},h}dV_{\omega_{\epsilon}}+\int_M\langle \lambda\alpha_1,\alpha_1\rangle_{\omega_{\epsilon},h}dV_{\omega_{\epsilon}}\big)\\
=&\big(\int_M\langle (B+\lambda
I)^{-1}v,v\rangle_{\omega_{\epsilon},h}dV_{\omega_{\epsilon}}\big)
\big(||(\eta+g^{-1})^{\frac{1}{2}}D''^*\alpha_1||^2_{\omega_{\epsilon},h}+||\sqrt{\lambda}\alpha_1||^2_{\omega_{\epsilon},h}\big)\\
=&\big(\int_M\langle (B+\lambda
I)^{-1}v,v\rangle_{\omega_{\epsilon},h}dV_{\omega_{\epsilon}}\big)
\big(||(\eta+g^{-1})^{\frac{1}{2}}D''^*\alpha||^2_{\omega_{\epsilon},h}+||\sqrt{\lambda}P_{\omega_{\epsilon},h}\alpha||^2_{\omega_{\epsilon},h}\big),
\end{split}
\end{equation}
where $P_{\omega_{\epsilon},h}:L^2(M,\wedge^{n,q}T^*M \otimes Q,\omega_{\epsilon},h)\to \text{Ker}{D''}$ is the projection map.
Denote $H_{1,\epsilon}:=L^2(M,\wedge^{n,q-1}T^*M \otimes Q,\omega_{\epsilon},h)$ and $H_{2,\epsilon}:=L^2(M,\wedge^{n,q}T^*M \otimes Q,\omega_{\epsilon},h)$.
Then it follows from Hahn-Banach theorem and inequality $\eqref{L2 method: functional is bounded}$ that we have a bounded linear map $H_{1,\epsilon}\oplus H_{2,\epsilon}\to \mathbb{C}$, which is an extension of the following linear map
$$\big((\eta+g^{-1})^{\frac{1}{2}}D''^*\alpha,P_{\omega_{\epsilon},h}(\alpha)\big)\to \langle v,\alpha\rangle_{\omega_{\epsilon},h}.$$
Then there exist $\tilde{u}_{\epsilon}$ and $\tau_{\epsilon}$ such that
$$\langle v,\alpha\rangle_{\omega_{\epsilon},h}=\langle \tilde{u}_{\epsilon},(\eta+g^{-1})^{\frac{1}{2}}D''^*\alpha\rangle_{\omega_{\epsilon},h}+
\langle \tau_{\epsilon},\sqrt{\lambda}P_{\omega_{\epsilon},h}(\alpha)\rangle_{\omega_{\epsilon},h},$$
and
$$||\tilde{u}_\epsilon||_{\omega_{\epsilon},h}+||\tau_\epsilon||_{\omega_{\epsilon},h}\le \int_M\langle (B+\lambda
I)^{-1}v,v\rangle_{\omega_{\epsilon},h}dV_{\omega_{\epsilon}}.$$
Denote $u_\epsilon:=(\eta+g^{-1})^{\frac{1}{2}}\tilde{u}_{\epsilon}$, then we have
\begin{equation}\label{L2method: d-bar equation epsilon}
v=D''u_\epsilon+P_{\omega_{\epsilon},h}(\sqrt{\lambda}\tau_\epsilon)
\end{equation}
and
\begin{equation}\label{L2method: estimate epsilon}
||(\eta+g^{-1})^{-\frac{1}{2}}u_\epsilon||_{\omega_{\epsilon},h}+||\tau_\epsilon||_{\omega_{\epsilon},h}\le \int_M\langle (B+\lambda
I)^{-1}v,v\rangle_{\omega_{\epsilon},h}dV_{\omega_{\epsilon}}.
\end{equation}
Note that $\int_M\langle (B+\lambda
I)^{-1}v,v\rangle_{\omega_{\epsilon},h}dV_{\omega_{\epsilon}}\le \int_M\langle (B+\lambda
I)^{-1}v,v\rangle_{\omega,h}dV_{\omega}$ for any $\epsilon>0$. Then we know that $||(\eta+g^{-1})^{-\frac{1}{2}}u_\epsilon||_{\omega_{\epsilon},h}$, $||\tau_\epsilon||_{\omega_{\epsilon},h}$ and $||\sqrt{\lambda}\tau_\epsilon||_{\omega_{\epsilon},h}$ is uniformly bounded with respect to $\epsilon$.

 Note that on any compact subset $K\subset M$, we have $\omega\le \omega_\epsilon\le \omega_1\le C_K \omega$ for some $C_K>0$. It follows from $||(\eta+g^{-1})^{-\frac{1}{2}}u_\epsilon||_{\omega_{\epsilon},h}$ is uniformly bounded with respect to $\epsilon$ and $\eta+g^{-1}$ is a smooth bounded positive function on any compact subset $K$ of $M$ that we know that $u_\epsilon$ weakly converges to some $u_0$ in $L^2(M,\wedge^{n,q-1}T^*M \otimes Q,\text{loc})$. It follows from Lemma \ref{weakly convergence} that $(\eta+g^{-1})^{-\frac{1}{2}}u_\epsilon$ weakly converges to some $(\eta+g^{-1})^{-\frac{1}{2}}u_0$ in $L^2(M,\wedge^{n,q-1}T^*M \otimes Q,\text{loc})$. Let $\epsilon_1>0$ be given. Then we have
 \begin{equation}\label{L2 method: estimate for u_0}
 \begin{split}
 \int_K ||(\eta+g^{-1})^{\frac{1}{2}}u_0||_{\omega_{\epsilon_1}}
 \le&\liminf_{\epsilon\to 0} \int_K ||(\eta+g^{-1})^{\frac{1}{2}}u_{\epsilon}||_{\omega_{\epsilon_1}}\\
 \le &
 \liminf_{\epsilon\to 0} \int_K ||(\eta+g^{-1})^{\frac{1}{2}}u_{\epsilon}||_{\omega_{\epsilon}}\\
 \le &\liminf_{\epsilon\to 0} \int_M ||(\eta+g^{-1})^{\frac{1}{2}}u_{\epsilon}||_{\omega_{\epsilon}}.
 \end{split}
 \end{equation}

It follows from Lemma \ref{weakly converge lemma} that we know that $\tau_\epsilon$ weakly converges to $\tau_0$ in $\mathcal{H}_{2,1}$, $\sqrt{\lambda}\tau_\epsilon$ weakly converges to $\tilde{\tau}_0$ in $\mathcal{H}_{2,1}$ and $P_{\omega_\epsilon,h}(\sqrt{\lambda}\tau_\epsilon)$ weakly converges to $P_{\omega,h}(\tilde{\tau}_0)$ in $\mathcal{H}_{2,1}$. Lemma \ref{weakly convergence} shows that $\sqrt{\lambda}\tau_\epsilon$ weakly converges to $\sqrt{\lambda}\tau_0$ in $\mathcal{H}_{2,1}$ and we know that $\tilde{\tau}_0=\sqrt{\lambda}\tau_0$ in $\mathcal{H}_{2,1}$ since the weak limit is unique. Hence we have $P_{\omega_\epsilon,h}(\sqrt{\lambda}\tau_\epsilon)$ weakly converges to $P_{\omega,h}(\sqrt{\lambda}\tau_0)$ in $\mathcal{H}_{2,1}$.

It follows from $\tau_\epsilon$ weakly converges to $\tau_0$ and $P_{\omega_\epsilon,h}(\sqrt{\lambda}\tau_\epsilon)$ weakly converges to $P_{\omega,h}(\sqrt{\lambda}\tau_0)$ in $\mathcal{H}_{2,1}$ that we have $\tau_\epsilon$ weakly converges to $\tau_0$ in $L^2(M,\wedge^{n,q}T^*M\otimes Q,\text{loc})$ and $P_{\omega_\epsilon,h}(\sqrt{\lambda}\tau_\epsilon)$ weakly converges to $P_{\omega,h}(\sqrt{\lambda}\tau_0)$ in $L^2(M,\wedge^{n,q}T^*M\otimes Q,\text{loc})$. Let $K$ be any compact subset of $M$. We have
 \begin{equation}\label{L2 method: estimate for tau_0}
 \begin{split}
 \int_K ||\tau_0||_{\omega_{\epsilon_1}}
 \le&\liminf_{\epsilon\to 0} \int_K ||\tau_{\epsilon}||_{\omega_{\epsilon_1}}\\
 \le &
 \liminf_{\epsilon\to 0} \int_K ||\tau_{\epsilon}||_{\omega_{\epsilon}}\\
 \le &\liminf_{\epsilon\to 0} \int_M ||\tau_{\epsilon}||_{\omega_{\epsilon}}.
 \end{split}
 \end{equation}

Note that $u_\epsilon$ weakly converges to $u_0$ in $L^2(M,\wedge^{n,q-1}T^*M \otimes Q,\text{loc})$ and $P_{\omega_\epsilon,h}(\sqrt{\lambda}\tau_\epsilon)$ weakly converges to $P_{\omega,h}(\sqrt{\lambda}\tau_0)$ in $L^2(M,\wedge^{n,q}T^*M\otimes Q,\text{loc})$. Letting $\epsilon\to 0$ in \eqref{L2method: d-bar equation epsilon}, then we have
\begin{equation}\nonumber
v=D''u_0+P_{\omega,h}(\sqrt{\lambda}\tau_0).
\end{equation}

It follows from inequalities \eqref{L2method: estimate epsilon}, \eqref{L2 method: estimate for u_0} and \eqref{L2 method: estimate for tau_0} that we have
 \begin{equation}\label{L2 method: estimate for tau and u 1}
 \begin{split}
 &\int_K ||(\eta+g^{-1})^{\frac{1}{2}}u_0||_{\omega_{\epsilon_1}}dV_{\omega_{\epsilon_1}}+\int_K ||\tau_0||_{\omega_{\epsilon_1}}dV_{\omega_{\epsilon_1}}\\
 \le &\liminf_{\epsilon\to 0} \big( \int_M ||(\eta+g^{-1})^{\frac{1}{2}}u_{\epsilon}||_{\omega_{\epsilon}}dV_{\omega_{\epsilon}}+\int_M ||\tau_{\epsilon}||_{\omega_{\epsilon}}dV_{\omega_{\epsilon}}\big)\\
 \le &\liminf_{\epsilon\to 0}  \int_M\langle (B+\lambda
I)^{-1}v,v\rangle_{\omega_{\epsilon},h}dV_{\omega_{\epsilon}}\\
\le & \int_M\langle (B+\lambda
I)^{-1}v,v\rangle_{\omega,h}dV_{\omega}.
 \end{split}
 \end{equation}
When $\epsilon_1 \to 0$ in \eqref{L2 method: estimate for tau and u 1}, by monotone convergence theorem, we have
 \begin{equation}\nonumber
 \begin{split}
 \int_K ||(\eta+g^{-1})^{\frac{1}{2}}u_0||_{\omega}+\int_K ||\tau_0||_{\omega}
\le  \int_M\langle (B+\lambda
I)^{-1}v,v\rangle_{\omega,h}dV_{\omega}.
 \end{split}
 \end{equation}
 As $K$ is any compact subset of $M$, we have
  \begin{equation}\nonumber
 \begin{split}
 \int_M ||(\eta+g^{-1})^{\frac{1}{2}}u_0||_{\omega}+\int_M ||\tau_0||_{\omega}
\le  \int_M\langle (B+\lambda
I)^{-1}v,v\rangle_{\omega,h}dV_{\omega}.
 \end{split}
 \end{equation}
Lemma
\ref{d-bar equation with error term} has been proved.

\end{proof}

\begin{Lemma}
[Theorem 6.1 in \cite{DemaillyReg}, see also Theorem 2.2 in \cite{ZZ2019}]
\label{regularization on cpx mfld}
Let ($M,\omega$) be a complex manifold equipped with a hermitian metric
$\omega$, and $\Omega \subset \subset M $ be an open set. Assume that
$T=\widetilde{T}+\frac{\sqrt{-1}}{\pi}\partial\bar{\partial}\varphi$ is a closed
(1,1)-current on $M$, where $\widetilde{T}$ is a smooth real (1,1)-form and
$\varphi$ is a quasi-plurisubharmonic function. Let $\gamma$ be a continuous real
(1,1)-form such that $T \ge \gamma$. Suppose that the Chern curvature tensor of
$TM$ satisfies
\begin{equation}
\begin{split}
(\sqrt{-1}&\Theta_{TM}+\varpi \otimes Id_{TM})(\kappa_1 \otimes \kappa_2,\kappa_1
\otimes \kappa_2)\ge 0 \\
&\forall \kappa_1,\kappa_2 \in TM \quad with \quad \langle \kappa_1,\kappa_2
\rangle=0
\end{split}
\end{equation}
for some continuous nonnegative (1,1)-form $\varpi$ on M. Then there is a family
of closed (1,1)-currents
$T_{\zeta,\rho}=\widetilde{T}+\frac{\sqrt{-1}}{\pi}\partial\bar{\partial}
\varphi_{\zeta,\rho}$ on M ($\zeta \in (0,+\infty)$ and $\rho \in (0,\rho_1)$ for
some positive number $\rho_1$) independent of $\gamma$, such that
\par
$(i)\ \varphi_{\zeta,\rho}$ is quasi-plurisubharmonic on a neighborhood of
$\bar{\Omega}$, smooth on $M\backslash E_{\zeta}(T)$,
\\
increasing with respect to
$\zeta$ and $\rho$ on $\Omega $ and converges to $\varphi$ on $\Omega$ as $\rho
\to 0$.
\par
$(ii)\ T_{\zeta,\rho}\ge\gamma-\zeta\varpi-\delta_{\rho}\omega$ on $\Omega$.
\par
where $E_{\zeta}(T):=\{x\in M:v(T,x)\ge \zeta\}$ ($\zeta>0$) is the $\zeta$-upper level set of
Lelong numbers and $\{\delta_{\rho}\}$ is an increasing family of positive numbers
such that $\lim\limits_{\rho \to 0}\delta_{\rho}=0$.
\end{Lemma}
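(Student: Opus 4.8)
Lemma~\ref{regularization on cpx mfld} is Demailly's regularization theorem for closed $(1,1)$-currents with controlled loss of positivity, in the refined form of \cite{ZZ2019}, and since it is only used here as an input, the plan is to recall why it holds rather than to reprove it in full. First I would reduce the statement to a purely local smoothing problem: cover $\overline{\Omega}$ by finitely many coordinate balls $U_j$, absorb the smooth form $\widetilde{T}$ into a local potential so that on each $U_j$ one can write $\varphi=u_j+w_j$ with $u_j$ plurisubharmonic and $w_j$ smooth, and regularize $u_j$ by the standard convolution $u_j\ast\rho_{\varrho}$ with a radial mollifier. This produces smooth functions $\varphi_{j,\varrho}$ that decrease to $\varphi$ on slightly shrunken balls as $\varrho\to 0$ and satisfy a Hessian estimate $\tfrac{\sqrt{-1}}{\pi}\partial\bar\partial\varphi_{j,\varrho}\ge\gamma-\delta_{j,\varrho}\omega$ with $\delta_{j,\varrho}\to 0$, the error term $\delta_{j,\varrho}\omega$ measuring the failure of convolution to commute with the non-flat metric $\omega$. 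To obtain in addition smoothness off $E_{\zeta}(T)$ one combines this with Demailly's approximation of $T$ by a current with analytic singularities supported on $E_{\zeta}(T)$, which is where the second parameter $\zeta$ enters; the Lelong-number bookkeeping rests on the upper semicontinuity of Lelong numbers and Kiselman's minimum principle.

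The substantive step is to glue the local regularizations $\varphi_{j,\varrho}$ into a single global quasi-plurisubharmonic function. I would use Demailly's regularized maximum operator $\max\nolimits_{\eta}$, which is smooth, convex and nondecreasing in each argument and coincides with $\max$ outside an $\eta$-neighborhood of the diagonal, and build $\varphi_{\zeta,\varrho}$ by glueing the $\varphi_{j,\varrho}$ (shifted by controlled constants adapted to the cover) through $\max\nolimits_{\eta}$. The crucial point — and precisely where the curvature hypothesis on $TM$ is needed — is that patching two charts forces one to compare $\partial\bar\partial\varphi_{j,\varrho}$ computed in different holomorphic coordinate systems; the discrepancy is a curvature term governed by $\sqrt{-1}\Theta_{TM}$, and the assumption $(\sqrt{-1}\Theta_{TM}+\varpi\otimes \mathrm{Id}_{TM})(\kappa_1\otimes\kappa_2,\kappa_1\otimes\kappa_2)\ge 0$ for all $\kappa_1\perp\kappa_2$ is exactly what makes this discrepancy absorbable into a single negative term $-\zeta\varpi$. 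The two parameters then appear organically: $\zeta$ controls the size of the curvature correction $\zeta\varpi$, while $\varrho$ controls the mollification scale, with $\delta_{\varrho}$ the supremum over $j$ of the local losses; monotonicity of $\varphi_{\zeta,\varrho}$ in $\zeta$ and $\varrho$ on $\Omega$ is inherited from the monotonicity of $\max\nolimits_{\eta}$ and of the families $\varphi_{j,\varrho}$, and convergence $\varphi_{\zeta,\varrho}\to\varphi$ on $\Omega$ as $\varrho\to 0$ follows because the glueing is a regularized maximum of functions each decreasing to $\varphi$.

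The main obstacle is this glueing step: carrying out the precise estimate on how the complex Hessian transforms under the transition biholomorphisms, and verifying that every error produced by the regularized-maximum construction is dominated by $\gamma-\zeta\varpi-\delta_{\varrho}\omega$ on $\Omega$. This is the technical core of \cite{DemaillyReg} and of its refinement \cite{ZZ2019}, and nothing about it is special to the present situation. Consequently, in this paper the only thing that must actually be checked is that the data $(M,\omega)$, $T=\widetilde{T}+\tfrac{\sqrt{-1}}{\pi}\partial\bar\partial\varphi$, $\gamma$ and $\varpi$ meet the hypotheses of that theorem, after which the conclusion is quoted verbatim.
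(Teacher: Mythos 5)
The paper does not prove this lemma; it cites Demailly's Theorem 6.1 in \cite{DemaillyReg} and, via Remark 2.1 of \cite{ZZ2019}, notes only that the same proof yields the noncompact version with estimates on a relatively compact set. Your proposal takes the same stance (quote the external result, verify hypotheses) and your high-level sketch of the cited proof is broadly right: local smoothing with a controlled Hessian loss, the curvature hypothesis on $TM$ controlling the discrepancy under coordinate change, and Demailly's regularized maximum to glue and to obtain smoothness off $E_{\zeta}(T)$. A few details in your summary are off with respect to the specific theorem invoked (\cite{DemaillyReg}, i.e.\ the flow of the Chern connection version): the local smoothing there is not Euclidean convolution in charts but averaging along $\exp_{h}$, the ``holomorphic part'' of the exponential map of the Chern connection, which is precisely why the Hessian loss is governed by $\Theta_{TM}$ and $\varpi$ globally rather than chart by chart; the parameter $\zeta$ and the smoothness off $E_{\zeta}(T)$ come from taking a regularized maximum of the smoothed potential with a constant shift tied to the Lelong-number level (not from approximation with analytic singularities, which is a different Demailly regularization theorem via Ohsawa--Takegoshi/Bergman kernels); and Kiselman's minimum principle is not used in this variant. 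None of this affects the logic of the paper, since both you and the paper treat the lemma as a black-box citation, but the summary should be corrected so that it actually describes the theorem being cited rather than a cousin of it.
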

\begin{Remark}[see Remark 2.1 in \cite{ZZ2019}]
Lemma \ref{regularization on cpx mfld} is stated in \cite{DemaillyReg} in the case $M$ is a compact complex manifold. The similar proof as in \cite{DemaillyReg} shows that Lemma \ref{regularization on cpx mfld} on noncompact complex manifold still holds where the uniform estimate $(i)$ and $(ii)$ are obtained only on a relatively compact subset $\Omega$.
\end{Remark}

\begin{Remark}\label{psh fun is singular metric}
Let $M$ be a weakly pseudoconvex K\"ahler manifold. Let $\varphi$ be a plurisubharmonic function on $M$. Then $h:=e^{-\varphi}$ is a singular metric on $E:=M\times \mathbb{C}$ in the sense of Definition \ref{singular metric} and $h$ satisfies $\Theta_h(E)\ge^s_{Nak} 0$ in the sense of Definition \ref{singular nak}.
\end{Remark}
\begin{proof}As $M$ is weakly pseudoconvex, there exists a smooth plurisubharmonic
exhaustion function $P$ on $M$. Let $M_j:=\{P<j\}$ $(k=1,2,...,) $. We choose $P$ such that
$M_1\ne \emptyset$. Then $M_j$ satisfies $M_1 \Subset  M_2\Subset  ...\Subset
M_j\Subset  M_{j+1}\Subset  ...$ and $\cup_{j=1}^n M_j=M$. Each $M_j$ is weakly
pseudoconvex K\"ahler manifold.

Let $\delta>0$ be a real number. Denote $\varphi_l:=\max\{\varphi,-l\}+\frac{\delta}{l}$. Note that $\varphi_{l+1}-\varphi_l\le -\frac{1}{l(l+1)}\delta$. We also note that $\varphi_l$ is a plurisubharmonic function on $M$ and $v(T,x)=0$ for any $x\in M$.

Let $M=M_{j+1}$, $\Omega=M_{j}$, $T=\frac{\sqrt{-1}}{\pi}\partial\bar{\partial}\psi$
, $\gamma =0$ in Lemma \ref{regularization on cpx mfld}, then there exists a family of functions $\varphi_{j,l,\zeta,\rho}$ ($\zeta\in(0,+\infty)$ and $\rho\in(0,\rho_1)$ for some positive $\rho_1$)
on $M_{j+1}$ such that\\
(1) $\varphi_{j,l,\zeta,\rho}$ is a quasi-plurisubharmonic function on a neighborhood of $\overline{M_j}$, smooth on $M_{j+1}$, increasing with respect to $\zeta$ and $\rho$ on $M_j$ and converges to $\varphi_l$ on $M_j$ as $\rho \to 0$,\\
(2) $\frac{\sqrt{-1}}{\pi}\partial\bar{\partial}\varphi_{j,l,\zeta,\rho}\ge-\zeta\varpi-\delta_{\rho}\omega$ on $M_j$,\\
where $\{\delta_{\rho}\}$ is an increasing family of positive numbers such that $\lim_{\rho\to 0}\delta_{\rho}=0$.

Let $\rho=\frac{1}{m}$. Let $\tilde{\delta}_m:=\delta_{\frac{1}{m}}$ and $\zeta=\tilde{\delta}_m$. Denote $\varphi_{j,l,m}:=\varphi_{j,l,\tilde{\delta}_m,\frac{1}{m}}$. Then we have a sequence of functions $\{\varphi_{j,l,m}\}$ satisfying\\
(1') $\varphi_{j,l,m}$ is quasi-plurisubharmonic function on $\overline{M_j}$, smooth on $M_{j+1}$, decreasing with respect to $m$ and converges to $\varphi_l$ on $M_j$ as $m\to+\infty$,\\
(2') $\frac{\sqrt{-1}}{\pi}\partial\bar{\partial}\varphi_{j,l,m}\ge-\tilde{\delta}_m\varpi-\tilde{\delta}_m\omega$ on $M_j$,\\
where $\{\tilde{\delta}_m\}$ is an decreasing family of positive numbers such that $\lim_{m\to+\infty}\tilde{\delta}_m=0$.

As $M_j$ is relatively compact in $M$, there exists a positive number $b\ge 1$ such that $b\omega\ge\varpi$ on $M_j$. Then condition (2') becomes\\
(2'') $\frac{\sqrt{-1}}{\pi}\partial\bar{\partial}\varphi_{j,l,m}\ge-\tilde{\delta}_m\varpi-\tilde{\delta}_m\omega
\ge-2b\tilde{\delta}_m\omega$ on $M_j$.

Now, for each $l\ge 1$, we choose $m_l\in\mathbb{Z}_{\ge 1}$ such that $\varphi_{j,l}:=\varphi_{j,l,m_l}$ is decreasing with respect to $l$ and converges to $\varphi$ when $l\to +\infty$. Note that $M_{j-1}\subset\subset M_j\subset \subset M_{j+1}$. Let $m_1$ be any positive integer. Now we assume that $\{m_1,m_2,\ldots,m_l\}$ has been chosen. Now we choose $m_{l+1}$.

 Denote $E_{j,l+1,m}:=\{x\in M_j |\ \varphi_{j,l+1,m}(x)-\varphi_{j,l,m_l}(x)< 0\}$ and denote $E_{j,l+1}:=\{x\in M_j|\ \varphi_{l+1}(x)-\varphi_{j,l,m_l}(x)<0\}$. Note that $\varphi_{j,l+1,m}$ and $\varphi_{j,l,m_l}$ is smooth on $M_{j+1}$, we know that $E_{j,2,m}$ is open subset of $M_j$ for any $m\ge 1$. As $\varphi_{j,l+1,m+1}\le \varphi_{j,l+1,m}$ on $M_j$ and $\varphi_{l+1}<\varphi_l\le\varphi_{j,l,m_l}$, we have $E_{j,l+1,1}\subset E_{j,l+1,2}\subset\cdots \subset E_{j,l+1,m}\subset E_{j,l+1,m+1}\subset\cdots \subset E_{j,l+1}=M_j$ for any $m\ge 1$. Hence we know that $\cup_{m=1}^{+\infty} E_{j,l+1,m}$ is an open cover of $M_j$ and then an open cover of $\overline{M_{j-1}}$. By the relatively compactness of $M_{j-1}$, we know that there exists a positive integer $m_{l+1}$ such that $M_{j-1}\subset E_{j,l+1,m_{l+1}}$. Let $\varphi_{j,l+1}:=\varphi_{2,l+1,m_{l+1}}$ and we have $\varphi_{j,l+1}<\varphi_{j,l}=\varphi_{1,l,m_l}$ on $M_{j-1}$.

Hence we can find a sequence of smooth plurisubharmonic functions $\varphi_{j,l}:=\varphi_{j,l,m_l}$ on $M_{j-1}$ which is decreasing with respect to $l$ and converges to $\varphi$ when $l\to +\infty$. We note that
$$\frac{\sqrt{-1}}{\pi}\partial\bar{\partial}\varphi_{j,l}
\ge-2b\tilde{\delta}_{m_l}\omega$$
on $M_{j-1}$. Then we know that  $(M,M\times\mathbb{C},\emptyset,M_j,e^{-\varphi},e^{-\varphi_{j,l}})$ is a singular metric on $M\times\mathbb{C}$ and $\Theta_{e^{-\varphi}}(E)\ge^s_{Nak} 0$.
\end{proof}

\begin{Lemma}
[Theorem 1.5 in \cite{Demailly82}]
\label{completeness}
Let M be a K\"ahler manifold, and Z be an analytic subset of M. Assume that
$\Omega$ is a relatively compact open subset of M possessing a complete K\"ahler
metric. Then $\Omega\backslash Z $ carries a complete K\"ahler metric.

\end{Lemma}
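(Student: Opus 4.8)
The plan is to write down explicitly a complete Kähler metric on $\Omega\setminus Z$ by correcting a given complete Kähler metric $\omega$ on $\Omega$ with $\sqrt{-1}\partial\bar\partial$ of a function that forces the metric to blow up transversally to $Z$. First I would use that $\overline{\Omega}$ is compact in $M$ to cover a neighbourhood of $Z\cap\overline{\Omega}$ in $M$ by finitely many coordinate charts $U_\alpha$ on each of which $Z$ is the common zero set of holomorphic functions $g_{\alpha,1},\dots,g_{\alpha,N_\alpha}$; after shrinking I arrange $t_\alpha:=\sum_j|g_{\alpha,j}|^2<e^{-1}$ on $U_\alpha$, so that $-\log t_\alpha>1$ there. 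Set $\phi_\alpha:=-\log(-\log t_\alpha)$ on $U_\alpha\setminus Z$. Since $\log t_\alpha$ is plurisubharmonic, $s_\alpha:=-\log t_\alpha>0$ has $\sqrt{-1}\partial\bar\partial s_\alpha\le 0$, and a direct computation gives
\[
\sqrt{-1}\partial\bar\partial\phi_\alpha=-\frac{\sqrt{-1}\partial\bar\partial s_\alpha}{s_\alpha}+\frac{\sqrt{-1}\partial s_\alpha\wedge\bar\partial s_\alpha}{s_\alpha^2}\ \ge\ \sqrt{-1}\,\partial\log s_\alpha\wedge\bar\partial\log s_\alpha\ \ge\ 0,
\]
so each $\phi_\alpha$ is plurisubharmonic and smooth on $U_\alpha\setminus Z$ and tends to $-\infty$ along $Z$.

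Next I would globalize. On $U_\alpha\cap U_\beta$ two sets of generators of the ideal sheaf of $Z$ satisfy $C^{-1}t_\beta\le t_\alpha\le C t_\beta$, hence $\log t_\alpha/\log t_\beta\to 1$ near $Z$, so $\phi_\alpha-\phi_\beta$ is smooth and bounded together with all its derivatives near $Z\cap\overline{\Omega}$. Choosing a partition of unity $\{\chi_\alpha\}$ subordinate to $\{U_\alpha\}$ (extended by $0$ away from $Z$) and setting $\Phi:=\sum_\alpha\chi_\alpha\phi_\alpha$, the local identity $\Phi=\phi_\alpha+\sum_\beta\chi_\beta(\phi_\beta-\phi_\alpha)$ shows $\sqrt{-1}\partial\bar\partial\Phi\ge-C'\omega_M$ on a neighbourhood of $Z\cap\overline{\Omega}$, where $\omega_M$ is a fixed Kähler metric on $M$ (which exists since $M$ is Kähler). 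I then define
\[
\tilde\omega:=\omega+(C'+1)\,\omega_M+\sqrt{-1}\partial\bar\partial\Phi
\]
on $\Omega\setminus Z$. This is a closed real $(1,1)$-form, and $\tilde\omega-\omega=(C'+1)\omega_M+\sqrt{-1}\partial\bar\partial\Phi\ge\omega_M>0$, so $\tilde\omega$ is a genuine Kähler metric with $\tilde\omega\ge\omega$ and $\tilde\omega\ge\omega_M$, which near $Z$ moreover dominates $\sqrt{-1}\partial\bar\partial\phi_\alpha$ up to the already-absorbed bounded correction.

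Finally I would verify completeness of $\tilde\omega$ via the divergent-path criterion: a sequence eventually leaving every compact subset of $\Omega\setminus Z$ either accumulates on $\partial\Omega$, in which case $\tilde\omega\ge\omega$ and completeness of $\omega$ gives infinite $\tilde\omega$-distance; or it accumulates on $Z$, in which case, transversally to $Z$ where $t_\alpha\sim r^2$ with $r$ the distance to $Z$, the bound $\tilde\omega\gtrsim\sqrt{-1}\,\partial\log(-\log t_\alpha)\wedge\bar\partial\log(-\log t_\alpha)$ forces length at least a constant times $\int_0^{\delta}\frac{dr}{r\log(1/r)}$, which diverges. Hence $\tilde\omega$ is complete on $\Omega\setminus Z$. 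The step I expect to be the main obstacle is the globalization: checking uniformly near $Z\cap\overline{\Omega}$ that the partition-of-unity corrections $\chi_\beta(\phi_\beta-\phi_\alpha)$ have $\sqrt{-1}\partial\bar\partial$ bounded relative to $\omega_M$ — this is exactly where the comparability of the various local defining functions of $Z$ and the slow growth of $-\log t_\alpha$ enter essentially — while keeping the whole construction manifestly $d$-closed.
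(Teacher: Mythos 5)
The paper merely cites this lemma from Demailly's 1982 paper rather than proving it, so I will compare your proposal with Demailly's argument. Your overall strategy — adding $\sqrt{-1}\partial\bar\partial$ of a function of the form $-\log(-\log t)$, where $t$ is a sum of squared moduli of local defining functions of $Z$, so that the metric picks up the completeness-inducing singularity $\frac{dr^2}{r^2(\log r)^2}$ along $Z$ — is exactly Demailly's. However, your globalization step contains a genuine gap, and it is precisely the step you flagged as the main obstacle.

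You assert that, because $C^{-1}t_\beta\le t_\alpha\le Ct_\beta$ implies $\log t_\alpha/\log t_\beta\to 1$ near $Z$, the difference $\phi_\alpha-\phi_\beta$ is ``bounded together with all its derivatives near $Z\cap\overline{\Omega}$,'' and hence $\sqrt{-1}\partial\bar\partial\Phi\ge-C'\omega_M$. The boundedness of $\phi_\alpha-\phi_\beta$ itself is fine, but its first derivative is \emph{not} bounded. Writing $t_\alpha=u\,t_\beta$ with $u=t_\alpha/t_\beta$ smooth and bounded away from $0$ and $\infty$, one finds
\[
\partial(\phi_\alpha-\phi_\beta)
=-\frac{\partial u}{u(\log u+\log t_\beta)}
+\frac{\partial t_\beta}{t_\beta}\cdot\frac{\log u}{\log t_\beta\,(\log u+\log t_\beta)}.
\]
The first summand tends to $0$, but the second contains $\partial t_\beta/t_\beta$, which behaves like $t_\beta^{-1/2}$ near $Z$ (since $|\partial t_\beta|\lesssim t_\beta^{1/2}$), multiplied only by a factor of order $(\log t_\beta)^{-2}$. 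Hence $|\partial(\phi_\alpha-\phi_\beta)|$ grows like $t_\beta^{-1/2}(\log t_\beta)^{-2}$ and is unbounded. Consequently the cross terms $\sqrt{-1}\partial\chi_\beta\wedge\bar\partial(\phi_\beta-\phi_\alpha)+\text{c.c.}$ are \emph{not} bounded below by $-C'\omega_M$, and the inequality $\sqrt{-1}\partial\bar\partial\Phi\ge-C'\omega_M$ — the cornerstone of your definition of $\tilde\omega$ — is not established as stated.

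The repair is to glue at the level of $\log t_\alpha$ rather than at the level of $\phi_\alpha$, as Demailly does. Set $v:=\sum_\alpha\chi_\alpha\log t_\alpha$. In $\sqrt{-1}\partial\bar\partial v$ the cross terms are $\sum_\alpha\sqrt{-1}\partial\chi_\alpha\wedge\bar\partial\log t_\alpha+\text{c.c.}$; since $\sum_\alpha\partial\chi_\alpha=0$, these equal $\sum_\alpha\sqrt{-1}\partial\chi_\alpha\wedge\bar\partial(\log t_\alpha-\log t_{\alpha_0})+\text{c.c.}$ for any fixed $\alpha_0$, and $\log t_\alpha-\log t_{\alpha_0}=\log(t_\alpha/t_{\alpha_0})$ is a genuinely smooth function with bounded derivatives of all orders near $Z\cap\overline{\Omega}$. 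So one obtains $\sqrt{-1}\partial\bar\partial v\ge -C_1\omega_M$ on a neighbourhood of $\overline\Omega$, and after arranging $v\le -C_2<0$ there, one may take $\Phi:=-\log(-v)$ globally, for which $\sqrt{-1}\partial\bar\partial\Phi\ge-\frac{C_1}{C_2}\omega_M+\frac{\sqrt{-1}\partial v\wedge\bar\partial v}{v^2}$. The first summand is bounded and the second supplies the blow-up near $Z$. This is the crucial point your argument misses: the cancellation $\sum\partial\chi_\alpha=0$ tames the unbounded terms only when the functions being glued differ by functions with bounded derivatives, which is true for the $\log t_\alpha$ but false for the $\phi_\alpha=-\log(-\log t_\alpha)$ because the outer logarithm destroys the linearity. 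An alternative (but more delicate) repair of your original scheme would be to absorb the unbounded negative cross terms not into $\omega_M$ but into the positive rank-one contribution $\sum_\alpha\chi_\alpha\,\sqrt{-1}\partial\log s_\alpha\wedge\bar\partial\log s_\alpha\sim t^{-1}(\log t)^{-2}$, which asymptotically dominates $|\partial(\phi_\alpha-\phi_\beta)|^2\sim t^{-1}(\log t)^{-4}$; this is possible but requires a careful Cauchy--Schwarz argument that keeps track of the conormal direction, and you neither carry it out nor indicate its necessity. Your completeness check at the end is correct granted the construction, and your local psh computation for $\phi_\alpha$ is correct.
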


\begin{Lemma}
[Lemma 6.9 in \cite{Demailly82}]
\label{extension of equality}
Let $\Omega$ be an open subset of $\mathbb{C}^n$ and Z be a complex analytic subset of
$\Omega$. Assume that $v$ is a (p,q-1)-form with $L^2_{loc}$ coefficients and h is
a (p,q)-form with $L^1_{loc}$ coefficients such that $\bar{\partial}v=h$ on
$\Omega\backslash Z$ (in the sense of distribution theory). Then
$\bar{\partial}v=h$ on $\Omega$.
\end{Lemma}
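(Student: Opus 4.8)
The plan is to verify the distributional equation defining $\bar\partial v=h$ directly against test forms, which converts the statement into a local small-energy (capacity) estimate for the cutoff of the analytic set $Z$. Since the assertion is local and we already know $\bar\partial v=h$ on $\Omega\setminus Z$, it suffices to work on a relatively compact coordinate ball $B\Subset\Omega$ on which $Z\subset\{g=0\}$ for some holomorphic $g\not\equiv0$, and to show that for every smooth test form $\phi$ of complementary bidegree with $\mathrm{supp}\,\phi\subset B$ the pairing
\begin{equation*}
\int_{B} v\wedge\bar\partial\phi\;=\;c_{p,q}\int_{B} h\wedge\phi,\qquad c_{p,q}\in\{+1,-1\},
\end{equation*}
holds, knowing by hypothesis that it holds whenever $\mathrm{supp}\,\phi\subset B\setminus Z$.

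First I would build cutoff functions $\chi_\epsilon$ ($0<\epsilon<1$) as functions of $\log|g|$: put $\chi_\epsilon\equiv0$ on $\{|g|\le\epsilon^2\}$, $\chi_\epsilon\equiv1$ on $\{|g|\ge\epsilon\}$, and let $\chi_\epsilon$ interpolate linearly in $\log|g|$ on the shell $\{\epsilon^2<|g|<\epsilon\}$. Then $0\le\chi_\epsilon\le1$, each $\chi_\epsilon$ vanishes on a neighborhood of $Z$, and $\chi_\epsilon\uparrow1$ pointwise off $\{g=0\}$, hence a.e. on $\Omega$. The key input is the energy bound: on the shell $\bar\partial\chi_\epsilon=\frac{1}{-\log\epsilon}\,\bar\partial\log|g|$, so for every compact $K\subset\Omega$
\begin{equation*}
\int_{K}\bigl|\bar\partial\chi_\epsilon\bigr|^2\,dV\;\le\;\frac{C}{(\log\epsilon)^2}\int_{K\cap\{\epsilon^2<|g|<\epsilon\}}\bigl|\bar\partial\log|g|\bigr|^2\,dV .
\end{equation*}
Here I would invoke the standard capacity estimate that the integral on the right is $O(-\log\epsilon)$: near the regular part of $\{g=0\}$ one chooses coordinates with $g=z_1$, reducing it to the elementary planar computation $\int_{\epsilon^2<|z_1|<\epsilon}|z_1|^{-2}\,dA(z_1)=2\pi(-\log\epsilon)$, and the lower-dimensional set $\{g=0\}_{\mathrm{sing}}$ is dealt with either by a resolution of singularities of $g$ or by induction on $\dim Z$. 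It follows that $\int_{K}|\bar\partial\chi_\epsilon|^2\,dV=O\bigl(1/(-\log\epsilon)\bigr)\to0$ as $\epsilon\to0$.

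With the $\chi_\epsilon$ at hand, I would apply the identity valid off $Z$ to the test form $\chi_\epsilon\phi$ and expand $\bar\partial(\chi_\epsilon\phi)=\bar\partial\chi_\epsilon\wedge\phi+\chi_\epsilon\,\bar\partial\phi$:
\begin{equation*}
\int_{B} v\wedge\bar\partial\chi_\epsilon\wedge\phi\;+\;\int_{B}\chi_\epsilon\, v\wedge\bar\partial\phi\;=\;c_{p,q}\int_{B}\chi_\epsilon\, h\wedge\phi .
\end{equation*}
Letting $\epsilon\to0$: the right-hand side tends to $c_{p,q}\int_B h\wedge\phi$ by dominated convergence, since $h\in L^1_{\mathrm{loc}}$ and $0\le\chi_\epsilon\le1$; the second term on the left tends to $\int_B v\wedge\bar\partial\phi$ for the same reason, using $v\in L^2_{\mathrm{loc}}\subset L^1_{\mathrm{loc}}$; and the first term is bounded by $\|v\|_{L^2(\mathrm{supp}\,\phi)}\,\|\bar\partial\chi_\epsilon\|_{L^2(\mathrm{supp}\,\phi)}\,\sup|\phi|$, which tends to $0$ by Cauchy--Schwarz together with the energy bound. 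Passing to the limit gives the pairing identity on all of $B$, and hence $\bar\partial v=h$ on $\Omega$.

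The only genuinely nontrivial ingredient is the capacity/energy estimate $\int_{K\cap\{\epsilon^2<|g|<\epsilon\}}\bigl|\bar\partial\log|g|\bigr|^2\,dV=O(-\log\epsilon)$; the rest is soft functional analysis. It is also exactly the place where the hypothesis that $v$ has $L^2_{\mathrm{loc}}$ (and not merely $L^1_{\mathrm{loc}}$) coefficients is used: the logarithmic gain $\|\bar\partial\chi_\epsilon\|_{L^2}^2=O(1/(-\log\epsilon))$ is precisely what is needed to kill the error term $\int_B v\wedge\bar\partial\chi_\epsilon\wedge\phi$ in the limit.
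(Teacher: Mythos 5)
The paper does not supply a proof of this lemma; it is cited verbatim from Demailly's 1982 paper, so I compare your argument to the known proof of that reference. Your architecture is exactly the standard one: localize, dominate $Z$ by $\{g=0\}$ for a holomorphic $g\not\equiv0$, build cutoffs $\chi_\epsilon$ vanishing near $\{g=0\}$ with $\|\bar\partial\chi_\epsilon\|_{L^2(K)}\to0$, test against $\chi_\epsilon\phi$, and use Cauchy--Schwarz together with $v\in L^2_{\mathrm{loc}}$ to kill the error term $\int v\wedge\bar\partial\chi_\epsilon\wedge\phi$. You have also correctly identified why the hypothesis must be $v\in L^2_{\mathrm{loc}}$ rather than merely $L^1_{\mathrm{loc}}$.

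There is, however, a genuine gap in your justification of the capacity estimate $\int_{K\cap\{\epsilon^2<|g|<\epsilon\}}|\bar\partial\log|g||^2\,dV=O(-\log\epsilon)$. You verify it only near the regular locus of $\{g=0\}$ (where one may take $g=z_1$) and dispatch the singular locus by appealing to ``a resolution of singularities or induction on $\dim Z$'' without carrying either out. Neither reduction is routine: after pulling back by a log-resolution $\pi$, the volume form $\pi^*\,dV$ acquires a Jacobian vanishing along the exceptional divisor, so the shell integral is not literally your SNC computation; and it is unclear what the inductive hypothesis on $\dim Z$ would even be, since the obstruction sits at $\mathrm{Sing}(\{g=0\})$, which is not itself the zero set of the form whose energy you need to bound. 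The standard way to close this --- and essentially Demailly's route --- sidesteps the singular locus entirely: set $\psi=-\log(-\log|g|^2)$ on $\{|g|<1\}$. Then $\psi$ is plurisubharmonic, and a direct computation gives $i\partial\bar\partial\psi=i\,\partial\psi\wedge\bar\partial\psi$ on $\{g\ne0\}$; since the positive current $i\partial\bar\partial\psi$ has locally finite mass, $\int_K i\,\partial\psi\wedge\bar\partial\psi\wedge\omega^{n-1}<+\infty$, that is, $\int_K\frac{|\bar\partial\log|g||^2}{(\log|g|)^2}\,dV<+\infty$, with no analysis near $\mathrm{Sing}(\{g=0\})$ required. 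From this you may either build the cutoff out of $\psi$ directly (the ``double-log'' cutoff $\theta_\epsilon=\chi(\epsilon\psi+1)$, for which $\|\bar\partial\theta_\epsilon\|_{L^2(K)}^2\le C\epsilon^2\int_K|\bar\partial\psi|^2\to0$), or keep your single-log cutoff and replace the unproven $O(-\log\epsilon)$ by the weaker but fully justified $o\bigl((\log\epsilon)^2\bigr)$: on the shell one has $(\log|g|)^2\le4(\log\epsilon)^2$, so
\begin{equation*}
\int_{K\cap\{\epsilon^2<|g|<\epsilon\}}|\bar\partial\log|g||^2\,dV\le 4(\log\epsilon)^2\int_{K\cap\{\epsilon^2<|g|<\epsilon\}}\frac{|\bar\partial\log|g||^2}{(\log|g|)^2}\,dV=o\bigl((\log\epsilon)^2\bigr)
\end{equation*}
by dominated convergence, which is all that $\|\bar\partial\chi_\epsilon\|_{L^2(K)}^2=O\bigl((\log\epsilon)^{-2}\bigr)\cdot\int_{\text{shell}}|\bar\partial\log|g||^2\to0$ requires. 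A smaller point: your $\chi_\epsilon$ is only Lipschitz, so mollify it before using $\chi_\epsilon\phi$ as a test form.
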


The following notations can be referred to \cite{Boucksom note}.

Let $X$ be a complex manifold. An upper semi-continuous function $u:X\to [-\infty,+\infty)$ is quasi-plurisubharmonic if it is locally of the form $u=\varphi+g$ where $\varphi$ is plurisubharmonic and $g$ is smooth.
Let $\theta$ be a closed real $(1,1)$ form on $X$. By Poincar\'{e} lemma, $\theta$ is locally of the form $\theta=dd^c f$ for a smooth real-valued function $f$ which is called a local potential of $\theta$. We call a quasi-plurisubharmonic function $u$ is $\theta$-plurisubharmonic if $\theta+dd^c u\ge 0$ in the sense of currents.
\begin{Lemma}[see \cite{Demaillybook}, see also \cite{Boucksom note}]
\label{regular of max} For arbitrary $\eta=(\eta_1,\ldots,\eta_p)\in (0,+\infty)^p$, the function
$$M_{\eta}(t_1,\ldots,t_p)=\int_{\mathbb{R}^p}\max\{t_1+h_1,\ldots,t_p+h_p\}\prod\limits_{1\le j\le p}\theta(\frac{h_j}{\eta_j})dh_1\ldots dh_p$$
possesses the following properties:\par
(1) $M_{\eta}(t_1,\ldots,t_p)$ is non decreasing in all variables, smooth and convex on $\mathbb{R}^p$;\par
(2) $\max\{t_1,\ldots,t_p\}\le M_{\eta}(t_1,\ldots,t_p)\le \max\{t_1+\eta_1,\ldots,t_p+\eta_p\}$;\par
(3) $M_{\eta}(t_1,\ldots,t_p)=M_{\eta_1,\ldots,\hat{\eta}_j,\ldots,\eta_p}(t_1,\ldots,\hat{t}_j,\ldots,t_p)$ if $t_j+\eta_j\le \max\limits_{k\neq j}\{t_k-\eta_k\}$;\par
(4) $M_{\eta}(t_1+a,\ldots,t_p+a)=M_{\eta}(t_1,\ldots,t_p)+a$ for any $a\in\mathbb{R}$;\par
(5) if $u_1,\ldots,u_p$ are plurisubharmonic functions, then $u=M_{\eta}(u_1,\ldots,u_p)$ is plurisubharmonic;\par
(6) if $u_1,\ldots,u_p$ are $\theta$-plurisubharmonic functions, then $u=M_{\eta}(u_1,\ldots,u_p)$ is $\theta$-plurisubharmonic function.
\end{Lemma}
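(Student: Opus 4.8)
The plan is to reduce every one of the six properties to a single structural remark: after absorbing the Jacobian (so that we work with the correctly normalized kernel $\prod_j\frac{1}{\eta_j}\theta(h_j/\eta_j)$), the number $M_\eta(t_1,\dots,t_p)$ is the integral of the function $(h_1,\dots,h_p)\mapsto\max\{t_1+h_1,\dots,t_p+h_p\}$ against the measure $d\mu_\eta:=\prod_j\frac{1}{\eta_j}\theta(h_j/\eta_j)\,dh_1\cdots dh_p$, and $\mu_\eta$ is (a) a probability measure, since $\theta\ge0$ and $\int_{\mathbb R}\theta=1$; (b) symmetric about the origin, since $\theta$ is even; and (c) supported in $\prod_j[-\eta_j,\eta_j]$, since $\operatorname{supp}\theta\subset[-1,1]$. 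For each fixed $h$ the map $t\mapsto\max\{t_1+h_1,\dots,t_p+h_p\}$ is convex (a maximum of finitely many affine functions) and non-decreasing in each $t_j$; averaging against the positive measure $\mu_\eta$ preserves both features, which gives the convexity and monotonicity in (1). For smoothness in (1) I would change variables $s_j=t_j+h_j$, writing $M_\eta(t)=\int_{\mathbb R^p}\max\{s_1,\dots,s_p\}\prod_j\frac{1}{\eta_j}\theta\bigl(\tfrac{s_j-t_j}{\eta_j}\bigr)\,ds$, and differentiate under the integral sign; for $t$ ranging in a bounded set the integrand and all its $t$-derivatives are supported in a fixed compact $s$-set and bounded there, so $M_\eta\in C^\infty(\mathbb R^p)$.

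Properties (2)--(4) then drop out. For (2), on $\operatorname{supp}\mu_\eta$ one has $t_j+h_j\le t_j+\eta_j$ for each $j$, hence $\max\{t_1+h_1,\dots,t_p+h_p\}\le\max\{t_1+\eta_1,\dots,t_p+\eta_p\}$, and integrating against the probability measure $\mu_\eta$ gives the upper bound; the lower bound $M_\eta(t)\ge\max\{t_1,\dots,t_p\}$ is Jensen's inequality for the convex function $\max$ and the measure $\mu_\eta$, whose barycenter is the origin by symmetry. Property (4) is the pointwise identity $\max\{(t_1+a)+h_1,\dots,(t_p+a)+h_p\}=a+\max\{t_1+h_1,\dots,t_p+h_p\}$ integrated against $\mu_\eta$. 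For (3), the hypothesis $t_j+\eta_j\le\max_{k\neq j}\{t_k-\eta_k\}$ forces, on $\operatorname{supp}\mu_\eta$, the chain $t_j+h_j\le t_j+\eta_j\le\max_{k\neq j}\{t_k-\eta_k\}\le\max_{k\neq j}\{t_k+h_k\}$, so the $j$-th slot never realizes the maximum; thus $\max\{t_1+h_1,\dots,t_p+h_p\}=\max_{k\neq j}\{t_k+h_k\}$ on the support, and integrating out the now-irrelevant variable $h_j$ (which contributes a factor $1$) yields precisely $M_{\eta_1,\dots,\widehat{\eta_j},\dots,\eta_p}(t_1,\dots,\widehat{t_j},\dots,t_p)$.

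For (5), fix a coordinate patch and plurisubharmonic functions $u_1,\dots,u_p$. For each fixed $h\in\operatorname{supp}\mu_\eta$ the function $z\mapsto\max_k\{u_k(z)+h_k\}$ is a finite maximum of plurisubharmonic functions, hence plurisubharmonic; since such functions are locally uniformly bounded above, Fubini (for the sub-mean-value inequality over small balls) and the reverse Fatou lemma (for upper semicontinuity) show that the $\mu_\eta$-average $z\mapsto M_\eta(u_1(z),\dots,u_p(z))$ is again plurisubharmonic. Alternatively one simply invokes the standard fact that a finite convex function on $\mathbb R^p$ which is non-decreasing in each variable, composed with plurisubharmonic functions, is plurisubharmonic. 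For (6), localize the closed real $(1,1)$-form $\theta$ of the ambient manifold as $\theta=dd^cf$ with $f$ smooth, so each $f+u_k$ is plurisubharmonic; by property (4), $M_\eta(u_1,\dots,u_p)+f=M_\eta(f+u_1,\dots,f+u_p)$, which is plurisubharmonic by (5); hence $dd^c\bigl(M_\eta(u_1,\dots,u_p)\bigr)+\theta\ge0$, i.e.\ $M_\eta(u_1,\dots,u_p)$ is $\theta$-plurisubharmonic.

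I expect no genuine obstacle: once the convolution picture is set up, the lemma is a sequence of short verifications. The points that actually require attention are the evenness of the mollifier $\theta$ (needed for the lower bound in (2)), the support argument that kills the $j$-th coordinate in (3), and the measure-theoretic bookkeeping — local boundedness from above together with Fubini and Fatou — needed to pass plurisubharmonicity through the integral in (5) and hence (6); one should also fix at the outset the normalization and symmetry conventions on $\theta$ that are left implicit in the statement.
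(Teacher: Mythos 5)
Your argument is correct, and for property (6) — the only one the paper actually proves in detail — it coincides with the paper's reasoning: localize $\theta=dd^c f$ for a local potential $f$, observe each $f+u_k$ is plurisubharmonic, apply (4) to write $M_\eta(u_1,\ldots,u_p)+f=M_\eta(f+u_1,\ldots,f+u_p)$, and invoke (5). For (1)--(5) the paper simply defers to \cite{Demaillybook}; your sketches reproduce the standard arguments there (monotonicity and convexity by averaging a convex increasing integrand, smoothness by the change of variables $s_j=t_j+h_j$ and differentiation under the integral sign, the support and barycenter arguments for (2) and (3), and the convex-increasing-composed-with-psh fact for (5)), and you correctly observe that the displayed formula is missing the normalizing factors $\prod_j 1/\eta_j$ present in Demailly's book, without which the probability-measure normalization used in (2) would not hold.
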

\begin{proof}The proof of (1)-(5) can be referred to \cite{Demaillybook} and the proof of (6) can be referred to \cite{Boucksom note}. For the convenience of the readers, we recall the proof of (6).

 Let $f$ be a local potential of $\theta$. We know $f+u_i$ is plurisubharmonic function. It follows from (4) and (5) that $M_{\eta}(u_1+f,\ldots,u_p+f)=M_{\eta}(u_1,\ldots,u_p)+f$ is plurisubharmonic. Hence $u=M_{\eta}(u_1,\ldots,u_p)$ is $\theta$-plurisubharmonic function.
\end{proof}

\subsection{Proof of Lemma \ref{L2 method}}

Note that $X\backslash \{F=0\}$ is a weakly pseudoconvex K{\"a}hler manifold. The following remark shows that we can assume that $F$ has no zero points on $M$.
\begin{Remark}

As $(X,E,\Sigma,X_j,h,h_s)$ is a singular hermitian metric on $E$ and $\Theta_h(E)\ge^s_{Nak}0$. We know that for any compact subset $K$, there exist a relatively compact subset $X_{j_K}\subset X$ containing $K$ and  a $C^2$ smooth hermitian metric $h_{j_K,1}$ on $X_{j_K}$ such that $h_{j_K,1}\le h$ on $K\subset X_{j_K}$.

Assume that there exists a holomorphic $E$-valued $(n,0)$ form $\hat{F}$ on $X\backslash \{F=0\}$ such that
\begin{equation*}
  \begin{split}
      & \int_{X\backslash \{F=0\}}|\hat{F}-(1-b_{t_0,B}(\Psi))fF^{1+\delta}|^2_h e^{v_{t_0,B}(\Psi)-\delta \tilde{M}}c(-v_{t_0,B}(\Psi)) \\
      \le & \left(\frac{1}{\delta}c(T)e^{-T}+\int_{T}^{t_0+B}c(s)e^{-s}ds\right)
       \int_{X\backslash \{F=0\}}\frac{1}{B}\mathbb{I}_{\{-t_0-B<\Psi<-t_0\}}|fF|^2_h.
  \end{split}
\end{equation*}

Let $K$ be any compact subset of $X$. Both $h_{j_K,1}$ and $\hat{h}$ are $C^2$ smooth hermitian metrics on $E$, then there exists a constant $c_{K}>0$, such that $h_{j_K,1}\le c_K \hat{h}$ on $K$. Note that $M_k:=\inf_{K} e^{v_{t_0,B}(\Psi)-\delta M}c(-v_{t_0,B}(\Psi))>0$ and $h_{j_K,1}\le h$.
Then we have
\begin{equation}
 \begin{split}
& \int_{(X\backslash \{F=0\})\cap K}|\hat{F}|^2_{h_{j_K,1}}\\
 \le& 2 \int_{(X\backslash \{F=0\})\cap K}|\hat{F}-(1-b_{t_0,B}(\Psi))fF^{1+\delta}|^2_{h_{j_K,1}} +2\int_{(X\backslash \{F=0\})\cap K}|(1-b_{t_0,B}(\Psi))fF^{1+\delta}|^2_{h_{j_K,1}}\\
 \le & \frac{2}{M_K}\int_{X\backslash \{F=0\}}|\hat{F}-(1-b_{t_0,B}(\Psi))fF^{1+\delta}|^2_h e^{v_{t_0,B}(\Psi)-\delta \tilde{M}}c(-v_{t_0,B}(\Psi))\\
 +&2\bigg(\sup_K |F^{1+\delta}|^2\bigg)\int_{\{\Psi<-t_0\}\cap K}|f|^2_{\hat{h}}<+\infty.
 \end{split}
\end{equation}
As $K$ is arbitrarily chosen, by Montel theorem and diagonal method, we know that there exists a holomorphic $E$-valued $(n,0)$ form $\tilde{F}$ on $X$ such that $\tilde{F}=\hat{F}$ on $X\backslash \{F=0\}$. And we have
\begin{equation*}
  \begin{split}
     & \int_{X}|\hat{F}-(1-b_{t_0,B}(\Psi))fF^{1+\delta}|^2_h e^{v_{t_0,B}(\Psi)-\delta \tilde{M}}c(-v_{t_0,B}(\Psi)) \\
      \le & \left(\frac{1}{\delta}c(T)e^{-T}+\int_{T}^{t_0+B}c(s)e^{-s}ds\right)
       \int_{X}\frac{1}{B}\mathbb{I}_{\{-t_0-B<\Psi<-t_0\}}|fF|^2_h.
  \end{split}
\end{equation*}

\end{Remark}
The following remark shows that we can assume that $c(t)$ is a smooth function.
\begin{Remark} We firstly introduce the regularization process of $c(t)$.

Let $f(x)=2\mathbb{I}_{(-\frac{1}{2},\frac{1}{2})}\ast\rho(x)$ be a smooth function on $\mathbb{R}$, where $\rho$ is the kernel of convolution satisfying $\text{supp}(\rho)\subset (-\frac{1}{3},\frac{1}{3})$ and $\rho>0$.

Let $g_i(x)=\left\{ \begin{array}{rcl}
if(ix) & \mbox{if}
&x\le 0 \\ if(i^2 x) & \mbox{if} & x>0
\end{array}\right.$, then $\{g_i\}_{i\in \mathbb{N}^+}$ is a family of smooth functions on $\mathbb{R}$ satisfying:

(1) $\text{supp}(g)\subset [-\frac{1}{i},\frac{1}{i}]$, $g_i(x)\ge 0$ for any $x\in\mathbb{R}$,

(2) $\int_{-\frac{1}{i}}^0 g_i(x)dx=1$, $\int^{\frac{1}{i}}_0 g_i(x)dx\le\frac{1}{i}$ for any $i \in \mathbb{N}^+$.

Let $\tilde{h}(t)$ be an extension of the function $c(t)e^{-t}$ from $[T,+\infty)$ to $\mathbb{R}$ such that

(1) $\tilde{h}(t)=h(t):=c(t)e^{-t}$ on $[T,+\infty)$;

(2) $\tilde{h}(t)$ is decreasing with respect to $t$;

(3) $\lim_{t\to T-0}\tilde{h}(t)=c(T)e^{-T}$.

Denote $c_{i}(t):=e^t\int_{\mathbb{R}}\tilde{h}(t+y)g_{i}(y)dy$. By the construction of convolution, we know $c_{i}(t)\in C^{\infty}(-\infty,+\infty)$.  For any $t\ge T$, we have
$$c_i(t)-c(t)\ge e^t\left(\int_{-\frac{1}{i}}^0(\tilde{h}(t+y)-\tilde{h}(t))g_i(y)dy\right)\ge 0.$$

As $\tilde{h}(t)$ is decreasing with respect to $t$, we know that $c_i(t)e^{-t}$ is also decreasing with respect to $t$. Hence $c_i(t)e^{-t}$ is locally $L^1$ integrable on $\mathbb{R}$.

As $\tilde{h}(t)$ is decreasing with respect to $t$, then set $\tilde{h}^{-}(t)=\lim\limits_{s\to t-0}\tilde{h}(s)\ge h(t)$ for any $t\in\mathbb{R}$. Note that $c^{-}(t):=\lim\limits_{s\to t-0}\tilde{h}(s)e^t\ge c(t)$ for any $t\ge T$.

Now we prove $\lim\limits_{i\to +\infty}c_i(t)e^{-t}=\tilde{h}^{-}(t)$. In fact, we have
 \begin{equation}
\begin{split}
|c_i(t)e^{-t}-\tilde{h}^{-}(t)|&\le \int_{-\frac{1}{i}}^0|\tilde{h}(t+y)-h^{-}(t)|g_i(y)dy\\
&+\int_{0}^{\frac{1}{i}}\tilde{h}(t+y)g_i(y)dy.
\label{clt}
\end{split}
\end{equation}
For any $\epsilon>0 $, there exists $\delta>0$ such that $|h(t-\delta)-h^{-}(t)|<\epsilon$. Then $\exists N>0$, such that for any $n>N$, $t\ge t+y>t-\delta$ for all $y \in [-\frac{1}{i},0)$ and $\frac{1}{i}<\epsilon$. It follows from \eqref{clt} that
$$|c_i(t)e^{-t}-\tilde{h}^{-}(t)|\le \epsilon +\epsilon \tilde{h}(t),$$
hence $\lim\limits_{i\to +\infty}c_i(t)e^{-t}=\tilde{h}^{-}(t)$ for any $t\in\mathbb{R}$. Especially, we have $\lim\limits_{i\to +\infty}c_i(T)e^{-T}=\tilde{h}^{-}(T)=c(T)e^{-T}$.

Assume that for each $i$, we have an $E$-valued holomorphic $(n,0)$ form $\tilde{F}_i$ on $X$  such that
\begin{equation}
\label{estimate for F_n}
  \begin{split}
      & \int_{X}|\tilde{F}_i-(1-b_{t_0,B}(\Psi))fF^{1+\delta}|^2_he^{v_{t_0,B}(\Psi)-\delta \tilde{M}}c(-v_{t_0,B}(\Psi)) \\
      \le & (\frac{1}{\delta}c_i(T)e^{-T}+\int_{T}^{t_0+B}c_i(s)e^{-s}ds)
       \int_{X}\frac{1}{B}\mathbb{I}_{\{-t_0-B<\Psi<-t_0\}}|fF|^2_h,
  \end{split}
\end{equation}

By construction of $c_i(t)$, we have
\begin{equation}
\begin{split}
&\int_T^{t_0+B}c_i(t_1)e^{-t_1}dt_1\\
=&\int_T^{t_0+B}\int_{\mathbb{R}}\tilde{h}(t_1+y)g_i(y)dydt_1\\
=&\int_{\mathbb{R}}g_i(y)\left(\int_{T}^{t_0+B}\tilde{h}(t_1+y)dt_1\right)dy\\
=&\int_{\mathbb{R}}g_i(y)\left(\int_{T+y}^{t_0+B+y}\tilde{h}(s)ds\right)dy\\
=&\int_{\mathbb{R}}g_i(y)\left(\int_{T}^{t_0+B}\tilde{h}(s)ds
+\int_{t_0+B}^{t_0+B+y}\tilde{h}(s)ds-\int_{T}^{T+y}\tilde{h}(s)ds\right)dy,
\label{integral of cn}
\end{split}
\end{equation}
then it follows from the construction of $g_i(t)$, $\tilde{h}(t)$ is decreasing with respect to $t$, inequality \eqref{integral of cn} and $\tilde{h}(t)=c(t)e^{-t}$ on $[T,+\infty)$ that we have
\begin{equation}
\begin{split}
\lim_{i\to+\infty}\int_T^{t_0+B}c_i(t_1)e^{-t_1}dt_1=\int_{T}^{t_0+B}c(t_1)e^{-t_1}dt_1.
\label{limitof integral of cn}
\end{split}
\end{equation}

As $(X,E,\Sigma,X_j,h,h_s)$ is a singular hermitian metric on $E$ and $\Theta_h(E)\ge^s_{Nak}0$. We know that for any compact subset $K$, there exist a relatively compact subset $M_{j_K}\subset M$ containing $K$ and  a $C^2$ smooth hermitian metric $h_{j_K,1}$ on $M_{j_K}$ such that $h_{j_K,1}\le h$ on $K\subset M_{j_K}$.
For any compact subset $K$ of $M$, we have $$\inf\limits_i\inf\limits_K e^{v_{t_0,B}(\Psi)-\delta \tilde{M}}c_i(-v_{t_0,B}(\Psi))\ge\inf\limits_K e^{v_{t_0,B}(\Psi)-\delta \tilde{M}}c(-v_{t_0,B}(\Psi)),$$ then
\begin{equation}\nonumber
\begin{split}
\sup\limits_i\int_K|\tilde{F}_i-(1-b_{t_0,B}(\Psi))fF^{1+\delta}|^2_h<+\infty.
\end{split}
\end{equation}
Hence we have
\begin{equation}\nonumber
\begin{split}
\sup\limits_i\int_K|\tilde{F}_i-(1-b_{t_0,B}(\Psi))fF^{1+\delta}|^2_{h_{j_K,1}}<+\infty.
\end{split}
\end{equation}
Note that there exists a constant $C_K>0$ such that $h_{j_K,1}\le C_K \hat{h}$ on $K$. We have
\begin{equation}\nonumber
\begin{split}
\int_K|(1-b_{t_0,B}(\Psi))fF^{1+\delta}|^2_{h_{j_K,1}}\le C_K (\sup_K|F^{1+\delta}|^2)\int_{K\cap \{\Psi<-t_0\}} |f|^2_{\hat{h}}<+\infty,
\end{split}
\end{equation}
then $\sup\limits_i\int_K|\tilde{F}_i|^2_{h_{j_K,1}}<+\infty$,  by Montel theorem and diagonal method, we know that there exists a subsequence of $\{\tilde{F}_i\}$ (also denoted by $\{\tilde{F}_i\}$), which is compactly convergent to an $E$-valued holomorphic $(n,0)$ form $\tilde{F}$ on $X$. Then it follows from inequality \eqref{estimate for F_n} and Fatou's Lemma that
\begin{equation}\nonumber
  \begin{split}
  &\int_{X}|\tilde{F}-(1-b_{t_0,B}(\Psi))fF^{1+\delta}|^2_h e^{v_{t_0,B}(\Psi)-\delta \tilde{M}}c(-v_{t_0,B}(\Psi)) \\
 \le &\int_{X}|\tilde{F}-(1-b_{t_0,B}(\Psi))fF^{1+\delta}|^2_h e^{v_{t_0,B}(\Psi)-\delta \tilde{M}}c^-(-v_{t_0,B}(\Psi)) \\
      \le&\liminf_{i\to+\infty}\int_{X}|\tilde{F}_i-(1-b_{t_0,B}(\Psi))fF^{1+\delta}|^2_h e^{v_{t_0,B}(\Psi)-\delta \tilde{M}}c_i(-v_{t_0,B}(\Psi)) \\
      \le & \liminf_{i\to+\infty}\left(\frac{1}{\delta}c_i(T)e^{-T}+\int_{T}^{t_0+B}c_i(s)e^{-s}ds\right)
       \int_{X}\frac{1}{B}\mathbb{I}_{\{-t_0-B<\Psi<-t_0\}}|fF|^2_h\\
       = & \left(\frac{1}{\delta}c(T)e^{-T}+\int_{T}^{t_0+B}c(s)e^{-s}ds\right)
       \int_{X}\frac{1}{B}\mathbb{I}_{\{-t_0-B<\Psi<-t_0\}}|fF|^2_h.
  \end{split}
\end{equation}

\end{Remark}

In the following discussion, we assume that $F$ has no zero points on $X$ and $c(t)$ is smooth.

As $X$ is weakly pseudoconvex, there exists a smooth plurisubharmonic
exhaustion function $P$ on $X$. Let $X_j:=\{P<j\}$ $(k=1,2,...,) $. We choose $P$ such that
$X_1\ne \emptyset$.\par
Then $X_j$ satisfies $X_1 \Subset  X_2\Subset  ...\Subset
X_j\Subset  X_{j+1}\Subset  ...$ and $\cup_{j=1}^n X_j=X$. Each $X_j$ is weakly
pseudoconvex K\"ahler manifold with exhaustion plurisubharmonic function
$P_j=1/(j-P)$.
\par
\emph{We will fix $j$ during our discussion until step 7.}

\

\emph{Step 1: Regularization of $\Psi$. }

We note that there must exists a continuous nonnegative $(1,1)$-form $\varpi$ on $X_{j+1}$ satisfying
\begin{equation}\nonumber
(\sqrt{-1}\Theta_{TM}+\varpi \otimes Id_{TM})(\kappa_1 \otimes \kappa_2,\kappa_1
\otimes \kappa_2)\ge 0,
\end{equation}
for $\forall \kappa_1,\kappa_2 \in TM$ on $M_{j+1}$.

\par
Let $M=X_{j+1}$, $\Omega=X_{j}$, $T=\frac{\sqrt{-1}}{\pi}\partial\bar{\partial}\psi$
, $\gamma =0$ in Lemma \ref{regularization on cpx mfld}, then there exists a family of functions $\psi_{\zeta,\rho}$ ($\zeta\in(0,+\infty)$ and $\rho\in(0,\rho_1)$ for some positive $\rho_1$)
on $X_{j+1}$ such that\\
(1) $\psi_{\zeta,\rho}$ is a quasi-plurisubharmonic function on a neighborhood of $\overline{X_j}$, smooth on $X_{j+1}\backslash E_{\zeta}(\psi)$, increasing with respect to $\zeta$ and $\rho$ on $X_j$ and converges to $\psi$ on $X_j$ as $\rho \to 0$,\\
(2) $\frac{\sqrt{-1}}{\pi}\partial\bar{\partial}\psi_{\zeta,\rho}\ge-\zeta\varpi-\delta\omega$ on $X_j$,\\
where $E_{\zeta}(\psi):=\{x\in X: v(\psi,x)\ge \zeta\}$ is the upper-level set of Lelong number and $\{\delta_{\rho}\}$ is an increasing family of positive numbers such that $\lim_{\rho\to 0}\delta_{\rho}=0$.

Let $\rho=\frac{1}{m}$. Let $\tilde{\delta}_m:=\delta_{\frac{1}{m}}$ and $\zeta=\tilde{\delta}_m$. Denote $\psi_m:=\psi_{\tilde{\delta}_m,\frac{1}{m}}$. Then we have a sequence of functions $\{\psi_m\}$ satisfying\\
(1') $\psi_m$ is quasi-plurisubharmonic function on $\overline{X_j}$, smooth on $X_{j+1}\backslash E_{m}(\psi)$, decreasing with respect to $m$ and converges to $\psi$ on $X_j$ as $m\to+\infty$,\\
(2') $\frac{\sqrt{-1}}{\pi}\partial\bar{\partial}\psi_{m}\ge-\tilde{\delta}_m\varpi-\tilde{\delta}_m\omega$ on $X_j$,\\
where $E_{m}(\psi)=\{x\in X:v(\psi,x)\ge\frac{1}{m}\}$ is the upper level set of Lelong number and $\{\tilde{\delta}_m\}$ is an decreasing family of positive numbers such that $\lim_{m\to+\infty}\tilde{\delta}_m=0$.

As $X_j$ is relatively compact in $X$, there exists a positive number $b\ge 1$ such that $b\omega\ge\varpi$ on $X_j$. Then condition (2') becomes\\
(2'') $\frac{\sqrt{-1}}{\pi}\partial\bar{\partial}\psi_{m}\ge-\tilde{\delta}_m\varpi-\tilde{\delta}_m\omega
\ge-2b\tilde{\delta}_m\omega$ on $X_j$.

Let $\eta_m=\{\frac{t_0-T}{3m},\frac{t_0-T}{3m}\}$ and we have the function $M_{\eta_m}(\psi_m+T,2\log|F|)$. Denote $M_{\eta_m}:=M_{\eta_m}(\psi_m+T,2\log|F|)$ for simplicity.  Note that $\psi_m+T$ is a $2b\tilde{\delta}_m\omega$-plurisubharmonic function. As $F$ is a holomorphic function, $\omega$ is a K\"ahler form and $b\tilde{\delta}_m>0$, we know that $2\log|F|$ is a $2b\tilde{\delta}_m\omega$-plurisubharmonic function. It follows from Lemma \ref{regular of max} that $M_{\eta_m}$ is a $2b\tilde{\delta}_m\omega$-plurisubharmonic function, i.e.,
$$\frac{\sqrt{-1}}{\pi}\partial\bar{\partial}M_{\eta_m}\ge -2\pi b\tilde{\delta}_m\omega.$$

Denote $\Psi_m:=\psi_m-M_{\eta_m}(\psi_m+T,2\log|F|)$. Then $\Psi_m$ is smooth on $X_j\backslash E_m$. It is easy to verify that when $m\to+\infty$, $\Psi_m\to \Psi$. It follows from Lemma \ref{regular of max} that we know\par
(1) if $\psi_m+T\le 2\log|F|-\frac{2(t_0-T)}{3m}$ holds, we have $\Psi_m=\psi_m-2\log|F|$;\par
(2) if $\psi_m+T\ge 2\log|F|+\frac{2(t_0-T)}{3m}$ holds, we have $\Psi_m=-T$;\par
(3) if $2\log|F|-\frac{2(t_0-T)}{3m}<\psi_m+T< 2\log|F|+\frac{2(t_0-T)}{3m}$ holds, we have $\max\{\psi_m+T,2\log|F|\}\le M_{\eta_m}\le (\psi_m+T+\frac{t_0-T}{m})$ and hence $-T-\frac{t_0-T}{m}\le\Psi_m\le -T$.

Thus we have $\{\Psi_m<-t_0\}= \{\psi_m-2\log|F|<-t_0\}\subset \{\psi-2\log|F|<-t_0\}=\{\Psi<-t_0\}$. We also note that $\Psi_m\le-T$ on $M_{j+1}$.

\

\emph{Step 2: Recall some constructions. }

To simplify our notations, we denote $b_{t_0,B}(t)$ by $b(t)$ and $v_{t_0,B}(t)$ by $v(t)$.

Let $\epsilon \in (0,\frac{1}{8}B)$. Let $\{v_\epsilon\}_{\epsilon \in
(0,\frac{1}{8}B)}$ be a family of smooth increasing convex functions on $\mathbb{R}$, such
that:
\par
(1) $v_{\epsilon}(t)=t$ for $t\ge-t_0-\epsilon$, $v_{\epsilon}(t)=constant$ for
$t<-t_0-B+\epsilon$;\par
(2) $v_{\epsilon}{''}(t)$ are convergence pointwisely
to $\frac{1}{B}\mathbb{I}_{(-t_0-B,-t_0)}$,when $\epsilon \to 0$, and $0\le
v_{\epsilon}{''}(t) \le \frac{2}{B}\mathbb{I}_{(-t_0-B+\epsilon,-t_0-\epsilon)}$
for ant $t \in \mathbb{R}$;\par
(3) $v_{\epsilon}{'}(t)$ are convergence pointwisely to $b(t)$ which is a continuous
function on $\mathbb{R}$ when $\epsilon \to 0$ and $0 \le v_{\epsilon}{'}(t) \le 1$ for any
$t\in \mathbb{R}$.\par
One can construct the family $\{v_\epsilon\}_{\epsilon \in (0,\frac{1}{8}B)}$  by
 setting
\begin{equation}\nonumber
\begin{split}
v_\epsilon(t):=&\int_{-\infty}^{t}(\int_{-\infty}^{t_1}(\frac{1}{B-4\epsilon}
\mathbb{I}_{(-t_0-B+2\epsilon,-t_0-2\epsilon)}*\rho_{\frac{1}{4}\epsilon})(s)ds)dt_1\\
&-\int_{-\infty}^{-t_0}(\int_{-\infty}^{t_1}(\frac{1}{B-4\epsilon}
\mathbb{I}_{(-t_0-B+2\epsilon,-t_0-2\epsilon)}*\rho_{\frac{1}{4}\epsilon})(s)ds)dt_1-t_0,
\end{split}
\end{equation}
where $\rho_{\frac{1}{4}\epsilon}$ is the kernel of convolution satisfying
$\text{supp}(\rho_{\frac{1}{4}\epsilon})\subset
(-\frac{1}{4}\epsilon,{\frac{1}{4}\epsilon})$.
Then it follows that
\begin{equation}\nonumber
v_\epsilon{''}(t)=\frac{1}{B-4\epsilon}
\mathbb{I}_{(-t_0-B+2\epsilon,-t_0-2\epsilon)}*\rho_{\frac{1}{4}\epsilon}(t),
\end{equation}
and
\begin{equation}\nonumber
v_\epsilon{'}(t)=\int_{-\infty}^{t}(\frac{1}{B-4\epsilon}
\mathbb{I}_{(-t_0-B+2\epsilon,-t_0-2\epsilon)}*\rho_{\frac{1}{4}\epsilon})(s)ds.
\end{equation}

Let $\eta=s(-v_\epsilon(\Psi_m))$ and $\phi=u(-v_\epsilon(\Psi_m))$, where $s \in
C^{\infty}([T,+\infty))$ satisfies $s\ge \frac{1}{\delta}$ and $u\in C^{\infty}([T,+\infty))$, such that $s'(t)\neq 0$ for any $t$, $u''s-s''>0$
and $s'-u's=1$.
Let $\Phi_m=\phi+\delta M_{\eta_m}$. Recall that $(X,E,\Sigma,X_j,h,h_{j,m'})$ is a singular hermitian metric on $E$ and $\Theta_h(E)\ge^{m'}_{Nak}0$. Then there exists a sequence of hermitian metrics $\{h_{j,m'}\}_{m'=1}^n$ on $X_j$ of class $C^2$ such that $\lim\limits_{m'\to+\infty}h_{j,m'}=h$ almost everywhere on $X_j$ and $\{h_{j,m'}\}_{m'=1}^n$ satisfies the conditions of Definition \ref{singular nak}. Since $j$ is fixed until the last step, we simply denote $\{h_{j,m'}\}_{m'=1}^{+\infty}$ by $h_{m'}$ and denote $\tilde{h}:=h_{m'}e^{-\Phi_m}$.

\

\emph{Step 3: Solving $\bar{\partial}$-equation with error term. }

Set $B=[\eta \sqrt{-1}\Theta_{\tilde{h}}-\sqrt{-1}\partial \bar{\partial}
\eta\otimes\text{Id}_E-\sqrt{-1}g\partial\eta \wedge\bar{\partial}\eta\otimes\text{Id}_E, \Lambda_{\omega}]$, where
$g$
is a positive function. We will determine $g$ by calculations. On $X_j\backslash E_m$, direct calculation shows that
\begin{equation}\nonumber
\begin{split}
\partial\bar{\partial}\eta=&
-s'(-v_{\epsilon}(\Psi_m))\partial\bar{\partial}(v_{\epsilon}(\Psi_m))
+s''(-v_{\epsilon}(\Psi_m))\partial(v_{\epsilon}(\Psi_m))\wedge
\bar{\partial}(v_{\epsilon}(\Psi_m)),\\
\eta\Theta_{\tilde{h}}=&\eta\partial\bar{\partial}\phi\otimes\text{Id}_E+\eta\Theta_{h_{m'}}+
\eta\partial\bar{\partial}(\delta M_{\eta_m})\otimes\text{Id}_E\\
=&su''(-v_{\epsilon}(\Psi_m))\partial(v_{\epsilon}(\Psi_m))\wedge
\bar{\partial}(v_{\epsilon}(\Psi_m))\otimes\text{Id}_E
-su'(-v_{\epsilon}(\Psi_m))\partial\bar{\partial}(v_{\epsilon}(\Psi_m))\otimes\text{Id}_E\\
+&s\Theta_{h_{m'}}+
s\partial\bar{\partial}(\delta M_{\eta_m})\otimes\text{Id}_E.
\end{split}
\end{equation}
\par
Hence
\begin{equation}\nonumber
\begin{split}
&\eta \sqrt{-1}\Theta_{\tilde{h}}-\sqrt{-1}\partial \bar{\partial}
\eta\otimes\text{Id}_E-\sqrt{-1}g\partial\eta \wedge\bar{\partial}\eta\otimes\text{Id}_E\\
=&s\Theta_{h_{m'}}+
s\sqrt{-1}\partial\bar{\partial}(\delta M_{\eta_m})\otimes\text{Id}_E\\
+&(s'-su')(v'_{\epsilon}(\Psi_m)\sqrt{-1}\partial\bar{\partial}(\Psi_m)+
v''_\epsilon(\psi_m)\sqrt{-1}\partial(\Psi_m)\wedge\bar{\partial}(\Psi_m))\otimes\text{Id}_E\\
+&[(u''s-s'')-gs'^2]\sqrt{-1}\partial(v_\epsilon(\Psi_m))\wedge\bar{\partial}(v_\epsilon(\Psi_m))\otimes\text{Id}_E,
\end{split}
\end{equation}
where we omit the term $-v_{\epsilon}(\Psi_m)$ in $(s'-su')(-v_{\epsilon}(\Psi_m))$ and $[(u''s-s'')-gs'^2](-v_{\epsilon}(\Psi_m))$ for simplicity.
\par
Let $g=\frac{u''s-s''}{s'^2}(-v_\epsilon(\Psi_m))$ and note that $s'-su'=1$,
$0\le v'_\epsilon(\Psi_m)\le 1$. Then
\begin{equation}\label{calculation of curvature 1}
\begin{split}
&\eta \sqrt{-1}\Theta_{\tilde{h}}-\sqrt{-1}\partial \bar{\partial}
\eta\otimes\text{Id}_E-\sqrt{-1}g\partial\eta \wedge\bar{\partial}\eta\otimes\text{Id}_E\\
=&s\sqrt{-1}\Theta_{h_{m'}}+
s\sqrt{-1}\partial\bar{\partial}(\delta M_{\eta_m})\otimes\text{Id}_E\\
&+
v'_{\epsilon}(\Psi_m)\sqrt{-1}\partial\bar{\partial}(\Psi_m)\otimes\text{Id}_E+
v''_\epsilon(\psi_m)\sqrt{-1}\partial(\Psi_m)\wedge\bar{\partial}(\Psi_m)\otimes\text{Id}_E\\
=&v''_\epsilon(\psi_m)\sqrt{-1}\partial(\Psi_m)\wedge\bar{\partial}(\Psi_m)\otimes\text{Id}_E+
v'_{\epsilon}(\Psi_m)\sqrt{-1}\partial\bar{\partial}(\Psi_m)\otimes\text{Id}_E\\
&+s(\sqrt{-1}\Theta_{h_{m'}}+\lambda_{m'}\omega\otimes\text{Id}_E)-s\lambda_{m'}\omega\otimes\text{Id}_E\\
&+s(\sqrt{-1}\partial\bar{\partial}(\delta M_{\eta_m})+2\pi b\delta\tilde{\delta}_{m}\omega)\otimes\text{Id}_E-2\pi b s \delta\tilde{\delta}_{m}\omega\otimes\text{Id}_E\\
\ge&v''_\epsilon(\psi_m)\sqrt{-1}\partial(\Psi_m)\wedge\bar{\partial}(\Psi_m)\otimes\text{Id}_E+
v'_{\epsilon}(\Psi_m)\sqrt{-1}\partial\bar{\partial}(\Psi_m)\otimes\text{Id}_E\\
&+\frac{1}{\delta}(\sqrt{-1}\Theta_{h_{m'}}+\lambda_{m'}\omega\otimes\text{Id}_E)
+\frac{1}{\delta}(\sqrt{-1}\partial\bar{\partial}(\delta M_{\eta_m})+2\pi b\delta\tilde{\delta}_{m}\omega)\otimes\text{Id}_E\\
&-s\lambda_{m'}\omega\otimes\text{Id}_E-2\pi b s\delta\tilde{\delta}_{m}\omega\otimes\text{Id}_E.\\
\end{split}
\end{equation}
Note that
\begin{equation}\label{calculation of curvature 2}
\begin{split}
&\delta v'_{\epsilon}(\Psi_m)\sqrt{-1}\partial\bar{\partial}(\Psi_m)\otimes\text{Id}_E
+(\sqrt{-1}\Theta_{h_{m'}}+\lambda_{m'}\omega\otimes\text{Id}_E)\\
&+(\sqrt{-1}\partial\bar{\partial}(\delta M_{\eta_m})+2\pi b\delta\tilde{\delta}_{m}\omega)\otimes\text{Id}_E\\
=&(1-v'_{\epsilon}(\Psi_m))(\sqrt{-1}\Theta_{h_{m'}}+\lambda_{m'}\omega\otimes\text{Id}_E+\sqrt{-1}\partial\bar{\partial}(\delta M_{\eta_m})\otimes\text{Id}_E+2\pi b\delta\tilde{\delta}_{m}\omega\otimes\text{Id}_E)\\
&+v'_{\epsilon}(\Psi_m)(\sqrt{-1}\Theta_{h_{m'}}+\lambda_{m'}\omega\otimes\text{Id}_E+\sqrt{-1}\partial\bar{\partial}(\delta M_{\eta_m})\otimes\text{Id}_E+2\pi b\delta\tilde{\delta}_{m}\omega\otimes\text{Id}_E)\\
&+v'_{\epsilon}(\Psi_m)(\partial\bar{\partial}(\delta\psi_m)\otimes\text{Id}_E-\partial\bar{\partial}(\delta M_{\eta_m})\otimes\text{Id}_E)\\
=&(1-v'_{\epsilon}(\Psi_m))(\sqrt{-1}\Theta_{h_{m'}}+\lambda_{m'}\omega\otimes\text{Id}_E+\sqrt{-1}\partial\bar{\partial}(\delta M_{\eta_m})\otimes\text{Id}_E+2\pi b\delta\tilde{\delta}_{m}\omega\otimes\text{Id}_E)\\
&+v'_{\epsilon}(\Psi_m)(\sqrt{-1}\Theta_{h_{m'}}+\lambda_{m'}\omega\otimes\text{Id}_E+\sqrt{-1}\partial\bar{\partial}
(\delta \psi_m)\otimes\text{Id}_E+2\pi b\delta\tilde{\delta}_{m}\omega\otimes\text{Id}_E)\\
\ge& 0.
\end{split}
\end{equation}
It follows from inequality \eqref{calculation of curvature 1} and inequality \eqref{calculation of curvature 2} that
\begin{equation}\nonumber
\begin{split}
&\eta \sqrt{-1}\Theta_{\tilde{h}}-\sqrt{-1}\partial \bar{\partial}
\eta\otimes\text{Id}_E-\sqrt{-1}g\partial\eta \wedge\bar{\partial}\eta\otimes\text{Id}_E\\
\ge&v''_\epsilon(\Psi_m)\sqrt{-1}\partial(\Psi_m)\wedge\bar{\partial}(\Psi_m)\otimes\text{Id}_E
-2\pi b s\delta\tilde{\delta}_{m}\omega\otimes\text{Id}_E-s\lambda_{m'}\omega\otimes\text{Id}_E.
\end{split}
\end{equation}

By the constructions of $s(t)$, $v_{\epsilon}(t)$ and $\sup_m\sup_{X_j}\Psi_m\le-T$, we have $s(-v_{\epsilon}(\Psi_m))$ is uniformly bounded on $X_j$ with respect to $\epsilon$ and $m$. Let $N_1$ be the uniformly upper bound of $s(-v_{\epsilon}(\Psi_m))$ on $X_j$. Then on $X_j\backslash E_m$, we have

\begin{equation}\nonumber
\begin{split}
&\eta \sqrt{-1}\Theta_{\tilde{h}}-\sqrt{-1}\partial \bar{\partial}
\eta\otimes\text{Id}_E-\sqrt{-1}g\partial\eta \wedge\bar{\partial}\eta\otimes\text{Id}_E\\
\ge&v''_\epsilon(\Psi_m)\sqrt{-1}\partial(\Psi_m)\wedge\bar{\partial}(\Psi_m)\otimes\text{Id}_E
-2\pi b N_1\delta\tilde{\delta}_{m}\omega\otimes\text{Id}_E-N_1\lambda_{m'}\omega\otimes\text{Id}_E.
\end{split}
\end{equation}

Hence, for any $E$-valued $(n,1)$ form $\alpha$, we have
\begin{equation}
\label{curvature inequality}
\begin{split}
&\langle(B+(2\pi bN_1 \delta\tilde{\delta}_{m}+N_1 \lambda_{m'})\text{Id}_E)\alpha,\alpha\rangle_{\tilde h}\\
\ge&\langle[v''_\epsilon(\Psi_m)\partial(\Psi_m)\wedge\bar{\partial}(\Psi_m)\otimes\text{Id}_E,
\Lambda_{\omega}]\alpha,\alpha\rangle_{\tilde h}\\
=&\langle(v''_\epsilon(\Psi_m)\bar{\partial}(\Psi_m)
\wedge(\alpha\llcorner(\bar{\partial}\Psi_m)^{\sharp})),\alpha\rangle_{\tilde
h}.
\end{split}
\end{equation}
It follows from Lemma \ref{sempositive lemma} that $B+(2\pi bN_1 \delta\tilde{\delta}_{m}+N_1 \lambda_{m'})\text{Id}_E$ is semi-positive. Denote $\tilde{\lambda}_{m'}:=\lambda_{m'}+\frac{1}{m'}$, then $B+(2\pi bN_1 \delta\tilde{\delta}_{m}+N_1 \tilde{\lambda}_{m'})\text{Id}_E$ is positive. Using the definition of contraction, Cauchy-Schwarz inequality
and inequality \eqref{curvature inequality}, we have
\begin{equation}
\label{cs inequality}
\begin{split}
|\langle
v''_\epsilon(\Psi_m)\bar{\partial}\Psi_m\wedge\gamma,\tilde{\alpha}\rangle_
{\tilde h}|^2=
&|\langle
v''_\epsilon(\Psi_m)\gamma,\tilde{\alpha}\llcorner(\bar{\partial}\Psi_m)^{\sharp}
\rangle_{\tilde h}|^2\\
\le&\langle
(v''_\epsilon(\Psi_m)\gamma,\gamma)
\rangle_{\tilde h}
(v''_\epsilon(\Psi_m))|\tilde{\alpha}\llcorner(\bar{\partial}\Psi_m)^{\sharp}|^2_{\tilde
h}\\
=&\langle
(v''_\epsilon(\Psi_m)\gamma,\gamma)
\rangle_{\tilde h}
\langle
(v''_\epsilon(\Psi_m))\bar{\partial}\Psi_m\wedge
(\tilde{\alpha}\llcorner(\bar{\partial}\Psi_m)^{\sharp}),\tilde{\alpha}
\rangle_{\tilde h}\\
\le&\langle
(v''_\epsilon(\Psi_m)\gamma,\gamma)
\rangle_{\tilde h}
\langle
(B+(2\pi bN_1 \delta\tilde{\delta}_{m}+N_1 \tilde{\lambda}_{m'})\text{Id}_E)\tilde{\alpha},\tilde{\alpha})
\rangle_{\tilde h}
\end{split}
\end{equation}
for any $E$-valued $(n,0)$ form $\gamma$ and $E$-valued $(n,1)$ form $\tilde{\alpha}$.

As $fF^{1+\delta}$ is holomorphic on $\{\Psi<-t_0\}$ and $\{\Psi_m<-t_0-\epsilon\}\subset \{\Psi_m<-t_0\}\subset\{\Psi<-t_0\}$, then $\lambda:=\bar{\partial}\big((1-v'_{\epsilon}(\Psi_m))fF^{1+\delta}\big)$ is well defined and smooth on $X_j\backslash E_m$.

Taking $\gamma=fF^{1+\delta}$, $\tilde{\alpha}=(B+(2\pi bN_1 \delta\tilde{\delta}_{m}+N_1 \tilde{\lambda}_{m'})\text{Id}_E)^{-1}(\bar{\partial}v'_{\epsilon}(\Psi_m))\wedge fF^{1+\delta}$. Then it follows from inequality \eqref{cs inequality} that
$$\langle
(B+(2\pi bN_1 \delta\tilde{\delta}_{m}+N_1 \tilde{\lambda}_{m'})\text{Id}_E)^{-1}\lambda,\lambda\rangle_
{\tilde h}\le v''_\epsilon(\Psi_m)|fF^{1+\delta}|^2_{\tilde{h}}.$$

Thus we have
$$\int_{X_j\backslash E_m}\langle
(B+(2\pi bN_1 \delta\tilde{\delta}_{m}+N_1 \tilde{\lambda}_{m'})\text{Id}_E)^{-1}\lambda,\lambda\rangle_
{\tilde h}\le \int_{X_j\backslash E_m}v''_\epsilon(\Psi_m)|fF^{1+\delta}|^2_{\tilde{h}}.$$

Recall that $\tilde{h}=h_{m'}e^{-\Phi_m}$ and $\Phi_m=\phi+\delta M_{\eta_m}$. As $h_{m'}$ is $C^2$ smooth, on $X_j\subset\subset X$, there exists a constant $C_{j,m'}> 0$ such that $C_{j,m'}^{-1}|e_x|_{\hat{h}}\le |e_x|_{h_{m'}}\le C_{j,m'} |e_x|_{\hat{h}},$ for any $e_x\in E_x$. Note that $0\le
v_{\epsilon}^{''}(t) \le \frac{2}{B}\mathbb{I}_{(-t_0-B+\epsilon,-t_0-\epsilon)}$, $e^{-\phi}$ is smooth function on $X_j$ and $\delta M_{\eta_m}\ge \delta 2\log|F|$. It follows from \eqref{estimate on sublevel set of lemma2.1} that
$$\int_{X_j\backslash E_m}v''_\epsilon(\Psi_m)|fF^{1+\delta}|^2_{\tilde{h}}\le C_{j,m'} \sup_{X_k}(|F|^2e^{-\phi})\int_{\overline{X_j}\cap\{\Psi<-t_0\}}|f|_{\hat{h}}^2<+\infty.$$

By Lemma \ref{completeness}, $X_j\backslash E_m$ carries a complete K\"ahler metric. Then it follows from Lemma \ref{d-bar equation with error term} that there exists
$$u_{m,m',\epsilon,j}\in L^2(X_j\backslash E_m, K_X\otimes E,h_{m'}e^{-\Phi_m}),$$
$$h_{m,m',\epsilon,j}\in L^2(X_j\backslash E_m, \wedge^{n,1}T^*X \otimes E,h_{m'}e^{-\Phi_m}),$$
such that $\bar\partial u_{m,m',\epsilon,j}+P_{m,m'}\big(\sqrt{2\pi bN_1\delta\tilde\delta_m+N_1 \tilde{\lambda}_{m'}}h_{m,m',\epsilon,j}\big)=\lambda$ holds on $X_j\backslash E_m$ where $P_{m,m'}:L^2(X_j\backslash E_m, \wedge^{n,1}T^*X \otimes E,h_{m'}e^{-\Phi_m})\to \text{Ker}{D''}$ is the orthogonal projection, and

\begin{equation}\nonumber
\begin{split}
&\int_{X_j \backslash E_m}
\frac{1}{\eta+g^{-1}}|u_{m,m',\epsilon,j}|^2_{h_{m'}}e^{-\Phi_m}+
\int_{X_j \backslash E_m}
|h_{m,m',\epsilon,j}|^2_{h_{m'}}e^{-\Phi_m} \\
\le&
\int_{X_j \backslash E_m}
\langle
(B+(2\pi bN_1\tilde \delta\delta_{m}+N_1 \tilde{\lambda}_m') \text{Id}_E)^{-1}\lambda,\lambda
\rangle_{\tilde h}\\
\le&
\int_{X_j \backslash E_m}
v''_{\epsilon}(\Psi_m)|fF^{1+\delta}|^2_{h_{m'}}e^{-\Phi_m}
<+\infty.
\end{split}
\end{equation}

Assume that we can choose
$\eta$ and $\phi$ such that
$(\eta+g^{-1})^{-1}=e^{v_\epsilon(\Psi_m)}e^{\phi}c(-v_\epsilon(\Psi_m))$. Then we have

\begin{equation}
\label{estimate 1}
\begin{split}
&\int_{X_j \backslash E_m}
|u_{m,m',\epsilon,j}|^2_{h_{m'}} e^{v_\epsilon(\Psi_m)-\delta M_{\eta_m}}c(-v_\epsilon(\Psi_m))+
\int_{X_j \backslash E_m}
|h_{m,m',\epsilon,j}|^2_{h_{m'}} e^{-\phi-\delta M_{\eta_m}}\\
\le&
\int_{X_j \backslash E_m}
v''_{\epsilon}(\Psi_m)|fF^{1+\delta}|^2_{h_{m'}} e^{-\phi-\delta M_{\eta_m}}
<+\infty.
\end{split}
\end{equation}

By the construction of $v_{\epsilon}(t)$ and $c(t)e^{-t}$ is decreasing with respect to $t$, we know $c(-v_{\epsilon}(\Psi_m))e^{v_{\epsilon}(\Psi_m)}$ has a positive lower bound on $X_j\Subset X$. By the constructions of $v_{\epsilon}(t)$ and $u$, we know $e^{-\phi}=e^{-u(-v_{\epsilon}(\Psi_m))}$ has a positive lower bound on $X_j\Subset X$. By the upper semi-continuity of $M_{\eta_m}$, we know $e^{-\delta M_{\eta_m}}$ has a positive lower bound on $X_j\Subset M$. Note that $h_{m'}$ is $C^2$ smooth on $X_j\Subset X$. Hence it follows from inequality \eqref{estimate 1} that

$$u_{m,m',\epsilon,j}\in L^2(X_j, K_M\otimes E,h_{m'}e^{-\Phi_m}),$$
$$h_{m,m',\epsilon,j}\in L^2(X_j, \wedge^{n,1}T^*M\otimes E, h_{m'}e^{-\Phi_m}).$$
It follows from Lemma \ref{extension of equality} that we know
\begin{equation}
\label{d-bar realation u,h,lamda 1}
D''u_{m,m',\epsilon,j}+P_{m,m'}\big(\sqrt{2\pi bN_1\delta\tilde\delta_m+N_1 \tilde{\lambda}_{m'}}h_{m,m',\epsilon,j}\big)=\lambda
\end{equation}
holds on $X_j$. And we have
\begin{equation}
\label{estimate 2}
\begin{split}
&\int_{X_j }
|u_{m,m',\epsilon,j}|^2_{h_{m'}} e^{v_\epsilon(\Psi_m)-\delta M_{\eta_m}}c(-v_\epsilon(\Psi_m))+
\int_{X_j}
|h_{m,m',\epsilon,j}|^2_{h_{m'}} e^{-\phi-\delta M_{\eta_m}}\\
\le&
\int_{X_j}
v''_{\epsilon}(\Psi_m)|fF^{1+\delta}|^2_{h_{m'}} e^{-\phi-\delta M_{\eta_m}}
<+\infty.
\end{split}
\end{equation}

\

\emph{Step 4: Letting $m\to+\infty$. }

In the Step 4, we note that $m'$ is fixed.

Note that $\sup_m\sup_{X_j}e^{-\phi}=\sup_m\sup_{X_j}e^{-u(-v_{\epsilon}(\Psi_m))}<+\infty$ and $e^{-\delta M_{\eta_m}}\le e^{-\delta 2\log|F|}$. As $\{\Psi_m<-t_0-\epsilon\}\subset \{\Psi_m<-t_0\}\subset \{\Psi<-t_0\}$, we have
$$v''_{\epsilon}(\Psi_m)|fF^{1+\delta}|^2_{h_{m'}} e^{-\phi-\delta M_{\eta_m}}\le \frac{2}{B}C_{j,m'}\left(\sup_m\sup_{X_j}e^{-\phi}\right)\left(\sup_{X_j}|F|^2\right)\mathbb{I}_{\{\Psi<-t_0\}}|f|^2_{\hat{h}}$$
holds on $M_j$. It follows from $\int_{\{\Psi<-t_0\}\cap \overline{M_j}}|f|^2_{\hat{h}}<+\infty$ and dominated convergence theorem that

\begin{equation}\nonumber
\begin{split}
&\lim_{m\to+\infty}\int_{X_j}
v''_{\epsilon}(\Psi_m)|fF^{1+\delta}|^2_{h_{m'}} e^{-\phi-\delta M_{\eta_m}}\\
=&\int_{X_j}v''_{\epsilon}(\Psi)|fF^{1+\delta}|^2_{h_{m'}} e^{-u(-v_{\epsilon}(\Psi))-\delta \max{\{\psi+T,2\log|F|\}}}.
\end{split}
\end{equation}

Note that $\inf_{m}\inf_{X_j}c(-v_{\epsilon}(\Psi_m))e^{-v_{\epsilon}(\Psi_m)}>0$.
It follows from Lemma \ref{regular of max} that $M_{\eta_m}\le \max{\{\psi_m+T,2\log|F|\}}+\frac{t_0-T}{3m}\le \max{\{\psi_m+T,2\log|F|\}}+t_0-T\le \max{\{\psi_1+T,2\log|F|\}}+t_0-T$. As $\psi_1$ is a quasi-plurisubharmonic function on $\overline{X_j}$, we know $\max{\{\psi_1+T,2\log|F|\}}$ is upper semi-continuous function on $X_j$. Hence
\begin{equation}
\label{estimate for M eta}
\inf_{m}\inf_{X_j}e^{-M_{\eta_m}}\ge \inf_{X_j}e^{-\max{\{\psi_1+T,2\log|F|\}}-t_0}>0.
\end{equation}
 Then it follows from inequality \eqref{estimate 2} that
$$\sup_{m}\int_{X_j}|u_{m,m',\epsilon,j}|^2_{h_{m'}}<+\infty.$$

Therefore the solutions $u_{m,m',l,\epsilon,j}$ are uniformly bounded with respect to $m$ in $L^2(X_j,K_M, h_{m'})$. Since the closed unit ball of the Hilbert space is
weakly compact, we can extract a subsequence of $\{u_{m,m',\epsilon,j}\}$ (also denoted by $\{u_{m,m',\epsilon,j}\}$) weakly
convergent to $u_{m',\epsilon,j}$ in $L^2(X_j,K_M, h_{m'})$ as $m\to+\infty$.

Note that $\sup_m\sup_{M_j}e^{v_{\epsilon}(\Psi_m)}c(-v_{\epsilon}(\Psi_m))< +\infty$. As $M_{\eta_m}\ge \max\{\psi_m+T,2\log|F|\}\ge 2\log|F|$ and $F$ has no zero points on $M$, we have $\sup_m\sup_{M_j}e^{-M_{\eta_m}}\le \sup_{M_j}\frac{1}{|F|^2}<+\infty$. Hence we know
$$\sup_m\sup_{M_j}e^{v_{\epsilon}(\Psi_m)}c(-v_{\epsilon}(\Psi_m))e^{-\delta M_{\eta_m}}<+\infty.$$
It follows from Lemma \ref{weakly convergence} that we know $u_{m,m',\epsilon,j}\sqrt{e^{v_{\epsilon}(\Psi_{m_1})}c(-v_{\epsilon}(\Psi_{m}))e^{-\delta M_{\eta_{m}}}}$ weakly converges to $u_{m',\epsilon,j}\sqrt{e^{v_{\epsilon}(\Psi)}c(-v_{\epsilon}(\Psi))e^{-\delta \max\{\psi+T,2\log|F|\}}}$ in $L^2(X_j,K_M, h_{m'})$ as $m\to+\infty$ . Hence we have
\begin{equation}
\label{estimate for m'}
\begin{split}
&\int_{X_j}
|u_{m',\epsilon,j}|^2_{h_{m'}} e^{v_\epsilon(\Psi)-\delta \max\{\psi+T,2\log|F|\}}c(-v_\epsilon(\Psi))\\
\le&\liminf_{m\to+\infty}\int_{X_j}
|u_{m,m',\epsilon,j}|^2_{h_{m'}} e^{v_\epsilon(\Psi_{m})-\delta M_{\eta_{m}}}c(-v_\epsilon(\Psi_{m}))\\
\le&\liminf_{m\to+\infty}
\int_{X_j}
v''_{\epsilon}(\Psi_{m})|fF^{1+\delta}|^2_{h_{m'}} e^{-u(-v_{\epsilon}(\Psi_{m}))-\delta M_{\eta_{m}}}\\
\le & \int_{X_j}v''_{\epsilon}(\Psi)|fF^{1+\delta}|^2_{h_{m'}} e^{-u(-v_{\epsilon}(\Psi))-\delta \max{\{\psi+T,2\log|F|\}}}
<+\infty.
\end{split}
\end{equation}

It follows from Lemma \ref{weakly converge lemma} that we know that $h_{m,m',\epsilon,j}$ weakly converges to $h_{m',\epsilon,j}$ in $L^2(X_j, \wedge^{n,1}T^*M\otimes E, h_{m'}e^{-\Phi_1})$ and then
$\sqrt{2\pi bN_1\delta\tilde\delta_m+N_1 \tilde{\lambda}_{m'}}h_{m,m',\epsilon,j}$ weakly converges to $\sqrt{N_1 \tilde{\lambda}_{m'}}h_{m',\epsilon,j}$ in $L^2(X_j, \wedge^{n,1}T^*M\otimes E, h_{m'}e^{-\Phi_1})$. Hence by Lemma \ref{weakly converge lemma} and the uniqueness of weak limit, we know that
$P_{m,m'}\big(\sqrt{2\pi bN_1\delta\tilde\delta_m+N_1 \tilde{\lambda}_{m'}}h_{m,m',\epsilon,j}\big)$ weakly converges to $P_{m'}\big(\sqrt{N_1 \tilde{\lambda}_{m'}}h_{m',\epsilon,j}\big)$ in $L^2(X_j, \wedge^{n,1}T^*M\otimes E, h_{m'}e^{-\Phi_1})$.

It follows from $\inf_{X_j}e^{-\Phi_1}=\inf_{X_j}e^{-u(-v_{\epsilon}(\Psi_{1}))}>0$ and inequality \eqref{estimate for M eta} that we have $h_{m,m',\epsilon,j}$ weakly converges to $h_{m',\epsilon,j}$ in $L^2(X_j, \wedge^{n,1}T^*M\otimes E, h_{m'})$ and $P_{m,m'}\big(\sqrt{2\pi bN_1\delta\tilde\delta_m+N_1 \tilde{\lambda}_{m'}}h_{m,m',\epsilon,j}\big)$ weakly converges to $P_{m'}\big(\sqrt{N_1 \tilde{\lambda}_{m'}}h_{m',\epsilon,j}\big)$ in $L^2(X_j, \wedge^{n,1}T^*M\otimes E, h_{m'})$.

Note that $\sup_{m}\sup_{X_j}e^{-u(-v_{\epsilon}(\Psi_{m}))}<+\infty$ and $\sup_{m}\sup_{X_j}e^{-M_{\eta_{m}}}\le \sup_{M_j}\frac{1}{|F|^2}<+\infty$. We know $$\sup_{m}\sup_{X_j}e^{-u(-v_{\epsilon}(\Psi_{m}))-\delta M_{\eta_{m}}}<+\infty.$$
It follows from Lemma \ref{weakly convergence} that we have
$h_{m,m',l,\epsilon,j}\sqrt{e^{-u(-v_{\epsilon}(\Psi_{m}))-\delta M_{\eta_{m}}}}$ is weakly convergent to
$h_{m',l,\epsilon,j}\sqrt{e^{-u(-v_{\epsilon}(\Psi))-\delta \max{\{\psi+T,2\log|F|\}}}}$ in $L^2(X_j,\wedge^{n,1}T^*M\otimes E, h_{m'})$ as $m\to+\infty$. Hence we have

\begin{equation}
\label{estimate for h m'}
\begin{split}
&\int_{X_j}
|h_{m',\epsilon,j}|^2_{h_{m'}}e^{-u(-v_{\epsilon}(\Psi))-\delta \max{\{\psi+T,2\log|F|\}}}\\
\le&\liminf_{m\to+\infty}\int_{M_j}
|h_{m,m',\epsilon,j}|^2_{h_{m'}} e^{-u(-v_{\epsilon}(\Psi_{m}))-\delta M_{\eta_{m}}}\\
\le&\liminf_{m\to+\infty}
\int_{X_j}
v''_{\epsilon}(\Psi_{m})|fF^{1+\delta}|^2_{h_{m'}} e^{-u(-v_{\epsilon}(\Psi_{m}))-\delta M_{\eta_{m}}}\\
\le & \int_{X_j}v''_{\epsilon}(\Psi)|fF^{1+\delta}|^2_{h_{m'}} e^{-u(-v_{\epsilon}(\Psi))-\delta \max{\{\psi+T,2\log|F|\}}}
<+\infty.
\end{split}
\end{equation}

Letting $m\to +\infty$ in \eqref{d-bar realation u,h,lamda 1}, we have
\begin{equation}
\label{d-bar realation u,h,lamda 2}
D''u_{m',\epsilon,j}+P_{m'}\big(\sqrt{N_1 \tilde{\lambda}_{m'}}h_{m',\epsilon,j}\big)
=D''\left((1-v'_{\epsilon}(\Psi))fF^{1+\delta}\right).
\end{equation}

\

\emph{Step 5: Letting $m'\to+\infty$. }

When $\Psi<-t_0-\epsilon<-t_0$, we have $\psi-2\log|F|<-T$ and then $\max{\{\psi+T,2\log|F|\}}=2\log|F|$. Hence
$$\int_{X_j}v''_{\epsilon}(\Psi)|fF^{1+\delta}|^2_{h_{m'}} e^{-u(-v_{\epsilon}(\Psi))-\delta \max{\{\psi+T,2\log|F|\}}}=\int_{X_j}v''_{\epsilon}(\Psi)|fF|^2_{h_{m'}} e^{-u(-v_{\epsilon}(\Psi))}.$$

Note that $\sup_{X_j}(e^{-u(-v_{\epsilon}(\Psi))})<+\infty$,
$0\le
v_{\epsilon}{''}(t) \le \frac{2}{B}\mathbb{I}_{(-t_0-B+\epsilon,-t_0-\epsilon)}$ and $|e_x|_{h_{m'}}\le |e_x|_{h_{m'+1}}\le |e_x|_{h}$ for any $m'\in \mathbb{Z}_{\ge 0}$. We have
\begin{equation}
\label{upper bound for rhs 1}
v''_{\epsilon}(\Psi)|fF|^2_{h_{m'}} e^{-u(-v_{\epsilon}(\Psi))}\le \sup_{X_j}(e^{-u(-v_{\epsilon}(\Psi))})\frac{2}{B}\mathbb{I}_{\{-t_0-B+\epsilon<\Psi<-t_0-\epsilon\}} |fF|_h^2.
\end{equation}

It follows from inequality \eqref{estimate on annaul of lemma2.1} and dominated convergence theorem that we have
\begin{equation}\nonumber
\begin{split}
&\lim_{m'\to+\infty}\int_{X_j}
v''_{\epsilon}(\Psi)|fF|^2_{h_{m'}} e^{-u(-v_{\epsilon}(\Psi))}\\
=&\int_{X_j}v''_{\epsilon}(\Psi)|fF|^2_h e^{-u(-v_{\epsilon}(\Psi))}<+\infty.
\end{split}
\end{equation}

Let $C_j:=\inf_{X_j}e^{v_\epsilon(\Psi)-\delta \max\{\psi+T,2\log|F|\}}c(-v_\epsilon(\Psi))$ be a real number and we note that $C_j>0$. Then it follows from $C_j>0$, inequalities \eqref{estimate for m'}, \eqref{upper bound for rhs 1} and \eqref{estimate on annaul of lemma2.1} that we have
$$\sup_{m'} \int_{X_j}
|u_{m',\epsilon,j}|^2_{h_{m'}}<+\infty.$$
As $|e_x|_{h_{m'}}\le |e_x|_{h_{m'+1}}$ for any $m'\in \mathbb{Z}_{\ge 0}$, for any fixed $i$, we have
$$\sup_{m'} \int_{X_j}
|u_{m',\epsilon,j}|^2_{h_{i}}<+\infty.$$
Especially letting $h_i=h_1$, since the closed unit ball of the Hilbert space is
weakly compact, we can extract a subsequence $u_{m'',\epsilon,j}$ weakly
convergent to $u_{\epsilon,j}$ in $L^2(M_j,K_M\otimes E, h_1)$ as $m''\to+\infty$. Note that $$\sup_{X_j}e^{v_\epsilon(\Psi)-\delta \max\{\psi+T,2\log|F|\}}c(-v_\epsilon(\Psi))
\le \sup_{X_j}\frac{e^{v_\epsilon(\Psi)}c(-v_\epsilon(\Psi))}{|F|^{2\delta}}<+\infty.$$
It follows from Lemma \ref{weakly convergence} that $u_{m'',\epsilon,j}\sqrt{e^{v_{\epsilon}(\Psi)}c(-v_{\epsilon}(\Psi))e^{-\delta \max\{\psi+T,2\log|F|\}}}$ weakly converges to $u_{\epsilon,j}\sqrt{e^{v_{\epsilon}(\Psi)}c(-v_{\epsilon}(\Psi))e^{-\delta \max\{\psi+T,2\log|F|\}}}$ in $L^2(X_j,K_X\otimes E, h_{1})$ as $m''\to+\infty$ .

For fixed $i\in\mathbb{Z}_{\ge 0}$, as $h_1$ and $h_i$ are both $C^2$ smooth hermitian metrics on $X_k$ and $X_j\subset\subset X$, we know that the two norms in $L^2(X_j, K_X\otimes E, h_1)$ and $ L^2(X_j, K_X\otimes E, h_i)$ are equivalent. Note that $\sup_{m''} \int_{X_j}
|u_{m'',\epsilon,j}|^2_{h_{i}}<+\infty$. Hence we know that $u_{m'',\epsilon,j}\sqrt{e^{v_{\epsilon}(\Psi)}c(-v_{\epsilon}(\Psi))e^{-\delta \max\{\psi+T,2\log|F|\}}}$ also weakly converges to $u_{\epsilon,j}\sqrt{e^{v_{\epsilon}(\Psi)}c(-v_{\epsilon}(\Psi))e^{-\delta \max\{\psi+T,2\log|F|\}}}$ in $L^2(X_j,K_X\otimes E, h_{i})$ as $m''\to+\infty$.
Then we have
\begin{equation}\nonumber
\begin{split}
&\int_{X_j}|u_{\epsilon,j}|^2_{h_i}e^{v_\epsilon(\Psi)-\delta \max\{\psi+T,2\log|F|\}}c(-v_\epsilon(\Psi))\\
\le& \liminf_{m''\to+\infty}\int_{X_j}|u_{m'',\epsilon,j}|^2_{h_i}e^{v_\epsilon(\Psi)-\delta \max\{\psi+T,2\log|F|\}}c(-v_\epsilon(\Psi))\\
\le& \liminf_{m''\to+\infty}\int_{X_j}v''_{\epsilon}(\Psi)|fF^{1+\delta}|^2_{h_{m''}} e^{-u(-v_{\epsilon}(\Psi))-\delta \max{\{\psi+T,2\log|F|\}}}\\
\le&
\int_{X_j}v''_{\epsilon}(\Psi)|fF|^2_h e^{-u(-v_{\epsilon}(\Psi))}
<+\infty.
\end{split}
\end{equation}
Letting $i\to +\infty$, by monotone convergence theorem, we have
\begin{equation}
\begin{split}
\int_{X_j}|u_{\epsilon,j}|^2_{h}e^{v_\epsilon(\Psi)-\delta \max\{\psi+T,2\log|F|\}}c(-v_\epsilon(\Psi))
\le
\int_{X_j}v''_{\epsilon}(\Psi)|fF|^2_h e^{-u(-v_{\epsilon}(\Psi))}
<+\infty.
\label{estimate for uej}
\end{split}
\end{equation}

Let $\tilde{C}_j:=\inf_{X_j}e^{-u(-v_{\epsilon}(\Psi))-\delta\max\{\psi+T,2\log|F|\}}$ and note that $\tilde{C}_j$ is a positive number.
Then it follows from $\tilde{C}_j>0$, inequalities \eqref{estimate for h m'}, \eqref{upper bound for rhs 1} and \eqref{estimate on annaul of lemma2.1} that we have
$$\sup_{m''} \int_{X_j}
|h_{m'',\epsilon,j}|^2_{h_{m''}}<+\infty.$$
As $|e_x|_{h_{m'}}\le |e_x|_{h_{m'+1}}$ for any $m'\in \mathbb{Z}_{\ge 0}$, for $h_1$, we have
$$\sup_{m''} \int_{X_j}
|h_{m'',\epsilon,j}|^2_{h_{1}}<+\infty.$$

Since the closed unit ball of the Hilbert space is
weakly compact, we can extract a subsequence of $\{h_{m'',\epsilon,j}\}$ (also denote by $h_{m'',\epsilon,j}$) weakly
convergent to $h_{\epsilon,j}$ in $L^2(X_j,\wedge^{n,1}T^*M\otimes E, h_1)$ as $m''\to+\infty$.
As $0\le \tilde{\lambda}_{m''}\le\lambda+1$ and $X_j$ is relatively compact in $X$, we know that
$$\sup_{m''} \int_{X_j} N_1 \tilde{\lambda}_{m''}
|h_{m'',\epsilon,j}|^2_{h_{m''}}<+\infty.$$
It follows from Lemma \ref{weakly converge lemma} that we know that $\sqrt{N_1\tilde{\lambda}_{m''}} h_{m'',\epsilon,j}$ weakly converges to some $\tilde{h}_{\epsilon,j}$ and $P_{m'}\big(\sqrt{N_1\tilde{\lambda}_{m'}}h_{m',\epsilon,j}\big)$ weakly converges to $P\big(\tilde{h}_{\epsilon,j}\big)$ in $L^2(X_j,\wedge^{n,1}T^*M\otimes E, h_1)$.

It follows from  $0\le \tilde{\lambda}_{m''}\le\lambda+1$, $X_j$ is relatively compact in $X$ and Lemma \ref{weakly convergence} that we know $\sqrt{N_1\tilde{\lambda}_{m''}}h_{m'',\epsilon,j}$ weakly
convergent to $0$ in $L^2(X_j,\wedge^{n,1}T^*M\otimes E, h_1)$. It follows from the uniqueness of weak limit that we know $\tilde{h}_{\epsilon,j}=0$. Then we have $P_{m'}\big(\sqrt{N_1\tilde{\lambda}_{m'}}h_{m',\epsilon,j}\big)$ weakly converges to $0=P\big(\tilde{h}_{\epsilon,j}\big)$ in $L^2(X_j,\wedge^{n,1}T^*M\otimes E, h_1)$

Replace $m'$ by $m''$ in \eqref{d-bar realation u,h,lamda 2} and let $m''$ go to $+\infty$, we have
\begin{equation}
\label{d-bar realation u,h,lamda 3}
D'' u_{\epsilon,j}
=D''\left((1-v'_{\epsilon}(\Psi))fF^{1+\delta}\right).
\end{equation}

Denote $F_{\epsilon,j}:=-u_{\epsilon,j}+(1-v'_{\epsilon}(\Psi))fF^{1+\delta}$. It follows from \eqref{d-bar realation u,h,lamda 3} and inequality \eqref{estimate for uej} that we know $F_{\epsilon,j}$ is an $E$-valued holomorphic $(n,0)$ form on $X_j$ and
\begin{equation}
\label{estimate for F lej}
\begin{split}
&\int_{X_j}
|F_{\epsilon,j}-(1-v'_{\epsilon}(\Psi))fF^{1+\delta}|^2_h e^{v_\epsilon(\Psi)-\delta \max\{\psi+T,2\log|F|\}}c(-v_\epsilon(\Psi))\\
\le&\int_{X_j}v''_{\epsilon}(\Psi)|fF|^2_h e^{-u(-v_{\epsilon}(\Psi))}<+\infty.
\end{split}
\end{equation}

\

\emph{Step 6: Letting $\epsilon\to 0$. }

Note that $\sup_{\epsilon}\sup_{X_j}(e^{-u(-v_{\epsilon}(\Psi))})<+\infty$,
$0\le
v_{\epsilon}{''}(t) \le \frac{2}{B}\mathbb{I}_{(-t_0-B+\epsilon,-t_0-\epsilon)}$. We have
\begin{equation}
\label{upper bound for rhs 2 in L2 method}
v''_{\epsilon}(\Psi)|fF|^2_{h} e^{-u(-v_{\epsilon}(\Psi))}\le \sup_{\epsilon}\sup_{X_j}(e^{-u(-v_{\epsilon}(\Psi))})\frac{2}{B}\mathbb{I}_{\{-t_0-B<\Psi<-t_0\}} |fF|_h^2.
\end{equation}
It follows from inequality \eqref{estimate on annaul of lemma2.1} and dominated convergence theorem that we have
\begin{equation}\nonumber
\begin{split}
&\lim_{\epsilon\to 0}\int_{X_j}
v''_{\epsilon}(\Psi)|fF|^2_{h} e^{-u(-v_{\epsilon}(\Psi))}\\
=&\int_{X_j}v''(\Psi)|fF|^2_h e^{-u(-v(\Psi))}\\
\le&\bigg(\sup_{X_j}e^{-u(-v(\Psi))}\bigg)\int_{X_j}\frac{1}{B}\mathbb{I}_{\{-t_0-B<\Psi<-t_0\}} |fF|^2_h.
\end{split}
\end{equation}

Combining with $$\inf_{\epsilon}\inf_{X_j}e^{v_\epsilon(\Psi)-\delta \max\{\psi+T,2\log|F|\}}c(-v_\epsilon(\Psi))>0,$$
we have
$$\sup_{\epsilon}\int_{X_j}|F_{\epsilon,j}-(1-v'_{\epsilon}(\Psi))fF^{1+\delta}|^2_h<+\infty.$$
For any $i\in\mathbb{Z}_{\ge 0}$, as $h_i\le h$, we have
$$\sup_{\epsilon}\int_{X_j}|F_{\epsilon,j}-(1-v'_{\epsilon}(\Psi))fF^{1+\delta}|^2_{h_i}<+\infty.$$
For any fixed $i\in\mathbb{Z}_{\ge0}$, note that $\overline{X_j}$ is compact and both $h_i$ and $\hat{h}$ are $C^2$ smooth hermitian metrics on $E$, then there exists a constant $c_i>0$, such that $h_i\le c_i \hat{h}$ on $X_k$. By \eqref{estimate on sublevel set of lemma2.1}, we have
\begin{equation}\nonumber
\begin{split}
\sup_{\epsilon}\int_{X_j}|(1-v'_{\epsilon}(\Psi))fF^{1+\delta}|_{h_i}^2
\le c_i(\sup_{X_j}|F|^{2+2\delta}) \int_{X_j}\mathbb{I}_{\{\Psi<-t_0\}}|f|_{\hat{h}}^2<+\infty,
\end{split}
\end{equation}
one can obtain that $\sup_{\epsilon}\int_{X_j}|F_{\epsilon,j}|_{h_i}^2<+\infty$.

Especially, we know
$\sup_{\epsilon}\int_{X_k}|F_{\epsilon,j}|_{h_1}^2<+\infty$. Note that $h_1$ is a $C^2$ hermitian metric on $E$, $X_j\subset\subset X$ and $F_{\epsilon,j}$ is $E$-valued holomorphic $(n,0)$ form on $X_j$, there exists a subsequence of $\{F_{\epsilon},j\}_{\epsilon}$ (also denoted by $\{F_{\epsilon,j}\}_{\epsilon}$) compactly convergent to an $E$-valued holomorphic $(n,0)$ form $F_{j}$ on $X_j$.

It follows from Fatou's lemma that we have

\begin{equation}
\label{step epsilon to 0 formula 1}
\begin{split}
&\int_K|F_{j}-(1-b(\Psi))fF^{1+\delta}|_{h_i}^2
e^{v(\Psi)-\delta \max\{\psi+T,2\log|F|\}}c(-v(\Psi))\\
=&\liminf_{\epsilon\to 0}\int_K|F_{\epsilon,j}-(1-v'_{\epsilon}(\Psi))fF^{1+\delta}|_{h_i}^2
e^{v_\epsilon(\Psi)-\delta \max\{\psi+T,2\log|F|\}}c(-v_\epsilon(\Psi))\\
\le&\limsup_{\epsilon\to 0}\int_K|F_{\epsilon,j}-(1-v'_{\epsilon}(\Psi))fF^{1+\delta}|_{h}^2
e^{v_\epsilon(\Psi)-\delta \max\{\psi+T,2\log|F|\}}c(-v_\epsilon(\Psi))\\
\le&\limsup_{\epsilon\to 0}\int_{X_j}v''_{\epsilon}(\Psi)|fF|^2_h e^{-u(-v_{\epsilon}(\Psi))}\\
\le&\bigg(\sup_{X_j}e^{-u(-v(\Psi))}\bigg)\int_{X_j}\frac{1}{B}\mathbb{I}_{\{-t_0-B<\Psi<-t_0\}} |fF|^2_h.
\end{split}
\end{equation}
Letting $i\to +\infty$ in inequality \eqref{step epsilon to 0 formula 1} and by monotone convergence Theorem, we have
\begin{equation}\nonumber
\begin{split}
&\int_K|F_{j}-(1-b(\Psi))fF^{1+\delta}|_{h}^2
e^{v(\Psi)-\delta \max\{\psi+T,2\log|F|\}}c(-v(\Psi))\\
\le&\bigg(\sup_{X_j}e^{-u(-v(\Psi))}\bigg)\int_{X_j}\frac{1}{B}\mathbb{I}_{\{-t_0-B<\Psi<-t_0\}} |fF|^2_h.
\end{split}
\end{equation}
As $K$ is any compact subset of $X_j$ and by monotone convergence Theorem, we know
\begin{equation}
\label{step epsilon to 0 formula 3}
\begin{split}
&\int_{X_j}|F_{j}-(1-b(\Psi))fF^{1+\delta}|_{h}^2
e^{v(\Psi)-\delta \max\{\psi+T,2\log|F|\}}c(-v(\Psi))\\
\le&\bigg(\sup_{X_j}e^{-u(-v(\Psi))}\bigg)\int_{X_j}\frac{1}{B}\mathbb{I}_{\{-t_0-B<\Psi<-t_0\}} |fF|^2_h.
\end{split}
\end{equation}

\

\emph{Step 7: Letting $j\to+\infty$. }

It is easy to see that
\begin{equation}\nonumber
\begin{split}
&\bigg(\sup_{X_j}e^{-u(-v(\Psi))}\bigg)\int_{X_j}\frac{1}{B}\mathbb{I}_{\{-t_0-B<\Psi<-t_0\}} |fF|^2_h\\
\le& \bigg(\sup_{X}e^{-u(-v(\Psi))}\bigg)\int_{X}\frac{1}{B}\mathbb{I}_{\{-t_0-B<\Psi<-t_0\}} |fF|^2_h<+\infty.
\end{split}
\end{equation}

For fixed $j$, as $e^{v(\Psi)-\delta\max\{\psi+T,2\log|F|\}}c(-v(\Psi))$ has a positive lower bound on any $\overline{X_j}$, we have for $j_1>j$,
$$\sup_{j_1>j}\int_{X_j}
|F_{j_1}-(1-b(\Psi))fF^{1+\delta}|^2_h<+\infty.$$

For any $i\in\mathbb{Z}_{\ge 0}$, as $h_i\le h$, we have
$$\sup_{j_1>j}\int_{X_j}
|F_{j_1}-(1-b(\Psi))fF^{1+\delta}|^2_{h_i}<+\infty.$$
Note that $\overline{X_j}$ is compact and both $h_i$ and $\hat{h}$ are $C^2$ smooth hermitian metrics on $E$, then there exists a constant $c_i>0$, such that $h_i\le c_i \hat{h}$ on $X_k$. By \eqref{estimate on sublevel set of lemma2.1}, we have
\begin{equation}\nonumber
\begin{split}
\int_{X_j}|(1-b(\Psi))fF^{1+\delta}|_{h_i}^2
\le c_i(\sup_{X_j}|F|^{2+2\delta}) \int_{X_j}\mathbb{I}_{\{\Psi<-t_0\}}|f|_{\hat{h}}^2<+\infty,
\end{split}
\end{equation}
one can obtain that $\sup_{j_1>j}\int_{X_j}|F_{j_1}|_{h_i}^2<+\infty$. Especially
$$\sup_{j_1>j}\int_{X_j}|F_{j_1}|_{h_1}^2<+\infty.$$
By diagonal method, there exists a subsequence $F_{j''}$ uniformly convergent on any
$\overline{X_j}$ to an $E$-valued holomorphic $(n,0)$-form on $X$ denoted by $\tilde F$. It follows from Fatou's lemma that we have

\begin{equation}
\label{step j to infty formula 1}
\begin{split}
&\int_{X_j}|\tilde{F}-(1-b(\Psi))fF^{1+\delta}|_{h_i}^2
e^{v(\Psi)-\delta \max\{\psi+T,2\log|F|\}}c(-v(\Psi))\\
\le&\liminf_{j''\to+\infty}\int_{X_j}|F_{j''}-(1-b(\Psi))fF^{1+\delta}|_{h_i}^2
e^{v(\Psi)-\delta \max\{\psi+T,2\log|F|\}}c(-v(\Psi))\\
\le&\limsup_{j''\to+\infty}\int_{X_j}|F_{j''}-(1-b(\Psi))fF^{1+\delta}|_{h}^2
e^{v(\Psi)-\delta \max\{\psi+T,2\log|F|\}}c(-v(\Psi))\\
\le&\limsup_{j''\to+\infty}\int_{X_{j''}}|F_{j''}-(1-b(\Psi))fF^{1+\delta}|_{h}^2
e^{v(\Psi)-\delta \max\{\psi+T,2\log|F|\}}c(-v(\Psi))\\
\le&\limsup_{j''\to+\infty}\bigg(\sup_{X_{j''}}e^{-u(-v(\Psi))}\bigg)\int_{X_{j''}}\frac{1}{B}\mathbb{I}_{\{-t_0-B<\Psi<-t_0\}} |fF|^2_h\\
\le &\bigg(\sup_{X}e^{-u(-v(\Psi))}\bigg)\int_{X}\frac{1}{B}\mathbb{I}_{\{-t_0-B<\Psi<-t_0\}} |fF|^2_h<+\infty.
\end{split}
\end{equation}
Letting $i\to +\infty$ in inequality \eqref{step j to infty formula 1}, and by monotone convergence theorem, we have
\begin{equation}
\label{step j to infty formula 2}
\begin{split}
&\int_{X_j}|\tilde{F}-(1-b(\Psi))fF^{1+\delta}|_{h}^2
e^{v(\Psi)-\delta \max\{\psi+T,2\log|F|\}}c(-v(\Psi))\\
\le &\bigg(\sup_{X}e^{-u(-v(\Psi))}\bigg)\int_{X}\frac{1}{B}\mathbb{I}_{\{-t_0-B<\Psi<-t_0\}} |fF|^2_h<+\infty.
\end{split}
\end{equation}

Letting $j\to +\infty$ in inequality \eqref{step j to infty formula 2}, and by monotone convergence theorem, we have
\begin{equation}
\label{step j to infty formula 3}
\begin{split}
&\int_{X}|\tilde{F}-(1-b(\Psi))fF^{1+\delta}|_{h}^2
e^{v(\Psi)-\delta \max\{\psi+T,2\log|F|\}}c(-v(\Psi))\\
\le &\bigg(\sup_{X}e^{-u(-v(\Psi))}\bigg)\int_{X}\frac{1}{B}\mathbb{I}_{\{-t_0-B<\Psi<-t_0\}} |fF|^2_h<+\infty.
\end{split}
\end{equation}

\

\emph{Step 9: ODE System.}

Now we want to find $\eta$ and $\phi$ such that
$(\eta+g^{-1})=e^{-v_\epsilon(\Psi_m)}e^{-\phi}\frac{1}{c(-v_{\epsilon}(\Psi_m))}$.
As $\eta=s(-v_{\epsilon}(\Psi_m))$ and $\phi=u(-v_{\epsilon}(\Psi_m))$, we have
$(\eta+g^{-1})e^{v_\epsilon(\Psi_m)}e^{\phi}=\left((s+\frac{s'^2}{u''s-s''})e^{-t}e^u\right)\circ(-v_\epsilon(\Psi_m))$.\\

Summarizing the above discussion about $s$ and $u$, we are naturally led to a system of
ODEs:
\begin{equation}
\begin{split}
&1)(s+\frac{s'^2}{u''s-s''})e^{u-t}=\frac{1}{c(t)},\\
&2)s'-su'=1,
\end{split}
\label{ODE SYSTEM}
\end{equation}
when $t\in(T,+\infty)$.

We  solve the ODE system \eqref{ODE SYSTEM} and get
$u(t)=-\log(\frac{1}{\delta}c(T)e^{-T}+\int^t_T c(t_1)e^{-t_1}dt_1)$ and
$s(t)=\frac{\int^t_T(\frac{1}{\delta}c(T)e^{-T}+\int^{t_2}_T c(t_1)e^{-t_1}dt_1)dt_2+\frac{1}{\delta^2}c(T)e^{-T}}{\frac{1}{\delta}c(T)e^{-T}+\int^t_T
c(t_1)e^{-t_1}dt_1}$.

It follows that $s\in C^{\infty}([T,+\infty))$ satisfies
$s \ge \frac{1}{\delta}$ and $u\in C^{\infty}([T,+\infty))$ satisfies
$u''s-s''>0$.

As $u(t)=-\log(\frac{1}{\delta}c(T)e^{-T}+\int^t_T c(t_1)e^{-t_1}dt_1)$ is decreasing with respect to $t$, then
it follows from $-T \ge v(t) \ge \max\{t,-t_0-B_0\} \ge -t_0-B_0$, for any $t \le 0$
that
\begin{equation}
\begin{split}
\sup\limits_{X}e^{-u(-v(\Psi))} \le
\sup\limits_{t\in[T,t_0+B]}e^{-u(t)}=\frac{1}{\delta}c(T)e^{-T}+\int^{t_0+B}_T c(t_1)e^{-t_1}dt_1.
\end{split}
\end{equation}

Combining with inequality \eqref{step j to infty formula 3}, we have

\begin{equation}\nonumber
\begin{split}
&\int_{X}|\tilde{F}-(1-b(\Psi))fF^{1+\delta}|_{h}^2
e^{v(\Psi)-\delta \max\{\psi+T,2\log|F|\}}c(-v(\Psi))\\
\le & \left(\frac{1}{\delta}c(T)e^{-T}+\int^{t_0+B}_T c(t_1)e^{-t_1}dt_1\right)
\int_{X}\frac{1}{B}\mathbb{I}_{\{-t_0-B<\Psi<-t_0\}} |fF|^2_h<+\infty.
\end{split}
\end{equation}
We get Lemma \ref{L2 method}.

%%%------------------------------------------------------------------------

\vspace{.1in} {\em Acknowledgements}.
The first author and the second author were supported by National Key R\&D Program of China 2021YFA1003103.
The first author was supported by NSFC-11825101, NSFC-11522101 and NSFC-11431013. We would like to thank Shijie Bao for
checking the manuscript.

\bibliographystyle{references}
\bibliography{xbib}

\end{document}